\documentclass[a4paper,11pt]{amsart}
\usepackage{etex}

\usepackage{a4wide}
\usepackage{stmaryrd}
\usepackage{enumerate}
\usepackage[all]{xy}
\usepackage[linktocpage=true]{hyperref}
\usepackage{breakurl}

\usepackage{latexsym, amssymb, amsthm,pictex}
\usepackage[centertags]{amsmath}
\usepackage[dvips]{color}
\usepackage{graphicx}
\usepackage[small]{caption}
\usepackage{mathtools}
\usepackage{pstricks}
\usepackage{multido}
\usepackage{dsfont}
\usepackage{color}

\definecolor{usablegreen}{rgb}{0,.5,0}
\definecolor{usablecyan}{rgb}{0,.5,.5}

\pagestyle{headings}

\newcommand{\ee}{\end{equation}}
\newcommand{\eea}{\end{eqnarray}}
\newcommand{\bean}{\begin{eqnarray*}}
\newcommand{\eean}{\end{eqnarray*}}

 \newif\ifpctex

 \newcommand{\eps}{\epsilon}

\newcommand{\smallx}{\mathpzc{x}}
\newcommand{\smally}{\mathpzc{y}}

\DeclareMathAlphabet{\mathpzc}{OT1}{pzc}{m}{it}

  \newtheorem{lemma}{Lemma}[section]
  \newtheorem{theorem}{Theorem}
  \newtheorem{proposition}[lemma]{Proposition}
  \newtheorem{cor}[lemma]{Corollary}
  \newtheorem{corollary}[lemma]{Corollary}

  \theoremstyle{definition}
  \newtheorem{definition}[lemma]{Definition}
  \theoremstyle{definition}

  \numberwithin{equation}{section}
  \newtheoremstyle{step}{3pt}{0pt}{\itshape}{}{\bf}{}{.5em}{}

\theoremstyle{step} \newtheorem{step}{Step}

\newcommand{\R}{\mathbb{R}} 
\newcommand{\N}{\mathbb{N}}

\newcommand{\CM}{\mathcal{M}}

\newcommand{\Rand}[1]{\marginpar{#1}} 
\marginparwidth1.5cm
\newcommand{\be}[1]{\begin{equation}\label{#1}}
\newcommand{\bew}[1]{\Rand{\vspace{0,6cm}\tt
#1}\begin{equation*}\label{#1}}
\newcommand{\bea}[1]{\Rand{\vspace{0,6cm}\tt
#1}\begin{eqnarray}\label{#1}}

\newcommand{\beL}[2]{\Rand{\vspace{0,6cm}\tt
#1}\begin{lemma}[#2]\label{#1}}
\newcommand{\beD}[2]{\Rand{\vspace{0,6cm}\tt
#1}\begin{definition}[#2]\label{#1}}
\newcommand{\beT}[2]{\Rand{\vspace{0,6cm}\tt
#1}\begin{theorem}[#2]\label{#1}}
\newcommand{\beP}[2]{\Rand{\vspace{0,6cm}\tt
#1}\begin{proposition}[#2]\label{#1}}
\newcommand{\beC}[2]{\Rand{\vspace{0,6cm}\tt #1}\begin{cor}[#2]\label{#1}}

\newcommand{\axiom}{\phantom{vi}\bf}	

\newcommand{\circseg}{\nabla}
\newcommand{\newatop}[2]{\underset{\scriptscriptstyle #2}{#1}}	
\newcommand{\convto}[2][\infty]{\,\newatop{\longrightarrow}{#2 \to #1}\,}	

\providecommand{\tno}{}
\renewcommand{\tno}{\convto{n}}

\newcommand{\folge}[2][n]{(#2_{#1})_{#1\in\N}}
\newcommand{\floor}[1]{\lfloor #1 \rfloor}

\newcommand{\define}[1]{\emph{#1}}
\newcommand{\comment}[1]{}

\newcommand{\proofcase}[1]{\removelastskip\smallbreak\par\emph{#1}\hspace{0.25ex}}
\newcommand{\pstep}[1]{\proofcase{#1.}}
\newcommand{\istep}[2]{\pstep{``#1$\,\Rightarrow\,$#2''}}

\newcommand{\nbd}{\protect\nobreakdash-\hspace{0pt}}
\newcommand{\nclad}[1][m]{$#1$\nbd labelled cladogram}
\newcommand{\nclads}[1][m]{\nclad[#1]s}

\newcommand{\Exp}{\mathbb{E}}
\newcommand{\one}{\mathds{1}}
\newcommand{\Q}{\mathbb{Q}}

\newcommand{\testtree}{\mathfrak{t}}

\newcommand{\uu}{\underline{u}}
\newcommand{\ueta}{\underline{\eta}}
\newcommand{\ux}{\underline{x}}

\newcommand{\toln}{\,\xrightarrow[n\to\infty]{\law}\,}
\newcommand{\Clad}[1][m]{\mathfrak{C}_{#1}}
\newcommand{\shape}[1][T]{\mathfrak{s}_{#1}}
\newcommand{\shapedist}[1][m]{\mathfrak{S}_{#1}}
\newcommand{\sPol}{\Pi_{\shape[]}}
\newcommand{\tensor}[1][m]{[0,1]^{3\cdot \binom{#1}{3}}}

\DeclareMathOperator{\compop}{comp}
\newcommand{\compplain}{\compop}
\newcommand{\comp}[2]{\compplain_{#1}(#2)}
\newcommand{\m}[1][\smallx]{\mathfrak{m}_{#1}}

\newcommand{\massdist}[1][m]{\vartheta_{#1}}
\newcommand{\mPol}{\Pi_{\m[]}}

\newcommand{\gentree}[1]{\llbracket #1 \rrbracket}
\newcommand{\ePol}{\Pi_\iota}

\newcommand{\I}[1][T]{\mathcal{I}_{#1}}
\newcommand{\J}{\mathcal{I}}
\newcommand{\Sset}[1][T]{{\mathcal{S}}_{#1}}
\DeclareMathOperator{\dimVC}{dim_{VC}}
\newcommand{\law}{\mathcal{L}}

\renewcommand{\(}{\bigl(} 	\renewcommand{\)}{\bigr)}

\newcommand{\dGP}{d_\mathrm{GP}}
\newcommand{\dPr}{d_\mathrm{Pr}}

\newcommand{\diota}{d_\mathrm{BGP}}
\newcommand{\dH}[1][]{d^{#1}_\mathrm{H}}
\newcommand{\CCRT}{C_{\mathrm{CRT}}}
\newcommand{\F}{\mathcal{F}}
\newcommand{\B}{\mathcal{B}}

\newcommand{\U}{\mathcal{U}}
\newcommand{\A}{\mathcal{A}}
\newcommand{\MC}{\mathcal{M}}
\newcommand{\T}{\mathbb{T}}
\newcommand{\Tbin}{\T_{2}}

\renewcommand{\H}{\mathbb{H}}
\newcommand{\sxh}{\hat{\smallx}}
\newcommand{\rh}{\hat{r}}

\newcommand{\ch}{\hat{c}}

\newcommand{\Th}{\widehat{T}}
\newcommand{\Tb}{\overline{T}}
\newcommand{\mub}{\bar{\mu}}
\newcommand{\Vc}{\closure{V}}
\newcommand{\Vt}{\tilde{V}}
\newcommand{\ct}{\tilde{c}}
\newcommand{\cb}{\bar{c}}

\DeclareMathOperator{\supp}{supp}
\DeclareMathOperator{\lf}{lf}
\newcommand{\lfatom}{\lf_\mathrm{atom}}

\DeclareMathOperator{\br}{br}

\DeclareMathOperator{\edge}{edge}

\newcommand{\tri}{\Delta}

\DeclareMathOperator{\at}{at}
\newcommand{\triang}{\mathcal{T}}
\newcommand{\tree}{\tau}

\DeclareMathOperator{\conv}{conv}

\newcommand{\closure}{\overline}
\newcommand{\interior}{\mathring}

\newcommand{\restricted}[1]{{\mspace{-1mu}\upharpoonright}_{#1}}
\newcommand{\set}[2]{\{#1:\, #2\}}
\newcommand{\bset}[2]{\bigl\{#1 :\, #2\bigr\}}
\newcommand{\Bset}[2]{\Bigl\{#1 : #2\Bigr\}}
\newcommand{\openint}[2]{\mathopen(#1, #2\mathclose)}
\newcommand{\lopenint}[2]{\mathopen(#1, #2\mathclose]}
\newcommand{\ropenint}[2]{\mathopen[#1, #2\mathclose)}

\newcommand{\blopenint}[2]{\bigl(#1,\, #2\bigr]}

\newcommand{\Rtree}{$\R$\nobreakdash-tree}
\newcommand{\Rtrees}{$\R$\nobreakdash-trees}
\newcommand{\ngon}{$n$\nobreakdash-gon}

\newcommand{\disc}{\mathbb{D}}
\renewcommand{\S}{\mathbb{S}}
\newcommand{\Sub}{\mathcal{S}}
\newcommand{\dx}{\mathrm{d}}

\newcommand{\inta}[3]{\int_{#1} #2 \,\dx #3}
\newcommand{\intax}[4]{\int_{#1} #2 \,#3(\dx #4)}
\newcommand{\intamu}[3][m]{\inta{#2}{#3}{\mu^{\otimes #1}}}
\newcommand{\intamuu}[4][m]{\intax{#2}{#3}{\mu^{\otimes #1}}{#4}}
\newcommand{\C}{\mathcal{C}}
\newcommand{\Cb}{\mathcal{C}_b}

\newenvironment{remark}
  {\pushQED{\qed}\remarkx}
  {\popQED\endremarkx}
\newenvironment{example}
  {\pushQED{\qed}\examplex}
  {\popQED\endexamplex}

\newcommand{\node}{\bullet}
\newcommand{\xyedge}{\ar@{-}}
\newcommand{\leaf}{{}\phantom{\bullet}}
\newcommand{\Bnode}{\circ}
\newcommand{\Rnode}{\times}
\newcommand{\lfdr}{\leaf\ar@{-}[dr]}
\newcommand{\lfd}{\leaf\ar@{-}[d]}
\newcommand{\lfur}{\leaf\ar@{-}[ur]}
\newcommand{\lful}{\leaf\ar@{-}[ul]}
\newcommand{\lfdl}{\leaf\ar@{-}[dl]}
\newcommand{\brd}{\Bnode\ar@{-}[d]}
\newcommand{\brrr}{\node\ar@{-}[rrr]}
\newcommand{\brr}{\Bnode\ar@{-}[r]}
\newcommand{\brl}{\Rnode\ar@{-}[l]}
\newcommand{\brdr}{\Bnode\ar@{-}[dr]}
\newcommand{\brdl}{\node\ar@{-}[dl]}
\newcommand{\brul}{\Rnode\ar@{-}[ul]}
\newcommand{\Rbrdl}{\Rnode\ar@{-}[dl]}

\newcommand{\colsepdefault}{0.75pc}
\xymatrixrowsep{0.75pc} \xymatrixcolsep{\colsepdefault}

\newcommand{\cfigure}[3]{
	\begin{figure}
	\centerline{
	 #2 
	 }
	\caption{#3}
	\label{#1}
	\end{figure}
}
\newcommand{\xymatfig}[3]{\cfigure{#1}{$\xymatrix{#2}$}{#3}}

\psset{linewidth=0.05pt, dotsize=3.6pt, hatchsep=0.8pt, hatchwidth=0.6pt}
\newcommand{\deffillstyle}{vlines}
\newcommand{\dotcircle}{\pscircle[linestyle=dotted,linewidth=.75pt](0,0){1}}
\newcommand{\dotarc}[2]{\psarc[linestyle=dotted,linewidth=.75pt]{-}(0,0){1}{#1}{#2}}
\newcommand{\filledarc}[2]{
	\psline(1;#1)(1;#2)
	\psarc[fillstyle=\deffillstyle]{-}(0,0){1}{#1}{#2}
}

\newcommand{\pictpretri}{{
\psset{fillstyle=\deffillstyle}
\begin{pspicture}(-1.01,-1.01)(1.01,1.01)
		\pspolygon(0,1)(-0.866,-0.5)(0.866,-0.5)
	\SpecialCoor
		\filledarc{30}{90}
		\filledarc{90}{150}
		\filledarc{150}{210}
		\filledarc{210}{270}
		\filledarc{270}{330}
		\filledarc{330}{30}
\end{pspicture}
}}

\newcommand{\pictBox}{{
\psset{fillstyle=\deffillstyle}
\begin{pspicture}(-1.01,-1.01)(1.01,1.01)
	\SpecialCoor
	\filledarc{30}{150}
	\filledarc{150}{270}
	\filledarc{270}{30}
\end{pspicture}
}}

\newcommand{\pictsubtri}{
\begin{pspicture}(-1.01,-1.01)(1.01,1.01)
	\SpecialCoor
	\psarc{-}(0,0){1}{-30}{210}
	\dotarc{210}{-30}
	\pspolygon(1;-30)(1;90)(1;210)
	\multido{\n=-30+60,\ne=30+60}{4}{ \psline{-}(1;\n)(1;\ne) }
	\multido{\n=-30+30,\ne=0+30}{8}{ \psline{-}(1;\n)(1;\ne) }
	\multido{\n=-30+15,\ne=-15+15}{16}{ \psline{-}(1;\n)(1;\ne) }
\end{pspicture}
}

\newcommand{\picttrinobr}[1][0.09\textwidth]{{	
\psset{unit=#1}
\begin{pspicture}(-1.05,-1.02)(1.05,1.02)
	\SpecialCoor
	\dotcircle
	\psdot(1;0)
\end{pspicture}
\hfil
\begin{pspicture}(-1.05,-1.02)(1.05,1.02)
	\SpecialCoor
	\dotcircle
	\psline(1;-90)(1;45)
\end{pspicture}
\hfil
\begin{pspicture}(-1.05,-1.02)(1.05,1.02)
	\SpecialCoor
	\dotarc{105}{-105}
	\dotarc{-90}{45}
	\psline(1;-90)(1;45)
	\psline(1;-105)(1;105)
	\pspolygon[fillstyle=\deffillstyle,linestyle=none](1;-90)(1;45)(1;105)(1;-105)
	\psarc[fillstyle=\deffillstyle]{-}(0,0){1}{45}{105}
	\psarc[fillstyle=\deffillstyle]{-}(0,0){1}{-105}{-90}
\end{pspicture}
\hfil
\begin{pspicture}(-1.05,-1.02)(1.05,1.02)
	\SpecialCoor
	\dotarc{-90}{45}
	\filledarc{45}{-90}
\end{pspicture}
\hfil
\begin{pspicture}(-1.05,-1.02)(1.05,1.02)
	\pscircle[fillstyle=\deffillstyle](0,0){1}
\end{pspicture}
}}

\makeatletter 
\renewcommand\p@enumii{}
\makeatother

 \hyphenation{homeo-mor-phism}
\hyphenation{homeo-mor-phic}
\newcommand{\due}

\title[Spaces of algebraic measure trees and triangulations of the circle]{Spaces of algebraic measure trees\\ and triangulations of the circle}
\author{Wolfgang L\"ohr} \author{Anita Winter}
\thanks{Research supported by \emph{DFG-RTG 2131} and by \emph{DFG Priority Programme SPP 1590}. Wolfgang L\"ohr was supported by the \emph{DFG project 415705084}.}
\address{University of Duisburg-Essen, Mathematics, 45117 Essen, Germany}
\email{wolfgang.loehr@uni-due.de (corresponding author), anita.winter@uni-due.de}
\keywords{continuum tree, \Rtree, metric tree, branch point map, convergence of trees,
sample shape convergence, Gromov-weak convergence, Brownian CRT, $\beta$-splitting tree, Yule tree, state space,
tree-valued stochastic processes}
\subjclass[2010]{Primary: 60B10, 05C05; Second.: 60D05, 54F50, 57R05, 60C05}

\begin{document}
\begin{abstract}
In this paper we present with \emph{algebraic trees} a novel notion of (continuum) trees which generalizes countable
graph-theoretic trees to (potentially) uncountable structures. For that purpose we focus on the tree structure
given by the branch point map which assigns to each triple of points their branch point. We give
an axiomatic definition of algebraic trees, define a natural topology,
and equip them with a probability measure on the Borel-$\sigma$-field. 
Under an order-separability condition, algebraic (measure) trees can be considered as
tree structure equivalence classes of metric (measure) trees (i.e.\ subtrees of \Rtree s). 
Using Gromov-weak convergence (i.e.\ sample distance convergence) of the particular representatives given by the
metric arising from the distribution of branch points, we define a metrizable topology on the space of
equivalence classes of algebraic measure trees.

In many applications, binary trees are of particular interest. We introduce on that subspace with the
sample shape and the sample subtree mass convergence two additional, natural topologies.
Relying on the connection to triangulations of the circle, we
show that all three topologies are actually the same, and the space of binary algebraic measure trees is compact. 
To this end, we provide a formal definition of triangulations of the circle, and show that the coding map
which sends a triangulation to an algebraic measure tree is a continuous surjection
onto the subspace of binary algebraic non-atomic measure trees.
\end{abstract}

\maketitle

\vspace*{-\baselineskip}
\begin{quote}
{\footnotesize  \tableofcontents }
\end{quote}

\section{Introduction}
\label{S:intro}

Graph-theoretic trees are abundant in mathematics and its applications, from computer science to theoretical
biology. A natural question is how to define limits and limit objects as the size of the trees tends to
infinity. On the one hand, there are \emph{local} approaches yielding countably infinite graphs, or generalized
so-called graphings with a Benjamini-Schramm-type approach (going back to \cite{BenjaminiSchramm2001}, see
\cite[Part~4]{Lovasz2012}). On the other hand, if one takes a more \emph{global} point of view, as we are doing
here, the predominant approach is to consider graph-theoretic trees as metric spaces equipped with the
(rescaled) graph distance. Then the limit objects are certain ``tree-like'' metric spaces, most prominently
so-called \Rtree s introduced in \cite{Tits1977}.
They are also of independent interest, e.g.\ for studying isometry groups of
hyperbolic space (\cite{MorganShalen1984}), or as generalized universal covering spaces in the study of the
fundamental groups of one-dimensional spaces (\cite{FischerZastrow2013}). Characterizing the topological
structures induced by \Rtree s has received considerable attention
(\cite{MayerOversteegen90,MayerNikielOversteegen92,Fabel15}). Here, instead of the topological structures, we are
more interested in the ``tree structures'' induced by \Rtree s. We formalize the tree structure with a branch point
map and call the resulting axiomatically defined objects \emph{algebraic trees}.
While, unlike for metric spaces, we do not know any useful notion of convergence for topological spaces or
topological measure spaces, it is essential for us that we can define a very useful convergence of algebraic
measure trees.

Our main motivation lies in suitable state spaces for tree-valued stochastic processes.
The construction and investigation of scaling limits of tree-valued Markov chains within a metric space setup
started with the continuum analogs of the Aldous-Broder-algorithm for sampling a uniform spanning tree from the
complete graph (\cite{EvansPitmanWinter2006}), and of the tree-valued subtree-prune and regraft Markov chain used
in the reconstruction of phylogenetic trees (\cite{EvansWinter2006}).
It continued with the construction of evolving genealogies of infinite size populations in population genetics
(\cite{GrevenPfaffelhuberWinter2013,DepperschmidtGrevenPfaffelhuber2012,KliemLoehr2015,Piotrowiak2010,GrevenSunWinter2016}) and
in population dynamics (\cite{Gloede2012,KliemWinter19}). Moreover, continuum analogues of pruning procedures
were constructed
(\cite{AbrahamDelmasVoisin2010,AbrahamDelmas2012,LoehrVoisinWinter2015,HeWinkel2014,HeWinkel2017}).
All these constructions have in common that they encode trees as metric (measure) spaces or bi-measure
$\R$-trees, and equip the respective space of trees with the Gromov-Hausdorff (\cite{Gromov2000}),
Gromov-weak (\cite{Fukaya1987,GrevenPfaffelhuberWinter2009,Loehr2013}), Gromov-Hausdorff-weak
(\cite{Villani2009,AthreyaLohrWinter16}), or leaf-sampling weak-vague topology
(\cite{LoehrVoisinWinter2015}).

In the present paper, we shift the focus from the metric to the tree structure for several reasons. First,
checking compactness or tightness criteria for (random) metric (measure) spaces is not always easy, and some natural
sequences of trees do not converge as metric (measure) spaces with a uniform rescaling of edge-lengths. At least
for the subspace of binary algebraic measure trees we introduce, the situation is much more favorable, because
it turns out to be compact. Second, the metric is often less canonical than the tree structure in situations
where it is not clear that every edge should have the same length, e.g.\ in a phylogenetic tree, where edges
might correspond to very different evolutionary time spans. Third, one might want to preserve certain
functionals of the tree structure in the limit. For instance, the limit of binary trees is not always binary in
the metric space setup, while this will be the case for our algebraic measure trees. Also, the centroid
function used in \cite{Aldous2000} is not continuous on spaces of metric measure trees, but it is continuous on
our space.

The starting point of our construction is the notion of an $\R$-tree (see \cite{Tits1977,DreMouTer96,Chiswell2001,Evans2008}).
There are many equivalent definitions, but the following one is the most convenient for us:

\begin{definition}[\Rtree s]
A metric space $(T,r)$ is an \define{$\R$-tree}  iff it satisfies the following:
\begin{enumerate}[\axiom(RT1)]
\item $(T,r)$ satisfies the so-called \emph{$4$-point condition}, i.e., for all $x_1,x_2,x_3,x_4\in T$,
    \begin{equation}
    \label{e:4point}
       r(x_1,x_2)+r(x_3,x_4)
       \le
       \max\big\{r(x_1,x_3)+r(x_2,x_4),\,r(x_1,x_4)+r(x_2,x_3)\big\}.
    \end{equation}
\item $(T,r)$ is a connected metric space.
\end{enumerate}
\label{d:realGH}
\end{definition}

Notice that any metric space $(T,r)$ satisfying (RT1) and (RT2) admits a \emph{branch point map} $c\colon T^3\to
T$, i.e., for all $x_1,x_2,x_3\in T$ there exists a unique point $c(x_1,x_2,x_3)\in T$ such that
\begin{equation}
\label{e:bp}
   \bigl\{c(x_1,x_2,x_3)\bigr\} = [x_1,x_2]\cap[x_1,x_3]\cap[x_2,x_3],
\end{equation}
where for $x,y\in T$ the \emph{interval} $[x,y]$ is defined as
\begin{equation}
\label{e:arc}
  [x,y]:=\bset{z\in T}{r(x,z)+r(z,y)=r(x,y)}.
\end{equation}
Given the branch point map $c$, we can recover the intervals via the identity
\begin{equation}\label{e:arc2}
  [x,y]=\bset{z\in T}{c(x,y,z)=z}.
\end{equation}
\xymatfig{f:4pt}{\node\xyedge[dr]^(0){x_1} & & & & \node\\
	& \node\xyedge[rr]^<{c_1}^>{c_2} & & \node\xyedge[ur]^(1){x_3}\xyedge[dr]_(1){x_4} & \\
	\node\xyedge[ur]_(0){x_2}& & & & \node}
    {The only possible tree shape spanned by four points separates them into two pairs. Here,
    $r(x_1,x_2)+r(x_3,x_4)<\max\{r(x_1,x_3)+r(x_2,x_4),\,r(x_1,x_4)+r(x_2,x_3)\}$, while any other permutation
    yields equality. Furthermore, $c_1=c(x_1,x_2,x_3)=c(x_1,x_2,x_4)$ and $c_2=c(x_1,x_3,x_4)=c(x_2,x_3,x_4)$.
    }
While condition (RT1) is crucial for trees as it reflects the fact that there is only one possible shape for the
subtree spanned by four points (as shown in Figure~\ref{f:4pt}), the assumption of connectedness can be relaxed.
In \cite{AthreyaLohrWinter17}, the notion of a
\emph{metric tree} was introduced to allow for a unified set-up in discrete and continuous situations.
A metric tree $(T,r)$ is defined as a metric space
which can be embedded isometrically into an $\R$-tree such that it contains all branch points
$c(x_1,x_2,x_3)$, $x_1,x_2,x_3\in T$, as defined by \eqref{e:bp}.
To exclude non-tree graphs satisfying the $4$\nbd point condition (see Figure~\ref{f:nontree}), we have to require the property of containing the branch points explicitly.

{
\xymatrixrowsep{1.6pc}
\xymatfig{f:nontree}{& \node\xyedge[dl]\xyedge[dr] & \\ \node\xyedge[rr] & & \node}
    {The graph shown here is not a tree, but the vertices satisfy the $4$\nbd point condition
     with respect to the graph-distance.
     Condition~(MT\ref{MT:cex}) fails.}
}

\begin{definition}[metric trees]
A metric space $(T,r)$ is a \define{metric tree} if the following holds:
\begin{enumerate}[\axiom(MT1)]
\item\label{MT:4pt} $(T,r)$ satisfies the $4$-point condition (RT1).
\item\label{MT:cex} $(T,r)$ admits all branch points, i.e., for all $x_1,x_2,x_3\in T$ there exists a
	(necessarily unique) $c(x_1,x_2,x_3)\in T$ such that
	\begin{equation} \label{e:brapoi}
	   r\big(x_i,\,c(x_1,x_2,x_3)\big)+r\big(c(x_1,x_2,x_3),\,x_j\big)=r(x_i,x_j)\quad\forall\,i,j\in\{1,2,3\},\,
	   i\ne j.
	\end{equation}
\end{enumerate}
\label{d:metrictree}
\end{definition}

Our main goal is to forget the metric while keeping the tree structure
encoded by the branch point map. To axiomatize the latter, notice that for metric trees the branch point map satisfies the following obvious properties:
\begin{enumerate}[\axiom(BPM1)]
\item\label{BPM:1} The map $c\colon T^3\to T$ is symmetric.
\item\label{BPM:2} The map $c\colon T^3\to T$ satisfies the \emph{$2$-point condition} that
for all $x,y\in T$
\begin{equation}
   c(x,y,y)=y.
\end{equation}
\item\label{BPM:3} The map $c\colon T^3\to T$ satisfies the \emph{$3$-point condition} that
for all $x,y,z\in T$
\begin{equation}
\label{e:2pc}
   c\big(x,y,\,c(x,y,z)\big)=c(x,y,z).
\end{equation}
\item\label{BPM:4} The map  $c\colon T^3\to T$ satisfies the \emph{$4$-point condition} that
for all $x_1,x_2,x_3,x_4\in T$,
\begin{equation}\label{e:BPM4}
  c(x_1,x_2,x_3)\in\big\{c(x_1,x_2,x_4),\,c(x_1,x_3,x_4),\,c(x_2,x_3,x_4)\big\}.
\end{equation}
\end{enumerate}
\begin{definition}[algebraic tree]
An \define{algebraic tree} $(T,c)$ consists of a set $T\not=\emptyset$ and a branch point map $c\colon T^3\to T$ satisfying (BPM1)--(BPM4).
\label{d:algebraic}
\end{definition}

We define a natural topology on an algebraic tree $(T,c)$ as follows.
For each $x\in T$, we define an equivalence relation $\sim_x$ on $T\setminus\{x\}$ such that for all
$y,z\in T\setminus\{x\}$,
  $y\sim_x z$ 
  iff $c(x,y,z)\not =x$.
For $y\in T\setminus\{x\}$, we denote by
\begin{equation}
\label{e:equiv}
  \Sub_x(y):=\set{z\in T}{z\sim_x y}
\end{equation}
the equivalence class w.r.t.\ $\sim_x$ which contains $y$.
$\Sub_x(y)$ should be thought of as a subtree rooted at (but not containing) $x$.
We consider the topology generated by sets of the form \eqref{e:equiv} with $x\ne y$
and denote by $\B(T,c)$ the corresponding Borel $\sigma$\nbd algebra.

Our first main result (Theorem~\ref{t:algtreechar}) relates metric trees with algebraic trees. On the one
hand, if $(T,r)$ is a metric tree, then it is clear that $T$ together with the map $c$ from (MT\ref{MT:cex}) yields an algebraic tree.
On the other hand, we show that every order separable algebraic tree (Definition~\ref{d:ordersep})
is induced by a metric tree in this way.
More concretely, if $\nu$ is a measure on $\B(T,c)$ which is finite
and non-zero on non-degenerate intervals, i.e., on sets of the form
\begin{equation}
\label{e:001}
   [x,y]:=\big\{z\in T:\,c(x,y,z)=z\big\}
\end{equation}
   for $x,y\in T$, $x\ne y$, then a metric representation of $(T,c)$ is given by
\begin{equation}\label{e:rnu}
    r_\nu(x,y)
  :=
    \nu\big([x,y]\big)-\tfrac{1}{2}\nu\big(\{x\}\big)-\tfrac{1}{2}\nu\big(\{y\}\big).
\end{equation}

Next, we equip an algebraic tree $(T,c)$ with a sampling probability measure $\mu$ on $\B(T,c)$, and call the
resulting triple $(T,c,\mu)$ \emph{algebraic measure tree}.
Two algebraic measure trees $(T,c,\mu)$ and $(T',c',\mu')$ are equivalent (compare with
Definition~\ref{d:amtequiv}) if there are $A\subseteq T$, $A'\subseteq T'$ and a bijection $\phi\colon A \to A'$
such that the following holds.
\begin{itemize}
\item $\mu(A)=\mu'(A')=1$, $c(A^3) \subseteq A$ and $c'((A')^3) \subseteq A'$.
\item $\phi$ is measure preserving, and $c'(\phi(x),\phi(y),\phi(z))=\phi(c(x,y,z))$ for all $x,y,z\in T$.
\end{itemize}

Denote by $\T$ the space of all equivalence classes of order separable algebraic measure trees. We equip
$\T$ with a topology based on the Gromov-weak topology
(introduced in \cite{GrevenPfaffelhuberWinter2009} and shown in \cite{Loehr2013} to be equivalent to Gromov's
$\underline\Box_1$-topology from \cite{Gromov2000}).
For that purpose, we introduce a particular
metric representation of an algebraic measure tree. As metric representations are far from being unique, we will
consider the intrinsic metric $r_\nu$ which comes from the branch point distribution, i.e.,
the image measure $\nu:=c_\ast\mu^{\otimes 3}$ of $\mu^{\otimes 3}$ under the branch point map $c$.
We declare that
\begin{equation}
\label{e:convergence}
\begin{aligned}
   (T_n,c_n,\mu_n)\tno(T,c,\mu)\hspace{.2cm}\mbox{ iff }\hspace{.2cm}(T,r_{(c_n)_\ast\mu_n^{\otimes 3}},\mu_n)\to(T,r_{c_\ast\mu^{\otimes 3}},\mu)\mbox{ Gromov-weakly},
\end{aligned}
\end{equation}
or equivalently, $\Phi((T_n,c_n,\mu_n)) \tno \Phi((T,c,\mu))$
for all test functions of the form
\begin{equation}
\label{e:PhiGw}
   \Phi(T,c,\mu)=\Phi^{n,\phi}(T,c,\mu):=\int_{T^n}\phi\big((r_{c_\ast\mu^{\otimes 3}}(x_i,x_j))_{1\le i,j\le n}\big)\,\mu^{\otimes n}(\mathrm{d}\underline{x}),
\end{equation}
where $n\in\N$ and $\phi\in\C_b(\R^{n\times n})$.
We refer to this convergence as {branch point distribution distance Gromov-weak convergence}, or shortly,
\define{bpdd-Gromov-weak convergence}. It is important to keep in mind that---even though bpdd-Gromov-weak
convergence is defined via Gromov-weak convergence of particular metric representations---Gromov-weak
convergence of a sequence $(T_n,r_n,\mu_n)_{n\in\N}$ of metric measure trees does not imply bpdd-Gromov-weak
convergence of the corresponding sequence of algebraic measure trees. For instance, if the diameters
$\sup_{x,y\in\T_n} r_n(x,y)$ converge to zero, the sequence of metric measure trees converges to the trivial
(one-point) tree, while the corresponding sequence of algebraic measure trees might or might not converge, to the
same or a different limit. The same reasoning also applies to the stronger Gromov-Hausdorff-weak topology.

A particular subclass of interest is the space of binary algebraic measure trees.
Similar to encoding compact $\R$-trees by a continuous excursion on the unit interval,
binary algebraic trees can be encoded by \emph{sub-triangulations of the circle} (see Figure~\ref{f:triangtree}), where a sub-triangulation of the circle $\S$ is a closed, non-empty subset $C$ of $\disc$ satisfying the following two conditions:
\begin{enumerate}[\axiom(Tr{i}1)]
	\item The complement of the convex hull of $C$ consists of open interiors of triangles.
	\item $C$ is the union of non-crossing (non-intersecting except at endpoints), possibly
		degenerate closed straight line segments with endpoints in $\S$.
\end{enumerate}
\cfigure{f:triangtree}{
\ifpdf
	\includegraphics{figure-triangtree}
\else
	\psset{unit=0.1667\textwidth, linewidth=0.05pt, dotsize=3.6pt}
		\providecommand{\pstrrootedge}{\psline[linestyle=dashed, linewidth=0.3pt, arrows=*-o]}
\providecommand{\pstrintedge}{\psline[linestyle=dashed, linewidth=0.3pt, arrows=*-]}
\providecommand{\pstrextedge}{\psline[linestyle=dashed, linewidth=0.3pt, arrows=o-]}
\providecommand{\addTriangCommand}{}
\begin{pspicture}(-1.01,-1.01)(1.01,1.01)
\addTriangCommand
\psarc[linestyle=dotted,linewidth=0.75pt]{-}(0,0){1}{0}{360}
\pstrintedge(-0.122008,-0.455342)(0.044658,0.166667)
\pstrintedge(0.455342,-0.455342)(-0.122008,-0.455342)
\pstrextedge(-0.965926,-0.258819)(-0.910684,0.000000)
\pstrintedge(-0.577350,0.333333)(0.044658,0.166667)
\pstrintedge(0.455342,0.788675)(0.622008,0.500000)
\pstrextedge(0.258819,0.965926)(0.455342,0.788675)
\pstrrootedge(0.622008,0.500000)(0.965926,0.258819)
\pstrextedge(0.707107,-0.707107)(0.288675,-0.744017)
\pstrintedge(0.288675,-0.744017)(0.455342,-0.455342)
\pstrintedge(-0.000000,-0.910684)(0.288675,-0.744017)
\pstrextedge(0.258819,-0.965926)(-0.000000,-0.910684)
\pstrextedge(0.707107,0.707107)(0.455342,0.788675)
\pstrextedge(-0.707107,0.707107)(-0.455342,0.788675)
\pstrextedge(-0.258819,-0.965926)(-0.000000,-0.910684)
\pstrintedge(-0.910684,0.000000)(-0.577350,0.333333)
\pstrintedge(0.044658,0.166667)(0.622008,0.500000)
\pstrextedge(-0.965926,0.258819)(-0.910684,0.000000)
\pstrintedge(-0.455342,0.788675)(-0.577350,0.333333)
\pstrextedge(-0.258819,0.965926)(-0.455342,0.788675)
\pstrextedge(-0.707107,-0.707107)(-0.122008,-0.455342)
\pstrextedge(0.965926,-0.258819)(0.455342,-0.455342)
\SpecialCoor
\pspolygon(1;210.000000)(1;240.000000)(1;360.000000)
\pspolygon(1;240.000000)(1;330.000000)(1;360.000000)
\pspolygon(1;90.000000)(1;150.000000)(1;210.000000)
\pspolygon(1;30.000000)(1;60.000000)(1;90.000000)
\pspolygon(1;30.000000)(1;90.000000)(1;360.000000)
\pspolygon(1;240.000000)(1;300.000000)(1;330.000000)
\pspolygon(1;240.000000)(1;270.000000)(1;300.000000)
\pspolygon(1;150.000000)(1;180.000000)(1;210.000000)
\pspolygon(1;90.000000)(1;210.000000)(1;360.000000)
\pspolygon(1;90.000000)(1;120.000000)(1;150.000000)
\end{pspicture}
 \fi
}{A triangulation of the $12$-gon and the tree coded by it.}
Such an encoding was introduced by David Aldous in \cite{Aldous94,Aldous94b},
and there has since then been an increasing amount of research in the random tree community using this approach
(e.g.\ \cite{CurienLeGall11,BroutinSulzbach15,CurienKortchemski15}). Also more general ${}$-angulations and
dissections have been considered which allow for encoding not necessarily binary trees
(\cite{Curien,CurienHaasKortchemski15}). Note, however, that the relation between triangulations and trees has
never been made explicit, except for the finite case, where the tree is the dual graph. 

Aldous originally defines a triangulation of the circle as a
closed subset of the disc the complement of which is a disjoint union of open triangles with vertices on the
circle (\cite[Definition~1]{Aldous94b}). We modify his definition in two respects.
First, we add Condition (Tri2) which enforces existence of branch points and under which triangulations of the
circle are precisely the Hausdorff-metric limits of triangulations of \ngon s as $n\to\infty$.
Second, we extend the definitions to sub-triangulation of the circle (triangulations of a subset of
the circle) which allow for encoding algebraic measure trees with point masses on leaves. In fact,
triangulations of the whole circle encode binary trees with non-atomic measures,
which is relevant in the case of Aldous's CRT.
We formally construct the coding map that associates to a sub-triangulation of the circle the corresponding
binary algebraic measure tree with point-masses restricted to the leaves.
Furthermore, we show that---similar to the case of coding compact \Rtrees\ by continuous excursions---the
coding map is \emph{surjective} and \emph{continuous} when the set of sub-triangulations
is equipped with the Hausdorff metric topology and the set of binary algebraic measure trees with our
bpdd-Gromov-weak topology (Theorem~\ref{t:tree}).

We also analyze the subspace of binary algebraic measure trees with point-masses restricted to the leaves in more
detail. Our third main result (Theorem~\ref{t:topeq}) states that this space in the bpdd-Gromov-weak topology is
topologically as nice as it gets, namely a compact, metrizable space.
We also give two more notions of convergence which turn out to be equivalent to bpdd-Gromov-weak convergence on
this subspace.
One is of combinatorial nature and based on the weak convergence of test functions of the form
\begin{equation}
\label{e:Phicomb}
   \Phi(T,c,\mu)=\Phi^{n,\testtree}(T,c,\mu):=\mu\(\bset{(u_1,...,u_n)\in T^n}{\shape[(T,c)](u_1,...,u_n)=\testtree}\),
\end{equation}
where $\testtree$ is an $n$-cladogram (a binary graph-theoretic tree with $n$ leaves) and $\mathfrak{s}_{(T,c)}$
denotes the shape spanned by a finite sample in $(T,c)$ (Definitions~\ref{def:cladogram} and~\ref{def:treeshape}).
The other one is more in the spirit of stochastic analysis and based on weak convergence of the \emph{tensor of
subtree-masses} read off the algebraic measure subtree spanned by a finite sample (see
Definition~\ref{d:massdist}).
This equivalence allows to switch between different perspectives and turns out to be very useful for the following reasons:
\begin{itemize}
\item Using convergence of sample bpd-distance matrices allows to exploit well-known results about Gromov-weak convergence.
\item Showing convergence of graph theoretic tree-valued Markov chains as the number of vertices tends to
	infinity is, due to the combinatorial nature of the Markov chains, often easiest by showing
	convergence of the sample shape distributions. This has recently been successfully applied in the
	construction of the conjectured continuum limit of the Aldous chain (\cite{Aldous2000}) in
	\cite{LoehrMytnikWinter}, and of the continuum limit of the $\alpha=1$-Ford chain (\cite{Ford2005}) in
	\cite{Nussbaumer}.
\item The convergence of sample subtree-mass tensor distributions allows to analyze the limit process with
	stochastic analysis methods and gives more insight into the global structure of the evolving random
	trees.
\end{itemize}

\medbreak
\noindent{\bf Related work. }As an alternative with better compactness properties to Gromov-Hausdorff
convergence of discrete trees, Curien suggested in \cite{Curien} to look at convergence of coding
triangulations (in Hausdorff metric topology). He also proposed to read off a measured, ordered,
\emph{topological tree} from the limit triangulation, and sketched the construction as quotient w.r.t.\ some
equivalence relation in the special case of the Brownian triangulation. Note, however, that the topological
information cannot be completely encoded by the triangulation, because the latter only encodes the algebraic
measure tree by Theorem~\ref{t:tree}, and the algebraic structure does not determine the topological structure
uniquely (see Example~\ref{ex:homhomeom}). Therefore, Curien did not obtain a general map from the space of
triangulations to a space of trees.

In order to turn the set of valuations on the ring $\mathbb{C}\gentree{x,y}$ into the so-called
\emph{valuative tree}, Favre and Jonsson use in \cite{FavreJonsson04} partial orders to define the tree structure.
Using partial orders is essentailly equivalent to using branch point maps, and under some additional
assumptions (separability, order completeness and edge-freeness), their \emph{nonmetric trees} are equivalent
to our algebraic trees. We want to stress, however, that for our theory the branch point map plays a much more
crucial role than the partial order.
The relation between partial orders and algebraic trees is further discussed in Section~\ref{S:trees}.

The random exchangeable \emph{didendritic systems} introduced recently by Evans, Gr\"ubel, and Wakolbinger in
\cite{EvansGruebelWakolbinger17} can be considered as rooted, ordered versions of binary algebraic measure trees
with diffuse measure on the set of leaves. A didendritic system is an equivalence relation on $\N\times \N$
together with two partial orders on the set of equivalence classes.
An exchangeable didendritic system is similar to our sequence of sample-shape distributions. The authors also
introduce a particular metric representation as an \Rtree. Even though it is implicit in their work
that they think of the set of exchangeable didendritic systems as equipped with a kind of sample shape
convergence, they do not define it explicitly and do not analyze the resulting topological space.

Close relatives of algebraic measure trees have recently been studied independently by Forman in
\cite{Forman2018}. He uses ideas from \cite{FormanHaulkPitman2018} to represent rooted trees by so-called
\emph{hierarchies} (certain sets of subsets) on $\N$, which are similar to the didendritic systems in
\cite{EvansGruebelWakolbinger17}, but unordered. Thus, exchangeable random hierarchies can be thought of as
rooted versions of algebraic measure trees. Forman shows that the resulting equivalence classes of rooted
measure \Rtree s coincide with the so-called \emph{mass-structural} equivalence classes, which he defines by
bijections preserving intervals as well as masses of points, intervals and certain sub-trees.
He also singles out a particular representative, which he calls \emph{interval partition tree}, with the
essentially same metric as in \cite{EvansGruebelWakolbinger17} (not restricted to the binary case). This metric
follows a similar idea to but is different from our $r_\nu$.
Note that \cite{Forman2018} does not talk about convergence of trees or introduce a notion of ``continuum tree''
without a measure.

\medbreak
\noindent{\bf Outline. }The rest of the paper is organized as follows.
In Section~\ref{S:trees}, we introduce our concept of \emph{algebraic trees} by formalising the
branch point map as a tertiary operation on the tree. We also introduce an intrinsic Hausdorff topology and
characterize compactness (Proposition~\ref{p:compact}) and second countability (Proposition~\ref{p:separable}).
We show that under a separability constraint, algebraic trees can be seen as metric trees (subtrees of \Rtree
s), where the metric structure has been ``forgotten'' (Theorem~\ref{t:algtreechar}), and give an example that
the separability condition cannot be dropped without replacement.

In Section~\ref{S:amt}, we introduce the space of (equivalence
classes of) order separable \emph{algebraic measure trees}, and equip it with the Gromov-weak topology with respect to
the metric associated with the branch point distribution. We show that the resulting space is separable and
metrizable (Corollary~\ref{c:bpdd-metrizable}).
Furthermore, we prove a Carath\'eodory-type extension theorem, which is helpful for constructing algebraic
measure trees (Propositions~\ref{p:caratheodory} and \ref{p:construction}).

In Section~\ref{S:triangulation}, we give a definition of \emph{triangulations of the circle}, and show that
they are precisely the limits of triangulations of $n$-gons (Proposition~\ref{p:fintriapp}).
We also formalize the notion of the algebraic measure tree associated with a given triangulation of the circle.
This correspondence has been allured to in the literature, but it has never been made
precise (except for finite trees), and it has never been shown a tree in what sense is coded by a
triangulation of the circle.
We show that the resulting \emph{coding map} (mapping triangulations to trees) is well-defined
and surjective onto the space of binary algebraic measure trees with non-atomic measure.
Furthermore, the coding map is \emph{continuous} if the space of triangulations is equipped with the Hausdorff metric
topology, and the space of trees with the bpdd-Gromov-weak topology (Theorem~\ref{t:tree}).

In Section~\ref{S:topo}, we consider the subspace of \emph{binary} algebraic measure trees, and introduce
two other, natural notions of convergence.
We use the construction of the coding map from Section~\ref{S:triangulation}
to show that on this subspace all three notions of convergence are actually equivalent and define the same
topology (Theorem~\ref{t:topeq}).
This topology turns the subspace of binary algebraic measure trees into a \emph{compact, metrizable} space,
which in particular implies that it is a closed subset of the space of algebraic measure trees.
In this section, we also finish the proof of Theorem~\ref{t:tree} by showing continuity of the coding map.

In Section~\ref{s:examples}, we consider the example of the continuum limits of sampling consistent families of
random trees and illustrate it with the example of so-called $\beta$-splitting trees introduced in
\cite{Aldous1996}. This family includes the uniform binary tree (converging to the Brownian CRT) and the Yule
tree (aka Kingman tree or random binary search tree).

\section{Algebraic trees}
\label{S:trees}
In this section we introduce algebraic trees. In Subsection~\ref{sub:trees}
we formalize the ``tree structure'' common to both graph-theoretic trees and metric trees by a function that
maps every triplet of points in the tree to the corresponding branch point.  We show that the set of defining properties is rich enough to obtain known concepts such as leaves, branch points, degree, edges,
intervals, subtrees spanned by a set, discrete and continuum trees, etc. In Subsection~\ref{s:morphisms} we introduce the notion of structure preserving morphisms.
In Subsection~\ref{sub:astopological}
we equip algebraic trees with a canonical Hausdorff topology. We also characterize compactness and a concept we
call order separability, which is closely related to second countability of the topology.
Finally, in Subsection~\ref{sub:asRtree}, we show that any order
separable algebraic tree is induced by a metric tree (which is not true without order separability), and
establish the condition under which this metric tree can be chosen to be a compact $\R$-tree.

\subsection{The branch point map}
\label{sub:trees}
In this subsection we introduce algebraic trees. Recall from Definition~\ref{d:metrictree} the definition of a metric tree, and
the properties (BPM1)--(BPM4) of the map which sends a triplet of $3$ points in a metric tree to its branch point.

\begin{definition}[algebraic trees] \label{d:algebraic2}
An \define{algebraic tree} $(T,c)$ consists of a set $T\not=\emptyset$ and
a branch point map $c\colon T^3\to T$ satisfying (BPM1)--(BPM4).
\end{definition}

The following useful property reflects the fact that any four points in an algebraic tree can be associated with
a shape as illustrated in Figure~\ref{f:4pt} above.

\begin{lemma}[a consequence of (BPM4)]
Let\/ $(T,c)$ be an algebraic tree. Then for all\/ $x_1,x_2,x_3,x_4\in T$
the following hold:
\begin{enumerate}
\item\label{i:4ptsplit} If\/ $c(x_1,x_2,x_3)=c(x_1,x_2,x_4)$, then\/ $c(x_1,x_3,x_4)=c(x_2,x_3,x_4)$.
\item If\/ $c(x_1,x_2,x_3)=c(x_1,x_2,x_4)$, then\/ $c(x_1,x_2,x_3)=c(x_1,x_2,c(x_1,x_3,x_4))$.
\end{enumerate}
\label{l:4pt}
\end{lemma}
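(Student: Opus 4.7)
The plan is to derive both (i) and (ii) purely from the axioms (BPM1)--(BPM4), with (BPM4) doing the heavy lifting and (BPM3) used to collapse certain triple-applications of $c$.

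For (i), I would first use (BPM1) to observe that the membership statement in (BPM4) is symmetric under all permutations of $(x_1,x_2,x_3,x_4)$, so it may be applied with any of the four points singled out. Applying it once to extract $c(x_1,x_3,x_4)$ and once to extract $c(x_2,x_3,x_4)$, and writing $b:=c(x_1,x_2,x_3)=c(x_1,x_2,x_4)$ (this is the hypothesis), I get
\[
  c(x_1,x_3,x_4)\in\bigl\{b,\,c(x_2,x_3,x_4)\bigr\},\qquad
  c(x_2,x_3,x_4)\in\bigl\{b,\,c(x_1,x_3,x_4)\bigr\}.
\]
A short case distinction then forces $c(x_1,x_3,x_4)=c(x_2,x_3,x_4)$: if either of them equals $b$, then the other membership immediately collapses to $\{b\}$; otherwise they are equal to each other directly.

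For (ii), set $p:=c(x_1,x_3,x_4)$. Part (i) already gives $p=c(x_2,x_3,x_4)$ as well. Now (BPM3) yields
\[
  c(x_1,x_3,p)=c\bigl(x_1,x_3,c(x_1,x_3,x_4)\bigr)=p,\qquad
  c(x_2,x_3,p)=c\bigl(x_2,x_3,c(x_2,x_3,x_4)\bigr)=p.
\]
Applying (BPM4) to the quadruple $(x_1,x_2,x_3,p)$ (singling out the triple $(x_1,x_2,x_3)$) I then obtain
\[
  c(x_1,x_2,x_3)\in\bigl\{c(x_1,x_2,p),\,c(x_1,x_3,p),\,c(x_2,x_3,p)\bigr\}=\bigl\{c(x_1,x_2,p),\,p\bigr\}.
\]
In the first case the claim is immediate. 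In the second case $p=c(x_1,x_2,x_3)$, so one final application of (BPM3) gives $c(x_1,x_2,p)=c\bigl(x_1,x_2,c(x_1,x_2,x_3)\bigr)=c(x_1,x_2,x_3)$, again as desired.

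The whole argument is axiomatic bookkeeping; there is no genuinely hard step. The only mild subtlety is being careful that (BPM4), as stated for a fixed triple out of four points, is in fact symmetric under any relabelling, which is what allows me to read off memberships for whichever of the four $c$-values I wish to control. Once this is made explicit, (i) follows by a two-line case analysis and (ii) follows from (i) together with two applications of (BPM3) to reduce $c(x_i,x_3,p)$ to $p$.
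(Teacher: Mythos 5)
Your proof is correct and follows essentially the same route as the paper's: both parts are pure (BPM1)--(BPM4) bookkeeping, with (i) resting on the permutation-symmetry of (BPM4) and (ii) on using (BPM3) to show $p=c(x_1,x_3,p)=c(x_2,x_3,p)$. The only cosmetic difference is in the last step of (ii), where the paper reapplies part (i) with $x_4$ replaced by $c(x_1,x_3,x_4)$, whereas you apply (BPM4) to the quadruple $(x_1,x_2,x_3,p)$ and dispose of the degenerate case with one more use of (BPM3); both are equally valid.
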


\begin{proof}
Let $x_1,x_2,x_3,x_4\in T$ with $c_1:=c(x_1,x_2,x_3)=c(x_1,x_2,x_4)$, and $c_2:=c(x_1,x_3,x_4)$.

\smallskip
\emph{(i) }
Condition (BPM4) implies that
\begin{equation}
  c_2\in \big\{c_1=c(x_1,x_3,x_2),\, c(x_2,x_3,x_4),\, c_1=c(x_1,x_2,x_4)\big\}.
\end{equation}
Thus $c_1=c_2$, or $c_2=c(x_2,x_3,x_4)$. The second case is the claim.
In the first case, we apply Condition (BPM4) once more to find that
\begin{equation}\label{e:006}
  c(x_2,x_3,x_4)
  \in
  \big\{c_1=c(x_1,x_2,x_3),\, c_2=c(x_1,x_3,x_4),\, c_1=c(x_1,x_2,x_4)\big\}
  =\{c_1,c_2\}=\{c_2\},
\end{equation}
so that the claim also holds in this case.

\smallbreak
\emph{(ii) } Condition~(BPM3) implies that
\begin{equation}
   c(x_1,x_3,c_2) = c\big(x_1,\, x_3,\, c(x_1,x_3,x_4)\big)=c(x_1,x_3,x_4)=c_2,
\end{equation}
and similarly also $c(x_2,x_3,c_2)=c(x_2,x_3,x_4)=c_2$.
Now part \emph{\ref{i:4ptsplit}} with $x_4$ replaced by $c_2$ yields $c(x_1,x_2,x_3)=c(x_1,x_2,c_2)$ as claimed.
\end{proof}

We have seen that the four axiomatizing properties of the branch point map are
necessary. In many respects they are also sufficient to capture the tree structure.
For example, in analogy to \eqref{e:arc} we can
define for each $x,y\in T$ the \emph{interval} $[x,y]$ by
\begin{equation}\label{e:path}
	[x,y]:=\big\{w\in T:\,c(x,y,w)=w\big\}.
\end{equation}
We also use the notation $\openint xy := [x,y] \setminus\{x,y\}$, and similarly $\lopenint xy$, $\ropenint xy$.
The following properties of intervals are known to hold in \Rtrees\ (compare, e.g., to
\cite[Chapter~2]{Chiswell2001} or \cite[Chapter~3]{Evans2008}):

\begin{lemma}[properties of intervals]
Let\/ $(T,c)$ be an algebraic tree. Then the following hold:
\begin{enumerate}
\item If\/ $x,v,w,z\in T$ are such that\/ $w\in[x,z]$ and\/ $v\in[x,w]$, then\/ $v\in[x,z]$.
\item If\/ $x,y,z\in T$, then
\begin{equation} \label{e:010}
   [x,y] \cap [y,z] = \bigl[c(x,y,z),\, y\bigr].
\end{equation}

In particular,
\begin{equation}
\label{e:011}
   \bigl[x,\,c(x,y,z)\bigr]\cap \bigl[c(x,y,z),\,z\bigr] = \bigl\{c(x,y,z)\bigr\}.
\end{equation}
\item If\/ $x,y,z\in T$, then
\begin{equation}
\label{e:012}
   [x,y] \cup [y,z] = [x,z] \uplus \blopenint{c(x,y,z)}{y}.
\end{equation}

In particular,
\begin{equation}
\label{e:013}
   [x,y] \cup [y,z] = [x,z]  \quad\mbox{iff}\quad  y\in[x,z].
\end{equation}
\item For all $x,y,z\in T$,
\begin{equation} \label{e:014}
	[x,y]\cap [y,z]\cap [z,x]  =  \bigl\{c(x,y,z)\bigr\}.
\end{equation}
\end{enumerate}
\label{l:intprop}
\end{lemma}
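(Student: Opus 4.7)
The plan is to prove the four items in the stated order, treating (i) as the structural foundation, (ii) as the key identity, and (iii)--(iv) as corollaries of (ii) combined with (BPM4)-based case analysis. Informally, each statement asserts that the branch point $c(x,y,z)$ organises the intervals among $x,y,z$ in the expected way, and the tools I have available are (BPM1)--(BPM4) together with Lemma~\ref{l:4pt}.

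For part (i), I would translate the hypotheses into $c(x,z,w)=w$ and $c(x,w,v)=v$, then apply (BPM4) to the quadruple $\{x,z,v,w\}$, which forces $c(x,z,v)\in\{w,\,v,\,c(z,v,w)\}$. The middle option is exactly the conclusion. In the case $c(x,z,v)=w$, one unfolds via (BPM3) to get $c(x,v,w)=w$, and combines this with the hypothesis $c(x,v,w)=v$ to deduce $v=w$, so $v\in[x,z]$ trivially. The remaining case $c(x,z,v)=c(z,v,w)$ should yield to Lemma~\ref{l:4pt}(i) with the relabelling $x_1=v$, $x_2=z$, $x_3=x$, $x_4=w$, producing $c(v,x,w)=c(z,x,w)$, i.e.\ $v=w$ again. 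I expect this last case to be the main obstacle: finding the right permutation under which to invoke Lemma~\ref{l:4pt} is the only non-mechanical step in the whole argument.

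For part (ii), write $c:=c(x,y,z)$ and note from (BPM3) that $c$ lies in each of $[x,y]$, $[y,z]$, and $[x,z]$. The inclusion $[c,y]\subseteq[x,y]\cap[y,z]$ then follows directly from part (i) by using $c$ in the role of the intermediate point between the endpoints of the respective interval. For the reverse inclusion, take $u\in[x,y]\cap[y,z]$, so $c(y,u,x)=u=c(y,u,z)$, and apply Lemma~\ref{l:4pt}(ii) with $x_1=y$, $x_2=u$, $x_3=x$, $x_4=z$ to conclude $u=c(y,u,c)$, that is, $u\in[c,y]$. The ``in particular'' statement reduces to the identity $c(x,c,z)=c$, which is again (BPM3), so that $[x,c]\cap[c,z]=[c,c]=\{c\}$ via (BPM2).

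The remaining items follow mechanically. For (iii), a (BPM4)-case analysis on any $u\in[x,y]$ places it in one of $\{c\}$, $[x,z]$, or $[x,y]\cap[y,z]$, and the last case collapses to $[c,y]$ by (ii); the symmetric argument starting from $u\in[y,z]$ together with $[x,z]\subseteq[x,y]\cup[y,z]$ (again by (BPM4)) supplies the reverse inclusion. Disjointness $[x,z]\cap(c,y]=\emptyset$ will come from observing that any common point lies in $[x,y]\cap[x,z]\cap[c,y]$, which two applications of (ii) (to the triples $(y,x,z)$ and then $(y,c,x)$) collapse to $\{c\}$, and $c$ is excluded from $(c,y]$. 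Part (iv) is then the same computation: $[x,y]\cap[y,z]\cap[z,x]=[c,y]\cap[z,x]$, and $c$ itself lies in the intersection while the already-established bound $[c,y]\cap[x,z]\subseteq\{c\}$ closes the proof.
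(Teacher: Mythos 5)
Your proposal is correct and follows essentially the same route as the paper: part (i) by a (BPM4) case analysis on the quadruple resolved via Lemma~\ref{l:4pt}, part (ii) by Lemma~\ref{l:4pt}(ii) for the hard inclusion and part (i) for the easy one, and parts (iii)--(iv) as consequences of (ii) plus a further (BPM4) case split. The only differences are micro-level (e.g.\ you dispatch the case $c(x,z,v)=w$ in (i) with a direct (BPM3) computation where the paper applies Lemma~\ref{l:4pt} twice, and you spell out the disjointness in (iii) via (ii) where the paper attributes it to (i)); all of these check out.
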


\begin{proof} \emph{(i) } Let $x,v,w,z\in T$ with $w=c(x,w,z)$ and $v=c(x,v,w)$. Then by Condition~(BPM4),
\begin{equation}
\label{e:009}
  c(x,v,z) \in \bigl\{c(x,v,w),\,w=c(x,w,z),\,c(v,w,z)\bigr\}.
\end{equation}
We discuss the three cases separately.
If $c(x,v,z)=c(x,v,w)$, then $c(v,w,z)=c(x,w,z)=w$ by Lemma~\ref{l:4pt}(i).
It then follows that $c(x,v,z)=c(x,v,c(x,w,z))=c(x,v,w)=v$ by Lemma~\ref{l:4pt}(ii), which gives the claim in this case.

If $c(x,v,z)=w$ then $v=c(v,w,x)=c(v,w,z)$  by  Lemma~\ref{l:4pt}(i). It then follows that $c(x,v,z)=c(x,z,c(z,w,v))=c(x,v,v)=v$ by Lemma~\ref{l:4pt}(ii), which gives the claim in this case.

If $c(x,v,z)=c(v,w,z)$ then $v=c(x,w,v)=c(x,w,z)=w$ by  Lemma~\ref{l:4pt}(i).
Thus $v=w\in[x,z]$, and the claim holds also in this case.

\smallbreak
\emph{(ii) }Let $x,y,z\in T$, and $v\in[x,y]\cap[y,z]$. That is,
$v=c(x,v,y)=c(y,v,z)$. It follows from Lemma~\ref{l:4pt}(i) that
$c(x,z,v)=c(x,z,y)$, and then from Lemma~\ref{l:4pt}(ii) together with Condition~(BPM2) that
\begin{equation}
\label{e:008}
   v=c(x,v,y)=c\big(v,y,c(y,x,z)\big).
\end{equation}
Equivalently, $v\in[c(x,y,z),y]$. This proves the inclusion $[x,y]\cap[y,z]\subseteq[c(x,y,z),y]$. The other inclusion follows from (i).

Notice that \eqref{e:011} follows from \eqref{e:010} with the special choice $y=c(x,y,z)$.

\smallbreak
\emph{(iii) } Notice first that it follows immediately from (i) that the union on the right hand side is disjoint.
We claim that
	\begin{equation}
    \label{e:sclaim}
		[x,z] \subseteq [x,y] \cup [y,z].
	\end{equation}
	Indeed, let $v\in [x,z]$, i.e.\ $c(x,z,v)=v$. Then by (BPM4) applied to $\{v,x,y,z\}$,
\begin{equation}
\label{e:004}
   v=c(x,z,v) \in \big\{c(x,y,v),\, c(x,y,z),\, c(y,z,v)\big\},
\end{equation}
which implies that $v\in[x,y]$ (if $v=c(x,y,v)$) or $v\in[x,z]\cap[x,y]$ (if $v=c(x,y,z)$) or $v\in[y,z]$ (if $v=c(y,z,v)$).
Second, we claim that for all $x,y,z\in T$,
\begin{equation}
	[x,z] \cup \bigl[c(x,y,z),\,y\bigr] \subseteq [x,y] \cup [y,z].
\end{equation}
To see this, recall from (ii) that $[c(x,y,z),y]= [x,y]\cap[z,y]\subseteq [x,y]\cap[z,y]$.
As $[x,c(x,y,z)]\subseteq[x,y]$ by (i), we have $[x,y]\subseteq [x,c(x,y,z)]\cup
[c(x,y,z),y]\subseteq[x,y]\uplus\lopenint{c(x,y,z)}{y}$. The corresponding inclusion for $[y,z]$
is shown in the same way, and we have proven Equation \eqref{e:012}.

\smallbreak
\emph{(iv) } This follows immediately from (ii).
\end{proof}

We say that $\{x,y\}\subseteq T$ with $x\not=y$ is an \emph{edge} of $(T,c)$ if and only if there is ``nothing
in between'', i.e.\ $[x,y]=\{x,y\}$, and denote by
\begin{equation}\label{e:ed}
	\edge(T,c) := \bset{\{x,y\}\subseteq T}{x\ne y,\;[x,y] = \{x,y\}}
\end{equation}
the \emph{set of edges}. The following example explains that there is no need to distinguish between finite
algebraic trees and graph-theoretical trees, and the definitions of edges are consistent.

\begin{example}[finite algebraic trees correspond to graph-theoretic trees]
	Finite algebraic trees are in one to one correspondence with finite (undirected) graph-theoretic trees.
	Let $(T,E)$ be a graph-theoretic tree with vertex set $T$ and edge set $E$. Then $(T,E)$ corresponds to
	the algebraic tree $(T,c_E)$ with $c_E(u,v,w)$ defined as the unique vertex that is on the
	(graph-theoretic) path between any two of $u,v,w$. Conversely, if $(T,c)$ is an algebraic tree with $T$
	finite, then $(T,c)$  corresponds to the graph-theoretic tree $(T,E_c)$ with $E_c:=\edge(T,c)$.
	Obviously, $c_{E_c}=c$.
\label{ex:finite}
\end{example}

For a graph-theoretic tree $(T,E)$, we can allow the vertex set $T$ to be countably infinite, and still obtain a
corresponding algebraic tree as in the previous example.
Note, however, that countable algebraic trees do not necessarily correspond to graph-theoretic trees. Indeed, it
is possible that $T$ is countably infinite and $\edge(T,c)=\emptyset$. This can be seen by taking $T=\Q$ in the
following example, which shows that every totally ordered space naturally corresponds to an algebraic tree.

\begin{example}[totally ordered spaces as algebraic trees]\label{ex:totord}
	For a  totally ordered space $(T,\le)$, define $c_\le(x,y,z) := y$ whenever $x\le y \le z$, ($x,y,z\in
	T$). Then it is trivial to check that $(T,c_\le)$ is an algebraic tree and the interval $[x,y]$
	coincides with the order interval $\set{z\in T}{x\le z \le y}$.
\end{example}

Conversely, given an algebraic tree $(T,c)$ and a distinguished point $\rho$ (often referred to as \emph{root}), we can
define a \emph{partial order} $\le_\rho$ by letting for $x,y\in T$,
\begin{equation}
\label{e:partialrho}
  x\le_\rho y\quad\mbox{ iff }x\in[\rho,y].
\end{equation}
Partial orders provide an equivalent way of defining algebraic trees.

\begin{proposition}[algebraic trees and semi-lattices]\label{p:semilat}
\begin{enumerate}
\item Let\/ $(T,c)$ be an algebraic tree, and\/ $\rho\in T$. Then\/ $(T,\le_\rho)$ is a partially ordered
	set, and a meet semi-lattice with infimum
\begin{equation}\label{e:inf}
	x\land y = c(\rho,x,y) \qquad \forall x,y \in T.
\end{equation}
Furthermore, $\le_\rho$ is a total order on\/ $[\rho,x]$ for all\/ $x\in T$.
\item Let\/ $(T,\le)$ be a partially ordered set, such that all initial segments\/ $\set{y\in T}{y\le x}$, $x\in
	T$, are totally ordered (in particular a meet semi-lattice). Then for\/ $x,y,z\in T$
\begin{equation}\label{e:cfrominf}
	c(x,y,z) := \max\{x\land y,\, y\land z,\, z\land x\}
\end{equation}
	exists, and\/ $(T,c)$ is an algebraic tree.
\end{enumerate}
\end{proposition}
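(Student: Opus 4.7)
The plan for part (i) is to verify the order axioms of $\le_\rho$ one at a time. Reflexivity is immediate from (BPM2), antisymmetry from the symmetry (BPM1), and transitivity from Lemma~\ref{l:intprop}(i). The first nontrivial step is showing that $[\rho,x]$ is totally ordered: for $y,z\in[\rho,x]$, (BPM4) gives $c(\rho,y,z)\in\{c(\rho,y,x),c(\rho,z,x),c(y,z,x)\}=\{y,z,c(y,z,x)\}$. In the first two cases, the conclusion is immediate. In the third, (BPM3) places $c(\rho,y,z)$ in $[\rho,y]$ and (using (BPM1)) $c(y,z,x)$ in $[y,x]$, and Lemma~\ref{l:intprop}(ii) together with $c(\rho,y,x)=y$ forces $[\rho,y]\cap[y,x]=\{y\}$, so $c(\rho,y,z)=y$. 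For the meet formula, $b:=c(\rho,x,y)\le_\rho x,y$ follows directly from (BPM3), and maximality of $b$ among common lower bounds will follow from Lemma~\ref{l:4pt}(ii) applied to $(\rho,w,x,y)$, which yields $c(\rho,w,x)=c(\rho,w,b)$ and hence $w\le_\rho b$ whenever $w\le_\rho x,y$.

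For part (ii), the technical heart is a \emph{structural lemma} for triples: for all $x,y,z\in T$, among the three pairwise meets $x\land y$, $y\land z$, $z\land x$, two are equal (to their common minimum) and the third is the maximum. I plan to prove it by using total orderedness of the initial segments: within $A_y:=\{w:w\le y\}$ compare $x\land y$ and $y\land z$; assuming without loss of generality $x\land y\le y\land z$, then $x\land y\le z$, hence $x\land y\le x\land z$, and then a second comparison of $y\land z$ and $x\land z$ within $A_z$ splits into two subcases each of which forces one further equality. Given the structural lemma, (BPM1) and (BPM2) are immediate. For (BPM3) with $b:=c(x,y,z)$, a short calculation, splitting on which pairwise meet realises the maximum, shows $c(x,y,b)=b$; for instance, if $b=y\land z$ then $y\land b=b$, and $x\land y\le b$ is a common lower bound of $x,b$ forcing $b\land x=x\land y$, so $c(x,y,b)=\max\{x\land y,b,x\land y\}=b$.

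The main obstacle is (BPM4). My strategy will be to first distil the following consequence of the structural lemma: writing $b_{ij}:=x_i\land x_j$, if $b_{i4}<b_{j4}$ in the totally ordered $A_{x_4}$, then $b_{ij}=b_{i4}$. Since $b_{14},b_{24},b_{34}$ all lie in $A_{x_4}$, I may assume without loss of generality $b_{14}\le b_{24}\le b_{34}$ and case split on which inequalities are strict (four cases: all equal; only $b_{14}<b_{24}$; only $b_{24}<b_{34}$; all distinct). In every case the reformulation above determines each $b_{ij}$ with $i,j\in\{1,2,3\}$ in terms of the $b_{\cdot 4}$'s, reducing every $c_{ijk}=\max\{b_{ij},b_{ik},b_{jk}\}$ to an explicit element. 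A direct inspection then shows that $c_{123}$ coincides with at least one of $c_{124},c_{134},c_{234}$; e.g.\ when all three inequalities are strict, both $c_{123}$ and $c_{124}$ evaluate to $b_{24}$. The clean reduction to these few finite checks is what makes (BPM4) manageable rather than a combinatorial thicket.
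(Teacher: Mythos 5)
Your proposal is correct and follows essentially the same route as the paper: part (i) is verified axiom by axiom, with totality on $[\rho,x]$ and the infimum formula both extracted from (BPM4) via Lemmas~\ref{l:4pt} and~\ref{l:intprop}, and part (ii) rests on the same structural observation that two of the three pairwise meets coincide with their common minimum, followed by a finite case analysis for (BPM4) (the paper organizes the cases around comparing $x_2\land x_4$ with $c(x_1,x_2,x_3)$ rather than ordering $b_{14},b_{24},b_{34}$, but the content is the same). One small inaccuracy: in the tie cases of your (BPM4) analysis (e.g.\ $b_{14}=b_{24}$) the meet $b_{12}$ is \emph{not} determined by the $b_{\cdot 4}$'s — it is only bounded below by their common value — but since the undetermined meet then appears identically in $c_{123}$ and in the relevant $c_{ij4}$, your direct inspection still closes in every case.
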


\begin{proof}\proofcase{(i)}
Let $x,y\in T$ with $x\le_\rho y$ and $y\le_\rho x$. That is, $x=c(\rho,x,y)$ and $y=c(\rho,y,x)$ which implies that $x=y$, and proves that $\le_\rho$ is \emph{antisymmetric}.
As $x=c(\rho,x,x)$, $x\le_\rho x$ which proves that $\le_\rho$ is \emph{reflexive}. Finally, to show \emph{transitivity}, let $x,y,z\in T$ with $x\le_\rho y$ and $y\le_\rho z$. That is $x\in[\rho,y]$ and $y\in[\rho,z]$, which implies that $x\in[\rho,z]$ by Lemma~\ref{l:intprop}(i). Equivalently, $x\le_\rho z$
which proves the transience, and thus that $\le_\rho$ is a partial order.

For the \emph{infimum}, notice that $v\le_\rho x$ and $v\le_\rho y$ if and only if $v\in[\rho,x]\cap[\rho,y]$, or equivalently by Lemma~\ref{l:intprop}(ii), $v\in[\rho,c(\rho,x,y)]$.
As for all $v\in[\rho,c(\rho,x,y)]$ we have $v\le c(\rho,x,y)$, the claim \eqref{e:inf} follows.

Fix $x\in T$. For \emph{totality} on $[\rho,x]$, let $v,w\in[\rho,x]$, i.e., $v=c(\rho,v,x)$ and $w=c(\rho,w,x)$. Applying Condition (BPM4) to $\{\rho,v,w,x\}$ we find that one of the following three cases must occur: $c(\rho,v,w)=c(\rho,v,x)$ (which implies that $v=c(\rho,v,w)$, or equivalently, $v\le_\rho w$), $c(\rho,w,v)=c(\rho,w,x)$ (which implies that $w=c(\rho,w,v)$, or equivalently, $w\le_\rho v$), or $c(\rho,x,v)=c(\rho,x,w)$ (which implies that $w=v$).

\proofcase{(ii)} The maximum in \eqref{e:cfrominf} is over a totally ordered set (because initial segments are
totally ordered), thus exists. Furthermore, if $x\land y\le x\land z \le y\land z$, say, we also obtain $x\land y =
x\land y \land z \ge x\land z$. This means that (at least) two of $x\land y$, $y\land z$, $z\land x$ are
identical, and the maximum $c(x,y,z)$ is the third one. That $v$ satisfies (BPM1)--(BPM3) is obvious. To see the
4-point condition (BPM4), let $x_1, \ldots, x_4\in T$ and assume w.l.o.g.\ that $x_2\land x_3=c(x_1, x_2, x_3)=:v$, and hence $x_1\land x_2 = x_1\land x_3 \le v$.
We distinguish cases: If $x_2\land x_4 <v$, then $c(x_2, x_3, x_4)=\max\{x_2\land x_4, v, x_3\land x_4\} = v$,
and \eqref{e:BPM4} is satisfied.
Otherwise, $x_2\land x_4, x_3\land x_4 \ge v$ and at most one of them can be strictly larger.
If $x_2 \land x_4 > v \ge x_1 \land x_2$, then $x_1\land x_2 = x_1 \land x_4 = x_1\land x_3$,
and $c(x_1, x_3, x_4)=x_3\land x_4 = v$. The case $x_3 \land x_4 >v$ is analogous.
In the last case $x_2\land x_4 = x_3 \land x_4 = v$, and $c(x_2, x_3, x_4)=v$.
\end{proof}

\begin{remark}[Favre and Jonsson's nonmetric trees]
	In \cite{FavreJonsson04}, \emph{rooted nonmetric trees} are introduced as partially ordered sets with
	global minimum, totally ordered initial segments, and the additional property that all full, totally ordered
	subsets are order isomorphic to a real interval. Proposition~\ref{p:semilat} shows that they naturally
	induce algebraic trees.
\end{remark}

\begin{cor} Let\/ $(T,c)$ be an algebraic tree, and\/ $\rho,x,y\in T$. If\/ $v\in[x,y]$, then\/ $v\ge_\rho c(x,y,\rho)$.
\label{l:vgec}
\end{cor}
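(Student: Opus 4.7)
The plan is to apply (BPM4) to the four points $\rho, x, y, v$ and see that each of the three possible identifications immediately yields the conclusion. Set $b := c(x,y,\rho) = c(\rho,x,y)$ (using symmetry (BPM1)). What we want to show, unpacking the definition of $\le_\rho$ in \eqref{e:partialrho}, is that $b \in [\rho,v]$.

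First I would apply (BPM4) to the tuple $(\rho, x, y, v)$: this yields
\begin{equation*}
  c(\rho,x,y)\,\in\,\bigl\{c(\rho,x,v),\, c(\rho,y,v),\, c(x,y,v)\bigr\}.
\end{equation*}
Since the hypothesis $v\in[x,y]$ means exactly $c(x,y,v)=v$ by \eqref{e:path}, the right-hand member of the last option collapses to $v$, so $b\in\{c(\rho,x,v),\,c(\rho,y,v),\,v\}$.

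Now I would split into the three cases. In the first case $b=c(\rho,x,v)$, Lemma~\ref{l:intprop}(iv) (or directly the definition of the branch point) gives $b \in [\rho,v]\cap[\rho,x]\cap[x,v]$, in particular $b\in[\rho,v]$. The second case $b=c(\rho,y,v)$ is entirely symmetric and again places $b$ in $[\rho,v]$. In the third case $b=v$, we have $b\in[\rho,v]$ trivially, since $c(\rho,v,v)=v$ by (BPM2). In every case $b\in[\rho,v]$, which by the definition of $\le_\rho$ is precisely $c(x,y,\rho)\le_\rho v$, i.e.\ $v\ge_\rho c(x,y,\rho)$.

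I do not anticipate any genuine obstacle: the entire content of the statement is one invocation of the 4-point condition plus a three-way case analysis, each branch of which is immediate from the definitions. The only thing worth being a little careful about is not conflating the relation $v\ge_\rho b$ with membership $b\in[\rho,v]$ in the right direction, which is handled simply by reading off \eqref{e:partialrho}.
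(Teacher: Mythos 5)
Your proof is correct and follows essentially the same route as the paper: a single application of (BPM4) to the quadruple $\{\rho,x,y,v\}$, using $c(x,y,v)=v$ to collapse one option, followed by a three-way case analysis. Your handling of the individual cases is in fact slightly cleaner than the paper's, which routes through Lemma~\ref{l:4pt}(i) and Lemma~\ref{l:intprop}(ii), whereas you simply observe that in each case $c(x,y,\rho)$ is exhibited either as $v$ itself or as the branch point of a triple containing both $\rho$ and $v$, hence lies in $[\rho,v]$.
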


\begin{proof} Let $\rho,x,y\in T$ and $v\in[x,y]$. That is, $v=c(x,v,y)$.
We need to show that $c(\rho,v,c(\rho,x,y))=c(\rho,x,y)$.

By Condition~(BPM4) applied to $\{x,y,\rho,v\}$ we have one of the following three cases:
$c(x,y,\rho)=c(x,y,v)$ (in which case $c(\rho,x,y)=v$) or $c(\rho,y,x)=c(\rho,y,v)$ (in which case $c(x,v,\rho)=c(x,v,y)=v$ by Lemma~\ref{l:4pt}(i) and thus $v\in[\rho,x]$; the claim then follows since this implies that $v\in[\rho,x]\cap[x,y]=[c(\rho,x,y),y]$ by Lemma~\ref{l:intprop}(ii)), or $c(\rho,x,y)=c(\rho,x,v)$ (in which we conclude similar to the second case that $v\in[c(\rho,x,y),x]$).
\end{proof}

The partial orders $\le_\rho$ allow us to define a notion of completeness of algebraic trees.
\begin{definition}[directed order completeness]
	Let $(T,c)$ be an algebraic tree. We call $(T,r)$ \define{(directed) order complete} if for all $\rho\in
	T$ the supremum of every totally ordered, non-empty subset exists in the partially ordered set
	$(T,\le_\rho)$.
\label{d:ordercomplete}
\end{definition}

Obviously, in an order complete algebraic tree, infima of totally ordered sets exists, because they are
either $\rho$ if the set is empty or a non-empty supremum w.r.t.\ a different root.
This notion of completeness allows us to define the analogs of complete $\R$-trees.

\begin{definition}[algebraic continuum tree] \label{d:aCT}
We call an algebraic tree $(T,c)$ \define{algebraic continuum tree} if the following two conditions hold:
\begin{enumerate}[\axiom({A}CT1)]
\item $(T,c)$ is order complete.
\item $\edge(T,c)=\emptyset$.
\end{enumerate}
\end{definition}

\subsection{Morphisms of algebraic trees} \label{s:morphisms}

Like any decent algebraic structure (or in fact mathematical structure), algebraic trees come with a notion of
structure-preserving morphisms.

\begin{definition}[morphisms]
Let $(T,c)$ and $(\Th,\ch)$ be algebraic trees. A map $f\colon T\to \Th$
is called a \define{tree homomorphism} (from $T$ into $\Th$) if for all $x,y,z\in T$,
	\begin{equation}
\label{e:treehom}
		f\bigl(c(x,y,z)\bigr) = \ch\big(f(x),f(y),f(z)\big).
	\end{equation}
We refer to a bijective tree homomorphism as \define{tree isomorphism}.
\label{d:treehom}
\end{definition}

As we have seen, the tree structure can be expressed also in terms of
intervals or partial orders rather than the branch point map. This also works for the morphisms.
\begin{lemma}[equivalent definitions of morphisms]
	Let\/ $(T,c)$ and\/ $(\Th,\ch)$ be algebraic trees, and\/ $f\colon T\to\Th$. Then
	the following are equivalent:
	\begin{enumerate}[1.]
		\item\label{i:homom} $f$ is a tree homomorphism.
		\item\label{i:order} For all\/ $\rho\in T$, $f$ is an order preserving map from\/ $(T,\le_\rho)$ to\/ $(\Th,\le_{f(\rho)})$.
		\item\label{i:interval} For all\/ $x,y\in T$, $f([x,y])\subseteq[f(x),f(y)]$.
	\end{enumerate}
\label{l:treehom}
\end{lemma}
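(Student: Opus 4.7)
The plan is to prove the three conditions are equivalent by establishing the cyclic chain \ref{i:homom} $\Rightarrow$ \ref{i:order} $\Rightarrow$ \ref{i:interval} $\Rightarrow$ \ref{i:homom}. None of the implications should require more than direct application of the relevant definitions together with a property already established in Lemma~\ref{l:intprop}; the main task is bookkeeping between the branch point, interval, and partial order descriptions of the tree structure.

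For \ref{i:homom}$\,\Rightarrow\,$\ref{i:order}, I would fix a root $\rho\in T$ and unfold the definition: $x\le_\rho y$ means $x\in[\rho,y]$, which by \eqref{e:path} is equivalent to $c(\rho,x,y)=x$. Applying $f$ and using the homomorphism property \eqref{e:treehom} yields $\hat c(f(\rho),f(x),f(y))=f(x)$, so $f(x)\in[f(\rho),f(y)]$, i.e., $f(x)\le_{f(\rho)} f(y)$. For \ref{i:order}$\,\Rightarrow\,$\ref{i:interval}, I would fix $x,y\in T$ and $v\in[x,y]$; taking the root $\rho=x$ in the definition $u\le_\rho w \Leftrightarrow u\in[\rho,w]$ shows immediately $v\le_x y$, so order preservation with this specific choice of root gives $f(v)\le_{f(x)} f(y)$, which is just $f(v)\in[f(x),f(y)]$, as required.

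For \ref{i:interval}$\,\Rightarrow\,$\ref{i:homom}, given $x,y,z\in T$, set $w:=c(x,y,z)$ and observe from Lemma~\ref{l:intprop}(iv) (in $(T,c)$) that $w\in[x,y]\cap[y,z]\cap[z,x]$. Applying the interval hypothesis \ref{i:interval} to each of the three intervals gives $f(w)\in[f(x),f(y)]\cap[f(y),f(z)]\cap[f(z),f(x)]$. Invoking Lemma~\ref{l:intprop}(iv) once more, this time in the target algebraic tree $(\hat T,\hat c)$, identifies this triple intersection with the singleton $\{\hat c(f(x),f(y),f(z))\}$, which forces $f(c(x,y,z))=\hat c(f(x),f(y),f(z))$. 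The only subtlety worth flagging is that in \ref{i:order}$\,\Rightarrow\,$\ref{i:interval} one must use the freedom to pick the root ($\rho=x$) in order to recover the interval from the order; the remaining steps are purely definitional.
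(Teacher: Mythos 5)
Your proposal is correct and follows essentially the same route as the paper: the same cyclic chain \ref{i:homom}$\,\Rightarrow\,$\ref{i:order}$\,\Rightarrow\,$\ref{i:interval}$\,\Rightarrow\,$\ref{i:homom}, with the last implication resting on the identity $[x,y]\cap[y,z]\cap[z,x]=\{c(x,y,z)\}$ from Lemma~\ref{l:intprop} applied in both the source and target trees. No gaps.
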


\begin{proof}
\istep{\ref{i:homom}}{\ref{i:order}} Let $x,y,\rho\in T$ with
$x\le_\rho y$. Then $x=c(\rho,x,y)$ and thus
	$f(x)=\ch(f(\rho),f(x),f(y))$. Therefore $f(x)\le_{f(\rho)} f(y)$.

\istep{\ref{i:order}}{\ref{i:interval}} Let $x,y,z\in T$ with $z\in [x,y]$. Then $z\le_x y$ and thus $f(z) \le_{f(x)} f(y)$, i.e.\ $f(z) \in [f(x),f(y)]$.
\istep{\ref{i:interval}}{\ref{i:homom}} Let $x,y,z\in T$. Then $\{c(x,y,z)\}=[x,y]\cap[x,z]\cap[y,z]$.  Hence
\begin{equation}
   \big\{f(c(x,y,z))\big\}\subseteq \big[f(x),f(y)\big]\cap \big[f(y),f(z)\big]\cap \big[f(x),f(z)\big]
   	= \bigl\{ \ch\(f(x),f(y),f(z)\)\bigr\}.
\end{equation}
Therefore, $f(c(x,y,z))=\ch(f(x),f(y),f(z))$.
\end{proof}

Lemma~\ref{l:treehom} shows that our notion of morphisms of algebraic trees is weaker than the morphisms of
nonmetric trees used in \cite{FavreJonsson04}, but the notion of isomorphism is the same.
The image of an algebraic tree under a tree homomorphism is a subtree in the following sense.
\begin{definition}[subtree]
Let $(T,c)$ be an algebraic tree, and $\emptyset\ne A\subseteq T$.
$A$ is called a \define{subtree (of\/ $(T,c)$)} if
\begin{equation}
\label{e:subtree}
    c(A^3)\subseteq A.
\end{equation}
We refer to $c(A^3)$ as the \define{algebraic subtree generated by\/ $A$}.
\label{d:subtree}
\end{definition}
Obviously, a subtree $A$ of $(T,c)$, implicitly equipped with the restriction of $c$ to $A^3$, is an algebraic
tree in its own right. Furthermore, the following lemma is easy to check.

\begin{lemma}[tree homomorphisms]
Let\/ $(T,c)$ and\/ $(\Th,\ch)$ be two algebraic trees, and\/ $f\colon T \to \Th$ a homomorphism.
Then the image\/ $f(T)$ is a subtree of\/ $\Th$.
If\/ $f$ is injective, $f^{-1}$ is a tree homomorphism from\/ $f(A)$ into\/ $T$.

In particular, if\/ $(\tilde{T},\tilde{c})$ is another algebraic tree, and\/ $g$ is a homomorphism form\/
$(\Th,\ch)$ to\/ $(\tilde{T},\tilde{c})$, then\/ $g\circ f$ is a homomorphism from\/ $(T,c)$ to\/
$(\tilde{T},c_{\tilde{T}})$.
\label{l:aa}	
\end{lemma}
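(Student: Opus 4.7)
The plan is to verify each of the three assertions directly from Definition~\ref{d:treehom}, i.e.\ from the identity $f(c(x,y,z))=\ch(f(x),f(y),f(z))$ valid for all $x,y,z\in T$. All three claims follow by short, essentially independent computations, so there is no real obstacle in this lemma; the only point deserving any thought is the implicit convention, stated right after Definition~\ref{d:subtree}, that a subtree is viewed as an algebraic tree in its own right via restriction of the ambient branch point map.

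For the first assertion, that $f(T)$ is a subtree of $(\Th,\ch)$, I would pick arbitrary $\hat x,\hat y,\hat z\in f(T)$, choose preimages $x,y,z\in T$, and use the homomorphism identity to rewrite $\ch(\hat x,\hat y,\hat z)=\ch(f(x),f(y),f(z))=f(c(x,y,z))$, which visibly lies in $f(T)$. This gives $\ch(f(T)^3)\subseteq f(T)$, which is condition~\eqref{e:subtree} from Definition~\ref{d:subtree}.

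If additionally $f$ is injective, the inverse $f^{-1}\colon f(T)\to T$ is well-defined on the subtree $f(T)$, which by the previous step carries the restriction of $\ch$ to $f(T)^3$ as its branch point map. Given $\hat x,\hat y,\hat z\in f(T)$ with unique preimages $x,y,z\in T$, I would apply $f^{-1}$ to both sides of $f(c(x,y,z))=\ch(f(x),f(y),f(z))$ to obtain $f^{-1}(\ch(\hat x,\hat y,\hat z))=c(x,y,z)=c(f^{-1}(\hat x),f^{-1}(\hat y),f^{-1}(\hat z))$, which is precisely the defining identity of a tree homomorphism for $f^{-1}$.

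For the composition $g\circ f$, the verification is a one-line chain, applying the homomorphism property of $f$ and then that of $g$: $(g\circ f)(c(x,y,z))=g(\ch(f(x),f(y),f(z)))=\tilde c(g(f(x)),g(f(y)),g(f(z)))=\tilde c((g\circ f)(x),(g\circ f)(y),(g\circ f)(z))$. Since this holds for arbitrary $x,y,z\in T$, the composition $g\circ f$ is a tree homomorphism from $(T,c)$ to $(\tilde T,\tilde c)$, completing the proof.
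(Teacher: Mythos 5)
Your proposal is correct, and it is the canonical direct verification from the defining identity $f(c(x,y,z))=\ch(f(x),f(y),f(z))$; the paper itself offers no proof (it states the lemma is "easy to check"), so there is nothing to diverge from. All three computations are sound, and you correctly handle the one subtle point, namely that $f(T)$ is first shown to be a subtree so that the restriction of $\ch$ makes $f^{-1}$ a homomorphism between algebraic trees.
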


\subsection{Algebraic trees as topological spaces}
\label{sub:astopological}
In contrast to metric trees, there is a priori no topology defined on a given algebraic tree.
In this section, we therefore equip algebraic trees with a canonical topology.

For each $x\in T$, we introduce a (component) relation $\sim_x$ by letting $y\sim_x z$
if and only if $x\not\in[y,z]$, where $y,z\in T$. Let for each $y\in T\setminus\{x\}$
\begin{equation} \label{e:005}
  \Sub_x(y) = \Sub^{(T,c)}_x(y) := \bset{z\in T\setminus \{x\}}{z\sim_x y}
\end{equation}
be the equivalence class of $T\setminus\{x\}$ containing $y$, and
note that $\Sub_x(y)$ is a subtree for all $x,y\in T$, and $\Sub_x(y)=\Sub_x(z)$ whenever $z\in
\Sub_x(y)$.
We refer to $\Sub_x(y)$ as the \emph{component} of $T\setminus x$ containing $y$.
Now and in the following, we equip $(T,c)$ with the topology
\begin{equation}
\label{e:tau}
   \tau := \tau\(\bset{\Sub_x(y)}{x,y\in T,\; x\ne y}\)
\end{equation}
generated by the set of components, i.e.\ with the coarsest topology such that all components are open sets. We
call $\tau$ the \define{component topology} of $(T,c)$.

\begin{example}[on totally ordered trees, $\tau$ is the order topology]
	If $(T,\le)$ is a totally ordered space, and $(T,c_\le)$ the corresponding algebraic tree as in
	Example~\ref{ex:totord}, then $\tau$ coincides with the \emph{order topology}
	(i.e.\ the one generated by sets of the form $\set{y\in T}{y>x}$ and $\set{y\in T}{y<x}$ for $x\in T$).
\end{example}

\begin{example}[intervals are closed sets] \label{exp:002}
Let $(T,c)$ be an algebraic tree, and $x,y\in T$. Then
\begin{equation}
\label{e:030}
   T\setminus [x,y] = \bigcup\bset{\Sub_u(v)}{u\in[x,y],\,v\in T,\, \Sub_u(v)\cap[x,y]=\emptyset}\in\tau.
\end{equation}
This means that $[x,y]$ is closed in the component topology $\tau$.
\end{example}

\begin{lemma}\label{l:ccont}
	Let\/ $(T,c)$ be an algebraic tree. Then\/ $c$ is continuous w.r.t.\ the component topology\/ $\tau$.
\end{lemma}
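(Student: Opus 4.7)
The strategy is to check continuity on the subbasis level: since $\tau$ is generated by the sets $\Sub_w(u)$ with $w\ne u$, it suffices to show that the preimage $c^{-1}(\Sub_w(u))$ is open in $T^3$ for every such pair. The whole argument will hinge on the equivalence
\begin{equation*}
   c(x,y,z)\in\Sub_w(u)
   \quad\iff\quad
   \text{at least two of }x,y,z\text{ lie in }\Sub_w(u).
\end{equation*}
Given this, $c^{-1}(\Sub_w(u))$ equals the union of the three products $\Sub_w(u)^2\times T$, $\Sub_w(u)\times T\times\Sub_w(u)$, $T\times\Sub_w(u)^2$ of open sets, hence is open.

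For the ``$\Leftarrow$'' direction, assume $x,y\in\Sub_w(u)$. By (BPM3) one has $c(x,y,z)\in[x,y]$, so the task reduces to the ``convexity'' $[x,y]\subseteq\Sub_w(u)$. I would first apply the 4-point condition (BPM4) to $\{w,x,y,u\}$: since $x,y,u\in\Sub_w(u)$, and $\Sub_w(u)$ is a subtree (as noted before the lemma), one has $c(x,y,u)\ne w$; moreover $c(w,x,u)\ne w$ iff $w\notin[x,u]$ iff $x\in\Sub_w(u)$, and analogously for $y$. Thus all three elements in $\{c(x,y,u),c(w,x,u),c(w,y,u)\}$ differ from $w$, and (BPM4) forces $c(w,x,y)\ne w$, i.e.\ $w\notin[x,y]$. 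To push this to all of $[x,y]$, I would pass to the partial order $\le_w$ from Proposition~\ref{p:semilat}: the identity $c(w,q,u)=q\wedge u$ (meet in $\le_w$) identifies $\Sub_w(u)$ with $\set{q\ne w}{q\wedge u>_w w}$, and since $a:=x\wedge u$ and $b:=y\wedge u$ are comparable in the $\le_w$-totally ordered chain $[w,u]$, one has $x\wedge y\ge_w\min(a,b)>_w w$. Any $q\in[x,y]$ satisfies $q\ge_w x\wedge y$ and therefore $q\wedge u\ge_w(x\wedge y)\wedge u>_w w$, proving $q\in\Sub_w(u)$.

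The ``$\Rightarrow$'' direction I would handle by contraposition, assuming without loss of generality $y,z\notin\Sub_w(u)$. If $y=z=w$, then $c(x,y,z)=w\notin\Sub_w(u)$ by (BPM2). If exactly one of $y,z$ equals $w$, say $y=w\ne z$, then $c(x,y,z)\in[w,z]$, and the same partial-order argument applied with $z$ in place of $u$ shows $[w,z]\setminus\{w\}\subseteq\Sub_w(z)$, which is disjoint from $\Sub_w(u)$ since $z\notin\Sub_w(u)\cup\{w\}$. If $y,z\ne w$, they lie in components $\Sub_w(y),\Sub_w(z)$ of $T\setminus\{w\}$ distinct from $\Sub_w(u)$; by Lemma~\ref{l:intprop}(iii) applied at $c(w,y,z)$ (which equals $w$ if $y\not\sim_w z$, and otherwise allows the convexity argument inside the single component $\Sub_w(y)$), the interval $[y,z]$ is contained in $\Sub_w(y)\cup\{w\}\cup\Sub_w(z)$, and hence $c(x,y,z)\in[y,z]$ misses $\Sub_w(u)$.

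The main obstacle is the convexity $[x,y]\subseteq\Sub_w(u)$ for $x,y\in\Sub_w(u)$: this is not immediate from (BPM1)--(BPM4) alone, but becomes transparent once the partial order $\le_w$ of Proposition~\ref{p:semilat} is invoked.
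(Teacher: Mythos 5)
Your proof is correct and takes essentially the same route as the paper's: reduce to the subbasic sets $\Sub_w(u)$, establish that $c(x,y,z)\in\Sub_w(u)$ if and only if at least two of $x,y,z$ lie in $\Sub_w(u)$, and conclude that the preimage is a union of open product sets. The paper justifies this two-out-of-three characterization in one line from $c(x,y,z)\in[x,y]\cap[y,z]\cap[z,x]$ and the fact that components are subtrees, whereas you verify it in full; note that your detour through $\le_w$ for the convexity step is avoidable, since once $w\notin[x,y]$ is known, Lemma~\ref{l:intprop}(i) gives $[x,q]\subseteq[x,y]$ for every $q\in[x,y]$, hence $q\sim_w x$ directly.
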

\begin{proof}
	By definition of $\tau$, it is sufficient to show that for any $x,y\in T$, $x\ne y$, the set
	$c^{-1}(\Sub_x(y))$ is open in $T^3$. By definition of $\Sub_x(y)$ and the property $c(u,v,w)\in
	[u,v]\cap [v,w]\cap [w,u]$ shown in Lemma~\ref{l:intprop}, $c(u,v,w) \in \Sub_x(y)$ if and only if (at
	least) two of $u,v,w$ are in $\Sub_x(y)$. Because $\Sub_x(y)$ is open, the same is true for
	$\set{(u,v,w)\in T^3}{u,v\in \Sub_x(y)}$ in the product topology. Hence $c^{-1}(\Sub_x(y))$ is a union
	of open set and thus open.
\end{proof}

Next, we show that $\tau$ is a Hausdorff topology and characterize compactness of algebraic trees in this
topology.

\begin{lemma}[$\tau$ is Hausdorff]\label{l:Hausdorff}
	Let\/ $(T,c)$ be an algebraic tree. Then the component topology\/ $\tau$ defined in \eqref{e:tau} is a
	Hausdorff topology on\/ $T$.
\end{lemma}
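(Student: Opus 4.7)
My plan is to prove Hausdorff by producing, for any two distinct points $x,y\in T$, explicit disjoint open neighborhoods built from the components $\Sub_u(v)$ that generate $\tau$. The natural first candidates are $\Sub_y(x)$ (which contains $x$ but not $y$) and $\Sub_x(y)$ (which contains $y$ but not $x$); both are open by definition of $\tau$. I will analyze whether these are automatically disjoint, splitting into two cases according to whether $\{x,y\}$ is an edge of $(T,c)$ in the sense of \eqref{e:ed}.

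In the first case, assume $[x,y]=\{x,y\}$. If $z\in\Sub_y(x)\cap\Sub_x(y)$, then by the defining relation of $\sim_y$ and $\sim_x$ we have $c(x,y,z)\neq y$ and $c(x,y,z)\neq x$. However, the very definition of the branch point map gives $c(x,y,z)\in[x,y]$, and by assumption $[x,y]=\{x,y\}$, a contradiction. Hence $\Sub_y(x)$ and $\Sub_x(y)$ are the required disjoint neighborhoods.

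In the second case, pick any $m\in[x,y]\setminus\{x,y\}$ and consider $U:=\Sub_m(x)$ and $V:=\Sub_m(y)$. These are open, and one checks directly from $x\neq m$, $y\neq m$, together with (BPM2), that $x\in U$ and $y\in V$. To establish disjointness, suppose $z\in U\cap V$, so $c(m,x,z)\neq m$ and $c(m,y,z)\neq m$. Because $m\in[x,y]$, Lemma~\ref{l:intprop}(ii) applied to the intersection $[m,x]\cap[m,y]$ (or a direct computation) gives $c(m,x,y)=m$. On the other hand, (BPM4) applied to $\{m,x,y,z\}$ forces
\[
  c(m,x,y)\in\bigl\{c(m,x,z),\,c(m,y,z),\,c(x,y,z)\bigr\}.
\]
Since the first two options are excluded by assumption, we must have $m=c(x,y,z)$, so $m\in[x,z]$. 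But $m\in[x,z]$ is equivalent to $c(m,x,z)=m$, contradicting our choice of $z\in U$.

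The main conceptual step is the second case: one has to realize that instead of separating $x$ and $y$ by components of $T\setminus\{x\}$ and $T\setminus\{y\}$ (which may overlap precisely when an interior point of $[x,y]$ exists), one should separate them by two different components of $T\setminus\{m\}$ for an interior point $m$. The remaining work is bookkeeping with (BPM1)--(BPM4) and the elementary interval identities from Lemma~\ref{l:intprop}, so I do not expect any further obstacle.
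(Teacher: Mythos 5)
Your proof is correct and follows essentially the same route as the paper: both arguments separate $x$ and $y$ by two distinct components of $T\setminus\{m\}$ for an interior point $m$ of $[x,y]$, falling back on $\Sub_x(y)$ and $\Sub_y(x)$ when no such interior point exists. The only cosmetic difference is that the paper produces the interior point as $\rho:=c(x,y,z)$ for a $z$ chosen in the overlap $\Sub_x(y)\cap\Sub_y(x)$ and then invokes disjointness of distinct $\sim_\rho$-classes, whereas you pick $m\in[x,y]\setminus\{x,y\}$ directly and verify disjointness by hand via (BPM4).
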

\begin{proof}
	To show that $(T,\tau)$ is Hausdorff, let $x,y\in T$ be distinct. If $\Sub_y(x) \cap \Sub_x(y) =
	\emptyset$, then $\Sub_y(x)$ and $\Sub_x(y)$ are clearly disjoint neighbourhoods of $x$ and $y$,
	respectively. Assume that this is not the case,  and choose $z\in \Sub_x(y)\cap \Sub_y(x)$.  Then
	$\rho:=c(x,y,z)\not\in\{x,y\}$. Furthermore, $c(x,\rho,y)=c(x,y,z)=\rho$, and hence $x\not\sim_\rho y$.
	Thus $\Sub_\rho(x)$ and $\Sub_\rho(y)$ are disjoint neighbourhoods of $x$ and $y$, respectively.
	Hence $\tau$ is Hausdorff.
\end{proof}

\begin{proposition}[characterizing compactness]\label{p:compact}
	Let\/ $(T,c)$ be an algebraic tree with component topology $\tau$.
	Then\/ $(T,\tau)$ is compact if and only if\/ $(T,c)$ is directed order complete.
\end{proposition}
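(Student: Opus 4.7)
The plan is to treat the two implications separately, using elementary finite-intersection arguments for the forward direction and Alexander's subbase theorem for the converse. Fix $\rho\in T$ and recall from Proposition~\ref{p:semilat} that $(T,\le_\rho)$ is a meet semi-lattice with $x\wedge_\rho y=c(\rho,x,y)$, and that for $x\in T\setminus\{\rho\}$ the upper set $U_x^\rho:=\{y\in T:y\ge_\rho x\}$ coincides with $T\setminus\Sub_x(\rho)$ and is therefore closed in $\tau$ (set $U_\rho^\rho:=T$).

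For the direction \emph{compact $\Rightarrow$ directed order complete}, let $C\subseteq T$ be a non-empty chain in $(T,\le_\rho)$. Because $U_x^\rho\supseteq U_{x'}^\rho$ whenever $x\le_\rho x'$, any finite intersection $U_{x_1}^\rho\cap\cdots\cap U_{x_n}^\rho$ equals $U_{\max\{x_i\}}^\rho$ and is non-empty. Compactness therefore yields a non-empty set of upper bounds $V:=\bigcap_{x\in C}U_x^\rho$. For $y\in V$ put $V_y:=V\cap[\rho,y]$, non-empty (it contains $y$) and closed. The family $\{V_y:y\in V\}$ is filtered under intersection: for $y_1,y_2\in V$ the meet $y_3:=c(\rho,y_1,y_2)$ still dominates $C$ (so $y_3\in V$), and $[\rho,y_3]=[\rho,y_1]\cap[\rho,y_2]$ by the very definition of meets, whence $V_{y_3}=V_{y_1}\cap V_{y_2}$. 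A second compactness argument furnishes $m\in\bigcap_{y\in V}V_y$; this $m$ is an upper bound of $C$ and bounded above by every element of $V$, hence $m=\sup C$.

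For the reverse direction, assume $(T,c)$ is directed order complete and suppose, towards a contradiction via Alexander's subbase theorem, that a subbasic cover $\{\Sub_{x_\alpha}(y_\alpha)\}_{\alpha\in A}$ admits no finite subcover. The complements $F_\alpha:=T\setminus\Sub_{x_\alpha}(y_\alpha)=\{z\in T:x_\alpha\in[z,y_\alpha]\}$ are closed subtrees (each $F_\alpha$ is the upper set $\{z:z\ge_{y_\alpha}x_\alpha\}$ with respect to the root $y_\alpha$, and upper sets are subtrees by Proposition~\ref{p:semilat}) with the finite intersection property. The crux is the following foot construction: every non-empty closed subtree $K\subseteq T$ has a $\le_\rho$-minimum $\pi_K\in K$. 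Indeed, for any $z_0\in K$ the interval $[\rho,z_0]$ is totally ordered (Proposition~\ref{p:semilat}(i)) and, thanks to directed order completeness, forms a complete lattice; its order topology agrees with the subspace topology from $\tau$ (for $x\in[\rho,z_0]$ a component of $T\setminus\{x\}$ restricts to $[\rho,x)$, $(x,z_0]$, or $\emptyset$, and for $x\notin[\rho,z_0]$ to $\emptyset$ or $[\rho,z_0]$), so $[\rho,z_0]$ is compact. The closed non-empty set $K\cap[\rho,z_0]$ then admits a $\le_\rho$-minimum $m$, and the subtree property (yielding $c(\rho,w,z_0)\in K\cap[\rho,z_0]$ for every $w\in K$) forces $m\le_\rho w$ throughout $K$, so $m=\pi_K$.

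Writing $\pi_\mathcal{F}:=\pi_{K_\mathcal{F}}$ with $K_\mathcal{F}:=\bigcap_{\alpha\in\mathcal{F}}F_\alpha$ for finite $\mathcal{F}\subseteq A$, inclusion $\mathcal{F}\subseteq\mathcal{F}'$ yields $K_{\mathcal{F}'}\subseteq K_\mathcal{F}$ and hence $\pi_\mathcal{F}\le_\rho\pi_{\mathcal{F}'}$, so $D:=\{\pi_\mathcal{F}\}$ is directed in $(T,\le_\rho)$. Promoting chain completeness to directed completeness by a standard Zorn-type argument (extracting a cofinal chain), the supremum $z^*:=\sup_{\le_\rho}D$ exists in $T$. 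Any monotone net with supremum $z^*$ stays in $[\rho,z^*]$ and converges to $z^*$ in the order topology there, hence in $\tau$; for each fixed $\alpha\in A$ the cofinal subnet $(\pi_\mathcal{F})_{\mathcal{F}\ni\alpha}$ lies in the closed set $F_\alpha$, so $z^*\in F_\alpha$. Thus $z^*\in\bigcap_\alpha F_\alpha$, contradicting that the $\Sub_{x_\alpha}(y_\alpha)$ cover $T$. The main obstacle is this foot construction together with the monotone-convergence step in the backward direction, both relying on the identification of the subspace topology on intervals with the order topology and on the chain-to-directed completeness upgrade.
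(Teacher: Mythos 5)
Your overall strategy is sound and genuinely different from the paper's. For ``compact $\Rightarrow$ complete'' the paper argues by contraposition, exhibiting an explicit open cover $\set{U_x}{x\in A}\cup\set{V_y}{y\ge A}$ with no finite subcover when a chain $A$ has no supremum; you instead run a direct finite-intersection argument on the closed upper sets and then on the sets $V\cap[\rho,y]$, which is cleaner and avoids constructing the cover. For the converse, both proofs go through Alexander's subbase theorem, but the paper extracts a maximal chain from the subbasic sets containing a fixed $\rho$, takes the supremum of the associated cut points, and produces an explicit \emph{two}-element subcover; you instead pass to complements, which are ``upper sets'' $F_\alpha=\set{z}{z\ge_{y_\alpha}x_\alpha}$, and show their intersection is non-empty via a projection (``foot'') construction plus a monotone-net limit. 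Your route costs more infrastructure (the identification of the subspace topology on $[\rho,z_0]$ with the order topology, compactness of complete chains, the chain-to-directed upgrade --- which, incidentally, is automatic here: any two elements of a directed subset of $(T,\le_\rho)$ have a common upper bound $z$, hence both lie in the totally ordered set $[\rho,z]$ and are comparable, so directed subsets of tree orders are already chains), but it is a legitimate alternative.

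There is, however, one concrete error: the foot construction is stated for arbitrary non-empty closed \emph{subtrees} $K$, and the step ``the subtree property yields $c(\rho,w,z_0)\in K$'' is false. A subtree in the sense of Definition~\ref{d:subtree} only satisfies $c(K^3)\subseteq K$ and need not be interval-closed: take $T$ a tripod with leaves $\rho,w,z_0$ and centre $v$, and $K=\{w,z_0\}$; then $K$ is a closed subtree, but $c(\rho,w,z_0)=v\notin K$ and $K$ has no $\le_\rho$-minimum, so the lemma as stated fails. What rescues your argument is that the sets you actually apply it to are \emph{convex} (interval-closed): by Corollary~\ref{l:vgec}, $z_1,z_2\ge_{y_\alpha}x_\alpha$ and $v\in[z_1,z_2]$ imply $v\ge_{y_\alpha}c(z_1,z_2,y_\alpha)=z_1\land_{y_\alpha}z_2\ge_{y_\alpha}x_\alpha$, so each $F_\alpha$, and hence each finite intersection $K_{\mathcal F}$, is convex. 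For convex closed $K$ one has $[w,z_0]\subseteq K$ and therefore $c(\rho,w,z_0)\in[w,z_0]\cap[\rho,z_0]\subseteq K\cap[\rho,z_0]$, and the rest of your foot construction goes through. So the lemma must be restated for closed convex sets (with the convexity of the $F_\alpha$ verified as above) rather than for closed subtrees; with that repair the proof is correct.
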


\begin{proof}
\pstep{``only if''} Assume first that $(T,c)$ is not order complete. Then we can choose $\rho\in T$ and
	$\emptyset\ne A\subseteq T$ such that $A$ is totally ordered w.r.t.\ $\le_\rho$ but does not have a
	supremum in $(T,\le_\rho)$.
	For $x,y\in T$, let $U_x:= \set{z\in T}{z\not\ge_\rho x}$ and $V_y:=\set{z\in T}{z>_\rho y}$. Then $U_x$
	and $V_y$ are open sets.
	We claim that $\U:=\set{U_x}{x\in A} \cup \set{V_y}{y\ge A}$ is an open cover of $T$. Indeed, if
	$z\ge_\rho A$, then, because $A$ has no supremum, there is $y\in T$ with $A\le_\rho y \le_\rho z$, hence
	$z\in V_y \in \U$. Otherwise, if $z\not\ge_\rho A$, there is $x\in A$ with $z\in U_x \in \U$. Thus $\U$
	is a cover of $T$.

	$\U$ has no finite sub-cover, because if $\U' = \{U_{x_1},...,U_{x_n},V_{y_1},...,V_{y_m}\}$
	were such a finite sub-cover, then $\{U_{x_1},...,U_{x_n}\}$ would cover $A$. This, however, would imply that
	$\max\{x_1,...,x_n\}$ would be a supremum of $A$, contradicting our assumption. Hence $(T,\tau)$ is not
	compact.

\pstep{``if''} Assume that $(T,c)$ is order complete. Consider a cover $\U$ of $T$ with components,
	i.e.\ $\U\subseteq \set{\Sub_y(x)}{x,y\in T,\, x\ne y}$.  By the Alexander subbase theorem, for
	compactness of $\tau$, it is sufficient to show that $\U$ has a finite sub-cover.

	To this end, fix an element $\rho\in T$ and consider the set $\U_\rho:=\set{U\in \U}{\rho \in
	U}\not=\emptyset$. By Hausdorff's maximal chain theorem (or Zorn's lemma), there is a maximal chain $I$
	in the partially ordered set $(\U_\rho, \subseteq)$.
	For every $U\in I$, we have $\rho\in U$, and thus there is $x_U\in T$ such that $U=\Sub_{x_U}({\rho})$.
	We claim that $U\subseteq V$ implies $x_U \le_\rho x_V$. Indeed, $x_V\not\in V$
	and hence $x_V\not\in U$ which is equivalent to $x_V \ge_\rho x_U$. Therefore, $z:= \sup \set{x_U}{U\in I}$
	exists in $(T,\le_\rho)$ by directed order completeness of $T$.
Because $\U$ is a cover, there is
	$V\in \U$ with $z\in V$, hence $V=\Sub_y(z)$ for some $y\in T$.
Because $V\not\in I$ and $I$ is a maximal chain, $y\not\ge_\rho z$.
Hence there is $U\in I$ with
	$y\not\ge_\rho x_U=:x$. We claim that $T=\Sub_y(z)\cup \Sub_x(\rho)$.
	Indeed, let $w\in T\setminus \Sub_x(\rho)$. Then $w\ge_\rho x$. Using $z\ge_\rho x$ and $c(w,z,y)\in [w,z]$,
	we obtain $c(w,z,y) \ge_\rho x$, and hence $c(w,z,y) \ne y$. Thus $w\in \Sub_y(z)$ as claimed, and
	$\{\Sub_y(z),\,\Sub_x(\rho)\}$ is the desired sub-cover.
\end{proof}

It turns out that the following separability condition, which we call order separability,
is crucial for us.

\begin{proposition}[order separability]\label{p:separable}
	Let\/ $(T,c)$ be an algebraic tree with component topology $\tau$.
	Then the following are equivalent:
\begin{enumerate}[1.]
\item\label{i:dense} There exists a countable set\/ $D$ such that for all\/ $x,y\in T$ with\/ $x\not=y$,
	\begin{equation} \label{e:sep}
		D\cap \ropenint{x}{y} \ne \emptyset.
	\end{equation}
\item\label{i:A2} The topological space\/ $(T,\tau)$ is second countable (i.e.\ $\tau$ has a countable base),
	and\/ $\edge(T,c)$ is countable.
\item\label{i:sepa} The topological space\/ $(T,\tau)$ is separable, and\/  $\edge(T,c)$ is countable.
\end{enumerate}
\end{proposition}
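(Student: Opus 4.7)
The plan is to prove the cycle \ref{i:dense} $\Rightarrow$ \ref{i:A2} $\Rightarrow$ \ref{i:sepa} $\Rightarrow$ \ref{i:dense}.

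For \ref{i:dense} $\Rightarrow$ \ref{i:A2}, I propose the countable family $\mathcal{B} := \{\Sub_{d_1}(d_2) : d_1, d_2 \in D,\, d_1 \ne d_2\}$ as a subbase of $\tau$, so that its finite intersections form a countable base. To show $\mathcal{B}$ generates $\tau$, given $z \in \Sub_a(b)$ I pick $d_1 \in D \cap \ropenint{a}{z}$ by \ref{i:dense} and use Lemma~\ref{l:intprop}\emph{(i)} to verify $\Sub_{d_1}(z) \subseteq \Sub_a(b)$: for $w \in \Sub_{d_1}(z)$, if $a \in [w, z]$ then $d_1 \in [a, z] \subseteq [w, z]$, contradicting $d_1 \notin [w, z]$. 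A second choice $d_2 \in D \cap \ropenint{z}{d_1}$ then yields $z \in \Sub_{d_1}(d_2) = \Sub_{d_1}(z) \subseteq \Sub_a(b)$. Countability of $\edge(T, c)$ is automatic from \ref{i:dense}: any edge $\{x, y\}$ has $\ropenint{x}{y} = \{x\}$, forcing $x \in D$ (and symmetrically $y \in D$), so $\edge(T, c)$ injects into $D \times D$.

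The implication \ref{i:A2} $\Rightarrow$ \ref{i:sepa} is the standard fact that second countability implies separability, with the edge condition preserved verbatim.

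The main content lies in \ref{i:sepa} $\Rightarrow$ \ref{i:dense}. Let $D'$ be a countable dense subset of $(T, \tau)$ and $V$ the countable vertex set of $\edge(T, c)$, and let $D$ be the algebraic subtree generated by $D' \cup V$ by iteratively closing under $c$; $D$ remains countable. Fix $x \ne y$. If $\{x, y\} \in \edge$, then $x \in V \subseteq D \cap \ropenint{x}{y}$, done. Otherwise $(x, y) \ne \emptyset$; I assume toward a contradiction that $D \cap \ropenint{x}{y} = \emptyset$. The strategy is to apply density of $D'$ to three successive nonempty open sets so as to produce $d_1, d_2, d_3 \in D'$ on side branches off $[x, y]$ (the contradiction hypothesis forces them off the interval) whose projections $w_i := c(x, y, d_i) \in [x, y]$ are strictly ordered as $w_1, w_3, w_2$ along $[x, y]$ in the partial order $\le_x$ of Proposition~\ref{p:semilat}. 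Concretely, density in $\Sub_x(y) \cap \Sub_y(x)$ (which contains $(x, y)$) yields $d_1$ with $w_1 \in (x, y)$; density in $\Sub_{w_1}(y) \cap \Sub_y(w_1)$ yields $d_2$ with $w_2 \in (w_1, y)$; density in $\Sub_{w_1}(y) \cap \Sub_{w_2}(x)$ yields $d_3$ with $w_3 \in (w_1, w_2)$. Nonemptiness of each intersection uses that $w_1, w_2 \notin V$, because an edge $\{w_1, y\}$ or $\{w_1, w_2\}$ would place $w_1 \in V \cap \ropenint{x}{y} \subseteq D \cap \ropenint{x}{y}$, contradicting the assumption. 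The path decompositions $[d_i, d_j] = [d_i, w_i] \cup [w_i, w_j] \cup [w_j, d_j]$ then yield $c(d_1, d_2, d_3) = w_3$; since $c$-closure places this branch point in $D$ and $w_3 \in (x, y) \subseteq \ropenint{x}{y}$, we obtain a point of $D \cap \ropenint{x}{y}$, the desired contradiction.

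The main obstacle is precisely \ref{i:sepa} $\Rightarrow$ \ref{i:dense}: density of $D'$ alone does not place a point of $D'$ into $\ropenint{x}{y}$, since $[x, y]$ can have empty interior when side branches accumulate densely along the interval. The resolution is to close $D'$ under the branch-point operation $c$ and to exploit the tree-geometric fact that the branch point of three side-branch points with ordered projections equals the middle projection, which lies inside $(x, y)$.
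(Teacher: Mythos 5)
Your proof is correct and takes essentially the same route as the paper: the same cycle of implications, the same countable subbase $\bset{\Sub_{d_1}(d_2)}{d_1,d_2\in D}$ for the step from order density to second countability, and, for the step from separability back to order density, the same key construction of closing the topologically dense set under the branch point map and adjoining the edge endpoints. The only divergence is in the final verification there: the paper argues directly with a single sampled point $z$, setting $v=c(x,y,z)$ and exhibiting $v$ as $c(x',y',z)$ with $x',y'$ drawn from the dense set in the components $\Sub_v(x)$ and $\Sub_v(y)$, whereas you run a proof by contradiction with three sampled points and ordered projections onto $[x,y]$ --- both are sound, the paper's being somewhat shorter.
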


\begin{proof}
\istep{\ref{i:sepa}}{\ref{i:dense}} Assume that $\edge(T,c)$ is countable, and
that $(T,\tau)$ is separable. Then there exists a countable, dense subset $\tilde{D}\subseteq T$. We claim that
\begin{equation}\label{e:orderdense}
   D:=c\big(\tilde{D}^3\big)\cup\big\{z\in T:\,\exists\,x\in T\mbox{ such that }\{x,z\}\in\edge(T,c)\big\}
\end{equation}
satisfies   \eqref{e:sep}. Indeed, $D$ is countable by assumption. Moreover,
let $x,y\in T$. Then two cases are possible:
either $\Sub_x(y)\cap\Sub_y(x)=\emptyset$. In this case, $\{x,y\}\in\edge(T,c)$, which implies that
$\ropenint{x}{y} \cap D\not=\emptyset$.
Or $\Sub_x(y)\cap\Sub_y(x)\not=\emptyset$.
In this case, as $\Sub_x(y)\cap\Sub_y(x)$ is open by definition of $\tau$, there is $z\in
\tilde{D}\cap \Sub_x(y)\cap\Sub_y(x)$. Let $v:=c(x,y,z)$. Then $v\in\openint{x}{y}$, and either
$v=z\in D$, or the three components $\Sub_v(x)$, $\Sub_v(y)$, $\Sub_v(z)$ are distinct. In the second case, we
can choose $x'\in \tilde{D} \cap \Sub_v(x)$ and $y'\in \tilde{D}\cap \Sub_v(y)$ to see that $v=c(x',y',z) \in
D$. In any case, $v\in \ropenint{x}{y} \cap D$.
\istep{\ref{i:dense}}{\ref{i:A2}}
Let $D$ be a countable set satisfying \eqref{e:sep}. Then for all $\{x,y\}\in\edge(T,c)$, $D\cap
\ropenint{x}{y}=\{x\}$. This implies that  $\edge(T,c)$ is countable.
We consider the countable set $\U=\bset{\Sub_v(u)}{u,v\in D} \subseteq \tau$ and claim that it is a subbase for
$\tau$ (i.e.\ generates $\tau$). To this end, let $x,y\in T$. We show that $U:=\Sub_x(y)$ is a union of sets
from $\U$, i.e.\ for every $z\in U$ we construct $V\in\U$ with $z\in V \subseteq U$. By assumption on $D$, there
is $v\in D \cap \ropenint{x}{z}$ and $u\in D \cap \lopenint{v}{z}$. Let $V:=\Sub_v(u) \in \U$. Because
$c(u,v,z)=u\ne v$, we have $z\in V$. Let $w\in T\setminus U$. Because $u\in U$, we have $U=\Sub_x(u)$ and
therefore $x\in[u,w]$. Similarly, $x\in [v,w]$. In particular, by Lemma~\ref{l:4pt},
$c(u,v,w)=c(u,v,c(u,x,w))=c(u,v,x)=v$, and thus $w\not\in V$.
Because $w\in T\setminus U$ is arbitrary, we obtain $V\subseteq U$.
\istep{\ref{i:A2}}{\ref{i:sepa}} Trivial, because every second countable topological space is separable.
\end{proof}

\begin{definition}[order separability]\label{d:ordersep}
	We call an algebraic tree $(T,c)$ \define{order separable} if the equivalent conditions of
	Proposition~\ref{p:separable} are satisfied. We call a set $D\subseteq T$ \define{order dense} if it
	satisfies \eqref{e:sep}.
\end{definition}

\begin{example}[uncountable star tree] This example shows that in \eqref{e:sep}
we can not replace $\ropenint{x}{y}$  by $[x,y]$. Let $T:=\{0\}\cup [1,2]$ with $c(x,y,z):=0$ whenever $x,y,z\in
T$ are distinct. Then if $D\subseteq T$ is such that $D\cap\ropenint{x}{0}\not =\emptyset$ for all $x\in[1,2]$
then $[1,2]\subseteq D$, and thus $D$ is uncountable and $(T,c)$ not order separable.
On the other hand, $D:=\{0\}$ has the property that $D\cap[x,y]\not=\emptyset$ for all $x,y\in T$ with $x\not
=y$.
\label{exp:star}
\end{example}

An order complete, order separable algebraic tree is, in its component topology $\tau$, a compact, second
countable Hausdorff space by Propositions~\ref{p:compact} and \ref{p:separable}. In particular, it is metrizable.
In fact, order separability already implies metrizability, as we will see in Subsection~\ref{sub:asRtree}.
The following example shows that (topological) separability of $(T,\tau)$ alone, without requiring the number of
edges to be countable, is neither sufficient for order separability nor for metrizability of $(T,\tau)$.

\begin{example}[a continuum ladder]\label{ex:topnotalgsep}
	Let $T=[0,1]\times\{0,1\}$ with the lexicographic order $\le$ on $T$, and define the canonical branch
	point map $c_\le$ as in Example~\ref{ex:totord}. Then $\edge(T,c_\le) =
	\bset{\{(x,0),\,(x,1)\}}{x\in [0,1]}$ is uncountable, and hence $(T,c_\le)$ is not order separable.
	Because $(\Q\cap[0,1]) \times \{0,1\}$ is a countable dense set, $(T,\tau)$ is (topologically) separable.
	The topological subspace $[0,1] \times \{1\}$ is the \emph{Sorgenfrey line},
	which is known to be non-metrizable (see \cite[Counterexample~51]{SteenSeebach78}). Thus also $(T,\tau)$
	cannot be metrizable.
\end{example}

\begin{definition}[Borel $\sigma$-algebra $\B(T,c)$]
	Let $(T,c)$ be an algebraic tree. We denote the Borel $\sigma$-algebra of the component topology $\tau$
	by $\B(T,c)$ and call it \define{Borel\/ $\sigma$-algebra of\/ $(T,c)$}.
\end{definition}

In general, $\B(T,c)$ is not generated by the set of components. Order separability, however, is sufficient to
ensure this property because it implies second countability of the component topology.

\begin{cor}[Borel $\sigma$-algebra generated by components]\label{c:borel}
	Let\/ $(T,c)$ be an order separable algebraic tree, and\/ $D\subseteq T$ an order dense set.
	Then its Borel\/ $\sigma$-algebra is generated by the set of components indexed by\/ $D$, i.e.
	\begin{equation}
		\B(T,c) = \sigma\(\bset{\Sub_x(y)}{x,y\in D,\; x\ne y}\).
	\end{equation}
\end{cor}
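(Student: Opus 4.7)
The plan is to lift the subbase argument already established in Proposition~\ref{p:separable} from the topology to the Borel $\sigma$-algebra, which makes this a short corollary.

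First I would recall from the ``\ref{i:dense}$\Rightarrow$\ref{i:A2}'' step in the proof of Proposition~\ref{p:separable} that, given the order dense set $D$, the countable collection
\begin{equation*}
\U_D := \bset{\Sub_v(u)}{u,v\in D,\; u\ne v}
\end{equation*}
is a subbase for the component topology $\tau$: for arbitrary $x,y\in T$ with $x\ne y$, every point $z\in\Sub_x(y)$ lies in some $V\in\U_D$ with $V\subseteq\Sub_x(y)$, so $\Sub_x(y)$ is a union of elements of $\U_D$, and by definition of $\tau$ this set already generates the topology.

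Next, since $\U_D$ is countable, the collection $\B_D$ of all finite intersections of members of $\U_D$ is a countable base for $\tau$. Hence every open subset of $T$ is a countable union of elements of $\B_D$, and therefore
\begin{equation*}
\tau \subseteq \sigma(\B_D) = \sigma(\U_D).
\end{equation*}
Taking $\sigma$-algebras yields $\B(T,c) = \sigma(\tau) \subseteq \sigma(\U_D)$. The reverse inclusion is immediate, because every $\Sub_v(u)\in\U_D$ is open in $\tau$ and therefore Borel. Combining the two inclusions gives $\B(T,c) = \sigma(\U_D)$, which is the claim.

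There is no real obstacle here: the only non-trivial input is the subbase property, and that was already proved inside Proposition~\ref{p:separable}. The corollary simply packages it together with the standard fact that a countable subbase generates both a countable base and (hence) the full Borel $\sigma$-algebra.
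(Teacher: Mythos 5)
Your argument follows the same route as the paper: identify $\U_D=\bset{\Sub_x(y)}{x,y\in D,\,x\ne y}$ as a subbase of the component topology $\tau$ (which is exactly what the ``\ref{i:dense}$\,\Rightarrow\,$\ref{i:A2}'' step of Proposition~\ref{p:separable} establishes, and that step only uses order density of $D$, not its cardinality), and then conclude that a subbase of a sufficiently nice topology generates the Borel $\sigma$-algebra.

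There is, however, one unjustified step: you assert that $\U_D$ is countable. By Definition~\ref{d:ordersep}, an order dense set is merely one satisfying \eqref{e:sep}; it need not be countable (e.g.\ $D=T$ is always order dense), and the corollary is stated for an arbitrary order dense $D$. So the collection $\B_D$ of finite intersections need not be a \emph{countable} base, and your claim that every open set is a countable union of elements of $\B_D$ does not follow from counting alone. The fix is exactly what the paper does: order separability of $(T,c)$ gives second countability of $(T,\tau)$ by Proposition~\ref{p:separable} (via some countable order dense set, not necessarily $D$), hence $(T,\tau)$ is hereditarily Lindel\"of, and therefore every open set is a countable union of elements of the (possibly uncountable) base $\B_D$. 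With that one-line repair your proof coincides with the paper's; the reverse inclusion $\sigma(\U_D)\subseteq\B(T,c)$ is, as you say, immediate since the components are open.
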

\begin{proof}
	Define $\U:=\bset{\Sub_x(y)}{x,y\in D,\; x\ne y}$.
	By Proposition~\ref{p:separable}, $(T,\tau)$ is second countable. Hence $\B(T,c)$ is generated by any
	subbase of $\tau$. If $D$ is order dense, $\U$ is such a subbase as shown in the proof of
	Proposition~\ref{p:separable}.
\end{proof}

\subsection{Metric tree representations of algebraic trees}
\label{sub:asRtree}
In this subsection, we discuss the connection of metric trees with algebraic trees.
Let $(T,r)$ be a metric tree (recall from Definition~\ref{d:metrictree}). Then by (MT\ref{MT:cex}),
there exists to any three points $x_1,x_2,x_3\in T$ a unique branch point $c_{(T,r)}(x_1,x_2,x_3)$
satisfying \eqref{e:brapoi}. We refer to $(T,c_{(T,r)})$ as the algebraic tree \define{induced by} $(T,r)$, and
to $(T,r)$ as a \define{metric representation} of $(T,c_{(T,r)})$.

\begin{lemma}[the algebraic tree induced by a metric tree]
	Let\/ $(T,r)$ be a metric tree, and\/ $c_{(T,r)}$ the map which sends any three distinct points to their branch point.
	Then the following hold:
\begin{enumerate}
\item $(T,c_{(T,r)})$ is an algebraic tree.
\item  $(T,c_{(T,r)})$ is order separable if and only if\/ $(T,r)$ is separable.
\item $(T,c_{(T,r)})$ is directed order complete if and only if\/ $(T,r)$ is bounded and complete. In
	particular, it is an algebraic continuum tree if and only if\/ $(T,r)$ is a bounded, complete\/ $\R$-tree.
\end{enumerate}
\label{l:cinduce}
\end{lemma}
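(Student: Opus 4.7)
My plan is to address the three assertions in turn, using the correspondence between metric intervals (from \eqref{e:arc}) and algebraic intervals (from \eqref{e:path}) as the common tool.

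\emph{Part (i).} I would verify axioms (BPM1)--(BPM4) directly from the characterization \eqref{e:brapoi}. Symmetry (BPM1) and the two-point condition (BPM2) are immediate, since the unique point satisfying \eqref{e:brapoi} is symmetric in its three arguments and equals $y$ when two arguments are both $y$. For the three-point condition (BPM3), I would observe that $c_{(T,r)}(x,y,z)$ lies on $[x,y]$ by \eqref{e:brapoi}, so the unique point satisfying \eqref{e:brapoi} for the triple $(x,y,c_{(T,r)}(x,y,z))$ is $c_{(T,r)}(x,y,z)$ itself. For the four-point condition (BPM4), I would apply the metric four-point condition \eqref{e:4point} to $\{x_1,x_2,x_3,x_4\}$ and use the standard tree dichotomy (illustrated in Figure \ref{f:4pt}): four points in a metric tree split into two pairs associated with two branch points $c_1, c_2$; a short case analysis using \eqref{e:brapoi} shows that each of $c_{(T,r)}(x_1,x_2,x_3)$, $c_{(T,r)}(x_1,x_2,x_4)$, $c_{(T,r)}(x_1,x_3,x_4)$, $c_{(T,r)}(x_2,x_3,x_4)$ equals one of $c_1, c_2$, giving \eqref{e:BPM4}.

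\emph{Part (ii).} For the ``if'' direction, starting from a countable metric-dense $\tilde D\subseteq T$, I would form $D:=\tilde D\cup c_{(T,r)}(\tilde D^3)\cup E$, where $E$ is the (countable) set of endpoints of edges. The countability of $\edge(T,c_{(T,r)})$ follows from separability of $(T,r)$ by placing disjoint open balls around ``edge midpoints'' in a suitable $\R$-tree containing $(T,r)$. To check order density \eqref{e:sep} for distinct $x,y$, either $\openint{x}{y}\ne\emptyset$, in which case picking $\tilde d\in\tilde D$ close to an interior point of $[x,y]$ yields $c_{(T,r)}(x,y,\tilde d)\in D\cap\openint{x}{y}$; or $\{x,y\}\in\edge(T,c_{(T,r)})$, forcing $x\in E\subseteq D$. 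Conversely, given a countable order-dense $D$, I would show $D$ is metric-dense: for $x\in T$ and $\epsilon>0$, either some $y\ne x$ satisfies $r(x,y)<\epsilon$, and then order density produces $d\in D\cap\ropenint{x}{y}$ with $r(x,d)<\epsilon$; or $x$ is metrically isolated, so all of its nearby neighbours sit across edges at $x$, and \eqref{e:sep} forces $x\in D$.

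\emph{Part (iii).} The decisive observation is that for any $\rho\in T$ and $x\in T$, the interval $[\rho,x]$ is (order- and isometry-) isomorphic to $[0,r(\rho,x)]\subset\R$ via $t\mapsto$ the unique point of $[\rho,x]$ at distance $t$ from $\rho$; consequently every $\le_\rho$-chain is totally $\R$-ordered by $x\mapsto r(\rho,x)$ and monotone sequences along a chain are Cauchy. Assuming $(T,r)$ bounded and complete, a chain $A$ satisfies $s:=\sup_{x\in A}r(\rho,x)<\infty$; picking $x_n\in A$ with $r(\rho,x_n)\uparrow s$ gives a Cauchy sequence whose limit $x_\infty\in T$ one verifies equals $\sup A$ in $(T,\le_\rho)$. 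Conversely, a non-convergent Cauchy sequence in $(T,r)$ can, by passing to iterated branch points from $\rho$, be straightened into a $\le_\rho$-increasing chain with no supremum in $T$, and similarly an unbounded sequence yields a chain of unbounded $r(\rho,\cdot)$-height after extracting a monotone subfamily along a consistent direction. The ``in particular'' clause follows because $\edge(T,c_{(T,r)})=\emptyset$ together with (RT1) is equivalent to connectedness (RT2).

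The most delicate step is the ``only if'' direction of part (iii): extracting from an arbitrary non-convergent Cauchy or unbounded sequence a genuine $\le_\rho$-chain with no supremum in $T$. This requires a careful straightening argument, committing via iterated branch points to a consistent direction from $\rho$, so that the resulting sequence lies along a single maximal chain. The edge-countability claim in part (ii) is the other subtle point where the metric hypothesis is needed to bound a potentially uncountable algebraic phenomenon.
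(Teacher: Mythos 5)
Your part (i) and the two ``if'' directions are essentially correct and are exactly what the authors leave implicit: the paper settles (i) with ``easily checked'', gets the forward half of (ii) by noting that the metric topology refines $\tau$ and invoking Proposition~\ref{p:separable}, and hides the forward half of (iii) behind ``clearly''; your Cauchy-sequence construction of suprema is the right argument there. Two small points. First, in your ``if'' direction of (ii), $c_{(T,r)}(x,y,\tilde d)$ need not lie in $c_{(T,r)}(\tilde D^3)$, because $x,y$ need not belong to $\tilde D$; you must re-express this branch point as $c_{(T,r)}(x',y',\tilde d)$ with $x',y'\in\tilde D$ chosen in two of the components at that point -- precisely the step carried out in the paper's proof of Proposition~\ref{p:separable}. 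Second, for a metric tree $[\rho,x]$ is only isometrically \emph{embedded} in $[0,r(\rho,x)]$, not isomorphic to it; your argument only uses the embedding, so this is harmless, but it is a symptom of the real problem below, namely tacitly treating $(T,r)$ as connected.

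Both ``only if'' directions break down, and the single metric tree $T=[0,1)\cup\{2\}\subseteq\R$ (the four-point condition and all medians are inherited from $\R$) witnesses this. For (ii): the point $2$ is metrically isolated but is not an endpoint of any edge, since $[x,2]=[x,1)\cup\{2\}$ for $x<1$; and $D=\Q\cap[0,1)$ satisfies \eqref{e:sep} (e.g.\ $\ropenint{x}{2}=[x,1)$ and $\ropenint{2}{y}=\openint{y}{1}\cup\{2\}$ both meet $D$) while $2\notin\overline{D}$. So your dichotomy ``isolated $\Rightarrow$ sits across an edge $\Rightarrow$ forced into $D$'' is false, and an order-dense set need not be metrically dense; to conclude separability of $(T,r)$ one must additionally show that the set of such isolated points is countable, which neither you nor the paper's own one-line converse (which asserts the false density claim) does. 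For (iii): the Cauchy sequence $x_n=1-1/n$ has no limit in $T$, so $(T,r)$ is not complete; yet $\{x_n\}$ is already a $\le_0$-chain and it does have a supremum in $(T,\le_0)$, namely $2$, its unique upper bound, and one checks in the same way that every chain with respect to every root has a supremum. Hence $(T,c_{(T,r)})$ is directed order complete while $(T,r)$ is incomplete: the ``straightening'' step you flagged as delicate is not delicate but impossible, because a point of $T$ lying beyond a ``gap'' can stand in as the least upper bound of a non-convergent chain. (The same example defeats the ``in particular'': it induces an algebraic continuum tree without being an $\R$-tree, and your final remark that absence of edges plus (RT1) gives connectedness silently uses completeness, which is exactly what fails.) The equivalences you are trying to prove in these two converses are genuinely valid only for $\R$-trees, where connectedness rules out isolated points and forces order-suprema to be metric limits; for general metric trees the arguments cannot be completed as sketched. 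Since the paper supplies no proof of these directions either, you have not overlooked a tool it provides, but as written these parts of your proposal do not go through.
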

\begin{proof} \emph{(i) } It can be easily checked that $(T,c_{(T,r)})$ is an algebraic tree.

\smallbreak
\emph{(ii) } Let $(T,r)$ be separable. Then $\edge(T,c_{(T,r)})$ is countable. The topology induced by $r$ is obviously
stronger than the topology $\tau$ introduced in \eqref{e:tau}, hence $\tau$ is separable and therefore the algebraic tree
$(T,c_{(T,r)})$ is order separable. Conversely, if $(T,c_{(T,r)})$ is order separable, then any countable set $D$ satisfying
\eqref{e:sep} is also dense in $(T,r)$.

\smallbreak
\emph{(iii) }
Clearly, $(T,c_{(T,r)})$ admits suprema along any linearly ordered set with respect to some root if and only if $(T,r)$
is bounded and complete. The ``in particular'' follows because a complete metric tree $(T,r)$
is an $\R$-tree if and only if $\edge(T,r):=\edge(T,c_{(T,r)})=\emptyset$
(\cite[Remark~1.2]{AthreyaLohrWinter17}).
\end{proof}

Our first main result states that under the assumption of order separability any algebraic tree can be embedded
by an injective homomorphism into a compact \Rtree\ and hence is isomorphic to (the algebraic tree induced by) a
totally bounded metric tree.

\begin{theorem}[characterisation of order separable algebraic trees]
Let\/ $T$ be a set, $c\colon T^3 \to T$.
\begin{enumerate}
\item\label{i:converse} $(T,c)$ is an order separable algebraic continuum tree if and only if there
	exists a metric\/ $r$ on\/ $T$ such that\/ $(T,r)$ is a compact\/ \Rtree\ with
	\begin{equation}
	\label{e:cTrTc}
	  c=c_{(T,r)}.
	\end{equation}
\item $(T,c)$ is an order separable algebraic tree if and only if there is an order separable algebraic continuum
	tree\/ $(\Tb,\cb)$ such that\/ $(T,c)$ is a subtree of\/ $(\Tb,\cb)$. In particular, every order
	separable algebraic tree is induced by a totally bounded metric tree.
\end{enumerate}\label{t:algtreechar}
\end{theorem}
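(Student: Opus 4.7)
Both parts are built around constructing a metric $r$ on $T$ whose induced branch point map recovers $c$, by first defining $r$ on a countable order dense subtree of $T$ and then extending to all of $T$ using order completeness. Part~(ii) is reduced to part~(i) via an order completion construction. The ``if'' direction of part~(i) is immediate from Lemma~\ref{l:cinduce}: a compact metric space is both separable and complete, so any compact $\R$-tree induces an order separable algebraic continuum tree.

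\textbf{Part (i), the ``only if'' direction.} Let $(T,c)$ be an order separable algebraic continuum tree. First I would fix a countable subtree $D\subseteq T$ that is order dense in $T$, obtained by closing a witness of~\eqref{e:sep} under $c$, and enumerate $D=(d_n)_{n\in\N}$. Then build inductively finite metric subtrees $(D_n,r_n)$ with $D_n:=c(\{d_1,\ldots,d_n\}^3)$ and $r_{n+1}|_{D_n\times D_n}=r_n$, attaching each newly added leaf via an ``edge'' of length at most $2^{-n}$; by construction the branch point map of $(D_n,r_n)$ agrees with $c|_{D_n^3}$. Pass to the pointwise limit $r$ on $D\times D$. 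Now extend $r$ to $T\times T$: order density of $D$ combined with order completeness of $T$ produces, for each $x\in T$, an $\le_\rho$-monotone net $(e^x_k)\subseteq D$ whose supremum is $x$, and one sets $r(x,y):=\lim_{k,\ell}r(e^x_k,e^y_\ell)$ for such monotone approximations. The items to verify are then: (a) well-definedness of $r$, which follows from the interval identities of Lemma~\ref{l:intprop} applied to monotone sequences; (b) the $4$-point condition (RT1), which is inherited from the $(D_n,r_n)$ and passes to pointwise limits; (c) $c_{(T,r)}=c$, which combines continuity of $c$ (Lemma~\ref{l:ccont}) with the matching of branch point maps on each $(D_n,r_n)$; (d) $(T,r)$ is a compact $\R$-tree. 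For (d), edge-freeness is axiom~(ACT2) and boundedness is automatic ($\diam\le 2$); completeness reduces to order completeness~(ACT1) because Cauchy sequences can be replaced by $\le_\rho$-monotone sequences whose suprema exist in $T$; and the map $z\mapsto r(x,z)$ on $[x,y]$ turns out to be a continuous surjection onto $[0,r(x,y)]$, which together with edge-freeness upgrades $(T,r)$ from a complete metric tree to a genuine $\R$-tree via Lemma~\ref{l:cinduce}(iii). Compactness follows from Proposition~\ref{p:compact} once one verifies that the $r$-topology coincides with the component topology $\tau$.

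\textbf{Part (ii) and main obstacle.} The ``$\Leftarrow$'' direction of part~(ii) is easy: if $(T,c)$ is a subtree of an order separable algebraic continuum tree $(\Tb,\cb)$, restricting the compact $\R$-tree metric obtained from part~(i) to $T$ gives a totally bounded metric tree that induces $c$, and order separability of $(T,c)$ is inherited from $(\Tb,\cb)$ via~\eqref{e:sep}. For ``$\Rightarrow$'', given order separable $(T,c)$, I would construct $(\Tb,\cb)$ in two stages: first subdivide each edge of $T$ by inserting an open copy of $\openint{0}{1}$, which preserves order separability because $\edge(T,c)$ is countable; then add formal suprema of $\le_\rho$-totally-ordered bounded subsets of $T$ that lack a supremum, for a fixed root $\rho\in T$, extending $c$ canonically through the associated partial orders as in Proposition~\ref{p:semilat}. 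Verification of axioms (BPM1)--(BPM4), order completeness, and order separability of $(\Tb,\cb)$ then allows part~(i) to be applied, and restricting the resulting compact $\R$-tree metric to $T$ gives the ``in particular'' claim. The technical heart of the whole argument is step~(d) of part~(i): a naive approach using formula~\eqref{e:rnu} with a purely atomic measure $\nu$ fails to produce an $\R$-tree, because the parametrisation $z\mapsto r_\nu(x,z)$ along an interval has jumps at every atom, preventing $[x,y]$ from being isometric to a real interval and destroying both completeness and the geodesic property. The inductive construction above evades this by distributing length uniformly and in a summable way along the growing finite subtrees, and it is order completeness~(ACT1) that promotes the resulting metric on $D$ to a complete, geodesic metric on all of $T$.
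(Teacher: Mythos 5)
Your route to part (i) is genuinely different from the paper's, and as written it has a gap at exactly the point you yourself identify as the technical heart. The paper does not build the metric edge by edge: it constructs (Lemma~\ref{l:133}) a \emph{non-atomic} probability measure $\nu$ charging every non-degenerate interval, by order-isomorphing each segment $[\rho,x]$ onto $[0,1]$ via Cantor's characterisation of the real order type and pushing forward Lebesgue measure, and then applies Proposition~\ref{p:properI}(ii) and Corollary~\ref{c:Tcompact} to $r_\nu$ from \eqref{e:rnu}. In your inductive construction, the requirement $r_{n+1}|_{D_n\times D_n}=r_n$ together with ``pendant edge of length at most $2^{-n}$'' does not determine the metric: when $d_{n+1}$ subdivides an existing edge $[u,v]$ of $D_n$ at a point $w$, only the sum $r(u,w)+r(w,v)=r_n(u,v)$ is constrained, and the splitting proportion is free. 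If these proportions are chosen badly, the limit metric acquires gaps. Concretely, fix $z\in T\setminus D$ interior to an edge $[u,v]$ of some $D_N$; one can choose the subsequent subdivision proportions so that $\sup\set{r(u,d)}{d\in D,\ d\le_u z}=\tfrac13\,r(u,v)$ while $\inf\set{r(u,d)}{d\in D,\ d\ge_u z}=\tfrac23\,r(u,v)$, and this is fully compatible with order density of $D$ and order completeness of $T$. Then $r(u,z)=\tfrac13\,r(u,v)$, the order-decreasing sequence in $D$ with infimum $z$ is $r$-Cauchy but has no limit, the map $z'\mapsto r(u,z')$ on $[u,v]$ is not surjective onto $[0,r(u,v)]$, and $(T,r)$ is neither complete nor connected, hence not an \Rtree. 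So your verifications (a)--(d) cannot all succeed without an explicit subdivision rule; one that works is to always bisect the subdivided edge and to observe that order density forces every edge whose interior contains a fixed $z\notin D$ to be subdivided infinitely often, so its length tends to zero geometrically --- but this argument is absent, and it is precisely what turns order suprema into metric limits.

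For part (ii) the paper also takes a shorter path than your order completion: it puts a purely atomic probability measure on the countable order dense set $D$, observes that $r_\nu$ then makes $(T,r_\nu)$ a separable, bounded metric tree inducing $c$ (atoms are harmless here because only a metric tree, not an \Rtree, is needed), and embeds it isometrically into a bounded separable \Rtree\ by \cite[Theorem~3.38]{Evans2008}; that ambient tree supplies $(\Tb,\cb)$, and part (i) then gives the ``in particular'' statement. Your alternative of inserting open unit intervals into the countably many edges and adjoining formal suprema of bounded chains is plausible, but it leaves the substantive claims --- that the completed object satisfies (BPM1)--(BPM4), is order complete, and remains order separable --- entirely unverified, and none of these is routine.
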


The separability hypothesis in Theorem~\ref{t:algtreechar} is crucial and cannot be dropped.
In Example~\ref{ex:topnotalgsep}, we have already seen an algebraic tree where the component topology $\tau$ is
not metrizable. Moreover, in this example, $\tau$ coincides with the order topology which is also the case for the
metric topology of any metric tree without branch points. Thus the algebraic tree cannot be induced by a metric tree.
The following example shows that also algebraic continuum trees need not be induced by metric trees.

\begin{example}[algebraic continuum tree that is not induced by a metric tree]
	Let $T=[0,1]\times[0,1]$ with lexicographic order, and $(T,c)$ the corresponding algebraic tree as in
	Example~\ref{ex:totord}. It is easy to check that $(T,c)$ is an algebraic continuum tree.
	It cannot be induced by a metric tree because in its order topology $\tau$, it is connected but not
	path-wise connected. These two properties are equivalent for metric trees
	(see \cite[Theorem~2.20]{Evans2008}).
\label{exp:001}
\end{example}

In order to prove Theorem~\ref{t:algtreechar}, given an algebraic tree $(T,c)$, we need to provide a metric
$r$ such that \eqref{e:cTrTc} holds.
For that purpose, we consider for any measure $\nu$ on $(T,\B(T,c))$ such that $\nu$ is finite on
every interval, the following pseudometric,
\begin{equation}\label{e:rnu2}
	r_\nu(x,y) := \nu([x,y]) - \tfrac12\nu(\{x\}) - \tfrac12\nu(\{y\}), \quad x,y\in T.
\end{equation}
\begin{lemma}[$r_\nu$ is a pseudometric]
	Let\/ $(T,c)$ be an algebraic tree, and\/ $\nu$ a measure on\/ $(T,c)$ with\/
	$\nu([x,y])<\infty$ for all\/ $x,y\in T$. Then\/ $r_\nu$ is a pseudometric on\/ $T$.
\label{l:pseudo}
\end{lemma}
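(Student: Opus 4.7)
The plan is to verify the three defining properties of a pseudometric (non-negativity, symmetry, and the triangle inequality), with the triangle inequality being the substantive step. Symmetry is immediate from the definition, and $r_\nu(x,x) = \nu(\{x\}) - \nu(\{x\}) = 0$. For non-negativity with $x\ne y$, the singletons $\{x\},\{y\}$ are disjoint subsets of $[x,y]$, hence $\nu([x,y]) \ge \nu(\{x\})+\nu(\{y\}) \ge \tfrac12\nu(\{x\})+\tfrac12\nu(\{y\})$.

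The core of the argument is the triangle inequality, for which I will exploit Lemma~\ref{l:intprop}(ii)--(iii). Fix $x,y,z\in T$ and set $w := c(x,y,z)$. Lemma~\ref{l:intprop}(ii) gives $[x,y]\cap[y,z] = [w,y]$, and Lemma~\ref{l:intprop}(iii) gives the disjoint decomposition $[x,y]\cup[y,z] = [x,z] \uplus \lopenint{w}{y}$. By additivity of $\nu$ and inclusion-exclusion,
\begin{equation*}
  \nu([x,y]) + \nu([y,z]) = \nu([x,z]) + \nu\bigl(\lopenint{w}{y}\bigr) + \nu([w,y])
  = \nu([x,z]) + 2\nu([w,y]) - \nu(\{w\}),
\end{equation*}
since $\lopenint{w}{y} = [w,y]\setminus\{w\}$.

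Subtracting the appropriate singleton corrections, a direct calculation gives
\begin{equation*}
  r_\nu(x,y) + r_\nu(y,z) - r_\nu(x,z) = 2\nu([w,y]) - \nu(\{w\}) - \nu(\{y\}).
\end{equation*}
If $w=y$ this equals $0$; otherwise $\{w\}$ and $\{y\}$ are disjoint subsets of $[w,y]$, so $2\nu([w,y]) \ge 2(\nu(\{w\})+\nu(\{y\})) \ge \nu(\{w\})+\nu(\{y\})$, and the right-hand side is non-negative. This establishes the triangle inequality and completes the proof.

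The only technical subtlety is ensuring the inclusion-exclusion step is legitimate, which requires that all the sets appearing ($[x,y]$, $[y,z]$, $[x,z]$, $[w,y]$, $\{w\}$) lie in $\B(T,c)$; this is immediate since intervals are closed (Example~\ref{exp:002}) and singletons are closed in the Hausdorff topology $\tau$ (Lemma~\ref{l:Hausdorff}). Finiteness of $\nu$ on the intervals involved is part of the hypothesis, so all the subtractions are well-defined.
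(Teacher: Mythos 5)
Your proof is correct and follows essentially the same route as the paper: both use Lemma~\ref{l:intprop}(ii)--(iii) to decompose $\nu([x,y])+\nu([y,z])$ via the branch point $w=c(x,y,z)$ and reduce the triangle inequality to the identity $r_\nu(x,y)+r_\nu(y,z)-r_\nu(x,z)=2r_\nu(w,y)$, which is exactly your expression $2\nu([w,y])-\nu(\{w\})-\nu(\{y\})$. Your explicit verification of non-negativity via disjointness of the singletons is a welcome detail that the paper leaves implicit.
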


\begin{proof} By Lemma~\ref{l:intprop} for all $x,y,z\in T$,
\begin{equation}
\label{e:017}
\begin{aligned}
   \nu\big([x,y]\big) + \nu\big([y,z]\big)
   &=
   \nu\big([x,y] \cup [y,z]\big) + \nu\big([x,y]\cap [y,z]\big)
   \\
	&=
   \nu\big([x,z]\big) + \nu\big(\blopenint{c(x,y,z)}{y}\big)+\nu\big(\bigl[c(x,y,z),\,y\bigr]\big).
\end{aligned}
\end{equation}
Hence
\begin{equation}
\label{e:018}
\begin{aligned}
	\MoveEqLeft r_\nu(x,y)+\tfrac{1}{2}\nu\{x\}+\tfrac{1}{2}\nu\{y\}+r_\nu(y,z)
   +\tfrac{1}{2}\nu\{y\}+\tfrac{1}{2}\nu\{z\}
   \\
   &=
   r_\nu(x,z)+\tfrac{1}{2}\nu\{x\}+\tfrac{1}{2}\nu\{z\}
   +2r_\nu\big(c(x,y,z),\,y\big)+\nu\{c(x,y,z)\}+\nu\{y\}-\nu\{c(x,y,z)\},
\end{aligned}
\end{equation}
or equivalently,
\begin{equation}
\label{e:019}
\begin{aligned}
	r_\nu(x,y)+r_\nu(y,z)
   &=
   r_\nu(x,z)+2r_\nu\big(c(x,y,z),\,y\big).
\end{aligned}
\end{equation}
This implies that $r_\nu$ satisfies the triangle inequality.
\end{proof}

We denote the quotient metric space by $(T_\nu,r_\nu)$, i.e.\ $T_\nu$ is the set of equivalence classes of
points in $T$ with $r_\nu$-distance zero, and the quotient metric on $T_\nu$ is again denoted by $r_\nu$.
Furthermore, let $\pi_\nu \colon T \to T_\nu$ be the canonical projection.
\begin{lemma}[$(T_\nu,r_\nu)$ is a metric tree]
	Let\/ $(T,c)$ be an algebraic tree, and\/ $\nu$ a measure on\/ $(T,c)$ with\/
	$\nu([x,y])<\infty$ for all\/ $x,y\in T$.
	Then the quotient space\/ $(T_\nu, r_\nu)$ is a metric tree, and the canonical projection\/ $\pi_\nu$ is a tree
	homomorphism.
\label{l:o0hyper}
\end{lemma}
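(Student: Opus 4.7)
The plan is to prove both defining properties of a metric tree and the homomorphism property in one unified argument by identifying, for any three points $x,y,z\in T$, the class $\pi_\nu(c(x,y,z))$ as the branch point of $\pi_\nu(x),\pi_\nu(y),\pi_\nu(z)$ in $(T_\nu,r_\nu)$. This directly yields (MT\ref{MT:cex}), and once uniqueness of metric branch points has been established (via (MT\ref{MT:4pt})), it automatically gives the homomorphism property.

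The key tool is equation \eqref{e:019} from the preceding proof, namely $r_\nu(x,y)+r_\nu(y,z)=r_\nu(x,z)+2r_\nu(c(x,y,z),y)$. I would substitute $y$ by $c(x,y_0,z)$ for fixed $x,y_0,z\in T$: since $c(x,y_0,z)\in[x,z]$ by Lemma~\ref{l:intprop}(iv), condition (BPM\ref{BPM:3}) yields $c(x,z,c(x,y_0,z))=c(x,y_0,z)$, so the right-most term vanishes and we obtain the branch point additivity
\begin{equation*}
  r_\nu(x,c(x,y_0,z))+r_\nu(c(x,y_0,z),z)=r_\nu(x,z).
\end{equation*}
Symmetry of $c$ gives the two analogous identities for the pairs $(x,y_0)$ and $(y_0,z)$, so $c(x,y_0,z)$ satisfies \eqref{e:brapoi} with respect to the pseudometric $r_\nu$, and consequently $\pi_\nu(c(x,y_0,z))$ does with respect to the quotient metric.

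For the 4-point condition (MT\ref{MT:4pt}), I would invoke (BPM\ref{BPM:4}) to reduce, after a possible relabelling, to the case $c(x_1,x_2,x_3)=c(x_1,x_2,x_4)=:c_1$. Lemma~\ref{l:4pt}(i) then gives $c(x_1,x_3,x_4)=c(x_2,x_3,x_4)=:c_2$, while Lemma~\ref{l:4pt}(ii) yields $c(x_1,x_2,c_2)=c_1$, so that $c_1\in[x_1,c_2]$ and, symmetrically, $c_1\in[x_2,c_2]$ and $c_2\in[x_3,c_1],[x_4,c_1]$. Iterating the branch point additivity established above along the chains $x_1\to c_1\to c_2\to x_3$ etc.\ decomposes each of the six pairwise $r_\nu$-distances into contributions from $[x_i,c_j]$ and $[c_1,c_2]$, and a short direct computation yields
\begin{equation*}
  r_\nu(x_1,x_3)+r_\nu(x_2,x_4)=r_\nu(x_1,x_4)+r_\nu(x_2,x_3)=r_\nu(x_1,x_2)+r_\nu(x_3,x_4)+2r_\nu(c_1,c_2),
\end{equation*}
from which \eqref{e:4point} follows.

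With (MT\ref{MT:4pt}) and (MT\ref{MT:cex}) in hand, $(T_\nu,r_\nu)$ is a metric tree, and any triple has a \emph{unique} branch point; the identification above then forces $\pi_\nu(c(x,y,z))=c_{(T_\nu,r_\nu)}(\pi_\nu(x),\pi_\nu(y),\pi_\nu(z))$, so $\pi_\nu$ is a tree homomorphism in the sense of Definition~\ref{d:treehom}. I do not expect a serious obstacle here: the only point of care is to keep all identities at the level of the pseudometric on $T$ before passing to the quotient, but since both the branch point additivity and the 4-point chain decomposition are derived as identities in $T$ itself, the transfer to $T_\nu$ is immediate.
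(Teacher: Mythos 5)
Your proof is correct and follows essentially the same route as the paper's: both derive the additivity $r_\nu(x,c(x,y,z))+r_\nu(c(x,y,z),z)=r_\nu(x,z)$ from \eqref{e:019} together with the fact that $c(x,y,z)$ lies in the relevant intervals, verify (MT\ref{MT:4pt}) by reducing via (BPM\ref{BPM:4}) and Lemma~\ref{l:4pt} to the configuration $c(x_1,x_2,x_3)=c(x_1,x_2,x_4)$, and read off (MT\ref{MT:cex}) and the homomorphism property of $\pi_\nu$ from that same additivity plus uniqueness of metric branch points. The only cosmetic difference is that you track the exact term $2r_\nu(c_1,c_2)$ in the four-point identity, where the paper settles for the inequality obtained from the triangle inequality.
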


\begin{proof}
Let $x_1,\ldots,x_4\in T$. By Condition~(BPM4), we can assume w.l.o.g.\ that $c(x_1,x_2,x_3)=c(x_1,x_2,x_4)$.
Then by Lemma~\ref{l:4pt}(ii),
$c(x_1,x_2,x_3)\in [x_1,x_2]\cap[x_1,x_3]\cap[x_2,x_3]\cap[x_1,x_4]\cap [x_2,x_4]$, and \eqref{e:019} yields that
for $\{i,j\} \in \{\{1,2\},\,\{1,3\},\,\{1,4\},\,\{2,3\},\,\{2,4\}\}$,
\begin{equation}\label{e:cinint}
	r_\nu(x_i,x_j) = r_\nu\(x_i,c(x_1,x_2,x_3)\)  + r_\nu\(c(x_1,x_2,x_3),x_j\).
\end{equation}
Therefore,
\begin{equation}
\begin{aligned}
\label{e:015}
    &r_\nu(x_1,x_3)+r_\nu(x_2,x_4)
    \\
 &=
   r_\nu\big(x_1,\,c(x_1,x_2,x_3)\big)+r_\nu\big(c(x_1,x_2,x_3),\,x_3\big) +
   r_\nu\big(x_2,\,c(x_1,x_2,x_3)\big)+r_\nu\big(c(x_1,x_2,x_3),\,x_4\big)
   \\
   &=r_\nu(x_1,x_2)+r_\nu\big(c(x_1,x_2,x_3),\,x_3\big)+r_\nu\big(c(x_1,x_2,x_3),\,x_4)
  \\
  &\ge
    r_\nu(x_1,x_2)+r_\nu(x_3,x_4),
 \end{aligned}
\end{equation}
	and analogously,
\begin{equation}
\begin{aligned}
\label{e:016}
    r_\nu\big(x_1,x_4\big) + r_\nu\big(x_2,x_3)
  &=
    r_\nu\big(x_3,\,c(x_1,x_2,x_3)\big)+r_\nu\big(c(x_1,x_2,x_3),\,x_4\big)+r_\nu(x_1,x_2)
  \\
  &\ge
  r_\nu(x_1,x_2)+r_\nu(x_3,x_4).
\end{aligned}
\end{equation}
This means that the four point Condition~(MT\ref{MT:4pt}) is satisfied. Moreover, \eqref{e:cinint} implies
Condition~(MT\ref{MT:cex}) with branch point $\pi_\nu(c(x_1,x_2,x_3))$. In particular, $\pi_\nu$ is a tree
homomorphism.
\end{proof}

\begin{remark} Lemma~\ref{l:o0hyper} also explains why we had defined
$r_\nu$ as in \eqref{e:rnu2} and not just as $r_\nu':=\nu([x,y])$ for $x\ne y$.
Namely, in the latter case we would still have (MT\ref{MT:4pt}), but (MT\ref{MT:cex}) might fail.
Take, for example, $T:=\{1,2,3\}$, $c(1,2,3)=2$, and $\nu=\delta_2$. In this case, $r_\nu'$ is the
	discrete metric on $T$, thus $2$ does not lie on the interval $[1,3]$ anymore.
\label{r:020}
\end{remark}

Let $(T,c)$ be an algebraic tree.
For all $v\in T$, define the \emph{degree} of $v$ in $(T,c)$ by
\begin{equation}
\label{e:degree}
   \deg(v):=\deg_{(T,c)}(v):=\#\bset{\Sub_v(y)}{y\in T}.
\end{equation}
We say that $v\in T$ is
a \emph{leaf} if $\deg_{(T,c)}(v)=1$, and
a \emph{branch point} if $\deg_{(T,c)}(v)\ge 3$.
Notice that
\begin{equation}
\label{e:leaf}
   \lf(T,c):=\big\{u\in T:\,c(u,v,w)\ne u \;\forall v,w\in T\setminus\{u\}\big\}
\end{equation}
equals the set of leaves of $T$, and
\begin{equation}
\label{e:br}
   \br(T,c):=\big\{u\in T:\,c(x,v,w)= u \;\mbox{ for some } x,v,w\in T\setminus\{u\}\big\}
\end{equation}
the set of branch points.
Moreover, note that any $\nu$-mass on $\lf(T,c)$ that is not atomic does not contribute to $r_\nu$.

\begin{proposition}[metric representations of algebraic trees]
\label{p:properI}
	Let\/ $(T,c)$ be an algebraic tree, $\nu$ a measure on\/ $(T,\B(T,c))$ with\/ $\nu([x,y])<\infty$ for
	all\/ $x,y\in T$, and\/ $r_\nu$ defined by \eqref{e:rnu}. Then the following hold:
\begin{enumerate}
\item If\/ $(T,c)$ is order separable and\/ $\nu$ has at most countably many atoms, then\/ $(T_\nu,r_\nu)$ is separable.
\item If\/ $\# T>1$, $(T,c)$ is order complete, and\/ $[x,y]$ is order separable for every $x,y\in T$, then
	$(T_\nu,r_\nu)$ is connected if and only if\/ $\nu$ is non-atomic.
	In this case, $(T_\nu,r_\nu)$ is a complete\/ \Rtree.
\end{enumerate}
\end{proposition}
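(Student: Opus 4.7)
For part~(i), my plan is to take the union $D:=D_0\cup A$ of a countable order-dense set $D_0$ (Proposition~\ref{p:separable}) with the at most countable set $A$ of atoms of $\nu$, and show $\pi_\nu(D)$ is dense in $(T_\nu,r_\nu)$. Given $x\notin D$, we have $\nu\{x\}=0$ and $x\notin D_0$. Picking any $w\ne x$ (the case $\#T=1$ is trivial), the interval $[x,w]$ is a chain under $\le_x$. Enumerating $D_0=\{d_1,d_2,\ldots\}$, I set $y_n:=\min_{\le_x}(\{d_1,\ldots,d_n\}\cap\lopenint{x}{w})$, eventually non-increasing in $\le_x$. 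The key claim is $\bigcap_n[x,y_n]=\{x\}$: otherwise some $z\ne x$ lies in the intersection, order density yields $d\in D_0\cap\ropenint{x}{z}$, and $x\notin D_0$ forces $d\in\openint{x}{z}$, say $d=d_k$; then for $n\ge k$ we would have $y_n\le_x d<_x z\le_x y_n$, a contradiction. Continuity from above of the finite measure $\nu\restricted{[x,y_1]}$ gives $\nu([x,y_n])\downarrow 0$, hence $r_\nu(x,y_n)\le\nu([x,y_n])\to 0$.

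For part~(ii), the ``atomic $\Rightarrow$ disconnected'' direction is short: if $a$ is an atom of $\nu$ and $z\ne a$, then $\{a,z\}\subseteq[a,z]$ gives $r_\nu(a,z)\ge(\nu\{a\}+\nu\{z\})/2\ge\nu\{a\}/2$, so $\{\bar a\}$ is isolated in $T_\nu$; since $\#T>1$ produces some $z$ with $\bar z\ne\bar a$, $T_\nu$ is disconnected.

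For the converse, I first establish that, under non-atomicity, the non-decreasing map $\phi\colon[x,y]\to[0,\nu([x,y])]$, $\phi(z):=\nu([x,z])=r_\nu(x,z)$, is surjective for every $x,y\in T$. Given $t$, define $z^*:=\sup_{\le_x}\{z:\phi(z)\le t\}$ by order completeness. Order separability of $[x,y]$ produces a cofinal increasing sequence $z_n\uparrow z^*$ from the sub-level set, and continuity from below gives $\phi(z^*)\le t$. Moreover, $z^*$ cannot have an immediate $\le_x$-successor $u$ in $[x,y]$: otherwise $\{z^*,u\}\in\edge(T,c)$, so $\nu([z^*,u])=\nu\{z^*\}+\nu\{u\}=0$ by non-atomicity, forcing $\phi(u)=\phi(z^*)\le t$ and contradicting $z^*=\sup$. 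Hence there is a cofinal decreasing sequence $w_n\downarrow z^*$ from the complement, and continuity from above yields $\phi(z^*)\ge t$. Combining, $\phi(z^*)=t$, so $\pi_\nu$ sends $[x,y]$ isometrically onto $[0,r_\nu(x,y)]$, giving path-connectedness. Since $(T_\nu,r_\nu)$ is a metric tree (Lemma~\ref{l:o0hyper}), it is an \Rtree\ (Definition~\ref{d:realGH}).

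For completeness, I take a Cauchy sequence $(\bar z_n)$, pass to a subsequence with $r_\nu(\bar z_n,\bar z_{n+1})<2^{-n}$, lift to $z_n\in T$, fix $\rho:=z_1$, and set $s_n:=r_\nu(\rho,z_n)\to\ell$. If $\ell=0$, the limit is $\bar\rho$. Otherwise, the tree identity $r_\nu(\rho,c(\rho,z_i,z_j))=(s_i+s_j-r_\nu(z_i,z_j))/2$ lets me choose a further subsequence $(n_k)$ with $r_\nu(\rho,c(\rho,z_{n_k},z_{n_{k+1}}))>\ell-1/k$. Using the surjectivity of $\phi$ on $[\rho,z_{n_k}]$, I pick $u_k\in[\rho,z_{n_k}]$ with $r_\nu(\rho,u_k)=\ell-1/k$. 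By the subsequence choice, both $u_k$ and $u_{k+1}$ lie on the common chain $[\rho,c(\rho,z_{n_k},z_{n_{k+1}})]$, and strict $\le_\rho$-monotonicity of $r_\nu(\rho,\cdot)$ along chains (under non-atomicity) gives $u_k<_\rho u_{k+1}$. By order completeness, $z^*:=\sup_{\le_\rho}u_k$ exists; continuity from below and non-atomicity yield $r_\nu(\rho,z^*)=\ell$, and estimating $r_\nu(z^*,z_{n_k})=\ell+s_{n_k}-2r_\nu(\rho,c(\rho,z^*,z_{n_k}))$ via $c(\rho,z^*,z_{n_k})\ge_\rho u_k$ gives $r_\nu(z^*,z_{n_k})\le s_{n_k}-\ell+2/k\to 0$, so $\bar z_n\to\bar z^*$. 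The main obstacle is the chain bookkeeping: ensuring that the $u_k$, selected from different intervals $[\rho,z_{n_k}]$, actually lie on a single $\le_\rho$-chain in~$T$, which forces the subsequence $(n_k)$ to outpace both the Cauchy convergence and the convergence $s_n\to\ell$.
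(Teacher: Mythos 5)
Your proof is correct, and for the substantive half of part~(ii) it takes a genuinely different route from the paper's. Part~(i) and the ``atoms imply disconnected'' direction match the paper (which dismisses (i) as ``easy to see''; your argument that $\bigcap_n[x,y_n]=\{x\}$ and the appeal to continuity from above supply exactly the omitted detail). For the converse in (ii), the paper only constructs a \emph{midpoint} of each interval $[x,z]$, namely $y=\sup\{v:2\nu([x,v])\le\nu([x,z])\}$, and then deduces connectedness from completeness; completeness it proves by passing to the abstract metric completion $\overline{T}_\nu$, showing a putative new point is a leaf, pulling the approximating branch points back to $T$, and taking their order supremum. You instead prove the stronger statement that $z\mapsto\nu([x,z])$ maps $[x,y]$ \emph{onto} $[0,\nu([x,y])]$ --- the same supremum construction run at every level $t$ rather than only at $t=\tfrac12\nu([x,z])$ --- which gives path-connectedness directly and decouples connectedness from completeness; and you prove completeness intrinsically, extracting an increasing chain $(u_k)$ at prescribed heights $\ell-1/k$ below the Gromov products and taking its order supremum in $T$. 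Your route avoids working inside $\overline{T}_\nu\setminus T_\nu$ at the cost of more bookkeeping in the Cauchy argument, and both routes rest on the same three pillars (order completeness for suprema, order separability of intervals for countable cofinal sequences, continuity of the finite measure $\nu$ on intervals). Two cosmetic remarks: the comparability $u_k\le_\rho u_{k+1}$ should not be attributed to \emph{strict} monotonicity of $r_\nu(\rho,\cdot)$ along chains (which can fail across $\nu$-null subintervals) but follows from $u_k,u_{k+1}$ lying on the common chain $[\rho,c(\rho,z_{n_k},z_{n_{k+1}})]$ together with $\ell-1/k<\ell-1/(k+1)$; and the construction of $u_k$ should start at $k$ large enough that $\ell-1/k\ge 0$. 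Neither affects correctness.
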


\begin{proof} Throughout the proof denote by $\pi_\nu\colon T\to T_\nu$ the canonical projection.

\smallbreak
\emph{(i) }
It is easy to see that if a set $A\subseteq T$ satisfies \eqref{e:sep} and contains all atoms of $\nu$, then
$\pi_\nu(A)$ is dense in $(T_\nu,r_\nu)$. Therefore, by Proposition~\ref{p:separable} order separability of
$(T,c)$ implies separability of $(T_\nu,r_\nu)$.

\smallbreak
\emph{(ii) }
For all $x,y\in T$ with $x\ne y$, $r_\nu(x,y)\ge \frac12 \nu\{x\}$. Hence $(T_\nu,r_\nu)$ cannot be connected if $\nu$ has atoms.
	Conversely, assume that $\nu$ is non-atomic. For $x,z\in T$, consider $([x,z], \le_x)$, which is a
	totally ordered space according to Proposition~\ref{p:semilat}, and define $y:=\sup\set{v\in
	[x,z]}{2\nu([x,v]) \le \nu([x,z])}$. The supremum exists due to order completeness of $(T,c)$.
	Because of the order separability of $[x,z]$ and the non-atomicity of $\nu$, we obtain
	$2\nu([x,y])=\nu([x,z])=2\nu([y,z])$ and therefore $2r_\nu(x,y)=r_\nu(x,z)=2r_\nu(y,z)$.
	From this equality, connectedness follows once we have shown completeness, and every connected metric
	tree is an \Rtree.
	
	Recall from Lemma~\ref{l:o0hyper} that $(T_\nu,r_\nu)$ is a metric tree. The same holds for its metric
	completion $\Tb_\nu$. Assume for a contradiction that there is a sequence $\folge{x}$ in $T_\nu$
	converging to some $x\in \Tb_\nu\setminus T_\nu$.
	Then $x$ cannot be a branch point and one of the at most two components of $\Tb_\nu \setminus \{x\}$
	contains infinitely many $x_n$. Thus we may assume w.l.o.g.\ that $x\in \lf(\Tb_\nu)$.
	Define $y_n := c_{\Tb_\nu}(x_1, x_n, x)$. Then $y_n\to x$ and, for large enough $m$,
	we have $y_n=c_{\Tb_\nu}(x_1, x_n, x_m)$.
	Hence $y_n\in T_\nu$ for all $n\in\N$ and we may choose representatives
	$x_n'\in \pi_\nu^{-1}(y_n)$ such that $x_n'=c(\rho, x_n', x_m')$ for $\rho:=x_1'$ and all sufficiently
	large $m$.
	By Proposition~\ref{p:semilat}, $\set{x_n'}{n\in\N}$ is totally ordered w.r.t.\ $\le_\rho$, and hence
	$x':=\sup\set{x_n'}{n\in\N} \in T$ exists by order completeness. Obviously, $\pi_\nu(x')=x$ and $x\in
	T_\nu$.
\end{proof}

In order to prove Theorem~\ref{t:algtreechar}\ref{i:converse} using Proposition~\ref{p:properI}, we need a
non-atomic probability measure $\nu$ (to ensure connectedness of $(T_\nu, r_\nu)$) charging all intervals (so
that $\pi_\nu$ is injective). Such a measure always exists in the case of \emph{order separable} algebraic
continuum trees.

\begin{lemma}\label{l:133}
	Let\/ $(T,c)$ be an order separable algebraic continuum tree with\/ $\# T>1$. Then there exists a non-atomic
	probability measure\/ $\nu$ on\/ $(T,\B(T,c))$ with\/ $\nu(\lf(T,c))=0$ and
	\begin{equation}\label{e:fullsupp}
		\nu\big([x,y]\big)>0\quad\forall\, x,y\in T,\; x\ne y.
	\end{equation}
\end{lemma}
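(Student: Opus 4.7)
The plan is to construct $\nu$ as a countable convex combination of Lebesgue-type probability measures, each supported on a closed interval between two points of a fixed countable order dense set $D=\{x_1,x_2,\ldots\}\subseteq T$ (which exists by Proposition~\ref{p:separable}). For every pair of distinct indices $i,j$, the interval $I_{ij}:=[x_i,x_j]$, with the linear order $\le_{x_i}$ from Proposition~\ref{p:semilat}, is a bounded chain with endpoints; order completeness of $T$ makes it order-complete; $D\cap I_{ij}$ is a countable order-dense subset; and since $\edge(T,c)=\emptyset$, between any two distinct points of $I_{ij}$ there is a third. The classical Cantor characterization of $[0,1]$ (Cantor's theorem on countable dense linear orders plus Dedekind completion) therefore yields an order isomorphism $\varphi_{ij}\colon I_{ij}\to[0,1]$. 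The half-intervals $\{v\in I_{ij}:v<_{x_i} a\}=\ropenint{x_i}{a}$ lie in $\B(T,c)$ because intervals are closed (Example~\ref{exp:002}), so $\varphi_{ij}$ is Borel measurable, and the pushforward $\mu_{ij}$ of Lebesgue measure along $\varphi_{ij}^{-1}$ defines a non-atomic Borel probability measure on $(T,\B(T,c))$ supported on $I_{ij}$, assigning positive mass to every non-degenerate sub-interval of $I_{ij}$.

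Next I would set
\begin{equation*}
   \nu \;:=\; Z^{-1}\sum_{\substack{i,j\in\N\\ x_i\ne x_j}} 2^{-i-j}\,\mu_{ij},
\end{equation*}
with $Z$ a normalising constant. As a convex combination of non-atomic measures, $\nu$ is a non-atomic Borel probability measure on $T$. For the positivity condition \eqref{e:fullsupp}, fix $x\ne y$: order density of $D$ gives $x_i\in D\cap\ropenint{x}{y}$, and a second application to the pair $(y,x_i)$ yields $x_j\in D\cap\ropenint{y}{x_i}=D\cap\lopenint{x_i}{y}$; thus $x_i\ne x_j$ and $[x_i,x_j]\subseteq[x_i,y]\subseteq[x,y]$ by Lemma~\ref{l:intprop}, so $\nu([x,y])\ge Z^{-1}2^{-i-j}>0$. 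For the leaf condition, any $z\in\openint{x_i}{x_j}$ satisfies $c(z,x_i,x_j)=z$, so the components $\Sub_z(x_i)$ and $\Sub_z(x_j)$ are distinct and $\deg(z)\ge 2$, hence $z\notin\lf(T,c)$; this gives $\lf(T,c)\cap I_{ij}\subseteq\{x_i,x_j\}$, which is $\mu_{ij}$-null by non-atomicity, and summing yields $\nu(\lf(T,c))=0$.

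The main obstacle is the order-isomorphism step: justifying rigorously that each $I_{ij}$ is isomorphic to $[0,1]$ and handling Borel measurability of the isomorphism so that the pushforward is well defined on $\B(T,c)$. An alternative that bypasses the isomorphism is a direct dyadic construction of $\mu_{ij}$: enumerate $D\cap I_{ij}$ and recursively halve mass across these splitting points, extending by Carath\'eodory's theorem. Either route reduces the remaining work to standard measure-theoretic bookkeeping.
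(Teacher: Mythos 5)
Your construction is essentially the paper's own proof: both rely on Cantor's order characterisation of the unit interval to identify closed intervals with endpoints in an order dense set $D$ with $[0,1]$ (using order completeness for gap-freeness and $\edge(T,c)=\emptyset$ for jump-freeness), push Lebesgue measure forward, and take a countable convex combination indexed by $D$. The only cosmetic difference is that the paper fixes a single root $\rho$ and sums over intervals $[\rho,x_n]$ with weights $2^{-n}$, whereas you sum over all pairs $[x_i,x_j]$; your verification of \eqref{e:fullsupp} and of $\nu(\lf(T,c))=0$ is correct and, if anything, slightly more explicit than the paper's.
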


\begin{proof} Fix $\rho\in T$. Then, for every $x\in T\setminus\{\rho\}$, the interval $([\rho,x], \le_\rho)$ is
	a separable \emph{linear continuum} in the sense of order theory, i.e.\ a totally ordered space (proven in
	Proposition~\ref{p:semilat}) without \emph{jumps} (what we call here edges) or \emph{gaps} (which follows from
	directed order completeness).
	Due to Cantor's order characterisation of $\R$ (e.g.\ \cite[Theorem~560]{Dasgupta14}),
	this means that $[\rho,x]$ is order isomorphic to the unit interval. Obviously, every order isomorphism
	is measurable and bijective, and the image of Lebesgue measure on the unit interval is a non-atomic
	probability measure $\nu_x$ on $[\rho,x]$. Then $\sum_{n\in\N} 2^{-n} \nu_{x_n}$, where
	$\set{x_n}{n\in\N}$ satisfies \eqref{e:sep}, is a non-atomic
	probability satisfying \eqref{e:fullsupp} and $\nu(\lf(T,c))=0$.
\end{proof}

Any separable $\R$-tree $(T,r)$ comes with an intrinsic measure, called length measure, that generalizes the
Lebesgue-measure on $\R$. More generally, if $(T,r)$ is a complete, separable metric tree and $\rho\in T$ a
fixed root, the \emph{length measure} $\lambda=\lambda^{(T,r,\rho)}$ is uniquely defined by the two properties
$\lambda([\rho,x])=r(\rho, x)$ for all $x\in T$, and $\lambda(\lf_0(T,r))=0$, where $\lf_0$ is the set
of non-isolated leaves (see \cite[Section~2.1]{AthreyaLohrWinter17}).
Note that the total mass $\lambda(T)$ (the ``total length'' of the metric tree) does not depend on the choice of
$\rho$.

\begin{proposition}[total length of $(T_\nu,r_\nu)$]
	Let\/ $(T,c)$ be an order separable, order complete algebraic tree, $\nu$ a measure on\/
	$(T,\B(T,c))$ with\/ $\nu([x,y])<\infty$ for all\/ $x,y\in T$ and such that\/
	$\nu\restricted{\lf(T,c)}$ is purely atomic, and\/ $r_\nu$ be defined by \eqref{e:rnu}. Then the following hold:
\begin{enumerate}
\item The total length of the metric tree\ $(T_\nu,r_\nu)$  is given by
	\begin{equation}
    \label{e:Tnulen}
		\lambda(T_\nu) = \tfrac12 \int_T \deg_{(T,c)} \,\dx\nu.
	\end{equation}
\item $\int_T\deg_{(T,c)} \,\dx\nu = \int_{T_\nu} \deg_{(T_\nu,r_\nu)}\circ \pi_\nu \,\dx \nu.$
\end{enumerate}
\label{p:Tnuprop}
\end{proposition}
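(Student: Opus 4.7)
The plan is to prove~(i) by approximating $T$ from inside with finite subtrees and matching a discrete handshake identity, and to derive~(ii) as a short consequence. By Proposition~\ref{p:separable}, fix a countable order-dense set $D=\{d_1,d_2,\ldots\}\subseteq T$; after enlarging $D$ to include all atoms of $\nu$ (countably many, since $\nu$ is finite on every interval), set $V_n:=\{d_1,\ldots,d_n\}$ and $S_n:=c(V_n^3)$. Expanding the definition of $r_\nu$ and using that each vertex $v$ of the finite graph-theoretic tree $S_n$ lies in exactly $\deg_{S_n}(v)$ of the edge intervals $\{[x,y]_T:\{x,y\}\in\edge(S_n)\}$ while each non-vertex of $S_n$ lies in exactly one, one obtains the discrete handshake identity
\begin{equation*}
L_n \;:=\; \sum_{\{x,y\}\in\edge(S_n)} r_\nu(x,y)
\;=\; \nu(S_n\setminus V_n) \,+\, \tfrac12\sum_{v\in V_n}\deg_{S_n}(v)\,\nu\{v\}.
\end{equation*}

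Let $n\to\infty$. Order density guarantees that every non-leaf $v\in T$ is of the form $c(d_1,d_2,d_3)$ with $d_i\in D$ lying in three distinct components of $T\setminus\{v\}$, so $v\in\bigcup_n S_n$; a leaf of $T$ can lie in $\bigcup_n S_n$ only if it already lies in $D$, and since $\nu\restricted{\lf(T,c)}$ is purely atomic with atoms contained in $D$, one gets $\nu(T\setminus\bigcup_n S_n)=0$. Monotone convergence then yields $\nu(S_n\setminus V_n)\uparrow\nu(T\setminus D)$ (the non-atomic mass of $\nu$), and $\deg_{S_n}(v)\uparrow\deg_{(T,c)}(v)$ for each $v\in D$ implies $\tfrac12\sum_{v\in V_n}\deg_{S_n}(v)\,\nu\{v\}\to\tfrac12\sum_{v\in D}\deg_{(T,c)}(v)\,\nu\{v\}$ (effectively a sum over the atoms). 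On the geometric side, $L_n$ is the total graph-length of the finite subtree of $(T_\nu,r_\nu)$ spanning $\pi_\nu(V_n)$; since $\pi_\nu(D)$ is $r_\nu$-dense in $T_\nu$, these subtrees exhaust $T_\nu$ up to a set of non-isolated leaves, which carries no length measure, so $L_n\uparrow\lambda(T_\nu)$.

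To match $\lim_n L_n$ with $\tfrac12\int_T\deg\,d\nu$, split the integral into atomic and non-atomic parts. The atomic contribution equals $\tfrac12\sum_{v\in D}\deg_{(T,c)}(v)\,\nu\{v\}$. For the non-atomic part, $\br(T,c)$ is countable in any order separable algebraic tree: by~\eqref{e:br}, each branch point is of the form $c(d_1,d_2,d_3)$ for a triple from $D^3$ chosen in three distinct components of its complement, or lies in $D$ itself. Combined with $\nu\restricted{\lf}$ being purely atomic, the non-atomic part of $\nu$ is concentrated on the degree-$2$ points and contributes $2\,\nu(T\setminus D)$ to $\int_T\deg\,d\nu$. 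Hence $\tfrac12\int_T\deg\,d\nu=\nu(T\setminus D)+\tfrac12\sum_{v\in D}\deg_{(T,c)}(v)\,\nu\{v\}=\lim_n L_n=\lambda(T_\nu)$, which proves~(i). Part~(ii) then follows by applying~(i) to the algebraic tree induced by the metric tree $(T_\nu,r_\nu)$ with the pushforward $\tilde\nu:=(\pi_\nu)_\ast\nu$, after verifying that $r_{\tilde\nu}=r_\nu$ on $T_\nu$; this last identity reduces to the observation that each multi-point fibre $\pi_\nu^{-1}(w)$ is a subtree of $T$ containing no $\nu$-atoms and in which every interval has $\nu$-mass zero, so $\nu(\pi_\nu^{-1}(w))=0$ for $\tilde\nu$-a.e.\ such $w$.

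The main obstacle is the geometric convergence $L_n\uparrow\lambda(T_\nu)$ used in the second paragraph: although intuitively transparent from the $r_\nu$-density of $\pi_\nu(V_n)$, it depends on the fact---standard for \Rtrees\ and developed for general separable metric trees in~\cite{AthreyaLohrWinter17}---that the length measure is exhausted by an increasing family of finite spanning subtrees up to a negligible set of non-isolated leaves.
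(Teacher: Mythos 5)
Your proof of part~(i) follows essentially the same route as the paper's: exhaust $T$ by the subtrees spanned by finite order-dense samples, express the total edge length by degree bookkeeping, and pass to a monotone limit using that the length measure is carried by these spanning subtrees. The paper organizes the bookkeeping as an induction (adding one point $v_{n+1}$ at a time and tracking $\ell_n=\tfrac12\int_{T_n}\deg_n\dx\nu$ directly), whereas you write a closed-form identity and reconcile atomic and non-atomic contributions only at the end; this is a cosmetic difference. One local error: in your handshake identity the bookkeeping must run over the full vertex set $W_n:=c(V_n^3)$ of the spanned finite tree, not over the sample $V_n$. The correct statement is $L_n=\nu\(\gentree{V_n}\setminus W_n\)+\tfrac12\sum_{v\in W_n}\deg_{S_n}(v)\,\nu\{v\}$; as written, a branch point $v\in W_n\setminus V_n$ carrying an atom is weighted $\nu\{v\}$ instead of $\tfrac12\deg_{S_n}(v)\nu\{v\}$. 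Since all atoms lie in $D$ and are eventually absorbed into $V_n$, the limit is unaffected, but the identity you assert for fixed $n$ is false in general. The convergence $L_n\uparrow\lambda(T_\nu)$ that you flag as the main obstacle is exactly the step the paper also takes (its equation for $\lambda^{(T_\nu,r_\nu,\rho)}(T_\nu)$ as a limit over $\pi_\nu(\gentree{v_1,\ldots,v_n})$), justified by $T\setminus\lf(T,c)\subseteq\gentree{D}$ and $\nu(T\setminus\gentree{D})=0$; citing the length-measure construction of \cite{AthreyaLohrWinter17} is acceptable here.

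For part~(ii) you take a genuinely different and more roundabout route than the paper, and it has two gaps. First, re-applying part~(i) to $(T_\nu,c_{(T_\nu,r_\nu)},\tilde\nu)$ requires that this algebraic tree be order complete, i.e.\ that $(T_\nu,r_\nu)$ be bounded and complete (Lemma~\ref{l:cinduce}); completeness of the quotient is not automatic and is only established in the paper under extra hypotheses (Proposition~\ref{p:properI}(ii)), so this needs an argument or a reduction to the completion. Second, your claim that every multi-point fibre $\pi_\nu^{-1}(w)$ is $\nu$-null is not fully justified: from $r_\nu(u,v)=0$ you correctly get that the fibre is interval-closed, atom-free, and that all its intervals are $\nu$-null, which kills $\nu$ on $\pi_\nu^{-1}(w)\setminus\lf\(\pi_\nu^{-1}(w)\)$ via a countable order-dense set; but leaves of the fibre need not be leaves of $(T,c)$, so the hypothesis that $\nu\restricted{\lf(T,c)}$ is purely atomic does not immediately dispose of them. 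The paper avoids both issues by proving~(ii) directly: it shows that $\deg_{(T,c)}(v)\ne\deg_{(T_\nu,r_\nu)}(\pi_\nu(v))$ forces $\nu\{v\}=0$ (either some component $\Sub_v(y)$ collapses under $\pi_\nu$, or $v$ is identified with a branch point), so the two integrands agree $\nu$-almost everywhere. I would recommend replacing your part~(ii) with that pointwise comparison.
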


\begin{proof} \emph{(i) }
	Let $D:=\set{v_n}{n\in\N}$ be a subset of $(T,c)$ which contains the atoms of $\nu$ and satisfies
	\eqref{e:sep}, and $\pi_\nu\colon T\to T_\nu$ be the canonical projection. We use $\rho:=\pi_\nu(v_1)$
	as the root of $(T_\nu,r_\nu)$. Then
	\begin{equation} \label{e:130}
	   T\setminus \lf(T,c) \subseteq \gentree{D}=\bigcup_{n\in\N}\gentree{v_1,...,v_n},
	\end{equation}
	where $\gentree{A}:=\bigcup_{x,y\in A}[x,y]$.
	Hence $\nu(T\setminus \gentree{D})=0$, and
	\begin{equation} \label{e:Tnapprox}
		\lambda^{(T_\nu,r_\nu,\rho)}(T_\nu)
	   = \lim_{n\to\infty}\lambda^{(T_\nu,r_\nu,\rho)}\big(\pi_\nu(\gentree{v_1,...,v_n})\big).
	\end{equation}

	Abbreviate $T_n:=\gentree{v_1,...,v_n}$ and $\ell_n:=\lambda^{(T_\nu,r_\nu,\rho)}\big(\pi_\nu(\gentree{v_1,...,v_n})\big)$.
	If $v_{n+1}\in T_n$, then $T_{n+1}=T_n$ and
	$\lambda^{(T_\nu,r_\nu,\rho)}\big(\pi_\nu(T_{n+1})\big) =\lambda^{(T_\nu,r_\nu,\rho)}\big(\pi_\nu(T_n)\big)$.
	Otherwise, there exists a unique $u_n\in T$ with $T_{n+1}=T_n\uplus\lopenint{u_n}{v_{n+1}}$, and thus
	\begin{equation}
		\ell_{n+1} = \ell_n + r_\nu(u_n,v_{n+1})
		  = \ell_n + \nu\(\lopenint{u_n}{v_{n+1}}\) - \tfrac12 \nu\{v_{n+1}\} + \tfrac12 \nu\{u_n\}.
	\end{equation}
	For $v\in T_n$, let $\deg_n(v)$ be the degree of $v$ in the tree $(T_n, c\restricted{T_n})$.
	In the case $v_{n+1}\not\in T_n$, we have $\deg_{n+1}(v)=\deg_n(v)$ for $v\in T_n\setminus\{u_n\}$,
	and $\deg_{n+1}(u_n)=\deg_n(u_n)+1$.
	By induction over $n$, we obtain
	\begin{equation}\label{e:Tnlen}
		\ell_n = \tfrac12 \int_{T_n} \deg_n \,\dx\nu
	\end{equation}
	Note that $\deg_n(v)$ is monotonically increasing in $n$, and $\deg(v)=\lim_{n\to\infty} \deg_n(v)$
	holds for all $v\in \gentree{D}$. Thus using the monotone convergence theorem,
	combining \eqref{e:Tnapprox} and \eqref{e:Tnlen} yields \eqref{e:Tnulen}.

\smallbreak
\emph{(ii) }If $\deg_{(T,c)}(v) \ne \deg_{(T_\nu,r_\nu)}(\pi_\nu(v))$,
then either $\pi(\Sub_v(y))=\{\pi(v)\}$ for some $y\in T$
(and thus $\deg_{(T,c)}(v) > \deg_{(T_\nu,r_\nu)}(\pi_\nu(v))$), or
$\pi(v)=\pi(v')$ for some $v'\in \mathrm{Br}(T,c)$ (and thus
$\deg_{(T,c)}(v) <\deg_{(T_\nu,r_\nu)}(\pi_\nu(v))$). In both cases we have that
$\nu\{v\}=\nu\{\pi_\nu(v)\}=0$,
and thus the claim follows.
\end{proof}

\begin{cor}[compactness for bounded degree trees]
	Let\/ $(T,c)$ be an order separable algebraic tree, and\/ $\nu$ a finite measure on\/ $(T,\B(T,c))$
	with\/ $\nu\{v\in T:\;\deg(v)>d\}=0$ for some\/ $d\in\N$. Then the completion of\/ $(T_\nu,r_\nu)$ is compact.
\label{c:Tcompact}
\end{cor}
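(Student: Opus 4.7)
The plan is to show that the completion $\Tb_\nu$ of $(T_\nu,r_\nu)$ is separable and totally bounded, and then invoke completeness. Separability of $(T_\nu,r_\nu)$ follows from Proposition~\ref{p:properI}(i), since a finite measure has at most countably many atoms. It thus suffices, for every $\varepsilon>0$, to produce a finite subset $F\subseteq T$ such that every point of $T_\nu$ lies within distance $\varepsilon$ of the compact finite tree $\pi_\nu(\gentree{F})$.

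The decisive step is to bound the total length of $(T_\nu,r_\nu)$. Removing any non-atomic part of $\nu\restricted{\lf(T,c)}$ changes neither $r_\nu$ nor $T_\nu$, because for $x,y\in T$ the interval $[x,y]$ meets $\lf(T,c)$ in at most $\{x,y\}$; so I may assume $\nu\restricted{\lf(T,c)}$ is purely atomic. Choose a countable order-dense set $D=\{v_1,v_2,\ldots\}\subseteq T$ containing all atoms of $\nu$, put $T_n:=\gentree{v_1,\ldots,v_n}$, and let $\deg_n$ denote the degree in $T_n$ with the convention $\deg_n(v)=2$ for $v$ in the relative interior of a $T_n$-edge. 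The induction already used in the proof of Proposition~\ref{p:Tnuprop}(i) then yields
\begin{equation*}
  \lambda^{(T_\nu,r_\nu)}\bigl(\pi_\nu(T_n)\bigr)
  = \tfrac{1}{2}\int_{T_n}\deg_n\,\dx\nu
  \le \tfrac{1}{2}\int_T\deg_{(T,c)}\,\dx\nu
  \le \tfrac{d}{2}\,\nu(T),
\end{equation*}
using $\deg_n(v)\le\deg_{(T,c)}(v)$ and the hypothesis $\deg_{(T,c)}\le d$ $\nu$\nbd a.e. Since $\pi_\nu(T_n)$ is increasing and $\bigcup_n T_n\supseteq T\setminus\lf(T,c)$ exhausts $T_\nu$ up to a $\lambda$-null set (the remaining non-atomic leaf mass having been discarded), passing to the limit gives $\lambda^{(T_\nu,r_\nu)}(T_\nu)\le\tfrac{d}{2}\nu(T)<\infty$.

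Given this length bound, the $\varepsilon$-net follows by a greedy construction. Pick $x_1\in T$ arbitrarily, and at step $n+1$ choose, if possible, $x_{n+1}\in T$ with $r_\nu\bigl(x_{n+1},\,\pi_\nu(\gentree{x_1,\ldots,x_n})\bigr)\ge\varepsilon$. Each such step adjoins a new branch of length $\ge\varepsilon$ to $\pi_\nu(\gentree{x_1,\ldots,x_{n+1}})$, so by the length bound the process terminates after $N\le d\,\nu(T)/(2\varepsilon)$ steps. At termination, every point of $T_\nu$ lies within $\varepsilon$ of $\pi_\nu(\gentree{x_1,\ldots,x_N})$, which is itself compact (a finite metric tree of length $\le\tfrac{d}{2}\nu(T)$) and therefore admits a finite $\varepsilon$-net; combining gives a finite $2\varepsilon$-net for $(T_\nu,r_\nu)$. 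As $\varepsilon>0$ is arbitrary, $(T_\nu,r_\nu)$ is totally bounded, and its completion $\Tb_\nu$ is compact.

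The main obstacle I anticipate is the length bound, because the corollary does not assume the order-completeness hypothesis of Proposition~\ref{p:Tnuprop}(i). One has to verify that only the inequality direction is required and that order separability alone suffices to exhibit an increasing exhausting sequence $(T_n)$ which, after the reduction to purely atomic $\nu\restricted{\lf(T,c)}$, covers $T_\nu$ up to a $\lambda$-null set. The greedy argument in the last paragraph is then routine.
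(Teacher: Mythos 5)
Your proof is correct and follows essentially the same route as the paper: reduce to the case where $\nu\restricted{\lf(T,c)}$ is purely atomic, bound the total length of $(T_\nu,r_\nu)$ by $\tfrac d2\nu(T)$ via Proposition~\ref{p:Tnuprop}(i), and conclude that a complete metric tree of finite total length is compact (a fact the paper simply cites, while you spell it out with the greedy $\varepsilon$-net). Your observation that the corollary lacks the order-completeness hypothesis of Proposition~\ref{p:Tnuprop} — and that only the upper bound on the lengths of the finite subtrees $\pi_\nu(\gentree{F})$ is actually needed — is a legitimate and correctly resolved point of care that the paper's one-line proof glosses over.
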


\begin{proof}
	W.l.o.g.\ assume that $\nu\restricted{\lf(T,c)}$ is non-atomic (if $\nu\restricted{\lf(T,c)}$ has a
	non-atomic part, we can remove it without changing $r_\nu$).
	Then by Proposition~\ref{p:Tnuprop}(i),  $(T_\nu,r_\nu)$ has finite total length.
	As complete metric trees with finite total length are necessarily compact, the statement follows.
\end{proof}

We are now in a position to prove Theorem~\ref{t:algtreechar}.

\begin{proof}[Proof of Theorem~\ref{t:algtreechar}.]
\emph{(i) ``$\Longleftarrow$'' }
	Since every compact metric space is bounded, complete and separable, this step follows from Lemma~\ref{l:cinduce}.

\emph{``$\Longrightarrow$'' } Let $(T,c)$ be an order separable algebraic continuum tree. To avoid
trivialities, assume that $T$	contains more than two points. By Lemma~\ref{l:133}
we can choose a non-atomic probability measure $\nu$ on $(T,\B(T,c))$ satisfying \eqref{e:fullsupp}.
Define $r_\nu$ by \eqref{e:rnu}. Then the equivalence classes in $T_\nu$ are
	singletons by \eqref{e:fullsupp}, and we may identify $T_\nu$ with $T$.

   By Proposition~\ref{p:properI}, $(T,r_\nu)$ is a complete \Rtree\ and the identity is a tree homomorphism by Lemma~\ref{l:o0hyper}.
	Thus $c$ is induced by $r_\nu$. Moreover, $\nu(\br(T,c))=0$ because $\br(T,c)$ is countable and $\nu$
is non-atomic. We can therefore conclude with Corollary~\ref{c:Tcompact} that
	$(T,r_\nu)$ is also compact. \smallskip
	
\smallbreak
\emph{(ii) ``$\Longleftarrow$'' } This is obvious because every order separable algebraic continuum tree is
	induced by a separable \Rtree\ according to part(i), and subspaces of separable metric spaces are
	separable.

\emph{``$\Longrightarrow$''  }
	Let $(T,c)$ be an order separable algebraic tree and $D\subseteq T$ a countable set satisfying
	\eqref{e:sep}.
	Let $\nu$ be any probability measure on $D$ with $\nu\{x\}>0$ for all $x\in D$, and $r_\nu$ defined by
	\eqref{e:rnu}. The equivalence classes in $T_\nu$ are singletons, and we may again identify $T_\nu$ with
	$T$. By Proposition~\ref{p:Tnuprop}, $(T, r_\nu)$ is a metric tree with \eqref{e:cTrTc}. As $(T,c)$ is
	order separable, $(T,r_\nu)$ is separable by Proposition~\ref{p:properI}(i). Moreover, the diameter of
	$(T,r_\nu)$ is bounded by $1$.
	Hence, by \cite[Theorem~3.38]{Evans2008} (known since \cite{Dress84}), there is a bounded, separable
	\Rtree\ $(\Tb,\bar{r})$ such that $T\subseteq \Tb$ and $r_\nu$ is the restriction of $\bar{r}$ to $T$.
	By Lemma~\ref{l:cinduce}, this \Rtree\ induces an algebraic continuum tree $(\Tb,\cb)$, and $T$ is a
	subtree of $\Tb$.

\emph{``in particular''.} According to part \emph{(i)}, there is a metric $\tilde{r}$ on $\Tb$ such that
	$(\Tb,\tilde{r})$ is a compact \Rtree\ inducing $(\Tb,\cb)$. Let $r$ be the restriction of $\tilde{r}$
	to $T$. Then $(T,r)$ is a totally bounded metric tree inducing $(T,c)$.
\end{proof}

\subsection{Tree homomorphisms versus homeomorphisms}
\label{s:hom(e)o}
Since order separable algebraic continuum trees are \Rtrees\ where we have ``forgotten'' the metric, the question
arises how homeomorphisms of \Rtrees\ relate to tree homomorphisms of the corresponding algebraic trees. A
first observation is that homeomorphisms are necessarily tree homomorphisms. This statement relies on
connectedness of the \Rtrees\ and we cannot replace ``\Rtree'' by ``metric tree'': every bijection between finite
metric trees is obviously a homeomorphism because the topologies are discrete, but not necessarily a tree
homomorphism.

\begin{lemma}[homeomorphisms are tree isomorphisms]\label{l:homeomhom}
	Let\/ $(T,r),\, (\Th,\rh)$ be\/ \Rtrees, and\/ $f\colon T \to \Th$ a homeomorphism. Then\/ $f$ is a tree homomorphism.
\end{lemma}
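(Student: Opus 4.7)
The plan is to reduce the claim to the interval-preservation condition of Lemma~\ref{l:treehom}\emph{(\ref{i:interval})}, i.e.\ to prove that $f([x,y]) \subseteq [f(x),f(y)]$ for all $x,y \in T$. Once this is established, the conclusion that $f$ is a tree homomorphism is immediate from the equivalence shown in that lemma.

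The key ingredient I want to exploit is the well-known \emph{arc-uniqueness} property of \Rtrees: for any two points $u,v$ in an \Rtree\ $(S,s)$, the interval $[u,v]$ coincides with the image of every continuous injection $\gamma\colon [0,1] \to S$ satisfying $\gamma(0)=u$ and $\gamma(1)=v$, and such an injection exists (see, e.g., \cite[Theorem~2.20]{Evans2008} or \cite{Chiswell2001}). In particular, $([u,v], s\restricted{[u,v]^2})$ is isometric to the real interval $[0,s(u,v)]$, and any topological embedding of an interval into $S$ whose endpoints map to $u,v$ has image exactly $[u,v]$.

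First I would fix $x,y \in T$ and observe that the restriction $s \restricted{[x,y]^2}$ of the metric makes $[x,y]$ a topological arc in $T$ connecting $x$ and $y$. Applying the homeomorphism $f$, the set $f([x,y])$ becomes the image of the continuous injection $f\restricted{[x,y]}$ in $\hat T$, which has endpoints $f(x)$ and $f(y)$. Hence $f([x,y])$ is itself a topological arc from $f(x)$ to $f(y)$ in the \Rtree\ $(\hat T,\hat r)$. By arc-uniqueness applied in $(\hat T,\hat r)$, this image must equal $[f(x),f(y)]$. This yields the required inclusion (in fact equality) $f([x,y]) \subseteq [f(x),f(y)]$, and Lemma~\ref{l:treehom} then gives the conclusion.

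The step I expect to be most delicate is the invocation of arc-uniqueness, since this is the only place where genuine connectedness (and not just the tree-structural axioms) is used; this is exactly why the statement fails for general metric trees, where the topology may be discrete and $f$ could fail to respect the branch point map even though it is trivially a homeomorphism. Apart from that, the argument is essentially the observation that a homeomorphism between \Rtrees\ carries arcs to arcs, and arcs with prescribed endpoints are unique.
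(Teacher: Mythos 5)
Your proof is correct and follows essentially the same route as the paper: the paper likewise observes that in an \Rtree\ the interval $[x,y]$ is the unique simple arc from $x$ to $y$, hence a purely topological notion preserved by homeomorphisms, and then recovers the branch point map from intervals (via \eqref{e:bp} rather than Lemma~\ref{l:treehom}\emph{(\ref{i:interval})}, an immaterial difference). Your additional remark about why connectedness is essential matches the paper's discussion preceding the lemma.
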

\begin{proof}
	The branch point map can be expressed in terms of intervals by \eqref{e:bp}. In an \Rtree\ $(T,r)$, the interval
	$[x,y]$, $x,y\in T$, is the unique simple path from $x$ to $y$, which is a purely topological notion, and
	hence preserved by homeomorphisms.
\end{proof}

\begin{example}[tree isomorphisms need not be homeomorphisms] In Lemma~\ref{l:homeomhom},
	the converse is not true: bijective tree homomorphisms need not be
	homeomorphisms, even if the trees are order separable.
	To see this, let $r,\rh$ the metrics on $\N$ defined by $r(n,m)=\frac1n+\frac1m$,
	$\rh(n,m)=2$ for distinct $n,m\in \N$. Let $T$ and $\Th$ be the \Rtrees\ generated by $(\N,r)$ and
	$(\N,\rh)$, respectively. Then both $\Th$ and $T$ are the countable star with set $\N$ of leaves. In $T$,
	the distance from the branch point to leaf $n$ is $\frac1n$, while it is $1$ in $\Th$. Hence $T$ is
	compact while $\Th$ is not.
	The identity on $\N$ can be extended to a bijective tree homomorphism $f\colon T \to \Th$ which cannot be
	continuous.
\label{ex:homhomeom}
\end{example}

Example~\ref{ex:homhomeom} shows that it is possible for non-homeomorphic (topologically non-equivalent) \Rtree s
to induce isomorphic (equivalent) algebraic continuum trees. This can only happen if at least one of the
trees is non-compact.

\begin{proposition}[tree isomorphisms of compact \Rtrees\ are homeomorphisms]
Let\/ $T,\Th$ be \Rtrees, and\/ $f \colon T \to \Th$.
\begin{enumerate}
\item\label{i:homcont} If\/ $\Th$ is compact, $f(T)$ is connected, and\/ $f$ a tree homomorphism, then\/ $f$ is
	continuous.
\item\label{i:isomhomeom} If both\/ $T$ and\/ $\Th$ are compact and\/ $f$ is bijective, then\/ $f$ is a
	homeomorphism if and only if it is a tree homomorphism.
\end{enumerate}
\label{p:isomhomeom}
\end{proposition}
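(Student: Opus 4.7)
Part \emph{(ii)} reduces to part \emph{(i)} by symmetry: one direction is Lemma~\ref{l:homeomhom}, and for the other, if $f$ is a bijective tree homomorphism then so is $f^{-1}$ (a direct consequence of the homomorphism equation together with bijectivity), and both $f(T)=\Th$ and $f^{-1}(\Th)=T$ are connected as \Rtrees; part \emph{(i)} applied to $f$ and to $f^{-1}$ in turn yields the continuity of both, making $f$ a homeomorphism.

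For part \emph{(i)}, my first move is to replace $\Th$ by $\Tb:=\overline{f(T)}$. This closure is a closed connected subset of the compact \Rtree\ $\Th$, hence itself a compact \Rtree, and continuity of $f$ viewed as a map into $\Tb$ is equivalent to continuity into $\Th$. So I may assume $f(T)$ is dense in $\Th$. The key step is then to show that the restriction $f\restricted{[x,z]}\colon[x,z]\to[f(x),f(z)]$ is continuous for every $x,z\in T$. By Lemma~\ref{l:treehom}, $f$ sends intervals into intervals, so $f([x,z])\subseteq[f(x),f(z)]$; conversely, any $v\in T$ with $w:=f(v)\in[f(x),f(z)]$ satisfies $f(c(x,v,z))=c_{\Th}(f(x),w,f(z))=w$ and $c(x,v,z)\in[x,z]$, so $f([x,z])=f(T)\cap[f(x),f(z)]$. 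Since in the \Rtree\ $\Th$ the branch point $c_{\Th}(f(x),f(v),f(z))$ is the nearest-point projection of $f(v)$ onto $[f(x),f(z)]$, density of $f(T)$ in $\Th$ yields density of $f([x,z])$ in $[f(x),f(z)]$. Identifying each arc with a real interval via its \Rtree\ metric, $f\restricted{[x,z]}$ becomes a median-preserving and hence monotone map from $[0,r(x,z)]$ into $[0,\rh(f(x),f(z))]$; a monotone real function on a compact interval with dense image has no jumps and is therefore continuous.

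For continuity at an arbitrary $x\in T$, let $x_n\to x$; by compactness of $\Th$ it suffices to show that every subsequential limit $y$ of $(f(x_n))$ equals $f(x)$. Fix any $z\in T\setminus\{x\}$ and set $p_n:=c(x,z,x_n)\in[x,z]$. Since $r(x,p_n)\le r(x,x_n)\to 0$, we have $p_n\to x$ along $[x,z]$, and the arc-continuity just established gives $f(p_n)\to f(x)$. But $f(p_n)=c_{\Th}(f(x),f(z),f(x_n))$, and continuity of $c_{\Th}$ (Lemma~\ref{l:ccont}) applied to a subsequence with $f(x_{n_k})\to y$ yields $c_{\Th}(f(x),f(z),y)=f(x)$, i.e.\ $f(x)\in[f(z),y]_{\Th}$. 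Density of $f(T)$ in $\Th$ then allows me to choose $z$ so that $f(z)$ is arbitrarily close to $y$, forcing $y=f(x)$. The main obstacle is the arc-continuity step: without connectedness of $f(T)$, the image $f([x,z])$ need not be dense in $[f(x),f(z)]$ and the monotone restriction can have jumps; it is precisely the identification $f([x,z])=f(T)\cap[f(x),f(z)]$ (a reverse use of the homomorphism equation) combined with the density obtained from connectedness that eliminates all such discontinuities.
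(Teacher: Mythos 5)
Your proof is correct, and part~(i) takes a genuinely different route from the paper's. The paper argues order-theoretically: it fixes a root $\rho\in T$, writes the limit of a convergent sequence in a compact \Rtree\ as $v=\sup_{k}\inf_{i>k}v_{n_i}$ with respect to $\le_\rho$ (and likewise for an accumulation point $w$ of $f(v_n)$ with respect to $\le_{f(\rho)}$), uses order-preservation of $f$ (Lemma~\ref{l:treehom}) to get $w\le_{f(\rho)}f(v)$, and invokes connectedness of $f(T)$ exactly once, to produce a point $y=f(x)$ strictly between $w$ and $f(v)$ whose preimage forces the contradiction $y\le_{f(\rho)}w$. You instead argue metrically: after reducing to the case that $f(T)$ is dense, you prove continuity on each arc $[x,z]$ by identifying $f([x,z])=f(T)\cap[f(x),f(z)]$, transporting density down to the arc via the projection $w\mapsto\ch(f(x),w,f(z))$, and appealing to the fact that a monotone real function with dense image has no jumps; you then globalize by projecting the $x_n$ onto arcs through $x$ and letting $f(z)$ approach the accumulation point $y$. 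Both proofs use compactness of $\Th$ only to reduce convergence to uniqueness of accumulation points, and both use connectedness of $f(T)$ essentially; the paper's version is shorter and stays inside the order formalism of Section~\ref{S:trees}, while yours isolates the one-dimensional mechanism (monotonicity plus dense image) and makes transparent exactly where the argument would break without connectedness. Two small points: first, Lemma~\ref{l:ccont} gives continuity of the branch point map only for the component topology $\tau$, whereas you need metric continuity of $\ch$ on $\Th$; this is immediate from $\rh\big(a,\ch(a,b,c)\big)=\tfrac12\big(\rh(a,b)+\rh(a,c)-\rh(b,c)\big)$ together with $\ch(a,b,c)\in[a,b]$, or from the observation that on the compact \Rtree\ $\Th$ the coarser Hausdorff component topology coincides with the metric topology. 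Second, in part~(ii) your detour through $f^{-1}$ via Lemma~\ref{l:aa} is fine, but once part~(i) makes the bijection $f$ continuous from the compact $T$ to the Hausdorff $\Th$, it is automatically a homeomorphism.
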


\begin{proof}
	\ref{i:isomhomeom} is obvious from \ref{i:homcont} and Lemma~\ref{l:homeomhom}.

	Assume $f$ is a tree homomorphism, $f(T)$ is connected, and $\Th$ is compact. Choose a root $\rho \in T$.
	Let $v_n \to v$ be a convergent sequence in $T$, and $w\in \Th$ an accumulation point of $f(v_n)$.
	Then there is a subsequence $(n_k)_{k\in\N}$ with $f(v_{n_k}) \to w$. We have
	\begin{equation}\label{e:supinf}
		v = \sup_{k\in\N} \inf_{i>k} v_{n_i} \qquad\text{and}\qquad
		w = \sup_{k\in\N} \inf_{i>k} f(v_{n_i}),
	\end{equation}
	where $\sup$ and $\inf$ are w.r.t.\ the partial orders $\le_\rho$ and $\le_{f(\rho)}$ in the first and
	second equality, respectively. In the following, we show $w=f(v)$.
	Because $f$ is order preserving for these partial orders due to Lemma~\ref{l:treehom}, we obtain $w
	\le_{f(\rho)} f(v)$. Assume for a contradiction $w\ne f(v)$.  Because $f(T)$ is connected, there is
	$y\in \Th$ with $w<_{f(\rho)} y<_{f(\rho)} f(v)$ and $x \in T$ with $y=f(x)$. For $u:=c(\rho,x,v)$, we
	have $f(u) = \ch(f(\rho),y,f(v)) = y$, $u \le_\rho v$, and $u\ne v$.
	Therefore, $u \le_\rho v_{n_i}$ for all sufficiently large $i$, and thus $y=f(u) \le_{f(\rho)}
	f(v_{n_i})$ for those $i$. Now \eqref{e:supinf} implies $y\le_{f(\rho)} w$ in contradiction to the
	choice of $y$, finishing the proof of $w=f(v)$.
	Compactness of $\Th$ and uniqueness of accumulation points implies $f(v_n) \to f(v)$, and $f$ is
	continuous.
\end{proof}

In view of Theorem~\ref{t:algtreechar}, Proposition~\ref{p:isomhomeom} implies that order separable algebraic
continuum trees are in one-to-one correspondence with homeomorphism classes of compact \Rtrees. Furthermore,
the unique metric topology induced by the compact \Rtree\ coincides with the component topology $\tau$
introduced in Subsection~\ref{sub:astopological}.
But be aware that there may be other, non-homeomorphic, non-compact \Rtrees\ inducing the same order separable
algebraic continuum tree, as shown in Example~\ref{ex:homhomeom}.

\begin{cor}[uniqueness of inducing \Rtree]\label{c:unique}
	Every order separable algebraic continuum tree is induced by a compact\/ \Rtree\ that is unique up to
	homeomorphism, and the unique induced topology coincides with the component topology\/ $\tau$ defined in
	\eqref{e:tau}.
\end{cor}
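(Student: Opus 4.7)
The plan is to combine Theorem~\ref{t:algtreechar}\ref{i:converse} with Proposition~\ref{p:isomhomeom}\ref{i:isomhomeom} to obtain existence and uniqueness, and then to verify that the component topology $\tau$ coincides with the metric topology of any such representing compact $\R$-tree.

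First I would invoke Theorem~\ref{t:algtreechar}\ref{i:converse} directly: for an order separable algebraic continuum tree $(T,c)$ it provides a metric $r$ on $T$ turning $(T,r)$ into a compact \Rtree\ with $c = c_{(T,r)}$, which gives existence. For uniqueness, suppose $(T,r)$ and $(\Th,\rh)$ are two compact \Rtrees\ both inducing $(T,c)$ (the second one up to tree isomorphism, via some bijection $\phi\colon T \to \Th$ with $\phi(c(x,y,z)) = c_{(\Th,\rh)}(\phi(x),\phi(y),\phi(z))$). Then $\phi$ is a bijective tree homomorphism between the two compact \Rtrees\ viewed as algebraic trees, and Proposition~\ref{p:isomhomeom}\ref{i:isomhomeom} yields that $\phi$ is a homeomorphism. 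Thus the compact \Rtree\ is unique up to homeomorphism.

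It remains to show that the metric topology $\tau_r$ of the representing compact \Rtree\ $(T,r)$ equals the component topology $\tau$. Since $(T,c)$ is an algebraic continuum tree, it is directed order complete by (ACT1), so $(T,\tau)$ is compact by Proposition~\ref{p:compact}; by Lemma~\ref{l:Hausdorff} it is also Hausdorff. The metric topology $\tau_r$ is compact and Hausdorff as well. To equate them, I would show $\tau \subseteq \tau_r$ by proving that every generating set $\Sub_x(y)$ is open in $\tau_r$: in the \Rtree\ $(T,r)$ the unique arc between two points coincides with the algebraic interval, so the equivalence classes $\Sub_x(y)$ are precisely the arc-connected components of the open set $T\setminus\{x\}$, and these are open thanks to the local arc-connectedness of \Rtrees. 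Hence the identity $(T,\tau_r)\to(T,\tau)$ is a continuous bijection from a compact space to a Hausdorff space, and is therefore a homeomorphism; so $\tau = \tau_r$, and in particular this topology does not depend on the choice of representing compact \Rtree.

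The only non-bookkeeping point is the verification that $\Sub_x(y)$ is open in the metric topology. This rests on the standard fact that in an \Rtree\ the arc-components of the complement of a point are open, which is itself a consequence of local arc-connectedness and of the identification of $[y,z]$ with the unique arc from $y$ to $z$; everything else is a direct application of the results already proved in the excerpt.
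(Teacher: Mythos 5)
Your proposal is correct and follows essentially the same route as the paper: existence from Theorem~\ref{t:algtreechar}\ref{i:converse}, uniqueness up to homeomorphism from Proposition~\ref{p:isomhomeom}, and the identification of $\tau$ with the metric topology via the observation that components are open in the \Rtree\ together with the compact-to-Hausdorff continuous bijection argument. The only difference is that you spell out the openness of $\Sub_x(y)$ via local arc-connectedness, which the paper simply asserts.
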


\begin{proof}
	That an order separable algebraic continuum tree is induced by a compact \Rtree\ is
	Theorem~\ref{t:algtreechar}\ref{i:converse}.
	Any two such compact \Rtrees\ are isomorphic as algebraic trees, hence homeomorphic by
	Proposition~\ref{p:isomhomeom}. The component topology is a Hausdorff topology and clearly weaker than the
	topology induced by the \Rtree, because components are open sets of \Rtree s.
	Hence, by compactness of the \Rtree, the two topologies coincide.
\end{proof}

\section{The space of algebraic measure trees}
\label{S:amt}
In this section, we define algebraic measure trees, and equip the space of (equivalence classes of) algebraic
measure trees with a topology.
In what follows, the order separability of the underlying algebraic tree is crucial. Therefore, we include it
already in the following definition of algebraic measure trees.
\begin{definition}[algebraic measure trees]
An \define{algebraic measure tree} $(T,c,\mu)$ is an order separable algebraic tree $(T,c)$ together with a
probability measure $\mu$ on $\B(T,c)$.
\label{d:amt}
\end{definition}

\begin{definition}[equivalence of algebraic measure trees]
\begin{enumerate}
\item	We call two algebraic measure trees $(T_i,c_i,\mu_i)$, $i=1,2$, \define{equivalent} if there exist
	subtrees $A_i$ of $T_i$ with $\mu_i(A_i)=1$, and a measure preserving tree isomorphism $f$ from
	$A_1$ onto $A_2$. In this case, we call $f$ \define{isomorphism} of the algebraic measure trees.
\item	A metric measure tree $(T,r,\mu)$ is called a \define{metric representation} of the algebraic measure
	tree $(T',c',\mu')$ if its induced algebraic measure tree $(T,c_{(T,r)},\mu)$ is equivalent to
	$(T',c',\mu')$.
\end{enumerate}
\label{d:amtequiv}
\end{definition}

In the following, we denote for an algebraic measure tree $\smallx:=(T,c,\mu)$ by $\supp(\smallx)$
the algebraic subtree generated by the support of $\mu$, i.e.\
\begin{equation}\label{e:suppsmallx}
	\supp(\smallx):=c\(\supp(\mu)^3\),
\end{equation}
and by
\begin{equation}
\label{e:brsmallx}
   \br(\smallx):=\br(T,c)\cap \supp(\smallx)
\end{equation}
the set of \emph{branch points} of $\smallx$.
It is easy to check that an isomorphism $f$ from $\smallx=(T,c,\mu)$ to $\smallx'=(T',c',\mu')$ induces a
bijection between $\br(\smallx)$ and $\br(\smallx')$ (although it need neither be defined nor injective on all
of $\supp(\smallx)$). Also note that $\smallx$ is equivalent to $\supp(\smallx)$ equipped with the appropriate
restrictions of $c$ and $\mu$.

\begin{remark}[a note on our definition of equivalence]
	Every algebraic measure tree is equivalent to an algebraic continuum measure tree, and has a metric
	representation with a compact \Rtree\ by Theorem~\ref{t:algtreechar}. For the definition of equivalence
	of algebraic measure trees it is important that we do not require the whole trees to be isomorphic (see
	Example~\ref{e:toptree} below). On the other hand, it is also important that the isomorphism is injective
	on a subtree (as opposed to only a subset) of full measure, because otherwise it would not be an
	equivalence relation and every tree with $n$ leaves and uniform distribution on them would be equivalent
	to the $n$-star.
\label{r:algcont}
\end{remark}

\begin{example}[the linear non-atomic measure tree]\label{ex:lintree}
	There is only one equivalence class of linearly ordered algebraic measure trees with non-atomic measure.
	Indeed, let $(T,c,\mu)$ be an algebraic measure tree with $\br(T,c)=\emptyset=\at(\mu)$. Then, by
	Theorem~\ref{t:algtreechar}, there is a tree isomorphism from $T$ into $[0,1]$ and we may assume
	$T\subseteq [0,1]$ to begin with. Let  $F_\mu\colon [0,1]\to[0,1]$ be the
	distribution function of $\mu$. Then $F_\mu$ is continuous and maps $\mu$ to Lebesgue-measure
	$\lambda_{[0,1]}$.
	Let $A:=\set{x\in\supp(\mu)}{\text{there is no }y_n\in[0,1]\setminus \supp(\mu): y_n<x,\; y_n\to x}$ be
	the support of $\mu$ with left boundary points removed. Then $F_\mu$ restricted to $A$ is bijective and
	hence a measure preserving tree isomorphism onto $[0,1]$ (with Lebesgue measure and canonical branch
	point map).
	Thus $(T,c,\mu)$ is equivalent to $[0,1]$.
\end{example}

Let
\begin{equation}
\label{e:bbT}
  \T:=\{\text{equivalence classes of algebraic measure trees}\}.
\end{equation}
Next, we equip $\T$ with a topology. We shall base this notion of convergence on the fact that algebraic
measure trees allow for metric representations (see Theorem~\ref{t:algtreechar}), and require convergence in
Gromov-weak topology of particular representations.
To this end, let
\begin{equation}
\label{e:bbH}
  \H := \{\text{equivalence classes of (separable) metric measure trees}\},
\end{equation}
where we consider two metric measure trees $(T,r,\mu)$ and $(T',r',\mu')$ as \emph{equivalent} if there exists a
measure preserving isometry between the metric completions of $\supp(\mu)$ and $\supp(\mu')$.

In order to get a useful topology on $\T$, we cannot take arbitrary (optimal) metric representations. Instead,
given an algebraic measure tree $(T,c,\mu)$, we use the metric $r_\nu$ defined in \eqref{e:rnu2} for the
\emph{branch point distribution} $\nu$, namely the distribution of the random branch point obtained by sampling
three points with the sampling measure $\mu$.

\begin{definition}[branch point distribution]
	The \define{branch point distribution} of an algebraic measure tree $(T,c,\mu)$ is the push-forward of
	$\mu^{\otimes 3}$ under the branch point map,\label{d:bpd}
	\begin{equation}
\label{e:bpd}
		\nu:=c_\ast\mu^{\otimes 3}.
	\end{equation}
\end{definition}

Note that the branch point distribution is not necessarily supported by $\br(T,c)$. For instance, every atom of
$\mu$ is also an atom of $\nu$.
If $(T,c,\mu)$ and $(T',c',\mu')$ are equivalent algebraic measure trees with branch point distributions $\nu$
and $\nu'$, respectively, then the isomorphism is also an isometry w.r.t.\ $r_\nu$ and
$r_{\nu'}$.
Therefore, the following selection map, which associates a particular metric representation to every algebraic
measure tree, is well-defined.

\begin{definition}[selection map $\iota$]
Define the map $\iota\colon \T \to \H$ by
\begin{equation}\label{e:iota}
	 \iota(T,c,\mu) := (T_\nu,r_\nu,\mu_\nu),
\end{equation}
where $\nu=c_*\mu^{\otimes 3}$ is the branch point distribution of $(T,c,\mu)$, $(T_\nu,r_\nu)$ is the quotient
metric space, and $\mu_\nu$ is the image of $\mu$ under the canonical projection $\pi_\nu$.
\label{d:iota}
\end{definition}

The topology we use on $\T$ is the Gromov-weak topology w.r.t.\ the branch point distribution distance. That is,
it is the topology induced by the selection map $\iota$, i.e., the weakest (coarsest) topology on $\T$ such that
$\iota$ is continuous.

\begin{definition}[bpdd-Gromov-weak topology]
	Let $\H$ be equipped with the Gromov-weak topology.
	We call the topology induced on $\T$ by the selection map $\iota$
	\define{branch point distribution distance Gromov-weak topology} (\define{bpdd-Gromov-weak
	topology}).
\label{d:bpddGw}
\end{definition}

The following reconstruction theorem is crucial for the usefulness of bpdd-Gromov-weak convergence. It shows that
the selection map $\iota$ is an embedding and indeed selects metric representations.
\begin{proposition}[$\iota$ is injective]
	The selection map\/ $\iota\colon \T \to \H$ is injective, and\/
	$\iota(\smallx)$ is a metric representation of\/ $\smallx \in \T$.
\label{p:injective}
\end{proposition}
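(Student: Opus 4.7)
The plan is to prove the stronger assertion that $\iota(\smallx)$ is a metric representation of $\smallx$, and to derive injectivity of $\iota$ as an immediate corollary. Well-definedness of $\iota$ on equivalence classes follows by noting that any isomorphism of algebraic measure trees intertwines the branch point maps and hence the branch point distributions, so it becomes an isometry for the pseudometrics $r_\nu$; passage to the $r_\nu$-quotients yields a measure-preserving isometry. By Lemmas~\ref{l:pseudo} and~\ref{l:o0hyper}, $(T_\nu,r_\nu)$ is a metric tree, the canonical projection $\pi_\nu\colon T\to T_\nu$ is a tree homomorphism with $\mu_\nu=(\pi_\nu)_\ast\mu$, and the algebraic measure tree induced by $\iota(\smallx)$ is precisely $(T_\nu,c_{T_\nu},\mu_\nu)$.

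The core step is to exhibit subtrees $A\subseteq T$ and $A'\subseteq T_\nu$ of full measure and a measure-preserving tree isomorphism $A\to A'$, witnessing equivalence of $\smallx$ with $(T_\nu,c_{T_\nu},\mu_\nu)$. Since $r_\nu(x,y)=\nu(\openint{x}{y})+\tfrac12\nu\{x\}+\tfrac12\nu\{y\}$, two points $x\ne y$ of $T$ are identified by $\pi_\nu$ precisely when $\nu(\openint{x}{y})=\nu\{x\}=\nu\{y\}=0$. I would therefore set $B:=\{x\in\supp(\smallx): [x]\ne\{x\}\}$ and take $A:=\supp(\smallx)\setminus B$; on $A$ the map $\pi_\nu$ is injective by construction, hence by Lemma~\ref{l:aa} a tree isomorphism onto its image $A':=\pi_\nu(A)\subseteq T_\nu$, and measure-preservation is immediate from $\mu_\nu=(\pi_\nu)_\ast\mu$. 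It then remains to verify that $\mu(A)=\mu_\nu(A')=1$, i.e.\ $\mu(B)=0$, and that $A$ is closed under $c$.

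For the $\mu$-nullity of $B$, I would show first that every non-singleton equivalence class $C$ of the relation $r_\nu(\cdot,\cdot)=0$ is a subtree of $T$ closed even under intervals (if $y,z\in C$ and $w\in[y,z]$, then $\nu(\openint{w}{y})\le\nu(\openint{y}{z})=0$ and $\nu\{w\}=0$, so $w\in C$), and inherits order separability from $T$ by Proposition~\ref{p:separable}. Covering the non-leaf part of $C$ by the countably many $\nu$-null intervals with endpoints in a countable order-dense subset of $C$, and bounding the leaf contribution using the absence of $\nu$-atoms in $C$, gives $\nu(C)=0$; the lower bound $\nu(C)\ge\mu(C)^3$ from sampling three $\mu$-points inside the subtree $C$ then forces $\mu(C)=0$. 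An analysis of which pairs $x\ne y\in\supp(\mu)$ can be collapsed by $\pi_\nu$ shows that collapse occurs only when $\mu$ charges neither $\openint{x}{y}$ nor any side branch hanging from it, which reduces $\mu(B)$ to the $\mu$-mass of the set of ``boundary'' points of $\supp(\mu)$, controlled via the vanishing of $\mu(C)$ on each individual class. Finally, Lemma~\ref{l:4pt} together with the interval-closure of equivalence classes shows that the branch point of three points of $A$ cannot lie in any non-singleton class, so $A$ is a subtree. Injectivity of $\iota$ is then formal: if $\iota(\smallx_1)=\iota(\smallx_2)$ in $\H$, both are metric representations of the common algebraic measure tree induced by their shared $\iota$-image, hence $\smallx_1=\smallx_2$ in $\T$. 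The main obstacle will be the $\nu$-nullity of non-singleton equivalence classes, which uses the order separability of $T$ in an essential way; without it the argument and indeed the statement genuinely fail, as suggested by Example~\ref{exp:star}.
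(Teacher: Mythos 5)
Your overall strategy coincides with the paper's: show that $\iota(\smallx)$ is a metric representation by exhibiting a full-measure subtree on which the projection $\pi_\nu$ is injective, identify the obstruction with the non-singleton equivalence classes of the relation $r_\nu(\cdot,\cdot)=0$, and deduce injectivity of $\iota$ formally. The first genuine gap is the passage from ``$\mu(C)=0$ for each individual non-singleton class $C$'' to ``$\mu(B)=0$''. The set $B$ is a union of such classes, and without knowing that there are only \emph{countably many} of them you can conclude neither that $B$ is measurable nor that its measure vanishes; your reduction to the ``boundary points of $\supp(\mu)$'' still amounts to summing over all classes and so begs the same question. The paper closes this by noting that each non-singleton class, being closed under intervals, contains a non-degenerate interval, that distinct classes are disjoint, and that order separability permits at most countably many pairwise disjoint non-degenerate intervals (each must meet the countable order dense set $D$ in $\ropenint{v}{w}$); hence there are only countably many non-singleton classes and countable additivity finishes the argument. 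This countability step is where order separability of the \emph{whole} tree enters, and it is absent from your proposal.

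The second gap is in your proof that $\nu(C)=0$: ``bounding the leaf contribution using the absence of $\nu$-atoms in $C$'' is not valid for a general subtree, since an order separable subtree may have an uncountable set of leaves carrying positive mass even though the measure is non-atomic (the complete binary tree of Example~\ref{ex:compbin} is of exactly this type, with mass $\tfrac23$ on the ends). What rescues the argument is that a non-singleton class cannot contain any branch point of $\supp(\smallx)$: every component of $\supp(\smallx)\setminus\{v\}$ at such a branch point $v$ meets $\supp(\mu)$ and hence has positive $\mu$-mass, so $\nu\{v\}>0$ and $\{v\}$ is a singleton class. Consequently each non-singleton class is linearly ordered, has at most two extreme points, and your covering argument together with non-atomicity then does apply. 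This observation (the paper's preliminary reduction to a representative with $\nu\{v\}>0$ for all $v\in\br(T,c)$) is also what you actually need, rather than Lemma~\ref{l:4pt} alone, to verify that $A$ is closed under $c$. Both gaps are fixable, but they are precisely the points where order separability and the structure of the branch point distribution do real work, so they cannot be left implicit.
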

\begin{proof}
	If we show that $\iota(\smallx)$ is a metric representation of $\smallx=(T,c,\mu)\in \T$, it is
	obvious that $\iota$ is injective, because equivalence of metric measure spaces implies equivalence of
	the corresponding algebraic measure trees by Lemma~\ref{l:homeomhom}.

	Choosing an appropriate representative, we can assume that $\nu\{v\}>0$ for all $v\in\br(T,c)$.
	The canonical projection $\pi_\nu\colon T \to T_\nu$ is a tree homomorphism by Lemma~\ref{l:o0hyper}.
	To show equivalence of $(T,c,\mu)$ and $(T_\nu, c_{(T_\nu,r_\nu)},\mu_\nu)$, we have to show that
	$\pi_\nu$ is injective on a subtree $A\subseteq T$ with $\mu(A)=1$.
	Let $N:=\set{v\in T}{\pi_\nu(v) \ne \{v\}}$. Then $\mu(\pi_\nu(v)) = 0$ for all $v\in N$, and
	$w\in\pi_\nu(v)$ implies $[v,w]\subseteq \pi_\nu(v)$ because $\pi_\nu$ is a tree homomorphism.
	Because there are at most countably many non-degenerate, disjoint closed intervals in $T$ due to
	order separability, this implies that $\pi_\nu(N)$ is countable, and thus $\mu(N)=0$.
	Define $A=T\setminus N$. Then $\mu(A)=1$, and $\pi_\nu$ is injective on $T\setminus N$.
	To see that $A$ is a subtree, pick $x,y,z\in A$. If $v:=c(x,y,z)\in\{x,y,z\}$, then $v\in A$.
	Otherwise, $v\in \br(T,c)$, and hence $\nu\{v\}>0$. This implies $\pi_\nu(v)=\{v\}$, i.e.\ $v\in A$.
\end{proof}

\begin{cor}[metrizability]\label{c:bpdd-metrizable}
	$\T$ equipped with bpdd-Gromov-weak topology is a separable, metrizable space.
\end{cor}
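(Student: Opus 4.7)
The plan is to use the fact that the bpdd-Gromov-weak topology on $\T$ is by construction the initial topology induced by the selection map $\iota\colon \T \to \H$, so $\iota$ is automatically a topological embedding once it is known to be injective. Then separability and metrizability of $\T$ reduce to the corresponding properties of $\H$, which are classical for the Gromov-weak topology (shown in \cite{GrevenPfaffelhuberWinter2009}).

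More precisely, I would proceed as follows. First, by Proposition~\ref{p:injective}, $\iota$ is injective, so it is a bijection onto its image $\iota(\T) \subseteq \H$. By Definition~\ref{d:bpddGw}, the bpdd-Gromov-weak topology is the coarsest topology on $\T$ making $\iota$ continuous; equivalently, its open sets are precisely the preimages under $\iota$ of open sets in $\H$. Together with injectivity, this means that $\iota$ is a homeomorphism from $\T$ onto $\iota(\T)$ equipped with the subspace topology inherited from $\H$.

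Next, I invoke the fact that $\H$, equipped with the Gromov-weak topology, is a Polish space --- in particular, it is separable and metrizable. This is the main external input and is proved in \cite{GrevenPfaffelhuberWinter2009} (and equivalently in \cite{Loehr2013, Gromov2000}). Any subspace of a separable metrizable space is again separable and metrizable, so $\iota(\T)$ with the subspace topology has both properties. Transporting these properties along the homeomorphism $\iota$ yields that $\T$ with the bpdd-Gromov-weak topology is separable and metrizable, which is the claim.

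There is no real obstacle here: the entire content of the corollary has been pushed into Proposition~\ref{p:injective} and the known Polish structure of $(\H, \text{Gromov-weak})$. The only point requiring a moment of care is to make sure one uses the initial-topology definition of bpdd-Gromov-weak convergence (Definition~\ref{d:bpddGw}), rather than trying to construct an explicit metric on $\T$; with that definition in place, the proof is just a short formal argument as outlined above.
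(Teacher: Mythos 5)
Your proposal is correct and follows essentially the same route as the paper: both arguments rest on the injectivity of $\iota$ (Proposition~\ref{p:injective}) together with the separability and metrizability of $\H$ in the Gromov-weak topology from \cite{GrevenPfaffelhuberWinter2009}. The only cosmetic difference is that the paper writes down the pullback metric $\diota(\smallx,\smally) := \dGP(\iota(\smallx),\iota(\smally))$ explicitly, whereas you phrase the same fact as $\iota$ being a homeomorphism onto a subspace of a separable metrizable space.
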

\begin{proof}
	The Gromov-weak topology on $\H$ is separable, and metrizable, e.g.\ by the Gromov-Prohorov metric
	$\dGP$ (see \cite{GrevenPfaffelhuberWinter2009}).
	Because $\iota$ is injective by Proposition~\ref{p:injective},
	$\diota(\smallx,\smally) := \dGP(\iota(\smallx),\iota(\smally))$, $\smallx,\smally\in \T$, is a metric
	on $\T$ inducing bpdd-Gromov-weak topology.
\end{proof}

\begin{remark}[distance polynomials]\label{rem:bpddGw}
By definition, a sequence $(\smallx_n)_{n\in\N}$ in $\T$ converges to $\smallx\in \T$ bpdd-Gromov-weakly
if and only if $\iota(\smallx_n) \to \iota(\smallx)$ Gromov-weakly.
It has been shown that the Gromov-weak convergence is equivalent to the convergence
of the distribution of the distance matrix (\cite[Theorem~5]{GrevenPfaffelhuberWinter2009}). Therefore, the
bpdd-Gromov-weak convergence is equivalent to
\begin{equation}
	\Phi(\smallx_n) \tno \Phi(\smallx)
\end{equation}
for all so-called \emph{polynomials} $\Phi\colon \T\to\R$, which are test functions of the form \eqref{e:PhiGw}.
Note that the set $\ePol$ of all polynomials is an
algebra, and therefore also convergence determining for $\T$-valued random variables (see
\cite{Loehr2013,BlountKouritzin2010}).
\end{remark}

As pointed out in Remark~\ref{r:algcont}, the equivalence class of every algebraic measure tree contains an
algebraic \emph{continuum} measure tree.
The following example shows that $\iota$ would not be injective if we had defined it on the set of algebraic
continuum measure trees with the stricter notion of equivalence where the whole algebraic continuum trees
have to be measure preserving isomorphic.

\begin{example}\label{e:toptree}
	For $x\ge 0$, let $T_x$ be the \Rtree\ generated by the interval $I_x=[-x,1]$ together with additional
	leaves $\{v_n\}$, $n\in\N$, where $c(0,1,v_n)=\frac1n$ and $r(\frac1n,v_n)=\frac1n$, i.e.\ at each point
	$\frac1n \in I_x$ there is a branch of length $\frac1n$ attached.
	Then $T_x$ is a compact \Rtree\ for every $x\ge 0$, hence induces an algebraic continuum tree by
	Theorem~\ref{t:algtreechar}. Let $\mu_x\{-x\}=\frac12$, and $\mu_x\{v_n\}=2^{-n-1}$ for $n\in\N$. Then
	$\smallx_x := (T_x,\mu_x) \in \Tbin$. Now $\iota(\smallx_x)=\iota(\smallx_y)$ for every $x,y\ge 0$, but
	$T_x$ and $T_0$ are not homeomorphic, hence not isomorphic by Proposition~\ref{p:isomhomeom}.

	Note that $A_x:=\{x\} \cup\set{v_n}{n\in\N} \cup \set{\frac1n}{n\in\N}$ is a subtree of $T_x$ with
	$\mu_x(A_x)=1$, and $A_x$ is isomorphic (although not homeomorphic) to $A_0$.
\end{example}

In order to construct algebraic measure trees, it is of course not necessary to specify the mass of every Borel
subset. On the contrary, we can use the following Carath\'eodory-type extension result.
To this end, recall for $x,y\in T$ with $x\not =y$ from \eqref{e:005} the component
$\Sub_x(y)=\Sub_x^{(T,c)}(y)$ of $T\setminus\{x\}$ which contains $y$. In this section, it is convenient to define
\begin{equation}\label{e:Subxx}
	\Sub_x(x) := \{x\}.
\end{equation}
Then $T$ is the disjoint union of the $\deg(x)+1$ sets in
\begin{equation}
	\C_x := \bset{\Sub_x(y)}{y\in T}.
\end{equation}
Note that $\C_x=\set{\Sub_x(y)}{y\in V}$ for order dense $V\subseteq T$ with $x\in V$. In particular,
$\C_x$ is countable if $(T,c)$ is order separable.
For $y\in T$, $V\subseteq T$, we call a function $f\colon V \to \R$ \define{order left-continuous} on $V$ w.r.t.\
$\le_y$ if the following holds.
For all $x,x_n\in V$ with $x_1\le_y x_2 \le_y\cdots$ and $x=\sup_{n\in\N} x_n$ w.r.t.\ $\le_y$ (in short $x_n
\uparrow x$), we have $\lim_{n\to\infty} f(x_n) = f(x)$.
Recall the notion of algebraic continuum tree from Definition~\ref{d:aCT}.

\begin{proposition}[extension to a measure]\label{p:caratheodory}
	Let\/ $(T,c)$ be an order separable algebraic continuum tree, and\/ $V\subseteq T$ order
	dense. Then a set-function\/ $\mu_0 \colon \C_V:= \bigcup_{x \in V} \C_x \to [0,1]$ has a unique extension to a
	probability measure on\/ $\B(T,c)$ if it satisfies
	\begin{enumerate}[1.]
		\item For all\/ $x\in V$, $\sum_{A \in \C_x} \mu_0(A) = 1$
		\item For all\/ $x,y\in V$ with\/ $x\ne y$,
		\begin{equation}\label{e:posint}
			\mu_0\(\Sub_x(y)\) + \mu_0\(\Sub_y(x)\) \ge 1
		\end{equation}
		\item For every\/ $y\in V$, the function\/ $\psi_y\colon x\mapsto \mu_0(\Sub_x(y))$
			is order left-continuous on\/ $V$ w.r.t.\ $\le_y$.
	\end{enumerate}
\end{proposition}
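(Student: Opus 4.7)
The plan is to apply Carath\'eodory's extension theorem. I start by letting $\CA$ denote the Boolean algebra of subsets of $T$ generated by $\C_V:=\bigcup_{x\in V}\C_x$; since $V$ is order dense, $\sigma(\CA)=\B(T,c)$ by Corollary~\ref{c:borel}. For each finite $F\subseteq V$, the finite subalgebra $\CA_F\subseteq\CA$ generated by $\{\Sub_x(y):x,y\in F\}$ has atoms that are easy to describe. In the base case $F=\{x,y\}$ they are: the singletons $\{x\},\{y\}$; the ``invisible'' cells $I_x^F:=T\setminus\{x\}\setminus\bigcup_{y'\in F\setminus\{x\}}\Sub_x(y')$ and the analogous $I_y^F$ (unions of components at the respective vertex containing no $F$-point); and the ``edge cell'' $W_{xy}:=\Sub_x(y)\cap\Sub_y(x)$. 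For general finite $F$, further atoms are obtained by intersecting these with the components defined by the other points of $F$.

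Next, I would define a content $\mu$ on $\CA$ by assigning masses to the atoms: $\mu(\{x\}):=\mu_0(\{x\})$; $\mu(I_x^F)$ equal to the sum of $\mu_0$ over the invisible components at $x$ (nonnegative as a partial sum of the convergent series from condition 1); and $\mu(W_{xy}):=\mu_0(\Sub_x(y))+\mu_0(\Sub_y(x))-1$ (nonnegative by condition 2). Larger atom sets are handled inductively: when passing from $F$ to $F\cup\{v\}$, the new atoms refine the old according to the partition $\C_v$, and conditions 1 and 2 at $v$ imply these refinement masses are nonnegative and sum back to the parent atom's mass. A direct computation, using in particular that $\Sub_x(y)=\Sub_x(z)$ whenever $y,z$ lie in the same component at $x$, would then confirm that $\mu$ agrees with $\mu_0$ on all of $\C_V$ and is consistent across different $F$'s. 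This gives a well-defined, $[0,1]$-valued, finitely additive content on the countable algebra $\CA$ with $\mu(T)=1$.

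The core step, and the main obstacle, is to establish $\sigma$-additivity of $\mu$; this is delicate because the atoms of $\CA_F$ are in general neither open nor closed in $\tau$, so the usual compactness-based argument for tightness of Borel contents does not directly apply. I would argue by contradiction: assume $A_n\in\CA$ with $A_n\downarrow\emptyset$ and $\mu(A_n)\ge\alpha>0$. Writing each $A_n$ as a disjoint union of finitely many $\CA_{F_n}$-atoms and passing to subsequences, I would select atoms $B_n\subseteq A_n$ with masses bounded below, all of a single type. Singleton $B_n$'s are ruled out by countability of $V$ and $A_n\downarrow\emptyset$; for invisible or edge atoms, the defining $F_n$-vertices form (after further extraction) monotone chains in some partial order $\le_y$ ($y\in V$ an appropriate reference), whose suprema $x_\infty\in T$ exist by order completeness of $(T,c)$. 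Condition 3 applied to $\psi_y$ then forces $\mu_0(\Sub_{v_n}(y))\to\mu_0(\Sub_{x_\infty}(y))$, which in turn forces the masses $\mu(B_n)$ to converge either to zero (contradicting $\mu(B_n)\ge\alpha/N_n$) or to the mass of $\{x_\infty\}\subseteq\bigcap_n A_n$ (contradicting $A_n\downarrow\emptyset$). The argument interweaves compactness of $(T,\tau)$ from Proposition~\ref{p:compact}, order completeness, order separability of $V$, and condition 3 to preclude escape of mass to a boundary limit. Once $\sigma$-additivity is in place, Carath\'eodory's extension theorem yields the required unique probability measure on $\B(T,c)$ extending $\mu_0$.
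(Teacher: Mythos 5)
Your construction of the finitely additive content is sound and essentially matches the paper's: the paper works with the $\cap$-stable system $\A$ of finite intersections of components and sets $\mu(A)=1-\sum_{x\in\partial A}(1-\psi_A(x))$, which on your atoms reduces to exactly your inclusion--exclusion formulas (e.g.\ $\mu(\Sub_x(y)\cap\Sub_y(x))=\psi_y(x)+\psi_x(y)-1\ge 0$ by condition~2). The genuine gap is in the $\sigma$-additivity step, which you correctly identify as the crux but do not close. Your extraction selects from the atomic decomposition of $A_n$ an atom $B_n$ of mass at least $\alpha/N_n$; since $N_n$ may grow, this is not a uniform lower bound, so the first horn of your final dichotomy (``$\mu(B_n)\to 0$ contradicts $\mu(B_n)\ge\alpha/N_n$'') is not a contradiction at all. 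Moreover, the atoms $B_n$ chosen inside different $A_n$ need not be nested, so there is no reason their defining vertices line up into monotone chains for a single reference order $\le_y$; and condition~3 is only \emph{left}-continuity, so it controls approximation of a component from inside but says nothing about atoms shrinking onto a point from outside (right-continuity must in fact fail wherever $\mu$ has an atom).

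The missing idea is inner regularity with respect to compact sets, which is precisely how the paper exploits condition~3 together with the continuum and completeness hypotheses. For $A=\bigcap_{z\in\partial A}\Sub_z(y)\in\A$ one chooses, using order density of $V$ and the absence of edges, points $x_n(z)\in A\cap V$ with $x_n(z)\uparrow z$ w.r.t.\ $\le_y$, and sets $A_n:=\bigcap_{z\in\partial A}\Sub_{x_n(z)}(y)$; then $K_n:=A_n\cup\partial A_n$ is closed, hence compact by Proposition~\ref{p:compact}, satisfies $A_n\subseteq K_n\subseteq A$, and $\mu(A_n)\to\mu(A)$ by order left-continuity. Once every generating set is inner approximated by compact sets in this way, $\sigma$-additivity follows from the standard compact-class argument (which is exactly the ``take closures of the $B_n$'' scheme your sketch is implicitly reaching for, but applied to sets that really are compact). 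Without this step your proof does not go through; with it, the remainder of your argument becomes a correct, if differently packaged, version of the paper's.
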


\begin{proof}
	Note that $\psi_y(x) = \psi_z(x)$ for $z\in \Sub_x(y)$. We therefore may write $\psi_A(x) := \psi_y(x)$
	for any $A\subseteq \Sub_x(y)$.
	Define the $\cap$-stable set system
\begin{equation}\label{e:A}
	\A := \Bset{\bigcap_{k=1}^n A_k}{n\in\N,\, A_k \in \C_V}.
\end{equation}
	By Corollary~\ref{c:borel}, $\A$ generates the Borel $\sigma$-algebra $\B(T,c)$.
	Let $\emptyset \ne A \in \A$ and $y \in A$. Because $(T,c)$ has no edges and is order complete, we have
	$A = \bigcap_{x\in \partial A} \Sub_x(y)$, where $\partial$ denotes the boundary w.r.t.\ the component
	topology $\tau$, which is a finite set in the case of $A$.
	Using \eqref{e:posint}, we obtain for $v\in V$, $x_0,\ldots,x_n \in V\setminus\{v\}$ such that
	$\Sub_v(x_0),\ldots,\Sub_v(x_n)$ are distinct, that
\begin{equation}
	\psi_{x_0}(v) \le 1 - \sum_{k=1}^n \psi_{x_k}(v) \le 1 - \sum_{k=1}^n \(1 - \psi_v(x_k)\).
\end{equation}
	This implies for $\emptyset \ne A\in \A$, by induction over $\#\partial A$, that
\begin{equation}\label{e:muA}
	\mu(A) := 1 - \sum_{x\in \partial A} \(1-\psi_A(x)\) \ge 0,
\end{equation}
	hence $\mu$ is a non-negative extension of $\mu_0$ to $\A$.
	We claim that $\mu$ is super-additive, additive and inner regular for compact sets. From this it follows
	by standard arguments that it has a unique extension to a measure on the generated $\sigma$-algebra
	$\sigma(\A)=\B(T,c)$.

\pstep{Additivity}
	Let $n\in \N\setminus\{1\}$, and $A_1,\ldots, A_n \in \A\setminus \{\emptyset\}$ disjoint with
	$A:=\biguplus_{k=1}^n A_k \in \A$. Define $D:= \bigcup_{k=1}^n \partial A_k$.
	Then $\partial A \subseteq D$ and there is $x\in D \setminus \partial A \subseteq A$.
	Let $I_x:=\set{k\in \{1,\ldots,n\}}{x\in \partial A_k}$ and choose $y_k\in A_k$.
	Then, because the $A_k$ are disjoint, the $\Sub_x(y_k)$, $k\in I$, are distinct, and because the $A_k$
	cover $A$, we have $\set{\Sub_x(y_k)}{k\in I_x} = \C_x$.
	In particular, $\sum_{k\in I_x} \psi_{y_k}(x) = 1$, and $B_x:=\bigcup_{k\in I_x} A_k \in \A$ with
	$\partial B_x = \biguplus_{k\in I_x} \partial A_k\setminus \{x\}$.
	We obtain
\begin{equation}\label{e:lokad}
\begin{aligned}
	\sum_{k\in I_x} \mu(A_k)
	  &= \sum_{k\in I_x} \Bigl( 1 - \(1-\psi_{y_k}(x)\) - \sum_{z\in \partial A_k \setminus\{x\}} \(1-\psi_{y_k}(z)\) \Bigr) \\
	  &= \sum_{k\in I_x} \psi_{y_k}(x) - \sum_{z\in \partial B_x} \(1-\psi_x(z)\)
	  = \mu(B_x).
\end{aligned}
\end{equation}
	By induction over $n$, this implies additivity of $\mu$.

\pstep{Super-additivity}
	Let $A_1,\ldots, A_n \in \A \setminus \{\emptyset\}$ be disjoint and $\biguplus_{k=1}^n A_k \subseteq A
	\in \A$. The case $n=1$
	is trivial, and we proceed by induction over $n$.
	Choose $y\in A_1$ and let $D:=\partial A_1 \setminus \partial A$.
	For $x\in D$, $C\in \C_x':=\C_x \setminus \Sub_x(y)$
	and $k\in \{2,\ldots,n\}$, either $A_k \subseteq C$, or $A_k \cap C = \emptyset$.
	Therefore, we have the decomposition $\{2,\ldots,n\}=\biguplus_{x\in D}\biguplus_{C\in \C_x'} I_C$ with
	$I_C:=\set{k}{A_k \subseteq C}$. Because $C\cap A \in \A$, and $A_k \subseteq C\cap A$ for $k\in I_C$,
	we can use the induction hypothesis to obtain
\begin{equation}
	\sum_{k\in I_C} \mu(A_k) \le \mu(C\cap A) = \psi_C(x) - \sum_{z\in \partial A \cap C} \(1-\psi_A(x)\).
\end{equation}
	Therefore,
\begin{equation}\begin{aligned}
	\mu(A_1) &= 1 - \sum_{x\in \partial A_1 \cap \partial A} \(1- \psi_y(x)\)
			- \sum_{x\in D} \(1-\psi_y(x)\) \\
	  &= \mu(A) + \sum_{x\in \partial A \setminus \partial A_1} \(1-\psi_y(x)\)
	  	- \sum_{x\in D} \sum_{C\in \C_x'} \psi_C(x) \\
	  &\le \mu(A) - \sum_{x\in D} \sum_{C\in \C_x'} \sum_{k\in I_C} \mu(A_k) \\
	  &= \mu(A) - \sum_{k=2}^n \mu(A_k).
\end{aligned}\end{equation}

\pstep{Compact regularity}
	According to Proposition~\ref{p:compact}, all closed subsets of $T$ are compact. Let $y\in A\in \A$.
	Because $(T,c)$ is an order separable algebraic continuum tree, and $V$ is order dense, we find for $z\in
	\partial A$ a sequence $(x_n(z))_{n\in\N}$ in $A \cap V$ with $x_n(z) \uparrow z$ w.r.t.\ $\le_y$ as
	$n\to\infty$. Define $A_n:= \bigcap_{z\in \partial A} \Sub_{x_n(z)}(y) \in \A$ and $K_n := A_n \cup
	\partial A_n$. Then $K_n$ is compact, $A_n \subseteq K_n \subseteq A$, and because $\partial A$ is
	finite, we have $\partial A_n = \set{x_n(z)}{z\in \partial A}$ for sufficiently large $n$. Thus, by
	order left-continuity of $\psi_y$,
	\begin{equation}
		\lim_{n\to\infty} \mu(A_n) = 1 - \lim_{n\to\infty} \sum_{z\in \partial A} \(1-\psi_y(x_n(z))\)
			= 1 - \sum_{z\in \partial A} \(1-\psi_y(z)\) = \mu(A),
	\end{equation}
	and $\mu$ is inner compact regular as claimed.
\end{proof}

We conclude this section with an extension result, which will be very useful for reading off algebraic measure
trees from (sub\nobreakdash-)triangulations of the circle in Section~\ref{S:triangulation} below.
In Proposition~\ref{p:caratheodory}, we assumed the whole tree to be known, and considered the question of
constructing a probability measure on it. Now, we assume that not the whole tree is given a priori, but only the
(countably many) branch points. The question is, whether there is an extension of the tree which is rich
enough to carry a measure with the specified masses of components.

\begin{proposition}[construction of algebraic measure trees]\label{p:construction}
	Let\/ $(V,c_V)$ be a countable algebraic tree, and for each\/ $x\in V$, let\/ $A \mapsto \psi_A(x)$ be a
	probability measure on\/ $\C_x$. Define\/ $\psi_y(x) := \psi_{\Sub_x(y)}(x)$.
	Assume that for $x,y\in V$ with $x\ne y$,
	\begin{equation}\label{e:psisum}
		\psi_x(y)+\psi_y(x) \ge 1.
	\end{equation}
	Then there is a unique (up to equivalence) algebraic measure tree\/ $\smallx=(T,c,\mu)$ such that
	\begin{enumerate}
		\item\label{i:tree} $V\subseteq T$, $\br(T,c) = \br(V,c_V)$,
		\item\label{i:mupsi} $\mu\(\Sub^{(T,c)}_{x}(y)\) = \psi_y(x)$ for all\/ $x,y\in V$,
		\item\label{i:at} $\at(\mu) \subseteq V$, 
			where\/ $\at(\mu)$ denotes the set of atoms of\/ $\mu$.
	\end{enumerate}
\end{proposition}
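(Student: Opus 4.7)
The plan is to construct $(T,c,\mu)$ in three stages: first build an order-separable algebraic continuum tree $T\supseteq V$ with $\br(T,c)=\br(V,c_V)$; then invoke Proposition~\ref{p:caratheodory} to extend the prescribed component masses to a probability measure; and finally establish uniqueness up to equivalence.

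For the construction of $T$, I would start from $V$ and adjoin the following, none of which creates a branch point of degree at least three: for each edge $\{x,y\}\in\edge(V,c_V)$, glue in a copy of the open interval $(0,1)$ with endpoints identified with $x$ and $y$; for each bounded chain in $V$ without a supremum in $V$, adjoin a new point of degree at most two playing the role of that supremum; and, most subtly, for each chain $x_n\uparrow x$ in $V$ along which some prescribed value $\psi_y(x_n)$ fails to tend to $\psi_y(x)$, insert an additional arc in $T$ between the newly created $\sup_n x_n$ and $x$ itself, so as to leave room for the missing mass $\psi_y(x)-\lim_n\psi_y(x_n)$. Extend $c_V$ by the linear order along each inserted arc and in the obvious way at Dedekind-type points, and verify that $(T,c)$ is an order-separable algebraic continuum tree with $\br(T,c)=\br(V,c_V)$.

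For the measure, fix a countable order-dense subset $W\supseteq V$ of $T$ (adding a countable dense subset of each inserted arc) and define a set-function $\mu_0\colon \bigcup_{x\in W}\C_x^T\to[0,1]$ by setting $\mu_0(\Sub_x^T(y)):=\psi_y(x)$ for $x,y\in V$ (using the natural bijection $\C_x^V\leftrightarrow \C_x^T$ induced by preservation of branch points) and by linear interpolation along each inserted arc, with $\mu_0(\{x\})=0$ for $x\in W\setminus V$. The hypotheses of Proposition~\ref{p:caratheodory} are then to be verified: the probability condition and the positivity $\mu_0(\Sub_x(y))+\mu_0(\Sub_y(x))\ge 1$ follow from the assumptions on $\psi$; the order left-continuity of $\psi_y$ on $W$ with respect to $\le_y$ is precisely what the extra arcs from Step~1 have been designed to guarantee. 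Proposition~\ref{p:caratheodory} then yields the desired $\mu$, and properties (i)--(iii) of the statement are clear from the construction: $\mu(\{v\})=\psi_v(v)$ for $v\in V$, all non-atomic mass lies on interiors of inserted arcs, and component masses at vertices of $V$ match the prescribed $\psi$-values.

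For uniqueness, let $(T',c',\mu')$ be another solution. The conditions force $\mu'(\{v\})=\psi_v(v)$ for $v\in V$ and assign a uniquely determined amount of non-atomic mass to each region of $T'$ between consecutive vertices of $V$; since a non-atomic measure on a linearly ordered arc is unique up to measure-preserving reparametrisation, a measure-preserving tree isomorphism between full-measure subtrees of the two solutions can be constructed, giving the equivalence. The main obstacle lies in the third type of insertion in Step~1: a chain $x_n\uparrow x$ in $V$ along which $\psi_y$ has a jump at $x$ forces the insertion of an entire arc between $\sup_n x_n$ (now a proper element of $T$ strictly below $x$) and $x$ itself, and organising all such arcs simultaneously while respecting (BPM1)--(BPM4) and the branch-point set is the technical core of the argument.
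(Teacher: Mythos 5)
Your overall route is the paper's: enlarge $(V,c_V)$ to an order separable algebraic continuum tree with the same branch points, repair order left-continuity of the $\psi_y$ by inserting new material into the tree, feed the resulting set function into Proposition~\ref{p:caratheodory}, and reduce uniqueness to the fact that linearly ordered non-atomic measure trees are all equivalent (Example~\ref{ex:lintree}). The gap is in your treatment of atoms outside $V$. You insert arcs only along edges of $V$ and at points $x\in V$ where some $\psi_y$ jumps along a chain $x_n\uparrow x$ inside $V$, and you then stipulate $\mu_0(\{x\})=0$ for every $x\in W\setminus V$. This is inconsistent with the probability condition $\sum_{A\in\C_x}\mu_0(A)=1$ of Proposition~\ref{p:caratheodory} whenever the data force mass to concentrate at an accumulation point of $V$ that does not belong to $V$. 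Concretely, let $V=\{x_n: n\in\N\}\cup\{u_n: n\in\N\}$ be linearly ordered as $x_1<x_2<\cdots<\cdots<u_2<u_1$ with consecutive elements adjacent, and take the $\psi$-data of the measure giving mass $2^{-n-2}$ to each $x_n$ and each $u_n$. Any $\mu$ satisfying condition (ii) must have $\mu\(\openint{x_n}{u_n}\)=\mu\(\Sub_{x_n}(u_1)\)+\mu\(\Sub_{u_n}(x_1)\)-1=\psi_{u_1}(x_n)+\psi_{x_1}(u_n)-1\to\tfrac12$, so by continuity from above the gap $\bigcap_n\openint{x_n}{u_n}$ carries mass $\tfrac12$. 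In your construction every chain in $V$ with a supremum in $V$ is eventually constant, so none of your left-continuity arcs is triggered, and the gap consists only of the adjoined Dedekind point(s); the mass $\tfrac12$ is then forced to be atomic there, contradicting both condition (iii) and your stipulation. The tree you build is simply not large enough. (A one-sided variant of the same problem occurs at an end of $V$ along which the masses of the far components do not tend to $0$.)

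The paper resolves exactly this in a separate final step: it first constructs $\mu$ on the continuum tree \emph{allowing} atoms outside $V$ (defining $\psi_y$ at points of the closure of $V$ by suprema, and interpolating linearly only on edges of that closure), and afterwards replaces every atom located at a point of $\at(\mu)\setminus V$ --- necessarily a point of degree at most two --- by a copy of $[0,1]$ carrying the same mass as a multiple of Lebesgue measure, pushing $\mu$ forward by the corresponding kernel. If you want to keep your ``insert all arcs in advance'' architecture, you must additionally insert an arc at every point $w$ of the order completion of $V$ at which the limits of the component masses sum to strictly less than $1$, i.e.\ detect the would-be atoms from the $\psi$-data beforehand; this amounts to the same bookkeeping in a different order. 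Apart from this point, your verification of the hypotheses of Proposition~\ref{p:caratheodory} and your uniqueness argument are in line with the paper's.
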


Note that in general we cannot obtain $\lf(T,c) \subseteq \lf(V,c_V)$. To the contrary, $\lf(T,c)$ can be
uncountable (for every representative of $\smallx$).

\begin{proof} \pstep{Existence}
	First note that for $y\in V$, $\psi_y$ is monotonic w.r.t. $\le_y$. Indeed,
	$z\le_y x$ implies $\psi_y(z) \le 1 - \psi_x(z) \le \psi_z(x) = \psi_y(x)$.

	We need to enlarge the tree to make $\psi_y$ order left-continuous.
	Because $V$ is countable, we may consider one $y$ and one point $x$ at a time.
	If $x,y \in V$ are such that there exists $x_n \in V$ with $x_n \uparrow x$, then by
	monotonicity $\phi_y(x):=\lim_{n\to\infty} \psi_y(x_n) \le \psi_y(x)$ exists and is independent of the
	choice of $x_n$. If $\phi_y(x) \ne \psi_y(x)$, we extend the tree by adding one extra point $z\not\in
	V$, i.e.\ we consider $\Vt := V \uplus \{z\}$ with the unique extension $\ct$ of $c_V$
	such that $(\Vt,\ct)$ is an algebraic tree with $x_n \le_y z \le_y x$ for all $n$.
	Furthermore, we extend $\psi$ to $\tilde{\psi}$ on $\Vt$ by defining $\tilde\psi_y(z):=\phi_y(x)$,
	$\tilde\psi_z(z)=0$ and $\tilde\psi_x(z)=1-\phi_y(x)$.
	It is easy to check that $(\Vt,\ct)$ together with
	$\tilde\psi$ satisfies the prerequisites of the Proposition, $\br(\Vt,\ct)=\br(V,c)$, and
	$\set{x\in \Vt}{\tilde\psi_x(x) > 0} = \set{x\in V}{\psi_x(x)>0} \subseteq V$.

	Now assume w.l.o.g.\ that $\psi_y$ is already order left-continuous for all $y\in V$. Because $V$ is
	countable, it is in particular order separable and according to Theorem~\ref{t:algtreechar}, there is an
	order separable algebraic continuum tree $(T,c)$ such that $(V,c_V)$ is a subtree. We can choose
	$(T,c)$ such that $\br(T,c) = \br(V,c_V)$. Consider the closure $\Vc$ of $V$ w.r.t.\ the component
	topology $\tau$. For $x\in \Vc\setminus V$, we define
	\begin{equation}
		\psi_y(x) := \sup\set{\psi_y(z)}{z\in V\cap \Sub_x(y)}.
	\end{equation}
	Then \eqref{e:psisum} holds for $x,y\in\Vc$, $x\ne y$, and $\psi_y$ is order left-continuous.
	For every $\{x,y\} \in \edge(\Vc,\cb)$, where $\cb$ is the restriction of $c$ to $\Vc^3$, we fix an
	order isomorphism $\varphi_{x,y}\colon [x,y] \to [0,1]$, which exists by Cantor's order characterization of $\R$
	because $[x,y]$ is a linearly ordered, separable algebraic continuum tree.
	For every $z\in T \setminus \Vc$, there exists $\{x,y\} \in \edge(\Vc,\cb)$, with $z\in[x,y]$. We define
	\begin{equation}
		\psi_y(z) := (1-\varphi_{x,y}(z)) \psi_y(x) + \varphi_{x,y}(z) \( 1 - \psi_x(y)\),
	\end{equation}
	$\psi_x(z):=1-\psi_y(z)$ and $\psi_z(z):=0$.
	Now we can use Proposition~\ref{p:caratheodory} to see that
	\begin{equation}
		\mu_0\(\Sub_x(y)\) := \psi_y(x)
	\end{equation}
	has a unique extension to a probability measure $\mu$ on $\B(T,c)$.

	The last step in the construction is to remove point-masses outside $V$ by expanding them to intervals.
	To this end, let $P:=\at(\mu) \setminus V$, and $\Tb:=(T \setminus P) \uplus (P \times [0,1])$. Because
	$P\subseteq T\setminus V$ contains no branch points, we can extend the restriction
	of $c$ to $T\setminus P$ to a branch point map $\ct$ on $\Tb$ in a canonical way such that
	$[(x,0),(x,1)]=\{x\}\times[0,1]$ for $x\in P$. Define the Markov kernel $\kappa$ from $T$ to $\Tb$ by
	\begin{equation}
		\kappa(x) := \begin{cases} \delta_x, & x\in T\setminus P,\\
			\delta_x \otimes \lambda_{[0,1]}, & x \in P, \end{cases}
	\end{equation}
	where $\delta_x$ is the Dirac measure in $x$ and $\lambda_{[0,1]}$ is Lebesgue measure.
	Let $\mub:=\kappa_*(\mu)$ be the push-forward of $\mu$ under $\kappa$.
	Then $(\Tb,\ct,\mub)$ is a separable algebraic
	measure tree, and by construction $\br(\Tb,\ct) = \br(V,c_V)$ as well as $\at(\mub) = \at(\mu)\cap V
	\subseteq V$.
	Furthermore, for $x,y\in V$, we have $\mub(\Sub_x^{(\Tb,\ct)}(y)) = \mu(\Sub_x^{(T,c)}(y)) = \psi_y(x)$
	as claimed.
\pstep{Uniqueness} Follows similarly, where we note that it does not matter how we distribute the mass on an
	edge of $(\Vc,\cb)$ in a non-atomic way, because all algebraic measure trees without branch points and
	non-atomic measure are equivalent by Example~\ref{ex:lintree}.
\end{proof}

\section{Triangulations of the circle}
\label{S:triangulation}

In this section, we encode binary algebraic measure trees by triangulations of subsets of the circle.
This is comparable with the encoding of compact (ordered, rooted) metric (probability) measure trees by
excursions over the unit interval, where the height profile encodes the branch point map as well as the metric
distances. Moreover, also the measure can be encoded by the excursion by identifying the lengths of
sub-excursions with the mass of the corresponding subtrees. Similarly, it turns out that we can encode binary
algebraic measure trees by what we call sub-triangulations of the circle. As in the case of coding metric
measure trees with excursions, the resulting \emph{coding map} associating to a sub-triangulation the algebraic
measure tree is continuous.

In Subsection~\ref{s:spacetriang}, we introduce the space of sub-triangulations of the circle. In
Subsection~\ref{s:coding}, we construct the coding map.

\subsection{The space of sub-triangulations of the circle}
\label{s:spacetriang}

Let $\disc$ be a (fixed) closed disc of circumference $1$, and $\S:=\partial\disc$ the circle.
As usual, for a subset $A\subseteq\disc$, we denote by $\bar{A}$, $\interior{A}$, $\partial A$ and $\conv(A)$ the
closure, the interior, the boundary and the convex hull of $A$, respectively.
Furthermore, let
\begin{equation}
\label{e:tri}
   \tri(A):=\bigl\{\text{connected components of $\conv(A)\setminus A$}\bigr\},
\end{equation}
and
\begin{equation}
\label{e:nabla}
   \circseg(A):=\bigl\{\text{connected components of $\disc\setminus\conv(A)$}\bigr\}.
 \end{equation}
Then we have the disjoint decomposition $\disc = A \uplus \bigcup\tri(A) \uplus \bigcup\circseg(A)$.

\begin{definition}[(sub-)triangulations of the circle]
	A \define{sub-triangulation of the circle} is a closed, non-empty subset $C$ of $\disc$
	satisfying the following two conditions:
\begin{enumerate}[\axiom(Tr{i}1)]
	\item $\tri(C)$ consists of open interiors of triangles.
	\item $C$ is the union of non-crossing (non-intersecting except at endpoints), possibly
		degenerate closed straight line segments with endpoints in $\S$.
\end{enumerate}
We denote the set of sub-triangulations of the circle by $\triang$, i.e.\
\begin{equation}
\label{e:tatT}
   \triang := \bigl\{\text{sub-triangulations of the circle}\},
\end{equation}
and call $C\in\triang$ \emph{triangulation of the circle} if and only if $\S\subseteq C$.
\label{d:triangfinite}
\end{definition}

In particular, (Tri1) implies that $\partial\conv(C) \subseteq C$, and we may call $C$ triangulation of
$\partial\conv(C)$. Given (Tri1), (Tri2) implies that $\circseg(A)$ consists of circular segments with the
bounding straight line excluded and the rest of the bounding arc included.
We want to point out that our definition of triangulation of the circle differs from the one given by Aldous in
\cite[Definition~1]{Aldous94b}. Namely, Aldous required only Condition~(Tri1).
For the characterization of triangulations of the circle as limits of triangulations of $n$-gons given in
Proposition~\ref{p:fintriapp} below, however, Condition~(Tri2) is necessary.
See Figure~\ref{f:nontriang} for
an example of a triangulation in the sense of Aldous that is excluded by Condition~(Tri2), a sub-triangulation
of the circle that is no triangulation of the circle, and a triangulation of the circle.

\begin{figure}[t]
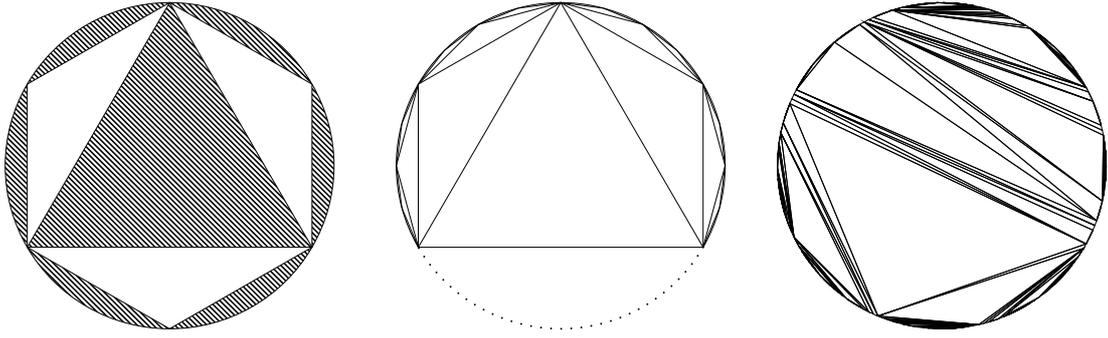

\begin{center}
\ifpdf
	\includegraphics{pretriang} \hfil \includegraphics{subtriang} \hfil \includegraphics{browntriang}
\else
	\psset{unit=0.139\textwidth}
	\pictpretri \hfil \pictsubtri  \hfil \begin{pspicture}(-1.01,-1.01)(1.01,1.01)
\SpecialCoor
\pspolygon(1;51.940299)(1;53.731343)(1;55.522388)
\pspolygon(1;152.238806)(1;338.507463)(1;340.298507)
\pspolygon(1;318.805970)(1;320.597015)(1;322.388060)
\pspolygon(1;197.014925)(1;198.805970)(1;200.597015)
\pspolygon(1;297.313433)(1;299.104478)(1;300.895522)
\pspolygon(1;266.865672)(1;274.029851)(1;279.402985)
\pspolygon(1;78.805970)(1;82.388060)(1;84.179104)
\pspolygon(1;154.029851)(1;336.716418)(1;338.507463)
\pspolygon(1;37.611940)(1;46.567164)(1;57.313433)
\pspolygon(1;5.373134)(1;7.164179)(1;354.626866)
\pspolygon(1;211.343284)(1;213.134328)(1;214.925373)
\pspolygon(1;26.865672)(1;28.656716)(1;116.417910)
\pspolygon(1;130.746269)(1;141.492537)(1;152.238806)
\pspolygon(1;1.791045)(1;354.626866)(1;356.417910)
\pspolygon(1;132.537313)(1;134.328358)(1;136.119403)
\pspolygon(1;340.298507)(1;345.671642)(1;347.462687)
\pspolygon(1;68.059701)(1;69.850746)(1;107.462687)
\pspolygon(1;146.865672)(1;150.447761)(1;152.238806)
\pspolygon(1;200.597015)(1;202.388060)(1;204.179104)
\pspolygon(1;266.865672)(1;270.447761)(1;274.029851)
\pspolygon(1;91.343284)(1;94.925373)(1;98.507463)
\pspolygon(1;234.626866)(1;238.208955)(1;240.000000)
\pspolygon(1;261.492537)(1;279.402985)(1;281.194030)
\pspolygon(1;308.059701)(1;311.641791)(1;313.432836)
\pspolygon(1;191.641791)(1;193.432836)(1;197.014925)
\pspolygon(1;188.059701)(1;189.850746)(1;197.014925)
\pspolygon(1;59.104478)(1;60.895522)(1;62.686567)
\pspolygon(1;256.119403)(1;257.910448)(1;259.701493)
\pspolygon(1;159.402985)(1;161.194030)(1;162.985075)
\pspolygon(1;107.462687)(1;109.253731)(1;112.835821)
\pspolygon(1;12.537313)(1;14.328358)(1;118.208955)
\pspolygon(1;94.925373)(1;96.716418)(1;98.507463)
\pspolygon(1;286.567164)(1;290.149254)(1;329.552239)
\pspolygon(1;168.358209)(1;171.940299)(1;205.970149)
\pspolygon(1;48.358209)(1;50.149254)(1;57.313433)
\pspolygon(1;295.522388)(1;300.895522)(1;302.686567)
\pspolygon(1;180.895522)(1;182.686567)(1;197.014925)
\pspolygon(1;356.417910)(1;358.208955)(1;360.000000)
\pspolygon(1;164.776119)(1;245.373134)(1;247.164179)
\pspolygon(1;205.970149)(1;234.626866)(1;241.791045)
\pspolygon(1;78.805970)(1;80.597015)(1;82.388060)
\pspolygon(1;50.149254)(1;55.522388)(1;57.313433)
\pspolygon(1;171.940299)(1;204.179104)(1;205.970149)
\pspolygon(1;136.119403)(1;137.910448)(1;139.701493)
\pspolygon(1;325.970149)(1;327.761194)(1;329.552239)
\pspolygon(1;30.447761)(1;32.238806)(1;57.313433)
\pspolygon(1;189.850746)(1;191.641791)(1;197.014925)
\pspolygon(1;23.283582)(1;116.417910)(1;118.208955)
\pspolygon(1;290.149254)(1;291.940299)(1;329.552239)
\pspolygon(1;7.164179)(1;351.044776)(1;352.835821)
\pspolygon(1;293.731343)(1;295.522388)(1;302.686567)
\pspolygon(1;247.164179)(1;248.955224)(1;282.985075)
\pspolygon(1;91.343284)(1;100.298507)(1;103.880597)
\pspolygon(1;291.940299)(1;304.477612)(1;306.268657)
\pspolygon(1;125.373134)(1;127.164179)(1;128.955224)
\pspolygon(1;69.850746)(1;73.432836)(1;107.462687)
\pspolygon(1;291.940299)(1;325.970149)(1;329.552239)
\pspolygon(1;211.343284)(1;214.925373)(1;216.716418)
\pspolygon(1;313.432836)(1;318.805970)(1;322.388060)
\pspolygon(1;205.970149)(1;209.552239)(1;232.835821)
\pspolygon(1;28.656716)(1;114.626866)(1;116.417910)
\pspolygon(1;291.940299)(1;293.731343)(1;304.477612)
\pspolygon(1;1.791045)(1;5.373134)(1;354.626866)
\pspolygon(1;10.746269)(1;347.462687)(1;349.253731)
\pspolygon(1;130.746269)(1;152.238806)(1;340.298507)
\pspolygon(1;10.746269)(1;118.208955)(1;347.462687)
\pspolygon(1;247.164179)(1;282.985075)(1;329.552239)
\pspolygon(1;7.164179)(1;10.746269)(1;349.253731)
\pspolygon(1;234.626866)(1;236.417910)(1;238.208955)
\pspolygon(1;125.373134)(1;128.955224)(1;130.746269)
\pspolygon(1;315.223881)(1;317.014925)(1;318.805970)
\pspolygon(1;46.567164)(1;48.358209)(1;57.313433)
\pspolygon(1;146.865672)(1;148.656716)(1;150.447761)
\pspolygon(1;14.328358)(1;23.283582)(1;118.208955)
\pspolygon(1;306.268657)(1;324.179104)(1;325.970149)
\pspolygon(1;286.567164)(1;288.358209)(1;290.149254)
\pspolygon(1;168.358209)(1;205.970149)(1;243.582090)
\pspolygon(1;342.089552)(1;343.880597)(1;345.671642)
\pspolygon(1;250.746269)(1;281.194030)(1;282.985075)
\pspolygon(1;265.074627)(1;266.865672)(1;279.402985)
\pspolygon(1;184.477612)(1;186.268657)(1;188.059701)
\pspolygon(1;180.895522)(1;197.014925)(1;200.597015)
\pspolygon(1;193.432836)(1;195.223881)(1;197.014925)
\pspolygon(1;209.552239)(1;231.044776)(1;232.835821)
\pspolygon(1;75.223881)(1;103.880597)(1;107.462687)
\pspolygon(1;130.746269)(1;139.701493)(1;141.492537)
\pspolygon(1;14.328358)(1;16.119403)(1;17.910448)
\pspolygon(1;340.298507)(1;342.089552)(1;345.671642)
\pspolygon(1;7.164179)(1;349.253731)(1;351.044776)
\pspolygon(1;205.970149)(1;241.791045)(1;243.582090)
\pspolygon(1;266.865672)(1;268.656716)(1;270.447761)
\pspolygon(1;274.029851)(1;275.820896)(1;277.611940)
\pspolygon(1;57.313433)(1;59.104478)(1;107.462687)
\pspolygon(1;28.656716)(1;57.313433)(1;107.462687)
\pspolygon(1;247.164179)(1;329.552239)(1;331.343284)
\pspolygon(1;7.164179)(1;352.835821)(1;354.626866)
\pspolygon(1;295.522388)(1;297.313433)(1;300.895522)
\pspolygon(1;75.223881)(1;77.014925)(1;103.880597)
\pspolygon(1;120.000000)(1;340.298507)(1;347.462687)
\pspolygon(1;32.238806)(1;34.029851)(1;57.313433)
\pspolygon(1;252.537313)(1;254.328358)(1;281.194030)
\pspolygon(1;209.552239)(1;222.089552)(1;231.044776)
\pspolygon(1;120.000000)(1;130.746269)(1;340.298507)
\pspolygon(1;155.820896)(1;157.611940)(1;331.343284)
\pspolygon(1;39.402985)(1;41.194030)(1;42.985075)
\pspolygon(1;14.328358)(1;19.701493)(1;23.283582)
\pspolygon(1;164.776119)(1;166.567164)(1;168.358209)
\pspolygon(1;234.626866)(1;240.000000)(1;241.791045)
\pspolygon(1;168.358209)(1;243.582090)(1;245.373134)
\pspolygon(1;211.343284)(1;218.507463)(1;220.298507)
\pspolygon(1;34.029851)(1;37.611940)(1;57.313433)
\pspolygon(1;10.746269)(1;12.537313)(1;118.208955)
\pspolygon(1;59.104478)(1;62.686567)(1;64.477612)
\pspolygon(1;261.492537)(1;263.283582)(1;279.402985)
\pspolygon(1;84.179104)(1;85.970149)(1;87.761194)
\pspolygon(1;254.328358)(1;256.119403)(1;281.194030)
\pspolygon(1;308.059701)(1;313.432836)(1;322.388060)
\pspolygon(1;157.611940)(1;164.776119)(1;247.164179)
\pspolygon(1;209.552239)(1;211.343284)(1;222.089552)
\pspolygon(1;284.776119)(1;286.567164)(1;329.552239)
\pspolygon(1;152.238806)(1;154.029851)(1;338.507463)
\pspolygon(1;73.432836)(1;75.223881)(1;107.462687)
\pspolygon(1;211.343284)(1;220.298507)(1;222.089552)
\pspolygon(1;333.134328)(1;334.925373)(1;336.716418)
\pspolygon(1;173.731343)(1;180.895522)(1;204.179104)
\pspolygon(1;184.477612)(1;188.059701)(1;197.014925)
\pspolygon(1;84.179104)(1;87.761194)(1;89.552239)
\pspolygon(1;28.656716)(1;107.462687)(1;114.626866)
\pspolygon(1;50.149254)(1;51.940299)(1;55.522388)
\pspolygon(1;19.701493)(1;21.492537)(1;23.283582)
\pspolygon(1;14.328358)(1;17.910448)(1;19.701493)
\pspolygon(1;23.283582)(1;26.865672)(1;116.417910)
\pspolygon(1;157.611940)(1;162.985075)(1;164.776119)
\pspolygon(1;291.940299)(1;306.268657)(1;325.970149)
\pspolygon(1;222.089552)(1;223.880597)(1;227.462687)
\pspolygon(1;37.611940)(1;39.402985)(1;42.985075)
\pspolygon(1;78.805970)(1;89.552239)(1;91.343284)
\pspolygon(1;109.253731)(1;111.044776)(1;112.835821)
\pspolygon(1;175.522388)(1;179.104478)(1;180.895522)
\pspolygon(1;270.447761)(1;272.238806)(1;274.029851)
\pspolygon(1;78.805970)(1;91.343284)(1;103.880597)
\pspolygon(1;263.283582)(1;265.074627)(1;279.402985)
\pspolygon(1;293.731343)(1;302.686567)(1;304.477612)
\pspolygon(1;222.089552)(1;227.462687)(1;229.253731)
\pspolygon(1;91.343284)(1;93.134328)(1;94.925373)
\pspolygon(1;205.970149)(1;207.761194)(1;209.552239)
\pspolygon(1;7.164179)(1;8.955224)(1;10.746269)
\pspolygon(1;180.895522)(1;200.597015)(1;204.179104)
\pspolygon(1;175.522388)(1;177.313433)(1;179.104478)
\pspolygon(1;130.746269)(1;136.119403)(1;139.701493)
\pspolygon(1;259.701493)(1;261.492537)(1;281.194030)
\pspolygon(1;154.029851)(1;331.343284)(1;336.716418)
\pspolygon(1;59.104478)(1;64.477612)(1;107.462687)
\pspolygon(1;211.343284)(1;216.716418)(1;218.507463)
\pspolygon(1;66.268657)(1;68.059701)(1;107.462687)
\pspolygon(1;164.776119)(1;168.358209)(1;245.373134)
\pspolygon(1;130.746269)(1;132.537313)(1;136.119403)
\pspolygon(1;168.358209)(1;170.149254)(1;171.940299)
\pspolygon(1;1.791045)(1;3.582090)(1;5.373134)
\pspolygon(1;42.985075)(1;44.776119)(1;46.567164)
\pspolygon(1;78.805970)(1;84.179104)(1;89.552239)
\pspolygon(1;331.343284)(1;333.134328)(1;336.716418)
\pspolygon(1;107.462687)(1;112.835821)(1;114.626866)
\pspolygon(1;154.029851)(1;155.820896)(1;331.343284)
\pspolygon(1;103.880597)(1;105.671642)(1;107.462687)
\pspolygon(1;250.746269)(1;252.537313)(1;281.194030)
\pspolygon(1;145.074627)(1;146.865672)(1;152.238806)
\pspolygon(1;306.268657)(1;308.059701)(1;322.388060)
\pspolygon(1;100.298507)(1;102.089552)(1;103.880597)
\pspolygon(1;91.343284)(1;98.507463)(1;100.298507)
\pspolygon(1;120.000000)(1;121.791045)(1;130.746269)
\pspolygon(1;34.029851)(1;35.820896)(1;37.611940)
\pspolygon(1;173.731343)(1;175.522388)(1;180.895522)
\pspolygon(1;223.880597)(1;225.671642)(1;227.462687)
\pspolygon(1;121.791045)(1;125.373134)(1;130.746269)
\pspolygon(1;64.477612)(1;66.268657)(1;107.462687)
\pspolygon(1;118.208955)(1;120.000000)(1;347.462687)
\pspolygon(1;37.611940)(1;42.985075)(1;46.567164)
\pspolygon(1;313.432836)(1;315.223881)(1;318.805970)
\pspolygon(1;306.268657)(1;322.388060)(1;324.179104)
\pspolygon(1;28.656716)(1;30.447761)(1;57.313433)
\pspolygon(1;171.940299)(1;173.731343)(1;204.179104)
\pspolygon(1;121.791045)(1;123.582090)(1;125.373134)
\pspolygon(1;274.029851)(1;277.611940)(1;279.402985)
\pspolygon(1;157.611940)(1;247.164179)(1;331.343284)
\pspolygon(1;23.283582)(1;25.074627)(1;26.865672)
\pspolygon(1;248.955224)(1;250.746269)(1;282.985075)
\pspolygon(1;205.970149)(1;232.835821)(1;234.626866)
\pspolygon(1;182.686567)(1;184.477612)(1;197.014925)
\pspolygon(1;222.089552)(1;229.253731)(1;231.044776)
\pspolygon(1;141.492537)(1;143.283582)(1;145.074627)
\pspolygon(1;256.119403)(1;259.701493)(1;281.194030)
\pspolygon(1;1.791045)(1;356.417910)(1;360.000000)
\pspolygon(1;77.014925)(1;78.805970)(1;103.880597)
\pspolygon(1;308.059701)(1;309.850746)(1;311.641791)
\pspolygon(1;282.985075)(1;284.776119)(1;329.552239)
\pspolygon(1;157.611940)(1;159.402985)(1;162.985075)
\pspolygon(1;69.850746)(1;71.641791)(1;73.432836)
\pspolygon(1;141.492537)(1;145.074627)(1;152.238806)
\end{pspicture}
 \fi
\end{center}
\caption{\emph{left:} An Aldous-triangulation of the circle that is not a triangulation of the circle (Condition~(Tri2) does not hold as the
	black triangle in the middle is not the union of non-crossing straight lines with endpoints on the circle).
\emph{middle:} A sub-triangulation of the circle (compare with Example~\ref{ex:compbin}).
\emph{right:} A triangulation of the circle. It is a realisation of the Brownian triangulation (compare with
Example~\ref{ex:CRT}).
}
\label{f:nontriang}
\end{figure}

For a metric space $(X,d)$, let
\begin{equation}
	\F(X):= \set{F\subseteq X}{F\ne \emptyset,\, F\text{ closed}},
\end{equation}
and equip $\F(X)$ with the \emph{Hausdorff metric topology}. That is, we say that a
sequence $(F_n)_{n\in\N}$ converges to $F$ in $\F(X)$ if and only if for all $\varepsilon>0$ and all large
enough $n\in\N$,
\begin{equation}\label{e:Hausdorff}
   F^\eps_n\supseteq F  \qquad\mbox{and}\qquad  F^\eps\supseteq F_n,
\end{equation}
where  for all $A\in\F(X)$, as usual, $A^\eps:=\set{x\in X}{d(x,A)<\eps}$. It is well-known that if $(X,d)$ is
compact, then $\F(X)$ is a compact metrizable space as well.
As sub-triangulations of the circle are elements of $\F(\disc)$, we naturally equip $\triang$ with the Hausdorff
metric topology. A first observation is that $\triang$ is actually a closed, and therefore compact subspace of
$\F(\disc)$.

\begin{lemma}[compactness of $\triang$] \label{l:triangcomp}
	Both the space of triangulations of the circle, and the space\/ $\triang$ of sub-triangulations of the
	circle, are compact metrizable spaces in the Hausdorff metric topology.
\end{lemma}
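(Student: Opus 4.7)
The plan is to prove that both $\triang$ and the subset of full triangulations of the circle are closed subsets of $\F(\disc)$. Since $\disc$ is compact, the hyperspace $\F(\disc)$ with Hausdorff metric topology is itself compact and metrizable (a classical result), so closedness will immediately yield the compactness and metrizability claims.

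To establish closedness of $\triang$, I take a sequence $C_n \to C$ in $\F(\disc)$ with $C_n \in \triang$ and verify that $C$ satisfies (Tri1) and (Tri2). For (Tri2), I fix $z\in C$ and pick $z_n \in C_n$ with $z_n \to z$. Each $z_n$ lies on some chord $[a_n, b_n] \subseteq C_n$ with $a_n, b_n \in \S$, and compactness of $\S$ lets me pass to a subsequence along which $a_n \to a$, $b_n \to b$. The chords $[a_n, b_n]$ then Hausdorff-converge to the (possibly degenerate) chord $[a,b]\subseteq C$, which contains $z$. Non-crossing is preserved in the limit: if two such chords in $C$ crossed properly at an interior point, their endpoints would strictly alternate along $\S$, and the same alternation would hold for the approximating chord endpoints in $C_n$, contradicting non-crossing in $C_n$.

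For (Tri1), let $R$ be a connected component of $\conv(C)\setminus C$. First, $\partial\conv(C)\subseteq C$ follows from $\partial\conv(C_n)\subseteq C_n$ together with Hausdorff continuity of $\conv$ on closed subsets of $\disc$, so $R$ is actually open in $\R^2$. Pick any $x_0 \in R$; then $x_0 \notin C_n$ for $n$ large, while $x_0 \in \interior(\conv(C_n))$, so $x_0$ belongs to some triangle $T_n \in \tri(C_n)$ with vertices $v_n^{(1)}, v_n^{(2)}, v_n^{(3)} \in \S$. Passing to a further subsequence so that $v_n^{(i)} \to v^{(i)}$, the closures $\overline{T_n}$ converge to $\overline{T} := \conv\{v^{(1)}, v^{(2)}, v^{(3)}\}$, and I claim $R = T := \interior(\overline{T})$. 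Non-degeneracy: if the $v^{(i)}$ were collinear, $\overline{T}$ would be contained in the union of the three chords $[v^{(i)},v^{(j)}]$ of $C$ (each being a Hausdorff limit of chords in $C_n$, hence in $C$), forcing $x_0 \in C$, a contradiction. The same argument shows $\partial T \subseteq C$. Moreover, $T\cap C = \emptyset$: any $y \in T\cap C$ would be approximated by some $y_n \in C_n$, and for $n$ large $y_n$ would lie in a small ball $B_\delta(y)\subseteq T\subseteq T_n$, contradicting $T_n\cap C_n = \emptyset$. Thus $T$ is a non-empty open connected subset of $\conv(C)\setminus C$ with $\partial T \subseteq C$; path-connecting $x_0$ to any other point of $R$ within the open set $R$ and observing that a first exit from $T$ along this path would land in $C\cap R = \emptyset$, I conclude $T = R$, so $R$ is an open triangle.

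The subset of full triangulations of the circle is closed in $\triang$ since $\S \subseteq C_n$ and $C_n \to C$ forces $\S \subseteq C$. The main obstacle is part (Tri1), where non-degeneracy of the limit triangle $T$ and the identification $T = R$ must be carefully argued by combining Hausdorff continuity of convex hulls with preservation of chord endpoints on $\S$ and path-connectedness of $R$. It is precisely here that condition (Tri2) (as opposed to the weaker Aldous condition illustrated in Figure~\ref{f:nontriang}) plays an essential role: without it, one cannot rule out limit configurations in which triangles collapse or in which interior regions fail to be bounded by chords.
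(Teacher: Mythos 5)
Your proof is correct and takes the same overall route as the paper: both arguments reduce the lemma to closedness of $\triang$ (and of the set of full triangulations) inside the compact metrizable hyperspace $\F(\disc)$, so the only content is the verification that (Tri1) and (Tri2) survive Hausdorff limits. The differences are in how that verification is carried out. For (Tri2), the paper uses a slicker device: it views the family $L_n$ of chords constituting $C_n$ as an element of $\F(\F(\disc))$, extracts a Hausdorff limit $L$ of the chord families themselves, and invokes continuity of the union operator $\bigcup\colon \F(\F(\disc))\to\F(\disc)$ to get $\bigcup L=C$; non-crossing of $L$ is then immediate because all chords of $L$ are limits along one and the same subsequence. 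This sidesteps the one soft spot in your pointwise version: the two limit chords you assume to cross properly are obtained along a priori different subsequences, so the step ``the same alternation would hold for the approximating chord endpoints in $C_n$'' needs those subsequences to be matched (repairable by a diagonal argument over a countable dense subset of $C$, or by the paper's trick). Conversely, for (Tri1) you are considerably more thorough than the paper, which dismisses closedness of that condition in a single sentence; your identification of each component $R$ of $\conv(C)\setminus C$ with a limit triangle $T$ via non-degeneracy, $\partial T\subseteq C$, $T\cap C=\emptyset$, and path-connectedness of $R$ is a genuine proof of what the paper merely asserts (the only step you state without justification is that the vertices of the triangles in $\tri(C_n)$ lie on $\S$, which itself uses (Tri2) for $C_n$, though your argument could run with compactness of $\disc$ instead). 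Neither issue is a real obstruction; both are routine to repair.
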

\begin{proof}
	Because $\disc$ is compact, $\F(\disc)$ is compact as well, and it is sufficient to show that $\triang$
	and the set of triangulations of the circle are closed subsets of $\F(\disc)$.

	Let $C_n\in\triang$ with $C_n \tno C \in \F(\disc)$ in the Hausdorff metric topology.
	(Tri1) is easily seen to be a closed property, thus $C$ satisfies (Tri1).
	Let $L_n$ be a set of non-crossing line segments with endpoints in $\S$ such that $C_n=\bigcup L_n$.
	The closure of $L_n$ in $\F(\disc)$ has the same property (it possibly differs from $L_n$ by a set of
	degenerated one-point segments contained in non-degenerate segments of $L_n$), so we may assume $L_n$ is
	closed to begin with, so that $L_n \in \F(\F(\disc))$. Because $\F(\F(\disc))$ is compact, we may
	assume, taking a subsequence if necessary, that $L_n \to L$ for some $L\in\F(\F(\disc))$.
	Obviously, $L_n$ consists of non-crossing line segments with endpoints in $\S$.
	Because the union operator $\bigcup \colon \F(\F(\disc)) \to \F(\disc)$ is continuous, we have
	$\bigcup L = C$. In particular, (Tri2) holds for $C$, and $C\in \triang$.
	Obviously, also the property that $\S\subseteq C$ is preserved by Hausdorff metric limits, thus the
	set of triangulations of the circle is closed as well.
\end{proof}

We now show two characterizations of sub-triangulations of the circle. Namely, condition (Tri2) can be replaced
by existence of ``triangles in the middle'' which is the major technical ingredient for the construction of the
branch point map in the next subsection.
Furthermore, they are precisely the limits of finite sub-triangulations, where we consider a sub-triangulation
$C$ as \emph{finite} if $C\cap \S$ is a finite set, or equivalently, $C$ consists of finitely many line
segments.

\begin{proposition}[characterization of (sub-)triangulations]\label{p:fintriapp}
Let\/ $\emptyset\ne C\subseteq\disc$ be closed. Then the following are equivalent.
\begin{enumerate}[1.]
	\item\label{it:sub} $C$ is a sub-triangulation of the circle.
	\item\label{it:br} Condition~(Tri1) holds, all extreme points of\/ $\conv(C)$ are contained in\/ $\S$, and
	   \begin{enumerate}[\bf(Tr{i}1)']\setcounter{enumii}{1}
   	   \item For\/ $x,y,z\in\tri(C)\cup \circseg(C)$ pairwise distinct, there exists a unique\/
		$c_{xyz}\in\tri(C)$ such that\/ $x,y,z$ are subsets of pairwise different connected
		components of\/ $\disc\setminus\partial c_{xyz}$.
	   \end{enumerate}
	\item\label{it:approx} There exists a sequence\/ $(C_n)_{n\in\N}$ of finite sub-triangulations of the
		circle with\/ $C_n\tno C$ in the Hausdorff metric topology.
\end{enumerate}
Furthermore, $C$ is a triangulation of the circle if and only if\/ $C_n$ in \ref{it:approx}.\ can be chosen as a
triangulation of a regular\/ $n$-gon inscribed in\/ $\S$.
\end{proposition}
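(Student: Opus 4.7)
The plan is to prove the cyclic chain \ref{it:approx} $\Rightarrow$ \ref{it:sub} $\Rightarrow$ \ref{it:br} $\Rightarrow$ \ref{it:approx} and then address the $n$-gon characterization. The direction \ref{it:approx} $\Rightarrow$ \ref{it:sub} is immediate from Lemma~\ref{l:triangcomp}, since finite sub-triangulations lie in $\triang$ and $\triang$ is closed under Hausdorff metric limits.

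For \ref{it:sub} $\Rightarrow$ \ref{it:br}, condition (Tri1) is given by the definition of a sub-triangulation. An extreme point of $\conv(C)$ lies in $C$ by compactness, and using the line-segment decomposition from (Tri2) it cannot lie in the relative interior of any chord, hence must be a chord endpoint on $\S$. For (Tri2'), given distinct components $x,y,z\in \tri(C)\cup \circseg(C)$, the family of chords of $C$ that separate $x$ from $y\cup z$ is non-empty and totally ordered by inclusion of the $x$-side half-disc (two non-crossing chords are always comparable in this sense). By compactness of $C$ this chain has an innermost element $\ell_x$; the analogously defined $\ell_y$ and $\ell_z$ are pairwise non-crossing chords bounding a common component of $\disc\setminus C$, and (Tri1) then identifies this component as the required triangle $c_{xyz}\in \tri(C)$, with uniqueness coming from uniqueness of the innermost separators.

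For \ref{it:br} $\Rightarrow$ \ref{it:approx}, I would fix a countable sequence $(T_k)_{k\in\N}$ in $\tri(C)\cup \circseg(C)$ that is Hausdorff-dense there, and define $\mathcal{F}_n$ as the smallest subset of $\tri(C)$ containing all branch triangles $c_{T_iT_jT_\ell}$ with $i,j,\ell\le n$ and closed under the ternary operation $c_{\bullet\bullet\bullet}$. The key observation is that (Tri2') turns $\tri(C)\cup \circseg(C)$ into (the vertex set of) an algebraic tree in the sense of Section~\ref{S:trees}, and a subtree generated by finitely many points is finite; hence $\mathcal{F}_n$ is finite. Letting $C_n$ be the union of the three chords bounding each triangle in $\mathcal{F}_n$ together with the chords separating any circle segments among $T_1,\ldots,T_n$ from the rest yields a finite union of non-crossing chords in $C$, and the closure of $\mathcal{F}_n$ under branch points ensures that each bounded component of $\conv(C_n)\setminus C_n$ is itself a triangle. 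Density of $(T_k)$ then forces $C_n\to C$ in the Hausdorff metric.

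For the additional $n$-gon claim, one direction is immediate: if $C_n$ is a triangulation of the regular $n$-gon $P_n$, then $\partial P_n\subseteq C_n$ and $\partial P_n\to \S$, so any Hausdorff limit $C$ contains $\S$. Conversely, if $\S\subseteq C$ I would refine the previous construction by additionally including the vertices of the regular $n$-gon (which lie in $C$) and choosing a triangulation of $P_n$ whose triangles shadow triangles of $\tri(C)$. The main obstacle is the finiteness of $\mathcal{F}_n$ and the verification that its edges bound triangles rather than more complicated polygons; both points rely on using the axioms from Section~\ref{S:trees} for the branch-point operation induced on components of $\disc\setminus C$ by (Tri2').
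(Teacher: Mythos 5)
Your steps \ref{it:approx}$\,\Rightarrow\,$\ref{it:sub} and \ref{it:sub}$\,\Rightarrow\,$\ref{it:br} essentially match the paper's, with one small omission in the latter: before taking innermost separating chords you must dispose of the case where, say, $y$ and $z$ lie in \emph{different} connected components of $\disc\setminus\partial_\disc x$. In that case no chord separates $x$ from $y\cup z$ (the side of any non-crossing chord away from $x$ sits inside a single component of $\disc\setminus\bar x$), so your family of separators is empty; the triangle you want is then $x$ itself. The paper handles this case first and only then runs the innermost-chord argument, which otherwise coincides with yours.

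The genuine gap is in \ref{it:br}$\,\Rightarrow\,$\ref{it:approx}. The set $C_n$ you propose --- the union of the bounding chords of the finitely many selected triangles and circular segments --- is in general \emph{not} a finite sub-triangulation: if $t,t'\in\mathcal{F}_n$ are neighbours in $\mathcal{F}_n$ (no third element of $\mathcal{F}_n$ separates them) but share no vertex, then the component of $\conv(C_n)\setminus C_n$ between them is bounded by one side of $t$, one side of $t'$, and up to two edges of $\partial\conv(C_n)$, i.e.\ it is a quadrilateral, so (Tri1) fails. Closure of $\mathcal{F}_n$ under the branch-point operation does not help here, because triangles of $\tri(C)$ that are adjacent in the dual tree of a \emph{finite} subfamily need not share edges (in the Brownian triangulation no two triangles do). Moreover, since your $C_n$ consists only of chords bounding selected triangles and segments, it cannot Hausdorff-approximate $C$ when $C$ has two-dimensional or lamination pieces far from every triangle; in the extreme case $C=\disc$ (a legitimate triangulation of the circle, coding the linear tree of Example~\ref{ex:extremal}) one has $\tri(C)=\circseg(C)=\emptyset$ and your construction returns the empty set. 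The paper avoids both problems by working with the complements $A_n:=\disc\setminus\bigcup V_n$, which contain $C$ and converge to it from outside, and then inscribing a genuine finite sub-triangulation $C_n\subseteq A_n$ at Hausdorff distance less than $e^{-n}$ from $A_n$; this last step is where the leftover regions (bounded by one or two segments and arcs) get triangulated, using chords that need not belong to $C$. Your appeal to the algebraic-tree axioms for the finiteness of $\mathcal{F}_n$ is acceptable (the paper needs the same fact to produce its finite $c$-closed sets $V_n$), and your treatment of the $n$-gon refinement is at the same level of detail as the paper's.
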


\begin{remark}[condition (Tri2)']
	That $x,y,z$ are subsets of different connected components of $\disc\setminus \partial c_{xyz}$ means
	that either\/ $c_{xyz}\in\{x,y,z\}$ and the two elements of\/ $\{x,y,z\}\setminus\{c_{xyz}\}$ are
	subsets of different connected components of\/ $\disc\setminus\closure{c_{xyz}}$,
	or\/ $c_{xyz}\not\in\{x,y,z\}$ and\/ $x,y,z$ are subsets of pairwise different connected components of\/
	$\disc\setminus\closure{c_{xyz}}$.
\end{remark}

\begin{proof}[Proof of Proposition~\ref{p:fintriapp}]
\istep{\ref{it:sub}}{\ref{it:br}}
	Because $C$ is the union of line segments with endpoints on $\S$, it is obvious that the extreme points
	of $\conv(C)$ are contained in $\S$.
	We have to show (Tri2)', so let $x,y,z\in\tri(C)\cup\circseg(C)$ be pairwise distinct and note that
	uniqueness is obvious. If one of the elements of $\{x,y,z\}$, say $x$, is such that the other two are
	subsets of two different connected components of $\disc\setminus\bar{x}$, then necessarily $x\in
	\tri(C)$, and $c_{xyz}:=x$ has the desired properties. So assume this is not the case.
	
	Fix a set $L$ of non-crossing, closed lines with endpoints in $\S$ such that $C=\bigcup L$. Define
	\begin{equation}
		L_x:=\set{\ell \in L}{\ell\text{ separates $x$ from $y \cup z$ in $\disc$}},
	\end{equation}
	note that $L_x \ne \emptyset$ because $y$ and $z$ are in the same connected component of
	$\disc\setminus\bar{x}$ by assumption, and order $L_x$ by distance from $x$.
	Similarly, define $L_y$ as set of lines separating $y$ from $x\cup z$ ordered by distance from $y$, and
	$L_z$ as set of lines separating $z$ from $x\cup y$, ordered by distance from $z$.
	Define $\ell_x := \sup L_x$, $\ell_y:=\sup L_y$, and $\ell_z := \sup L_z$, which exist because $C$ is
	closed. In particular, they are non-crossing, and because $\conv(C)\setminus C$ may only consist of
	triangles, they have to be the sides of some $c_{xyz}\in\tri(C)$ which has the desired properties.

\istep{\ref{it:br}}{\ref{it:approx}}
	Because the extreme points of $\conv(C)$ are on the circle, for every $x\in \circseg(C)$,
	the boundary $\partial_\disc x$ in $\disc$ is a single straight line with endpoints in $\S$.
	Let $(V_n)_{n\in\N}$ be an increasing sequence of finite subsets of $\tri(C)\cup\circseg(C)$ such that
	$c_{xyz} \in V_n$ for pairwise distinct $x,y,z \in V_n$, and $V_n \uparrow \tri(C)\cup \circseg(C)$.
	Let $A_n:=\disc\setminus \bigcup V_n$. Then $A_n \to C$ in the Hausdorff metric topology.
	Because $c_{xyz} \in V_n$ for distinct $x,y,z \in V_n$, the boundary of each of the finitely many
	connected components of $A_n \setminus \S$ consists of one or two line segments and one or two
	connected sub-arcs of $\S$. Therefore, there is a finite sub-triangulation $C_n\subseteq A_n$ of the
	circle with Hausdorff distance from $A_n$ less than $e^{-n}$. Thus $C_n \to C$.
\istep{\ref{it:approx}}{\ref{it:sub}}
	Obvious, because $\triang$ is a closed subset of $\F(\disc)$ by Lemma~\ref{l:triangcomp}.
\pstep{``Furthermore''}
	If $C_n$ is a triangulation of the $n$-gon, it contains the $n$-gon, and hence any Hausdorff metric
	limit as $n\to \infty$ contains the circle, and hence is a triangulation of the circle.
	That triangulations of the circle can be approximated by triangulations of regular
	$n$-gons is a slight modification of the arguments above. Details are left to the reader.
\end{proof}

The most prominent random tree is Aldous's Brownian CRT, which is the limit of uniform random trees. Similarly,
one can define the Brownian triangulation of the circle.
\begin{example}[Brownian triangulation]
	The uniform random triangulation of the $n$-gon converges in law with respect to the Hausdorff metric
	topology to the so-called \emph{Brownian triangulation} $\CCRT$,
	see \cite{Aldous94,Aldous94b,CurienKortchemski14}.  A realisation is shown in the right of
	Figure~\ref{f:nontriang}.  It has a.s.\ Hausdorff dimension $\frac32$ (see \cite{Aldous94}).
\label{ex:CRT}
\end{example}

\subsection{Coding binary measure trees with (sub-)triangulations of the circle}
\label{s:coding}

Given an algebraic tree $(T,c)$, recall the set of leaves $\lf(T,c)$, and the degree $\deg_{(T,c)}(v)$ of $v\in T$
from \eqref{e:leaf} and \eqref{e:degree}, respectively.
In this section, we are interested in the following subspace of the space of all binary algebraic measure trees.

\begin{definition}[our space $\Tbin$]\label{d:Tbin}
	Let $\Tbin \subseteq \T$ be the set of (equivalence classes of) algebraic measure trees
	$(T,c,\mu)$ with $(T,c)$ binary (i.e.\ $\deg_{(T,c)}(v)\le 3$ for all $v\in T$) and
	$\at(\mu)\subseteq \lf(T,c)$.
\end{definition}

The space $\Tbin$ is of particular interest to us, as it is invariant under the dynamics of the Aldous diffusion
on cladograms, the construction of which was one of the motivations for studying algebraic measure trees, and
because, as we will see, it is precisely the space of algebraic measure trees that can be coded by
sub-triangulations of the circle.

To illustrate the construction of the tree coded by a sub-triangulation, we first consider a triangulation $C$
of the regular $n$-gon into necessarily $n-2$ triangles (see Figure~\ref{f:triangtree}).
Here, the coded tree is the dual graph. That is, every triangle corresponds to a branch point of the tree,
and two branch points are connected by an edge if and only if the triangles share a common edge.
We then add a leaf for every edge of the $n$-gon and obtain a graph-theoretic binary tree with $n$ leaves
and $n-2$ internal vertices. Recall from Example~\ref{ex:finite} that the finite graph-theoretic tree
corresponds to a unique algebraic tree. We finally assign to each leaf mass $n^{-1}$ (which corresponds to the
length of the arcs of the circle connecting two endpoints of edges of the $n$-gon if we inscribe it in a circle
of unit length), and obtain an algebraic measure tree.

The main result of this section is that there is a natural, surjective coding map from $\triang$ onto $\T_2$,
which is also continuous.
To state that formally, we need further notation.
Given a sub-triangulation $C\subseteq\disc$, recall $\tri(C)$ and $\circseg(C)$ from \eqref{e:nabla} and
\eqref{e:tri}, respectively.
For $x\in\tri(C)\cup \circseg(C)$, and $y\subseteq\disc$ connected and disjoint from $\partial_\disc x$, where
$\partial_\disc$ denotes the boundary in the space $\disc$, let
\begin{equation}
\label{s:comp}
   \comp{x}{y} := \text{the connected component of $\disc\setminus\partial_\disc x$ which contains $y$}.
\end{equation}
For $x\in \tri(C)$, let $p_i(x)$, $i=1,2,3$, be the mid-points of the three arcs of $\S\setminus \partial x$,
and define
\begin{equation}
\label{e:Box}
	\Box(C) := \bset{\{p_i(x)\}}{x\in \tri(C),\, i\in \{1,2,3\},\, \comp{x}{\{p_i(x)\}} \subseteq C},
\end{equation}
as well as $\comp{p}{p}:=p$ for $p\in \Box(C)$ (see Figure~\ref{f:Box}).
Recall the definition of components $\Sub_v(w)$ in an algebraic tree from \eqref{e:005}.
\cfigure{f:Box}{
\ifpdf
	\includegraphics{pictBox}
\else
	\psset{unit=0.09\textwidth}
	\pictBox
\fi
}{Triangulation $C$ with $\#\tri(C)=1$, $\circseg(C)=\emptyset$, and $\#\Box(C)=3$. The coded tree consists of
three line segments with non-atomic measure of $\frac13$ each, glued together at one branch point.}

\begin{lemma}[induced branch point map]\label{l:branchtriang}
	For\/ $C\in \triang$, let\/ $V_C:=\tri(C) \cup \circseg(C) \cup \Box(C)$.
	If\/ $V_C\ne \emptyset$, then there is a unique branch point map\/ $c_V\colon V_C^3\to V_C$ such that\/
	$(V_C, c_V)$ is an algebraic tree with\/ $\Sub_x^{(V_c,c_V)}(y) = \set{v\in V_C}{\comp{x}{y} =
	\comp{x}{v}}$ for\/ $x,y\in V_C$.
	Furthermore, $\deg(x) = 3$ for all\/ $x\in \tri(C)$, and\/ $\deg(x)=1$ for\/ $x\in \circseg(C)\cup \Box(C)$.
\end{lemma}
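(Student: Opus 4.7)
The plan is to construct $c_V$ by extending condition (Tri2)' of Proposition~\ref{p:fintriapp} to include $\Box$-points. For each $p\in\Box(C)$ with $p=\{p_i(x')\}$, associate the virtual region $\hat p:=\comp{x'}{\{p_i(x')\}}\subseteq C$, and set $R(v):=v$ for $v\in\tri(C)\cup\circseg(C)$ and $R(p):=\hat p$ for $p\in\Box(C)$. The key geometric statement to establish first is the following extension of (Tri2)':
\emph{for distinct $v_1,v_2,v_3\in V_C$, there exists a unique $c\in\tri(C)$ such that $R(v_1),R(v_2),R(v_3)$ lie in three pairwise distinct connected components of $\disc\setminus\partial c$.}
Each $R(v)$ is connected, and for any $c\in\tri(C)$ with $c\ne x'$, the open triangle $c\subseteq \disc\setminus C$ is disjoint from $\hat p\subseteq C$, so $\hat p$ lies outside $c$ and, by connectedness, in a single outside component of $\disc\setminus\partial c$. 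Given this, the extended claim follows from (Tri2)' applied to the natural substitutes: if some $v_i\in\Box(C)$ corresponds to the triangle $x'_i$, replace $v_i$ with $x'_i$ (or with an element of $\tri(C)\cup\circseg(C)$ on the $\hat p$-side if present) to obtain a candidate $c$, then verify the separation property transfers to $\hat p$ itself.

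With the extended statement, I define $c_V(v_1,v_2,v_3):=c$ for distinct $v_i$, and extend by $c_V(x,y,y):=y$. Properties (BPM1) and (BPM2) are immediate. For (BPM3), note that $c:=c_V(x,y,z)\in\tri(C)$ satisfies $R(c)=c$ lying in the interior component of $\disc\setminus\partial c$ while $R(x),R(y)$ lie in two other components; thus the triple $(x,y,c)$ is separated by the same triangle $c$, and by uniqueness $c_V(x,y,c)=c$. For (BPM4), given $x_1,\dots,x_4\in V_C$, I would analyze the four separating triangles $c_{ijk}:=c_V(x_i,x_j,x_k)$: if two of these coincide, say $c_{123}=c_{124}=:\bar c$, then $R(x_1),R(x_2)$ are in two components of $\disc\setminus\partial\bar c$ while $R(x_3),R(x_4)$ are in the third, forcing $c_{134}=c_{234}=\bar c$ or forcing $c_{134},c_{234}\in\{\bar c, \tilde c\}$ with $\tilde c$ a triangle separating $R(x_3)$ from $R(x_4)$ inside the component where they both lie; otherwise all four $c_{ijk}$ are distinct and the non-crossing property of line segments in $C$ forces the four triangles to lie along a path (only two configurations, corresponding to the two $H$-shapes in Figure~\ref{f:4pt}), yielding \eqref{e:BPM4}.

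The component description $\Sub_x^{(V_C,c_V)}(y)=\{v\in V_C:\comp{x}{y}=\comp{x}{v}\}$ is read off directly from the definition of $c_V$: by the defining property, $c_V(x,y,v)\ne x$ if and only if $R(y)$ and $R(v)$ lie in the same component of $\disc\setminus \partial R(x)$ (for $x\in\tri(C)$), with analogous interpretations for $x\in\circseg(C)\cup\Box(C)$ using the virtual region and the convention $\comp{p}{p}=p$. For the degree claims, $x\in\tri(C)$ has $\partial x$ splitting $\disc$ into four components (the interior $x$ plus three outside components); each outside component contains at least one $R(v)$ with $v\in V_C$ --- either an element of $\tri(C)\cup\circseg(C)$ inside that component or, if the component is entirely in $C$, the corresponding $\Box$-point $p_i(x)$ --- giving $\deg(x)=3$. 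For $x\in\circseg(C)$, the single chord $\partial x$ splits $\disc$ into two components, one being $x$ itself and the other containing every $R(v)$ with $v\ne x$, so $\deg(x)=1$; the case $x\in\Box(C)$ is analogous using the virtual region.

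Uniqueness of $c_V$ is automatic: once the subtree description fixes $\Sub_x(y)$ for all $x,y$, the intervals are determined by $[x,y]=\{x,y\}\cup\{z: \Sub_z(x)\ne\Sub_z(y)\}$, and then $c_V(x,y,z)$ is pinned down via $\{c_V(x,y,z)\}=[x,y]\cap[y,z]\cap[x,z]$ from Lemma~\ref{l:intprop}\emph{(iv)}. The main technical hurdle will be the exhaustive case analysis in verifying (BPM4), particularly when several of the $c_{ijk}$ coincide or when the separating triangles share sides, and ensuring the extended (Tri2)' is well-behaved when $\Box$-points correspond to the same associated triangle $x'\in\tri(C)$.
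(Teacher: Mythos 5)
Your approach is exactly the paper's: the published proof of this lemma consists of two sentences asserting that the ``triangle in the middle'' from condition (Tri2)' of Proposition~\ref{p:fintriapp} defines a branch point map and ``can naturally be extended to $V_C^3$'', so everything you supply --- the virtual regions for $\Box$-points, the verification of (BPM1)--(BPM3), the component identity, the degree count, and uniqueness via Lemma~\ref{l:intprop} --- is precisely the content the authors declare straightforward, and those parts of your sketch are sound.

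One step in your (BPM4) sketch would fail as written. You say that if no two of the four medians $c_{ijk}$ coincide, the non-crossing property forces the four triangles onto a path, ``yielding \eqref{e:BPM4}''. But \eqref{e:BPM4} asserts $c(x_1,x_2,x_3)\in\{c(x_1,x_2,x_4),\,c(x_1,x_3,x_4),\,c(x_2,x_3,x_4)\}$, which is flatly incompatible with all four medians being distinct; in that branch you would have to prove the case is \emph{vacuous}, not that it implies the conclusion. The clean route avoids the case split on coincidences entirely: set $\bar c:=c_V(x_1,x_2,x_3)$ and locate the unique component of $\disc\setminus\partial\bar c$ containing $R(x_4)$. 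If it is the interior of $\bar c$ (i.e.\ $x_4=\bar c$) or the component of $R(x_3)$, then $\bar c$ already separates $R(x_1),R(x_2),R(x_4)$ into three distinct components and uniqueness in your extended (Tri2)' gives $c_V(x_1,x_2,x_4)=\bar c$; if it is the component of $R(x_1)$ (resp.\ of $R(x_2)$), then $\bar c$ separates $R(x_2),R(x_3),R(x_4)$ (resp.\ $R(x_1),R(x_3),R(x_4)$) and hence $c_V(x_2,x_3,x_4)=\bar c$ (resp.\ $c_V(x_1,x_3,x_4)=\bar c$). Since every $R(v)$ with $v\ne\bar c$ lies in exactly one of the three outer components, these cases are exhaustive and give \eqref{e:BPM4} directly, with no appeal to path configurations.
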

\begin{proof}
Recall from Proposition~\ref{p:fintriapp} that for a sub-triangulation $C$ of the circle and pairwise distinct
$x,y,z\in \tri(C)\cup \circseg(C)$, there is a triangle $c_{xyz}\in \tri(C)$ ``in the middle''.
It is straight-forward to see that this defines a branch point map and can naturally be extended to $V_C^3$.
\end{proof}

The following theorem states that all sub-triangulations $C$ of the circle can be associated with an element in $\T_2$ for which
$\tri(C)$ corresponds to the set of branch points, $\circseg(C)$ corresponds to the set
\begin{equation}
\label{e:033}
   \lfatom(\smallx):=\bset{x\in\lf(T,c)}{\mu(\{x\})>0}
\end{equation}
of leaves which carry an atom,
and $\comp{v}{w}$ corresponds to the component $\Sub_v(w)$.

\begin{theorem}[algebraic measure tree associated to a sub-triangulation] \label{t:tree}
\begin{enumerate}[(i)]
\item For every sub-triangulation\/ $C\subseteq\disc$ of the circle, there is a unique (up to equivalence)
	algebraic measure tree\/ $\smallx_C=(T_C,c_C,\mu_C)\in \Tbin$ with the following properties:
	\begin{enumerate}[\bf(CM1)]
	\item  $V_C \subseteq T_C$, $\br(\smallx_C)=\tri(C)$, and\/ $c_C$ is an extension
		of\/ $c_V$, where\/ $(V_C,c_V)$ is defined in Lemma~\ref{l:branchtriang}.
	\item $\mu_C\big(\Sub^{(T_C,c_C)}_x(y)\big) = \lambda_\S\big(\S\cap\comp{x}{y}\big)$
		for all\/ $x,y\in V_C$, where\/ $\lambda_\S$ denotes the Lebesgue measure on\/ $\S$.
	\item $\at(\mu_C) = \circseg(C)$.
	\end{enumerate}
\item The \emph{coding map} $\tree\colon \triang\to\Tbin$, $C \mapsto \smallx_C$ is surjective and continuous,
	where\/ $\triang$ is equipped with the Hausdorff metric topology and\/ $\Tbin$ with the bpdd-Gromov-weak
	topology.
\end{enumerate}
\end{theorem}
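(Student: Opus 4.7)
My plan for part (i) is to invoke the extension theorem Proposition \ref{p:construction} with the countable algebraic tree $(V_C, c_V)$ from Lemma \ref{l:branchtriang}, taking
\[
  \psi_y(x) := \lambda_\S\bigl(\S \cap \comp{x}{y}\bigr)
  \quad\text{for } x,y\in V_C,\; x\ne y.
\]
Two verifications are needed. First, for each $x\in V_C$ the components of $\disc\setminus \partial_\disc x$ partition $\disc\setminus \partial_\disc x$, and $\partial_\disc x \cap \S$ is a finite set of chord endpoints; thus the values $\psi_y(x)$ over a transversal of $\C_x\setminus\{\{x\}\}$ sum to $1-\lambda_\S(\S\cap x)$, so that setting $\psi_{\{x\}}(x):=\lambda_\S(\S\cap x)$ makes $\psi_\cdot(x)$ a probability on $\C_x$. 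Second, for distinct $x,y\in V_C$ we have $\disc\setminus (\comp{x}{y}\cup \comp{y}{x}) \subseteq \partial_\disc x\cup \partial_\disc y$, whose intersection with $\S$ is finite, so inclusion-exclusion on $\lambda_\S$ yields $\psi_x(y)+\psi_y(x)\ge 1$. Proposition \ref{p:construction} then produces an algebraic measure tree $\smallx_C=(T_C,c_C,\mu_C)$ with $V_C\subseteq T_C$, $\br(T_C,c_C)=\tri(C)$, $\mu_C(\Sub^{(T_C,c_C)}_x(y))=\psi_y(x)$ and $\at(\mu_C)\subseteq V_C$. Properties (CM1) and (CM2) are then immediate; (CM3) follows from the identity $\mu_C(\{x\})=\psi_{\{x\}}(x)=\lambda_\S(\S\cap x)$, which is positive precisely for $x\in\circseg(C)$. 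Finally, $\smallx_C\in\Tbin$ because branch points have degree $3$ by Lemma \ref{l:branchtriang} and the atoms $\circseg(C)$ are leaves of $V_C$ and hence of $T_C$.

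Uniqueness in (i) is inherited from the uniqueness assertion in Proposition \ref{p:construction}: (CM2) together with the forced values $\mu_C(\{x\})=\lambda_\S(\S\cap x)$ fixes the masses of the countably many components indexed by $V_C$; by Corollary \ref{c:borel}, combined with $\at(\mu_C)\subseteq V_C$ from (CM3), this determines $\mu_C$ on $\B(T_C,c_C)$, while (CM1) and the isomorphism condition on subtrees of full mass pin down the algebraic structure up to equivalence.

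For part (ii), my plan for surjectivity is to first exhibit an explicit finite coding: any finite binary algebraic measure tree with atomic measure on its leaves corresponds to a triangulation of a convex polygon by placing the atomic leaves cyclically on $\S$ along arcs of length equal to their respective masses, then inscribing the triangles dual to the branch points. A general $\smallx\in\Tbin$ is then captured by approximating it by finite trees $\smallx_n$, coding each by a finite sub-triangulation $C_n$, and using the compactness of $\triang$ (Lemma \ref{l:triangcomp}) to extract a Hausdorff-metric limit $C_{n_k}\to C\in\triang$; identification of $\tree(C)$ with $\smallx$ is achieved via continuity of $\tree$.

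The continuity of the coding map is the hard part of the theorem, and the paper defers it to Section \ref{S:topo}. The strategy there, as announced in the outline, is to exploit the equivalent characterization of bpdd-Gromov-weak convergence on $\Tbin$ via sample subtree-mass tensors (Theorem \ref{t:topeq}): Hausdorff convergence $C_n\to C$ forces the finitely many triangles and chords relevant to a fixed sample of size $n$ to converge, so that the arc-length tensors $(\lambda_\S(\S\cap \comp{\cdot}{\cdot}))$ (which by (CM2) encode precisely the sample subtree-masses, hence the test functions from Theorem \ref{t:topeq}) vary continuously along the coding map, yielding the desired convergence in $\Tbin$.
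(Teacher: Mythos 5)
Your part~(i) follows essentially the same route as the paper: feed $(V_C,c_V)$ and $\psi_y(x)=\lambda_\S(\S\cap\comp{x}{y})$ into Proposition~\ref{p:construction}, check the normalisation and the inequality $\psi_x(y)+\psi_y(x)\ge 1$ via the null set $\S\cap(\partial_\disc x\cup\partial_\disc y)$, and read off (CM1)--(CM3). This is correct; the only omission is the degenerate case $C=\disc$, where $V_C=\emptyset$ and Proposition~\ref{p:construction} is vacuous -- the paper treats this separately, identifying $\smallx_\disc$ with the linear non-atomic tree of Example~\ref{ex:lintree}.

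For surjectivity in part~(ii) you take a genuinely different route (finite coding $+$ compactness of $\triang$ $+$ continuity of $\tree$), whereas the paper constructs $C$ directly from $\smallx=(T,c,\mu)$: it extends $\le_\rho$ to a planar total order, assigns to each $v\in\br(T,c)\cup\lfatom(\smallx)$ the arc position $\alpha(v)=\mu\set{u}{u<v}$, and defines $\disc\setminus C$ explicitly as a union of triangles $\Delta(\alpha(v),\mu(S_1(v)),\mu(S_2(v)))$ and circular segments $L(\alpha(v),\mu\{v\})$. Your route has a genuine gap: the approximation of an arbitrary $\smallx\in\Tbin$ by finite trees in the bpdd-Gromov-weak topology is exactly Corollary~\ref{c:fintreeapp}, which the paper \emph{deduces from} the surjectivity and continuity of $\tree$. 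Unless you supply an independent proof of that density (say, by sampling $n$ points from $\mu$ and invoking a Glivenko--Cantelli argument), your argument assumes what it is meant to prove. The direct construction avoids this entirely and is what actually carries the proof.

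Your sketch of the continuity step is also problematic as stated: you propose to use the equivalence of the bpdd-Gromov-weak and sample subtree-mass topologies from Theorem~\ref{t:topeq}, but that theorem is itself proved \emph{using} the continuity of $\tree$ (via Lemma~\ref{l:Fcont}) together with compactness of $\triang$, so this is circular. The paper's actual argument proves continuity of $\shapedist\circ\tree$ into the sample \emph{shape} topology by a direct coupling of Lebesgue-distributed points on $\S$ with $\mu_C$-distributed points on $T_C$, and then concludes via the already-established one-way implication of Proposition~\ref{p:shapestrongerGw} (sample shape convergence implies bpdd-Gromov-weak convergence), which does not rely on Theorem~\ref{t:topeq}.
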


\begin{proof}
\proofcase{(i)} Let $C$ be a sub-triangulation of the circle.
If $C=\disc$, then $\tri(C)=\circseg(C)=\emptyset$, which requires by (CM1) that $\br(\smallx_C)=\emptyset$, and
by (CM3) that $\at(\mu)=\emptyset$.
There is a unique algebraic measure tree without branch points and atoms, namely the line segment with no atoms
(see Example~\ref{ex:extremal}).  We may therefore assume w.l.o.g.\ that $C\ne \disc$, and consequently that
$T_C\ne\emptyset$.

We claim that $(V_C,c_V)$ together with $\psi_y(x):= \lambda_\S\(\S\cap\comp{x}{y}\)$ satisfies the assumptions
of Proposition~\ref{p:construction}. Indeed, $V_C$ is obviously countable and an algebraic tree by
Lemma~\ref{l:branchtriang}, $\psi_y(x)$ depends on $y$ only through its equivalence class w.r.t.\ $\sim_x$, and
the lengths of all the arcs add up to the total length of $\lambda_\S(\S)=1$. Furthermore, $\psi_x(y) +
\psi_y(x) \ge \lambda_\S(\S)=1$, and Proposition~\ref{p:construction} yields existence and uniqueness of the
desired algebraic measure tree.

\proofcase{(ii)}
Let $\smallx=(T,c,\mu) \in \Tbin$. We construct a sub-triangulation $C$ such that $\tree(C)=\smallx$.
Fix $\rho\in\lf(T,c)$, and recall that $\rho$ induces a partial order relation $\le_\rho$.
We can extend this partial order to a total (planar) order $\le$ by picking for every $v\in \br(T,c)$ an order
of the two components of $T\setminus\{v\}$ that do not contain $\rho$. That is, we define
$S_0(v):=\Sub_v(\rho)$, denote the two remaining components of $T\setminus\{v\}$ by $S_1(v)$, $S_2(v)$, and
define
\begin{equation}\label{e:totord}
	v\le w \quad:\Leftrightarrow\quad v\le_\rho w \text{ or } v\in S_1\(c(x,y,\rho)\),\, w\in S_2\(c(x,y,\rho)\).
\end{equation}
Identify $\S$ with $[0,1]$, where the endpoints are glued. For $a\in [0,1]$ and $b,c>0$ with
	$a+b+c\le 1$, let $\Delta(a,b,c)\subseteq \disc$ be the open triangle with vertices $a,a+b,a+b+c\in \S$,
	$\ell(a,b)\subseteq \disc$ the straight line from $a$ to $a+b$, and $L(a,b)$ the connected component of
	$\disc\setminus \ell(a,b)$ containing $a + \frac b2\in \S$.
The first vertex of the triangle or circular segment corresponding to $v\in \br(T,c)\cup \lfatom(\smallx)$ is given
by the total mass before (w.r.t.\ $\le$ defined in \eqref{e:totord}), i.e.\ by
\begin{equation}\label{e:alpha}
	\alpha(v) := \mu\(\set{u\in T}{u<v}\).
\end{equation}
Define
\begin{equation}\label{e:025}
\begin{aligned}
		\disc\setminus C
 &:=
    \biguplus_{v\in\br(T,c)}\Delta\big(\alpha(v),\mu(S_1(v)),\mu(S_2(v))\big)
	\uplus
    \biguplus_{v\in\lfatom(\smallx)}L\big(\alpha(v),\mu\{v\}\big)
\end{aligned}
\end{equation}
By definition of $C$, $\conv(C)\setminus C$ consists of open triangles, i.e.\ condition (Tri1) is satisfied.
Furthermore, the extreme points of $\conv(C)$ are contained in $\S$, and for $x,y,z \in \tri(C) \cup \circseg(C)$
distinct, there are corresponding $u,v,w\in T$, and a triangle $c_{xyz} \in \tri(C)$ corresponding to
$c(u,v,w)$, which satisfies the requirements of (Tri2)'. Thus, by Proposition~\ref{p:fintriapp}, $C$ is a
sub-triangulation of the circle.
It is straight-forward to check that $\tree(C)=\smallx$.

We defer the proof of continuity of $\tree$ to the next section, where we prove it in Lemma~\ref{l:Fcont}.
\end{proof}

The following is obvious now.
\begin{lemma}[non-atomicity] A sub-triangulation\/ $C$ of the circle is a triangulation of the circle if and only
if, for\/ $(T_C,c_C,\mu_C):=\tree(C)$, the measure\/ $\mu_C$ is non-atomic.
\label{l:CRT}
\end{lemma}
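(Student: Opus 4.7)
My plan is to reduce the statement to a direct characterisation of when $\circseg(C)=\emptyset$, using property (CM3) of Theorem~\ref{t:tree}. Since (CM3) says $\at(\mu_C)=\circseg(C)$, non-atomicity of $\mu_C$ is equivalent to $\circseg(C)=\emptyset$. Hence the entire lemma reduces to the geometric equivalence
\begin{equation*}
   \circseg(C)=\emptyset \quad\Longleftrightarrow\quad \S\subseteq C.
\end{equation*}

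For the ``$\Longleftarrow$'' direction I would argue that if $\S\subseteq C$, then $\conv(C)\supseteq\conv(\S)=\disc$, so $\conv(C)=\disc$. By definition \eqref{e:nabla}, $\circseg(C)$ is the set of connected components of $\disc\setminus\conv(C)$, which is empty.

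For the ``$\Longrightarrow$'' direction, suppose $\circseg(C)=\emptyset$, so $\conv(C)=\disc$. From the disjoint decomposition $\disc=C\uplus\bigcup\tri(C)\uplus\bigcup\circseg(C)$, we get $\disc=C\uplus\bigcup\tri(C)$. The key point I need to verify is that no element of $\tri(C)$ meets $\S$: by Condition~(Tri1) each $x\in\tri(C)$ is an open triangle, and by Condition~(Tri2) its sides are chords of $\S$, so its three vertices lie on $\S$ while the rest of $\closure{x}$ lies in $\interior(\disc)$; in particular the open triangle $x$ itself is disjoint from $\S$. Therefore $\S\subseteq C$, i.e., $C$ is a triangulation of the circle.

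The argument is essentially bookkeeping, and there is no real obstacle; the only subtlety worth double-checking is that the open triangles in $\tri(C)$ do not stick out to the boundary $\S$, which follows directly from the fact that chords with distinct endpoints on $\S$ have interior strictly inside $\interior(\disc)$.
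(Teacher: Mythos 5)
Your proof is correct and is exactly the argument the paper leaves implicit (the paper simply declares the lemma ``obvious now'' after Theorem~\ref{t:tree}): reduce via (CM3) to $\circseg(C)=\emptyset \Leftrightarrow \S\subseteq C$ and check both directions from the definitions. As a minor simplification of your one ``subtlety'': each $x\in\tri(C)$ is an open subset of $\R^2$ contained in $\disc$, so it is automatically contained in $\interior{\disc}=\disc\setminus\S$, without needing to locate the triangle's vertices.
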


\begin{corollary}[finite tree approximation]\label{c:fintreeapp}
	Let\/ $\smallx=(T,c,\mu) \in \Tbin$.
	Then there is a sequence\/ $(\smallx_n)_{n\in\N}$ of finite algebraic measure trees in\/ $\Tbin$ with\/
	$\smallx_n \to \smallx$ bpdd-Gromov-weakly. Furthermore, if\/ $\mu$ is non-atomic, then\/ $\smallx_n$
	can be chosen as a tree with\/ $n$ leaves and uniform distribution on the leaves.
\end{corollary}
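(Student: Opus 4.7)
The plan is to transfer the finite approximation from the triangulation side to the tree side via the coding map. By surjectivity of $\tree$ (Theorem~\ref{t:tree}), choose a sub-triangulation $C\in\triang$ with $\tree(C)=\smallx$. By Proposition~\ref{p:fintriapp}\,(\ref{it:approx}), there exists a sequence $(C_n)_{n\in\N}$ of \emph{finite} sub-triangulations with $C_n\to C$ in the Hausdorff metric topology. Continuity of $\tree$ (the second part of Theorem~\ref{t:tree}) then yields
\begin{equation*}
	\smallx_n:=\tree(C_n)\tno \tree(C)=\smallx\quad\text{bpdd-Gromov-weakly}.
\end{equation*}

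The next task is to verify that every $\smallx_n$ is a \emph{finite} algebraic measure tree in $\Tbin$. Because $C_n$ is a finite union of closed line segments with endpoints on $\S$, the sets $\tri(C_n)$, $\circseg(C_n)$ and $\Box(C_n)$ are all finite; hence $V_{C_n}$ from Lemma~\ref{l:branchtriang} is finite. By (CM1)--(CM3) in Theorem~\ref{t:tree}, $\br(\smallx_n)=\tri(C_n)$ is finite, $\at(\mu_{C_n})=\circseg(C_n)$ is finite, and the total $\mu_{C_n}$-mass is concentrated on the (finitely many) atoms together with the finitely many edges between consecutive elements of $V_{C_n}$. Replacing each edge carrying a non-atomic mass by a finite edge (as in Example~\ref{ex:lintree}, all such edges are equivalent to line segments with non-atomic measure), we see that $\smallx_n$ is equivalent to a finite graph-theoretic binary tree equipped with a probability measure supported on the leaves; in particular $\smallx_n\in\Tbin$ and is finite.

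For the ``furthermore'' clause, assume that $\mu$ is non-atomic. Then $\at(\mu_C)=\circseg(C)=\emptyset$ by (CM3), so $C$ is a triangulation of the full circle by Lemma~\ref{l:CRT}. The last assertion of Proposition~\ref{p:fintriapp} then lets us choose $C_n$ to be a triangulation of a regular $n$-gon inscribed in $\S$. For such $C_n$, the set $\tri(C_n)$ consists of the $n-2$ triangles of the triangulation, $\circseg(C_n)$ consists of the $n$ circular segments bounded by a side of the $n$-gon and the corresponding arc of $\S$ (each arc having length $\tfrac{1}{n}$), and $\Box(C_n)=\emptyset$ since for every triangle $x\in\tri(C_n)$ the component $\comp{x}{\{p_i(x)\}}$ contains a circular segment and is therefore not contained in $C_n$. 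By (CM1)--(CM3), $\smallx_n$ is a binary tree with exactly $n-2$ branch points and $n$ leaves (the circular segments), and by (CM2), each leaf carries mass $\lambda_\S(\S\cap\comp{x}{y})=\tfrac{1}{n}$. Thus $\smallx_n$ is a finite tree with $n$ leaves and uniform distribution on them, as required.

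The only delicate point is the identification of $V_{C_n}$ as the full set of vertices of the coded tree when $C_n$ triangulates a regular $n$-gon; in particular, the verification that $\Box(C_n)=\emptyset$ (so that no extra leaves without atoms appear) and that the $n$ circular segments account for all leaves. Everything else is bookkeeping driven by the axioms (CM1)--(CM3) together with the well-known combinatorics of triangulations of convex polygons.
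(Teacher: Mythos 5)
Your proof is correct and follows exactly the paper's argument: lift $\smallx$ to a sub-triangulation via surjectivity of $\tree$, approximate by finite sub-triangulations (respectively $n$-gon triangulations in the non-atomic case) using Proposition~\ref{p:fintriapp}, and push back down by continuity of $\tree$. The paper simply asserts that $\tree(C_n)$ is ``obviously'' a finite tree, whereas you spell out the bookkeeping; note only that for a genuinely finite sub-triangulation the measure is automatically purely atomic (all of $\lambda_\S$ is carried by the arcs bounding elements of $\circseg(C_n)$), so your step of ``replacing edges carrying non-atomic mass'' is vacuous.
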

\begin{proof}
	By Theorem~\ref{t:tree}, there is a sub-triangulation $C\in \triang$ with $\tree(C)=\smallx$, and by
	Proposition~\ref{p:fintriapp}, there are finite sub-triangulations $C_n$ with $C_n\to C$. Obviously,
	$\smallx_n:=\tree(C_n)$ is a finite algebraic measure tree and by continuity of $\tree$ we have $\smallx_n
	\to \smallx$. If $\mu$ is non-atomic, then, by Lemma~\ref{l:CRT}, $C$ is a triangulation of the circle,
	and hence, by Proposition~\ref{p:fintriapp}, $C_n$ can be chosen as triangulation of the $n$-gon, which
	means that $\smallx_n$ has $n$ leaves and uniform distribution on them.
\end{proof}

We conclude this section with a few illustrative examples.

\begin{figure}[t]
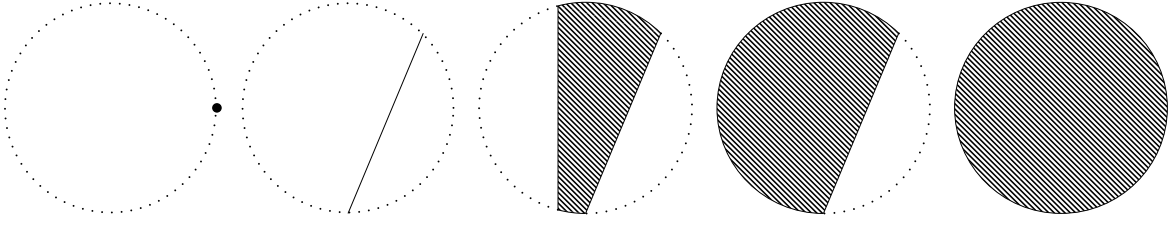

\begin{center}
\ifpdf
	\includegraphics{nobr1}\hfil
	\includegraphics{nobr2}\hfil
	\includegraphics{nobr3}\hfil
	\includegraphics{nobr4}\hfil
	\includegraphics{nobr5}
\else
	\picttrinobr
\fi
\end{center}
\caption{Sub-triangulations of the circle which correspond to the five cases of algebraic measure trees without branch points as explained in Example~\ref{ex:extremal}.}
\label{f:nobranch}
\end{figure}

\begin{example}[coding algebraic measure trees without branch points]\label{ex:extremal}
Let $\smallx$ be an algebraic measure tree without branch points. If $\smallx=\smallx_C$ for some
sub-triangulation $C$, then $\tri(C)= \br(\smallx_C) = \emptyset$ and the following five cases can occur (see Figure~\ref{f:nobranch}):
a) $\smallx_C$ consists of one single point of mass $1$. Then $C=\{x\}$ for some $x\in\S$.
b) $\smallx_C$ consists of an interval with two leaves, where each carries positive mass adding up to $1$,
   in which case $C$ is a single line segment dividing the circle into two arcs with length corresponding to the
   masses of the two leaves.
c) $\smallx_C$ consists of an interval with two leaves, where each has positive mass adding up to $a<1$.
   In this case, $C$ is the area of the disc bounded by two distinct line segments and two arcs (possibly one of
   them degenerated) of $\S$, and the lengths of the remaining two arcs are given by the masses of the leaves.
d) $\smallx_C$ consists of an interval with two leaves, where one has positive mass $a<1$ and the other one has
   zero mass. Then $C$ is a circular segment with arc length $1-a$.
e) $\smallx_C$ consists of an interval with no atoms on the leaves, which implies $C=\disc$.
\end{example}

\begin{example}[a complete binary tree]\label{ex:compbin}
	Let $C$ be the sub-triangulation of the circle drawn in the middle of Figure~\ref{f:nontriang}. Then
	$\#\circseg(C)=\#\lfatom(\tree(C))=1$. We refer to this only leaf with positive mass as the root $\rho$,
	and obtain $\mu(\{\rho\})=\frac13$, corresponding to the length of the dotted arc.
	Moreover, $\tree(C)$ consists of a complete rooted binary tree in the sense of graph theory (with the
	convention that the root has degree one), together with an uncountable set of leaves given by the ends
	at infinity and carrying the remaining $\frac23$ of the mass.
\end{example}

\begin{example}[coding the Brownian CRT]
	Recall the Brownian triangulation $\CCRT$ from Example~\ref{ex:CRT}, which is defined as the limit in
	distribution of uniform random triangulations $C_n$ of the $n$-gon. A realization is shown in the right
	of Figure~\ref{f:nontriang}.
	It is easy to see that $\tree(C_n)$ is the uniform binary tree with $n$ leaves and uniform distribution on
	the leaves. Thus, by Theorem~\ref{t:tree}, the uniform binary tree converges bpdd-Gromov-weakly to
	$\tree(\CCRT)$.
	At this point it is not entirely clear that $\tree(\CCRT)$ is the algebraic measure tree
	induced by the metric measure Brownian CRT. We will see in Section~\ref{s:examples} that this
	is indeed the case.
\end{example}

\section{The subspace of binary algebraic measure trees}
\label{S:topo}
In this section we introduce in Subsections~\ref{s:convshape} and~\ref{s:convmass} with the \emph{sample shape
convergence} and the \emph{sample subtree-mass convergence} two more notions of convergence of algebraic measure
trees which seem more natural when thinking of algebraic trees as combinatorial objects. We then show in
Subsection~\ref{s.equivalence} that on $\Tbin$, both of these notions are equivalent to the bpdd-Gromov-weak
convergence. The main tools are a uniform Glivenko Cantelli argument, and that the coding map sending a
sub-triangulation of the circle to an element in $\Tbin$ is continuous.

\iffalse{
\begin{remark}[convergence of algebraic trees versus convergence of $\R$-trees]\label{r:convcomp}
	Let $\sxh_n \in \H$, and $\smallx_n \in \T$ the corresponding algebraic measure tree.
	Since the equivalence classes of $\T$ are strictly coarser than those of $\H$, it is obvious that
	any kind of convergence of $\smallx_n$ does not imply convergence of $\sxh_n$ in any Hausdorff topology
	on $\H$.

	But also conversely, convergence of $\sxh_n$ in Gromov-weak or Gromov-Hausdorff-weak topology cannot imply
	convergence of $\smallx_n$ in any Hausdorff topology as shown by the following simple example.
	Let $\sxh_n = (T,\frac1n r, \mu)$ for any $(T,r,\mu)\in\H$ with $\mu$ not supported by a single point.
	Then $\sxh_n$ converges to the one-point space. All $\smallx_n$, however, are identical (equivalent as
	algebraic measure trees), but different from  the one-point space.
\end{remark}
}\fi

\subsection{Convergence in distribution of sampled tree shapes}
\label{s:convshape}

The basic idea behind Gromov-weak convergence for metric measure spaces is to sample finite metric sub-spaces
with the sampling measure $\mu$ and then require these to converge in distribution. In this section, we propose
a corresponding construction for binary algebraic measure trees, where we sample finite tree shapes with $\mu$.

First, we have to make precise what we mean by ``tree shape'', which we understand to be a cladogram with the
peculiarity that leaves may carry more than one label. The multi-label case is necessary to allow for
sampling the same point several times due to a possible atom at that point.

\xymatfig{f:shape}
	{&u_1\xyedge[d] &&&&&&&&
  \\
	 \xyedge[r]&\node\xyedge[dr] &     &   &\node\xyedge[ul]\xyedge[ur]       &      &&&&&   u_1\xyedge[dr] & & u_3\xyedge[d] &
  \\
	     &&{v}\xyedge[r]&u_3\xyedge[r]&\node\xyedge[r]\xyedge[u]&u_4&&&&&     &{v_1}\xyedge[r]&{v_2}\xyedge[r]&u_4
   \\
	 &u_2\xyedge[ur] &     &   &  &           &&&&&   u_2\xyedge[ur] & & &}
{A tree $T$ and the shape $\shape(u_1,u_2,u_3,u_4)$. Here, we are considering the homomorphism $f\colon C\to
c^3(\{u_1,...,u_4\}^3)$ given by $f(u_i):=u_i$, $i=1,...,4$, and then necessarily $f(v_1)=v$, $f(v_2)=u_3$.
$f$ is clearly no isomorphism, and the cladogram is not isomorphic to the subtree
$c(\{u_1,u_2,u_3,u_4\}^3)$ because $c(u_1,u_4,u_3)=u_3$.}

\begin{definition}[\nclad] For $m\in\N$, an \define{\nclad} is a binary, finite algebraic tree $C=(C,c)$ together with a surjective
	labelling map $\ell\colon \{1,...,m\} \to \lf(C)$.
	Two \nclads\ $(C_1,\ell_1)$ and $(C_2,\ell_2)$ are equivalent if they are label preserving
isomorphic i.e., there exists a tree isomorphism $f\colon C_1\to C_2$ with $f(\ell_1(i))=\ell_2(i)$ for all $i=1,...,m$.
\label{def:cladogram}
\end{definition}

Define
\begin{equation}
	\Clad := \{ \text{isomorphism classes of \nclads} \}.
\end{equation}

In the following we will use cladograms to encode the shape of a subtree spanned by a finite sample of leaves.
\begin{definition}[tree shape]
For a binary algebraic tree $(T,c)$, $m\in\N$, and $u_1,...,u_m\in T\setminus \br(T,c)$,
the \define{tree shape} $\shape(u_1,...,u_m)$
of the $m$-labelled cladogram spanned by $(u_1,...,u_m)$ in $(T,c)$
is the unique (up to isomorphism) \nclad\  $\shape(u_1,...,u_m)=(C,c_C,\ell)$ with
$\lf(C)=\{u_1,...,u_m\}$ and $\ell(i)=u_i$ for all $i=1,...,m$, and such that the identity on $\lf(C)$ extends to a tree
	homomorphism from $C$ onto $c\(\{u_1,...,u_m\}^3\)$.
\label{def:treeshape}
\end{definition}

\begin{remark}[spanned subtree and cladogram are not necessarily isomorphic]
	The tree homomorphism from $\shape(u_1,...,u_m)$ onto $c(\{u_1,...,u_m\}^3)$ does not need to be
	injective. This is the case if (and only if) $u_i\in \openint{u_j}{u_k}$ for some $i,j,k\in\{1,...,m\}$.
	See Figure~\ref{f:shape}.
\label{rem:002}
\end{remark}

\begin{example}[shape of a totally ordered algebraic tree]
	Let $(T,c)$ be a totally ordered algebraic tree, and $u_1,...,u_m\in T$.
	Then $\shape(u_1,...,u_m)$ is a so-called \emph{comb tree} which has a totally ordered spine of binary
	branch points with attached leaves (see Figure~\ref{f:ordered}).
\label{exp:003}
\end{example}

In the following, we build a topology on the convergence of tree shapes of $m$ randomly sampled points. We
therefore need the measurability of the shape map.

\begin{lemma}[measurability of the shape map]
	For every binary algebraic tree\/ $(T,c)$ and\/ $m\in\N$,
	the tree shape map\/ $\shape\colon (T\setminus\br(T,c))^m \to \Clad$ is a measurable function.
\label{l:001}
\end{lemma}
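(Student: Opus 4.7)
The set $\Clad$ of isomorphism classes of $m$-labelled cladograms is finite for fixed $m\in\N$, and (implicitly) equipped with the discrete $\sigma$-algebra. Hence, to prove measurability of $\shape$, it suffices to show that $\shape^{-1}(\{\testtree\})$ is Borel in $(T\setminus\br(T,c))^m$ for each $\testtree\in\Clad$. My plan is to factor $\shape$ through a measurable map into a finite set that records all the combinatorially relevant information about the sample.

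The basic measurability inputs are already in place: by Lemma~\ref{l:ccont} the branch point map $c\colon T^3\to T$ is continuous with respect to the component topology $\tau$, and by Lemma~\ref{l:Hausdorff}, $\tau$ is Hausdorff, so the diagonal $\Delta\subseteq T^2$ is closed and hence Borel. Therefore, for each pair $\alpha=(i_1,i_2,i_3)$, $\beta=(j_1,j_2,j_3)$ in $\{1,\dots,m\}^3$ and each $l\in\{1,\dots,m\}$, the sets
\begin{equation}
E_{\alpha,\beta} := \bset{\uu\in T^m}{c(u_{i_1},u_{i_2},u_{i_3})=c(u_{j_1},u_{j_2},u_{j_3})}
\end{equation}
and
\begin{equation}
E_{\alpha,l} := \bset{\uu\in T^m}{c(u_{i_1},u_{i_2},u_{i_3})=u_l}
\end{equation}
are preimages of $\Delta$ under continuous maps $T^m\to T^2$, and thus Borel in $T^m$. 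Consequently, the map
\begin{equation}
\Psi\colon T^m\to \{0,1\}^N,\quad \uu\mapsto \bigl((\one_{E_{\alpha,\beta}}(\uu))_{\alpha,\beta},\,(\one_{E_{\alpha,l}}(\uu))_{\alpha,l}\bigr)
\end{equation}
is measurable and takes only finitely many values.

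The core combinatorial claim is that $\Psi(\uu)$ determines $\shape(\uu)$. The argument is constructive: given only $\Psi(\uu)$, one can reconstruct an $m$-labelled cladogram isomorphic to $\shape(\uu)$ by taking as vertex set the equivalence classes of $\{1,\dots,m\}^3$ under the relation $\alpha\sim\beta$ iff $E_{\alpha,\beta}$ holds, identifying a class with the leaf labelled $l$ whenever $E_{\alpha,l}$ holds for some $\alpha$ in the class, and reading off the branching structure from Lemma~\ref{l:4pt} and Condition~(BPM4), which force the combinatorics of any four sampled points to be determined by the three possible branch-point identifications. Once this is granted, $\shape$ factors as $\shape=\wt\shape\circ\Psi$ for some function $\wt\shape$ on $\Psi(T^m)$, and $\shape^{-1}(\{\testtree\})=\Psi^{-1}(\wt\shape^{-1}(\{\testtree\}))$ is a finite union of sets of the form $\bigcap_{\alpha,\beta} E_{\alpha,\beta}^{\pm}\cap\bigcap_{\alpha,l}E_{\alpha,l}^{\pm}$, hence Borel; intersecting with $(T\setminus\br(T,c))^m$ gives the desired measurable set in the subspace Borel structure. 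The main obstacle is the combinatorial reconstruction step: one must verify carefully, using that the labels $u_i$ lie in $T\setminus\br(T,c)$ and using the two-, three-, and four-point conditions, that any two sample tuples with the same $\Psi$-pattern really do yield label-preserving isomorphic cladograms in the sense of Definition~\ref{def:treeshape}.
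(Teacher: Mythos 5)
Your argument is correct in outline but takes a genuinely different route from the paper. The paper stratifies the domain according to which of the sampled points coincide --- each stratum is an intersection of a closed and an open set, hence Borel --- and then argues that $\shape$ is locally constant (hence continuous) on each stratum. You instead factor $\shape$ through the finite-valued measurable map $\Psi$ recording all branch-point coincidences; the measurability of each $E_{\alpha,\beta}$ and $E_{\alpha,l}$ via Lemmas~\ref{l:ccont} and~\ref{l:Hausdorff} is clean, and this reduces everything to the single combinatorial claim that the coincidence pattern determines the cladogram. That claim is essentially the same fact the paper exploits later in the proof of Proposition~\ref{p:reconstruction}, where the \nclad\ is recovered from the set of pairs of triples with equal branch points. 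Both your proof and the paper's leave a combinatorial verification largely to the reader (the paper's ``locally constant, hence continuous'' is asserted rather than proved), so your route is a legitimate and arguably more robust alternative, since it avoids any analysis of the component topology near a sample configuration.

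One caveat on your reconstruction recipe: as stated, ``identifying a class with the leaf labelled $l$ whenever $E_{\alpha,l}$ holds for some $\alpha$ in the class'' would produce the subtree $c(\{u_1,\ldots,u_m\}^3)$ rather than the cladogram $\shape(\uu)$. When $u_l$ lies in the open interval between two other sample points, the triple $(l,l,l)$ and a triple $\alpha$ of distinct indices with $c(u_{\alpha})=u_l$ fall into the same $\sim$-class, so your quotient collapses the internal cladogram vertex onto the leaf $l$ --- exactly the non-injectivity discussed in Remark~\ref{rem:002} and Figure~\ref{f:shape}. The underlying claim that $\Psi(\uu)$ determines $\shape(\uu)$ is nevertheless true: $\Psi$ tells you which labels coincide, and for distinct labels the equivalence of triples of distinct indices under $E_{\alpha,\beta}$ determines $c_C$ on the cladogram (using that the extension homomorphism is injective on branch points of $C$); one must simply keep the internal vertex and the leaf as separate cladogram vertices even when they have the same image in $T$. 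Since measurability only requires that $\shape$ be constant on the fibres of $\Psi$, this does not invalidate your proof, but the reconstruction step should be corrected before the argument is complete.
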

\begin{proof}
	Restricted to the open subset $\bset{v\in (T\setminus\br(T,c))^m}{v_1,...,v_m \text{ distinct}}$,
	$\shape$ is locally constant, hence continuous. The same is true on the set $\bset{v\in
	(T\setminus\br(T,c))^m}{v_1={v_2},\; v_2,..., v_m \text{ distinct}}$, which is an intersection of a closed
	and an open set, hence measurable. We can continue this way to see that $\shape$ is measurable on
	$(T\setminus \br(T,c))^m$.
\end{proof}

\xymatfig{f:ordered}
	{\xyedge[r] & u_1 \xyedge[r] & u_2 \xyedge[r] & u_3 \xyedge[r]  & u_4  \xyedge[r] & &&
 & u_1  \xyedge[r] & \node  \xyedge[r] & \node  \xyedge[r]  & u_4   & &&
 & u_1,u_5  \xyedge[r] & \node  \xyedge[r] & \node  \xyedge[r]  & u_4  
   \\
    &  &    &  &   &  &&
    &  &   u_2\xyedge[u] & u_3\xyedge[u] &    &  &&
    &  &   u_2\xyedge[u] & u_3\xyedge[u] &    
}
{The left shows a totally ordered binary algebraic tree and four distinct points $u_1,...,u_4$.
The middle shows the shape $\shape(u_1,...,u_4)$ of the cladogram which forms a comb tree. The right illustrates
what happens if a fifth point is equal to $u_1$. Now one of the leaves of $\shape(u_1,...,u_5)$ has two labels.}

\begin{definition}[tree shape distribution]
For $\smallx=(T,c,\mu) \in \Tbin$ and $m\in\N$, the $m$\nbd \define{tree shape distribution} of $\smallx$ is
defined by
\begin{equation}
\label{e:131}
    \shapedist(\smallx):=\mu^{\otimes m} \circ \shape^{-1}\in\CM_1(\Clad).
\end{equation}
\end{definition}

\begin{example}[shape of the linear non-atomic measure tree] \label{exp:005}
	Let $\smallx=(T,c,\mu)$ be the linear non-atomic algebraic measure tree
	(Example~\ref{ex:lintree}).  Then any sample $(u_1,...,u_m)$ with $\mu$ consists of pairwise different
	points, and $\shapedist(\smallx)$ is the mixture of Dirac measures on labelled comb trees where the
	mixture is over all (up to isometry) permutations of the labels.
\end{example}

We refer to the weakest topology on $\Tbin$ such that for every $m\in \N$ the $m$-tree shape distribution is
continuous as sample shape topology.
\begin{definition}[sample shape topology]\label{d:shapeconv}
	The topology induced on $\Tbin$ by the set $\set{\shapedist}{m\in\N}$ of tree shape
	distributions is called \define{sample shape topology}.
\end{definition}
We say that a sequence $(\smallx_n)_{n\in\N}$ is \define{sample shape convergent} to $\smallx$ in $\Tbin$ if it
converges w.r.t.\ the sample shape topology, i.e.\ if $\shapedist(\smallx_n)$ converges to $\shapedist(\smallx)$
as $n\to\infty$ for every $m\in\N$.

In analogy to the set $\ePol$ of polynomials introduced in Remark~\ref{rem:bpddGw}, we also introduce a set of
test functions which evaluate the tree shape distributions. We refer to
$\Phi=\Phi^{m,\varphi}\colon \Tbin\to\R$,
\begin{equation}
\label{e:spol}
		\Phi(\smallx) = \inta{\Clad}{\varphi\,}{\shapedist(\smallx)}
			=\intamu{T^m}{\varphi\circ\shape},
\end{equation}
where $m\in\N$ and $\varphi\colon \Clad\to \R$, as \define{shape polynomial}. We also define
\begin{equation} \label{e:141}
 	\sPol := \{\,\text{shape polynomials on $\Tbin$}\,\}.
\end{equation}
Obviously, the sample shape topology is induced by the set $\sPol$ of shape polynomials.

\begin{proposition}[sample shape implies bpdd-Gromov-weak convergence]\label{p:shapestrongerGw}
	On\/ $\Tbin$, the sample shape topology is stronger than the bpdd-Gromov-weak topology (i.e.\ any
	open set in the bpdd-Gromov-weak topology is open in the sample shape topology).
\end{proposition}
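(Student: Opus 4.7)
The plan is to show that every bpdd-polynomial $\Phi^{n,\phi}$ from \eqref{e:PhiGw} is continuous with respect to the sample shape topology; Remark~\ref{rem:bpddGw} then immediately delivers the claim. The key identity is a representation of the branch-point-distribution distance as an expectation over three auxiliary $\mu$-samples: using $\nu=c_\ast\mu^{\otimes 3}$ and Fubini,
\begin{equation*}
 r_\nu(u,v)=\int_{T^3}g_0(u,v,w_1,w_2,w_3)\,\mu^{\otimes 3}(\dx w),
\end{equation*}
where
\begin{equation*}
 g_0:=\one_{[u,v]}\(c(w_1,w_2,w_3)\)-\tfrac12\one_{\{u\}}\(c(w_1,w_2,w_3)\)-\tfrac12\one_{\{v\}}\(c(w_1,w_2,w_3)\)
\end{equation*}
is bounded by $1$ in absolute value.

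The crucial step I would establish next is that $\mu^{\otimes 5}$-almost surely $g_0$ depends on $(u,v,w_1,w_2,w_3)$ only through $\shape(u,v,w_1,w_2,w_3)\in\Clad[5]$. Two preliminary facts enter: first, on any order-separable algebraic tree $\br(T,c)$ is countable -- starting from a fixed countable order-dense set $D$, applying order density along each of the (at most three, after restricting to $\Tbin$) components at a branch point $v$ shows that either $v\in D$ or $v=c(d_1,d_2,d_3)$ for some $d_1,d_2,d_3\in D$ -- and second, in $\Tbin$ the assumption $\at(\mu)\subseteq\lf(T,c)$ together with this countability forces $\mu(\br(T,c))=0$. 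Thus $\mu^{\otimes 5}$-almost every sample consists of five leaves of $T$, and for such samples Remark~\ref{rem:002} gives that the canonical homomorphism from the spanning $5$-cladogram onto the spanned subtree is a tree isomorphism (no leaf of $T$ can lie in an open interval between two other leaves). Interval containment and branch-point equalities in $T$ then translate into purely combinatorial questions about the $5$-cladogram, so $g_0=\tilde g_0\circ\shape$ for some bounded $\tilde g_0\colon\Clad[5]\to[-1,1]$.

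For a polynomial $\phi$ in the entries of its matrix argument, Fubini then expands $\Phi^{n,\phi}(\smallx)$ into a finite linear combination of integrals of the form
\begin{equation*}
 \int_{T^{n+3K}}\prod_{k=1}^K g_0\(u_{i_k},u_{j_k},w_k^{(1)},w_k^{(2)},w_k^{(3)}\)\,\mu^{\otimes(n+3K)}(\dx(u,w)),
\end{equation*}
whose integrand factors through the $(n+3K)$-cladogram shape (because sub-cladograms are isomorphism invariants of the larger cladogram). Hence $\Phi^{n,\phi}\in\sPol$ is a shape polynomial. For a general $\phi\in\Cb(\R^{n\times n})$, the bounds $0\le r_\nu\le\nu(T)=1$ confine the distance matrix to the compact cube $[0,1]^{n\times n}$, so Stone--Weierstrass supplies polynomial approximations $\phi_\eps$ with $\sup_{\smallx\in\Tbin}|\Phi^{n,\phi}(\smallx)-\Phi^{n,\phi_\eps}(\smallx)|\le\|\phi-\phi_\eps\|_{\infty,[0,1]^{n\times n}}\le\eps$. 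A standard $3\eps$-argument together with the previous step yields continuity of $\Phi^{n,\phi}$ in the sample shape topology, and Remark~\ref{rem:bpddGw} concludes.

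The main obstacle is the shape-determination of $g_0$ in the second step. The subtlety is that $g_0$ contains singleton indicators $\one_{\{u\}}\circ c$ and $\one_{\{v\}}\circ c$ which are a priori not combinatorial; the defining hypothesis $\at(\mu)\subseteq\lf(T,c)$ of $\Tbin$ is essential precisely because it rules out the pathological situation flagged in Remark~\ref{rem:002} and allows these indicators to be read off from the $5$-cladogram. Without it, the homomorphism from the cladogram to the spanned subtree could collapse, and $g_0$ would genuinely depend on non-shape information.
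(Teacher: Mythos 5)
Your route is genuinely different from the paper's: the paper reduces to Lipschitz $\phi$, replaces $\nu=c_\ast\mu^{\otimes 3}$ by the empirical branch point distribution of $3n$ auxiliary samples, and controls the error uniformly over $\Tbin$ via a VC/Glivenko--Cantelli bound, whereas you represent $r_\nu(u,v)$ exactly as a $\mu^{\otimes 3}$-integral of the kernel $g_0$ and then invoke Stone--Weierstrass on $[0,1]^{n\times n}$. If $g_0$ really factors through the sampled $5$-cladogram almost surely, the remaining steps (Fubini, the fact that sub-cladograms are determined by the full cladogram, uniform approximation) are fine, and your argument would even avoid the quantitative estimate in the paper's appendix.

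The key step, however, is justified by a false premise. From $\mu(\br(T,c))=0$ you infer that $\mu^{\otimes 5}$-almost every sample consists of five \emph{leaves} of $T$, and hence that the homomorphism from the $5$-cladogram onto the spanned subtree is an isomorphism. This fails whenever $\mu$ charges degree-two points: the unit interval with Lebesgue measure lies in $\Tbin$, almost every sample point there has degree two, the middle one of any three sampled points lies in the open interval between the other two, and the cladogram-to-subtree homomorphism collapses exactly as in Remark~\ref{rem:002}. The hypothesis $\at(\mu)\subseteq\lf(T,c)$ only forces \emph{repeated} sample points to be leaves; it does not exclude this situation, contrary to what your closing paragraph asserts. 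The conclusion you need is nevertheless true, but requires a different argument: for $u\ne v$ and distinct $w_1,w_2,w_3$, the vertex $a:=c_C(\ell(3),\ell(4),\ell(5))$ is a branch point of the cladogram $C$, the homomorphism $f$ from $C$ onto the spanned subtree is injective on $\br(C)$ (the fact used in the proof of Proposition~\ref{p:reconstruction}), and therefore $c(w_1,w_2,w_3)\in[u,v]$ holds if and only if $c_C(\ell(1),\ell(2),a)=a$, which is a purely combinatorial property of the labelled cladogram; the singleton indicators $\one_{\{u\}}(c(w_1,w_2,w_3))$ and $\one_{\{v\}}(c(w_1,w_2,w_3))$ vanish almost surely unless two of the three $w$-coordinates coincide (necessarily at an atom, hence at a leaf), in which case they, too, are read off from the multi-labelling. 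With this repair your proof goes through.
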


\begin{proof}
	The bpdd-Gromov-weak topology is induced by the set $\ePol$ of polynomials (see Remark~\ref{rem:bpddGw}).
	Because the set of $\phi\in\Cb(\R^{m\times m})$ which are Lipschitz continuous is convergence determining for
	probability measures on $\R^{m\times m}$, the subset of those $\Psi\in\ePol$ with
\begin{equation}\label{e:132}
	\Psi(T,c,\mu) =
		\intamuu{T^m}{\phi\((\nu[u_i,u_j] - \tfrac12\nu\{u_i\} - \tfrac12\nu\{u_j\})_{i,j=1,\ldots,m}\)}{\uu}
\end{equation}
	for some $m\in\N$ and Lipschitz continuous $\phi\in\Cb(\R^{m\times m})$ also induces the
	bpdd-Gromov-weak topology. Therefore, it is enough to show that such a $\Psi$ is continuous on $\Tbin$
	w.r.t.\ the sample shape topology. We do so by showing that the restriction to $\Tbin$ of $\Psi$ is in
	the uniform closure of $\sPol$. Let $L$ be the Lipschitz constant of $\phi$ w.r.t.\ the
	$\ell_\infty$-norm on $\R^{m\times m}$.

	For $n\in\N$ with $3n\ge m$, we define
\begin{equation} \label{e:133}
	\Phi_n(T,c,\mu) := \intamuu[3n]{T^{3n}}
		{\phi\((\nu_{n,\uu}[u_i,u_j] - \tfrac12\nu_{n,\uu}\{u_i\} - \tfrac12\nu_{n,\uu}\{u_j\})_{i,j=1,...,m}\)}{\uu},
\end{equation}
with the empirical branch point distribution
\begin{equation}
\label{e:134}
		\nu_{n,\uu} := \tfrac1n \sum_{k=0}^{n-1} \delta_{c(u_{3k+1},u_{3k+2},u_{3k+3})}.
	\end{equation}
Note that the restriction of $\Phi_n$ to $\Tbin$ belongs to $\sPol$ because whether or not $c(u_{k+1},u_{k+2},u_{k+3})$ lies on $[u_i,u_j]$,
	$k\in\{0,...,n-1\},\,i,j\in\{1,...,m\}$ only depends on the shape $\shape[3n](\uu)$.

Finally, we observe
	\begin{equation}
		\| \Psi - \Phi_n\|_\infty \le \sup_{(T,c,\mu)\in \Tbin} \intamuu[3n]{T^{3n}}
			{  L\cdot 3\sup_{I\in\I} |\nu(I) - \nu_{n,\uu}(I)| }{\uu}
			\le 3L\cdot \eps_n \tno 0,
	\end{equation}
	with $\I:=\{[x,y];\,x,y\in T\}$ and $(\eps_n)_{n\in\N}\tno 0$, where we have used a uniform
	Glivenko-Cantelli estimate which upper bounds the distance of the empirical branch point distribution to
	the branch point distribution.  Such an estimate should be known, but as we could not come up with a
	reference, we show it in Lemma~\ref{l:VCestim} in the appendix.
	We note that $\dimVC(\I)=2$ (compare Example~\ref{exp:004}).
\end{proof}

\begin{corollary}[metrizability] \label{c:shapemetrizable}
	The sample shape topology is metrizable.
\end{corollary}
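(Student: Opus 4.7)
The plan is to deduce metrizability by observing that the sample shape topology is the initial (coarsest) topology on $\Tbin$ making a \emph{countable} family of maps into metrizable targets continuous, and then checking that this initial topology is Hausdorff.

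First I would note that for each fixed $m\in\N$, the set $\Clad$ of isomorphism classes of $m$-labelled cladograms is \emph{finite}, because binary trees with at most $m$ leaves and surjective labelling from $\{1,\dots,m\}$ form a finite combinatorial set. Hence $\CM_1(\Clad)$ is (homeomorphic to) a finite-dimensional simplex and in particular a separable, metrizable space; a natural metric is the total variation metric $d_{\mathrm{TV}}$. Thus, by Definition~\ref{d:shapeconv}, the sample shape topology is precisely the initial topology on $\Tbin$ w.r.t.\ the countable family of maps
\begin{equation*}
  \shapedist \colon \Tbin \longrightarrow \bigl(\CM_1(\Clad), d_{\mathrm{TV}}\bigr),\qquad m\in\N.
\end{equation*}

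Next I would verify that this topology is Hausdorff. This is immediate from Proposition~\ref{p:shapestrongerGw}: the sample shape topology is finer than the bpdd-Gromov-weak topology, and the latter is already Hausdorff (in fact metrizable) by Corollary~\ref{c:bpdd-metrizable}. So any two distinct points of $\Tbin$ can be separated by disjoint bpdd-Gromov-weak open sets, which are in particular sample-shape-open.

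Finally I would invoke the standard fact that the initial topology generated by a \emph{countable} family of maps into (pseudo)metric spaces is (pseudo)metrizable, with an explicit metric given by
\begin{equation*}
  d(\smallx,\smally) \;:=\; \sum_{m=1}^\infty 2^{-m}\, d_{\mathrm{TV}}\!\bigl(\shapedist(\smallx),\,\shapedist(\smally)\bigr).
\end{equation*}
Combined with the Hausdorff property, which ensures that $d(\smallx,\smally)=0 \Rightarrow \smallx=\smally$, this $d$ is a genuine metric generating the sample shape topology. There is essentially no obstacle here, since the only non-routine ingredient is the Hausdorff property, and that has already been supplied by Proposition~\ref{p:shapestrongerGw} together with Corollary~\ref{c:bpdd-metrizable}.
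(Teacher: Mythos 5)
Your proposal is correct and follows essentially the same route as the paper: pseudo-metrizability from the countable family $(\shapedist)_{m\in\N}$ into metrizable targets, plus the Hausdorff property inherited from the coarser bpdd-Gromov-weak topology via Proposition~\ref{p:shapestrongerGw}. The extra observations (finiteness of $\Clad$ and the explicit series metric) are fine but not needed beyond what the paper's one-line argument already uses.
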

\begin{proof}
	Because the sample shape topology is induced by a countable family of functions $(\shapedist)_{m\in\N}$
	with values in metrizable spaces, it is pseudo-metrizable. By Proposition~\ref{p:shapestrongerGw}, it is
	stronger than the bpdd-Gromov-weak topology, hence a Hausdorff topology. Therefore, it is metrizable.
\end{proof}

\subsection{Convergence in distribution of sampled subtree masses}
\label{s:convmass}
In this subsection, we introduce yet another notion of convergence of algebraic measure trees which, in contrast to
sampling tree shapes, is based on sampling branch points and evaluating the masses of the subtrees that are
joined at these branch points.
This approach might be more similar to the case of metric measure spaces and distance matrix distributions,
because we sample a tensor of real numbers (masses of subtrees) as opposed to a combinatorial object (tree shape).
Thus, the typical tools of analysis are more readily applicable for the corresponding class of test functions.

Let $(T,c,\mu)\in\Tbin$, and recall from (\ref{e:005}) for $u,v,w\in T$ the subtree component $\Sub_{c(u,v,w)}(x)$ of
$T\setminus\{c(u,v,w)\}$ which contains $x\not=c(u,v,w)$. Here, we always take the component containing $x=u$,
and consider its mass
\begin{equation}\label{e:eta}
	\eta(u,v,w) := \one_{u\ne c(u,v,w)} \cdot \mu\(\Sub_{c(u,v,w)}(u)\).
\end{equation}	

\begin{lemma}[measurability of the subtree masses] \label{l:002}
	For every binary algebraic measure tree\/ $\smallx=(T,c,\mu)\in \Tbin$ and\/ $m\in\N$,
	the function\/ $\eta\colon T^3\to [0,1]$ is measurable.
\end{lemma}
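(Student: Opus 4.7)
The plan is to express $\eta$ as a product of an indicator of an open set and an integral (over $z\in T$ with respect to $\mu$) of a jointly Borel measurable indicator on $T^4$, and then conclude by Fubini's theorem. The main ingredients are the continuity of the branch point map $c\colon T^3\to T$ (Lemma~\ref{l:ccont}) and the Hausdorff property of the component topology (Lemma~\ref{l:Hausdorff}); together with second countability of $T$ (Proposition~\ref{p:separable}), this guarantees that the Borel $\sigma$-algebra on $T^n$ coincides with the $n$-fold product $\sigma$-algebra, so Fubini applies without subtlety.

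First I would rewrite the ``subtree-component'' as a pointwise condition. Recalling that $\Sub_x(y)=\{z\in T\setminus\{x\}:\,c(x,y,z)\ne x\}$ and setting $x=c(u,v,w)$, one obtains
\begin{equation*}
  \mu\bigl(\Sub_{c(u,v,w)}(u)\bigr)
  \;=\;
  \int_T \one_{A}(u,v,w,z)\,\mu(\dx z),
\end{equation*}
where
\begin{equation*}
  A \;:=\; \bigl\{(u,v,w,z)\in T^4 :\; z\ne c(u,v,w)\text{ and } c\bigl(c(u,v,w),u,z\bigr)\ne c(u,v,w)\bigr\}.
\end{equation*}

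Next I would check that $A$ is open in $T^4$, hence Borel. Indeed, the maps $(u,v,w,z)\mapsto(z,c(u,v,w))$ and $(u,v,w,z)\mapsto\bigl(c(c(u,v,w),u,z),\,c(u,v,w)\bigr)$ are continuous, as compositions of the continuous branch point map (Lemma~\ref{l:ccont}) with projections; and the off-diagonal $\{(a,b)\in T^2:\,a\ne b\}$ is open because $(T,\tau)$ is Hausdorff (Lemma~\ref{l:Hausdorff}). Thus $A$ is the intersection of two preimages of open sets, hence open.

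Finally, since $\one_A$ is jointly Borel measurable on $T^4$ and $\mu$ is a probability measure on $\B(T,c)$, Fubini's theorem yields that $(u,v,w)\mapsto \mu\bigl(\Sub_{c(u,v,w)}(u)\bigr)$ is Borel measurable on $T^3$. The prefactor $\one_{\{u\ne c(u,v,w)\}}$ is likewise the indicator of an open set by the same Hausdorff argument, so $\eta$ is the product of two Borel measurable functions and therefore Borel measurable. There is no genuine obstacle here; the only point to verify carefully is the openness of the sets defining $A$, which is immediate from Lemmas~\ref{l:ccont} and~\ref{l:Hausdorff}.
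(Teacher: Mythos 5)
Your proof is correct, but it takes a genuinely different route from the paper's. The paper first shows that the two-variable map $\psi(u,v):=\one_{u\ne v}\cdot\mu(\Sub_v(u))$ is \emph{lower semi-continuous} on $T^2$, via a direct sequence argument distinguishing whether $v_n$ eventually enters $\Sub_v(u)$ or not, and then obtains $\eta(u,v,w)=\psi(u,c(u,v,w))$ as a composition with the continuous branch point map. You instead linearize the problem: you write $\mu(\Sub_{c(u,v,w)}(u))$ as $\int_T\one_A(u,v,w,z)\,\mu(\dx z)$ for an explicitly open set $A\subseteq T^4$, and invoke the measurability-of-sections part of Fubini--Tonelli. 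Both arguments are sound. Your approach is more mechanical and avoids the case analysis, but it hinges on the identification of the Borel $\sigma$-algebra of $T^4$ with the fourfold product $\sigma$-algebra; you correctly flag that this follows from second countability of $(T,\tau)$, which Proposition~\ref{p:separable} guarantees because algebraic measure trees are by definition order separable. The paper's argument buys slightly more --- lower semi-continuity rather than bare measurability, and it works without appealing to second countability at the level of products --- whereas yours delivers exactly the statement of the lemma with less topological bookkeeping about components. One small terminological point: what you are using is the Tonelli-type assertion that $x\mapsto\int f(x,z)\,\mu(\dx z)$ is measurable for a jointly measurable nonnegative $f$, not the full Fubini theorem; the substance is unaffected.
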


\begin{proof}
	First, we claim that the map $\psi\colon T^2 \to [0,1]$,
	\begin{equation}
		\psi(u,v) := \one_{u\ne v} \cdot \mu\(\Sub_v(u)\)
	\end{equation}
	is lower semi-continuous. Indeed, let $(u_n,v_n)$ be a sequence converging to $(u,v)$. We may assume
	w.l.o.g.\ that $v\ne u$, $u_n\in\Sub_v(u)$, and either $v_n \not\in \Sub_v(u)$ for all $n\in\N$, or
	$v_n \in \Sub_v(u)$ for all $n\in\N$.
	In the first case, $\Sub_v(u) \subseteq \Sub_{v_n}(u_n)$, and hence $\psi(u,v) \le \psi(u_n,v_n)$.
	In the second case, for every $x\in \Sub_v(u)$ and $n \ge n_x$ sufficiently large, we have $u\in
	\Sub_{v_n}(u_n)$ and $v_n\not\in [x,u]$. This means $x\in \Sub_{v_n}(u)=\Sub_{v_n}(u_n)$ and hence
	\begin{equation}\label{e:}
		\psi(u,v) - \liminf_{n\to\infty} \psi(u_n,v_n) \le \lim_{n\to\infty}\mu\(\Sub_v(u)\setminus \Sub_{v_n}(u_n)\) = 0.
	\end{equation}
	Therefore, $\psi$ is lower semi-continuous.
	Because the branch point map $c$ is continuous due to Lemma~\ref{l:ccont}, the same applies to
	$\eta(u,v,w) = \psi((u,\, c(u,v,w))$, and $\eta$ is measurable.
\end{proof}

Given a vector $\uu=(u_1,\ldots,u_m)\in T^m$, $m\in\N$, we consider the masses of all the subtrees we obtain as branch
points of entries of $\uu$. To this end, let
\begin{equation}\label{e:ueta}
	\ueta(u,v,w) := \(\eta(u,v,w),\, \eta(v,u,w),\, \eta(w,u,v)\)
\end{equation}
and define the function\/ $\m\colon T^m\to \tensor$, given by
	\begin{equation} \label{e:m}
	   \m(\uu) := \( \ueta(u_i,u_j,u_k) \)_{1\le i < j < k \le m}
	\end{equation}

\begin{definition}[subtree-mass tensor distribution]\label{d:massdist}
For $\smallx=(T,c,\mu) \in \Tbin$ and $m\in\N$, the $m$\nbd\define{subtree-mass tensor distribution} of $\smallx$ is
defined by
\begin{equation}\label{e:massdist}
   \massdist(\smallx):=\mu^{\otimes m}\circ \m^{-1}\in \CM_1\(\tensor\),
\end{equation}
\end{definition}

\begin{example}[symmetric binary tree] Let for each $n\in\N$, $\smallx_n=(T_n,c_n,\mu_n)$ the symmetric binary
tree with $N=2^n$ leaves and the uniform distribution on the set of leaves. Then the $3$\nbd \define{subtree-mass tensor distribution} of $\smallx_n$ is equal to
\begin{equation}
\label{e:139}
\begin{aligned}
   \massdist[3](\smallx_n) &= \mu_n^{\otimes 3}\circ \m^{-1}
   \\
   &=\sum_{k=1}^{n-1} \,\frac{1-2^{-k}}{2^{k+1}}\,
   	\Bigl( \delta_{(\tfrac{1}{2^{k+1}},\tfrac{1}{2^{k+1}},1-\tfrac{1}{2^k})}
	  + \delta_{(\tfrac{1}{2^{k+1}},1-\tfrac{1}{2^k},\tfrac{1}{2^{k+1}})}
	  + \delta_{(1-\tfrac{1}{2^k},\tfrac{1}{2^{k+1}},\tfrac{1}{2^{k+1}})} \Bigr)
   \\
   &\phantom{{}={}} + \tfrac1N (1-\tfrac1N) \( \delta_{(\frac{1}{N},\frac{1}{N},1)}
   			+ \delta_{(\frac{1}{N},1,\frac{1}{N})} + \delta_{(1,\frac{1}{N},\frac{1}{N})} \)
	+\tfrac1{N^2}\delta_{(\frac{1}{N},\frac{1}{N},\frac{1}{N})}
   \\
   &\tno \sum_{k=1}^\infty \,\frac{1-2^{-k}}{2^{k+1}}\,
   	\Bigl( \delta_{(\tfrac{1}{2^{k+1}},\tfrac{1}{2^{k+1}},1-\tfrac{1}{2^k})}
	  + \delta_{(\tfrac{1}{2^{k+1}},1-\tfrac{1}{2^k},\tfrac{1}{2^{k+1}})}
	  + \delta_{(1-\tfrac{1}{2^k},\tfrac{1}{2^{k+1}},\tfrac{1}{2^{k+1}})} \Bigr)
\end{aligned}
\end{equation}
\label{exp:007}
\end{example}

\begin{remark}[$3$-subtree-mass tensor distribution is not enough]
	It is not enough to consider only the $3$\nbd \define{subtree-mass tensor distribution}. Indeed, $\massdist[3]$ cannot distinguish all
	non-isomorphic binary algebraic measure trees, i.e.\ it does not separate the points of $\Tbin$. To see
	this, take the tree from Figure~\ref{f:3nonsep} with uniform distribution on its $12$ leaves, and the
	same tree with the subtrees marked by $\times$ and $\circ$, respectively, exchanged.
	These two trees are clearly non-isomorphic, and because the
	two marked subtrees have the same number of leaves, every vertex in one tree corresponds to a vertex in
	the other with the same value for $\m$.
\label{rem:004}
\end{remark}
\xymatfig{f:3nonsep}{
     &\lfdr&     &\lfdl&     &     &     &     &\lfd  &\     &      &\lfdl\\
\lfdr&     &\brd &\lfdr&     &\lfdl&\lfdr&     &\Rbrdl&      &\Rbrdl&	  \\
     &\brr &\brdr&     &\brdl&\lfdr&     &\brdl&      &\brul &      &\lful\\
\lfur&     &     &\brrr&     &     &\node&     &      &      &\lful &
}{$\mu$ is the uniform distribution on the leaves. Swap the $\circ$-part with the $\times$-part to obtain a
non-isomorphic tree giving the same value for $\massdist[3]$.}

We consider the weakest topology on $\Tbin$ such that for every $m\in \N$ the $m$-subtree-mass tensor distribution is
continuous. Here, as usual, we equip $\CM_1(\tensor)$ with the weak topology.
\begin{definition}[sample subtree-mass topology]\label{d:massconv}
	The topology induced on $\Tbin$ by the set $\set{\massdist}{m\in\N}$ of subtree-mass tensor
	distributions is called \define{sample subtree-mass topology}.
\end{definition}
We say that a sequence $(\smallx_n)_{n\in\N}$ is \define{sample subtree-mass convergent} to $\smallx$ in $\Tbin$ if it
converges w.r.t.\ the sample subtree-mass topology, i.e.\ if $\massdist(\smallx_n)$ converges to $\massdist(\smallx)$
as $n\to\infty$ for every $m\in\N$.
To see that the sample subtree-mass topology is a Hausdorff topology on $\Tbin$,  we need the following reconstruction theorem.

\begin{proposition}[reconstruction theorem]
	The set of subtree-mass tensor distributions\/ $\set{\massdist}{m\in\N}$ separates points of\/ $\Tbin$,
	i.e., if\/ $\smallx_1,\smallx_2\in\Tbin$ are such that\/ $\massdist(\smallx_1)=\massdist(\smallx_2)$ for all\/
	$m\in\N$, then\/ $\smallx_1=\smallx_2$.
\label{p:reconstruction}
\end{proposition}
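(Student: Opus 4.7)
The plan is to reduce Proposition~\ref{p:reconstruction} to the reconstruction theorem for metric measure spaces (Theorem~5 of \cite{GrevenPfaffelhuberWinter2009}) via the injective selection map $\iota\colon\T\to\H$ of Proposition~\ref{p:injective}. I will show that if $\vartheta_m(\smallx_1)=\vartheta_m(\smallx_2)$ for all $m\in\N$, then for every $n\in\N$ the distribution of the $r_\nu$-distance matrix $(r_\nu(u_i,u_j))_{1\le i,j\le n}$ under $\mu^{\otimes n}$ is the same for $\smallx_1$ and $\smallx_2$; by Gromov-weak reconstruction this forces $\iota(\smallx_1)=\iota(\smallx_2)$, and injectivity of $\iota$ then yields $\smallx_1=\smallx_2$.

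By definition, $r_\nu(u,v) = \nu([u,v]) - \tfrac12\nu\{u\} - \tfrac12\nu\{v\}$, which rewrites as
\begin{equation*}
r_\nu(u,v) = \Exp\bigl[\one_{c(A,B,C)\in[u,v]} - \tfrac12\one_{c(A,B,C)=u} - \tfrac12\one_{c(A,B,C)=v}\bigm| u,v\bigr]
\end{equation*}
for iid $(A,B,C)\sim\mu^{\otimes 3}$ independent of $u,v$. By the strong law of large numbers, $r_\nu(u,v)$ is the $\mu^{\otimes\infty}$-a.s.\ limit of empirical averages over iid triples $(A_\ell,B_\ell,C_\ell)_{\ell\in\N}$. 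If each indicator on the right-hand side above is $\mu^{\otimes 5}$-a.s.\ a measurable function of the mass tensor $\m(u,v,A,B,C)$, then for any $n\in\N$ and continuous bounded $\phi\colon\R^{n\times n}\to\R$ the distance polynomial $\int\phi\bigl((r_\nu(u_i,u_j))_{i,j=1}^n\bigr)\,d\mu^{\otimes n}$ is a pointwise $N\to\infty$ limit of integrals of measurable functions against $\vartheta_{n+3N}(\smallx)$, and is therefore determined by the family $\{\vartheta_m\}_{m\in\N}$.

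The heart of the proof is therefore the key lemma that these indicators are a.s.\ measurable functions of the five-point mass tensor. My approach rests on two observations. First, $\smallx\in\Tbin$ forces atoms of $\mu$ to sit on leaves, so $\mu^{\otimes 5}$-a.s.\ no sampled point is a branch point and the coincidence pattern among sampled points can be read off the zero-pattern of $\m$ via the a.s.\ characterisation that $\eta(x,y,z)=0$ iff $x\in\{y,z\}$. Second, for five distinct sampled points, $c(A,B,C)\in[u,v]$ iff the three ``projections'' $c(u,v,A),\,c(u,v,B),\,c(u,v,C)$ onto $[u,v]$ are pairwise distinct, and each equality $c(u,v,A)=c(u,v,B)$ is equivalent to the 4-shape of $(u,v,A,B)$ being $(u,v)|(A,B)$; this 4-shape is detectable from $\m$ by comparing the subtree masses $\eta(A,u,v),\,\eta(B,u,v),\,\eta(u,v,A),\,\eta(v,u,A),\ldots$, with the fifth sample point $C$ used to disambiguate the coincidental equalities that can occur in highly symmetric sub-trees (equivalently, the indicator of interest is invariant under tree automorphisms and hence depends only on the tree-automorphism orbit of the five-tuple, which is a.s.\ determined by $\m$).

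Putting everything together, under the hypothesis of the proposition every distance polynomial takes the same value on $\smallx_1$ and $\smallx_2$; by Gromov-weak reconstruction $\iota(\smallx_1)=\iota(\smallx_2)\in\H$, and Proposition~\ref{p:injective} gives $\smallx_1=\smallx_2$. The main obstacle lies in the key lemma: the naive mass-equality tests $\eta(A,u,v)=\eta(B,u,v)$ etc.\ are necessary but not always sufficient to identify the 4-shape of $(u,v,A,B)$, since they can hold coincidentally in highly symmetric sub-trees, so a careful case analysis using the additional fifth sample point together with the branch-point axioms~(BPM1)--(BPM4) will be required.
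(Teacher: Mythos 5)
Your top-level reduction is a legitimate variant of the paper's strategy: where the paper shows that the mass tensor distributions determine the sample shape distributions (which separate points), you cut out the shape intermediary and go directly to the distance matrix distributions of $r_\nu$, then invoke Gromov-weak reconstruction together with the injectivity of $\iota$ from Proposition~\ref{p:injective}. The empirical approximation of the branch point distribution is sound (it is the same device as in the proof of Proposition~\ref{p:shapestrongerGw}, and for a fixed tree the strong law replaces the uniform Glivenko--Cantelli bound), and your a.s.\ criterion ``$c(A,B,C)\in[u,v]$ iff the projections $c(u,v,A),c(u,v,B),c(u,v,C)$ are pairwise distinct'' is correct for $\mu^{\otimes 5}$-a.e.\ tuple of five \emph{distinct} points, because a.s.\ no sampled point is a branch point, so two points with the same projection onto $[u,v]$ must lie in the same component off that projection.

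There is, however, a genuine gap, and it sits exactly where the real work of the paper's proof is done: you need that the event $c(u,v,A)=c(u,v,B)$ is, a.s., a measurable function of the subtree-mass tensor via a \emph{universal} map $h$ (the same for all $\smallx\in\Tbin$), and you explicitly do not prove this. You concede that the pairwise mass-equality tests are ``necessary but not always sufficient'', and the fallback claim that $\m$ a.s.\ determines the tree-automorphism orbit of the five-tuple is neither proved nor obviously true (the paper's Remark~\ref{rem:004} should make you suspicious of mass tensors resolving symmetric configurations for free). The paper closes precisely this gap: after fixing representatives in which every component $\Sub_v(u)$ has positive mass, it shows that for distinct non-branch points, $c(\ux_1)=c(\ux_2)$ holds if and only if the entries of $\ux_2$ can be reordered so that \emph{every} mixed substitution of entries of $\ux_1$ by the corresponding entries of $\ux_2$ leaves the vector $\ueta$ unchanged, i.e.\ \eqref{e:samem} holds for all eight index choices. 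Necessity uses that each component of $T\setminus\{c(\ux_1)\}$ contains exactly one entry of each triple; sufficiency is the delicate direction, proved by showing that if $c(\ux_1)\ne c(\ux_2)$ one can choose indices so that a mixed triple has strictly larger subtree mass, $\eta(\ux_1)<\eta(x_{1,1},x_{2,2},x_{1,3})$ (Figure~\ref{f:twocase}), exploiting the positivity convention and the injectivity of the shape homomorphism on branch points. Some argument of this type (applied to the two triples $(u,v,A)$ and $(u,v,B)$, where the full system \eqref{e:samem} includes the comparisons $\eta(u,A,v)=\eta(u,B,v)$ and $\eta(v,A,u)=\eta(v,B,u)$ that detect any mass strictly between the two candidate medians) is indispensable; until you supply it, the proposal is an outline rather than a proof.
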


\begin{proof}
	We always assume that the representative $(T,c,\mu)$ of an algebraic measure tree is chosen such that
	$\mu(\Sub_{v}(u))>0$ whenever $u,v\in T$, $u\ne v$.

	Because the set $\set{\shapedist}{m\in\N}$ of tree shape distributions separates points by
	Corollary~\ref{c:shapemetrizable}, it is enough to show that $\shapedist$ is determined by the $m$-subtree-mass tensor
	distribution $\massdist$ for every $m\in\N$. We do so by showing that there exists a (non-continuous) function
	$h\colon \tensor\to\Clad$ such that for every $\smallx=(T,c,\mu)\in\Tbin$ we have $\shape=h\circ \m$ on
	$\(T\setminus \br(T,c)\)^m$.
	This is enough, because $\mu(\br(T,c))=0$ by countability of $\br(T,c)$ and the assumption that $\at(\mu)\subseteq \lf(T,c)$.

	Fix $\uu=(u_1,...,u_m)\in (T\setminus\br(T,c))^m$ and set $C=(C,c_C,\ell):=\shape(\uu)$. For $i\ne j$,
	we have $u_i=u_j$ if and only if $\eta(u_i,u_j,u_k)=\eta(u_j,u_i,u_k)=0$ for any and hence all $k\in
	\{1,...,m\}\setminus \{i,j\}$. Thus, we can determine multiple labels of $C$ by $\m(\uu)$ and may assume
	in the following that $u_1,...,u_m$ are distinct.
	Then, the \nclad\ $C$ is uniquely determined by
	the set of pairs $(\ux_1,\ux_2)$ of triples $\ux_i=(x_{i,1},x_{i,2},x_{i,3}) \in\{u_1,...,u_m\}^3$, $x_{i,j} \ne
	x_{i,k}$ for $j\ne k$, $i=1,2$, such that
	\begin{equation} \label{e:ceq}
		c_C\big(x_{1,1},\,x_{1,2},\,x_{1,3}\big) = c_C\big(x_{2,1},\, x_{2,2},\, x_{2,3}\big).
	\end{equation}
	We claim that \eqref{e:ceq} holds if and only if we can reorder the three entries of $\ux_2$ such that we
	can replace every entry of $\ux_1$ by the corresponding entry of $\ux_2$ and obtain the same masses of
	subtrees. More precisely,
	\begin{equation}\label{e:samem}
		\ueta(x_{1,1},x_{1,2},x_{1,3}) = \ueta(x_{i,1},x_{j,2},x_{k,3}) \quad\forall i,j,k\in\{1,2\}.
	\end{equation}
	Indeed, if $c_C(\ux_1)=c_C(\ux_2)$, then $c(\ux_1)=c(\ux_2)$ by definition of $\shape$.
	Because none of the $u_i$ is a branch point, every component of $T\setminus \{c(\ux_1)\}$ contains
	precisely one of the $x_{1,i}$, as well as one of the $x_{2,i}$. We can reorder the entries of
	$x_2$ such that $x_{1,i}$ is in the same component as $x_{2,i}$, $i=1,\ldots,3$. Then it is easy to
	check that \eqref{e:samem} holds.
	
	Conversely, assume that $c_C(\ux_1) \ne c_C(\ux_2)$. Because the
	restriction of the tree homomorphism $C \to c(\{u_1,\ldots,u_m\}^3)$ to the branch points of $C$ is
	injective, this implies $v_1 := c(\ux_1) \ne c(\ux_2)=:v_2$. There must be an $i$ with
	$x_{1,i} \in \Sub_{v_1}(v_2)$, say $i=3$. Also, $x_{2,j}\in \Sub_{v_1}(v_2)$ for at least two different
	$j$, so at least one which is different from $i$, say $j=2$ (see Figure~\ref{f:twocase}). Then
	$v_3:=c(x_{1,1}, x_{2,2}, x_{1, 3}) \in \Sub_{v_1}(v_2)$, and in particular, $x_{1,1},x_{1,2} \in
	\Sub_{v_3}(x_{1,1})$. Thus $\eta(\ux_1) < \eta(x_{1,1}, x_{2,2}, x_{1,3})$, and \eqref{e:samem} does not hold.
	\xymatfig{f:twocase}{
		x_{1,1}\xyedge[dr] &     & x_{1,3}\xyedge[d] &     & x_{2,2}\xyedge[dl] \\
		        & v_1\xyedge[r] & v_3\xyedge[r]     & v_2\xyedge[dr]           \\
		x_{1,2}\xyedge[ur] &    & & &
	}{The situation in the proof of Proposition~\ref{p:reconstruction}.}
\end{proof}

\begin{corollary}[metrizability]\label{c:massmetrizable}
	The sample subtree-mass topology is metrizable.
\end{corollary}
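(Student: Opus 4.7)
The plan is to mimic the proof of Corollary~\ref{c:shapemetrizable} almost verbatim, with the reconstruction theorem (Proposition~\ref{p:reconstruction}) replacing the role of Proposition~\ref{p:shapestrongerGw}. By Definition~\ref{d:massconv}, the sample subtree-mass topology is the initial topology on $\Tbin$ induced by the countable family $\set{\massdist[m]}{m\in\N}$ of maps $\massdist[m]\colon \Tbin \to \CM_1([0,1]^{3\binom{m}{3}})$, where each codomain carries the weak topology. Since $[0,1]^{3\binom{m}{3}}$ is a compact metric space, $\CM_1([0,1]^{3\binom{m}{3}})$ endowed with the weak topology is a compact metrizable space (for instance, by the Prohorov metric).

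Fix a metric $d_m$ on $\CM_1([0,1]^{3\binom{m}{3}})$ generating its weak topology and bounded by $1$. Then I would set
\begin{equation}
    d(\smallx,\smally) := \sum_{m\in\N} 2^{-m} d_m\(\massdist[m](\smallx),\,\massdist[m](\smally)\), \qquad \smallx,\smally\in \Tbin.
\end{equation}
This is a pseudometric on $\Tbin$, and it clearly induces the initial topology of the family $(\massdist[m])_{m\in\N}$, i.e., the sample subtree-mass topology.

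It remains to verify that $d$ is in fact a metric, that is, $d(\smallx,\smally)=0$ implies $\smallx=\smally$. But $d(\smallx,\smally)=0$ forces $\massdist[m](\smallx)=\massdist[m](\smally)$ for every $m\in\N$, and the reconstruction theorem (Proposition~\ref{p:reconstruction}) precisely states that this is enough to conclude $\smallx=\smally$ in $\Tbin$. Hence $d$ is a metric generating the sample subtree-mass topology, proving metrizability.

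There is essentially no obstacle here, as all the real work has been done in Proposition~\ref{p:reconstruction}; the current corollary is the routine packaging of that reconstruction result together with the standard fact that the initial topology of a countable family of maps into metrizable spaces, once it is Hausdorff, is metrizable.
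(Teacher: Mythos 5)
Your proof is correct and follows essentially the same route as the paper: pseudo-metrizability from the countable family of maps $(\massdist)_{m\in\N}$ into metrizable spaces, upgraded to metrizability because Proposition~\ref{p:reconstruction} shows the family separates points (i.e.\ the topology is Hausdorff). You merely make the pseudometric explicit via a weighted sum, which the paper leaves implicit.
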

\begin{proof}
	Because the sample subtree-mass topology is induced by a countable family of functions $(\massdist)_{m\in\N}$
	with values in metrizable spaces, it is pseudo-metrizable. By Proposition~\ref{p:reconstruction}, it is
	a Hausdorff topology, hence it is metrizable.
\end{proof}

In analogy to the sets $\ePol$ and $\sPol$ of polynomials and shape polynomials, respectively, the sample
subtree-mass topology also comes with a canonical set of test functions.
We call $\Psi\colon \Tbin\to \R$ \define{subtree-mass polynomial} if there is $m\in\N$ and
$\psi\in \Cb(\tensor)$ with
\begin{equation}\label{e:mpol}
	\Psi(\smallx)
		= \inta{\tensor}{\psi\,}{\massdist(\smallx)}
		= \intamu{T^m}{\psi\circ \m}
\end{equation}
We also define
\begin{equation} \label{e:140}
 	\mPol := \{\,\text{subtree-mass polynomials on $\Tbin$}\,\}.
\end{equation}
Obviously, the sample subtree-mass topology is induced by the set $\mPol$ of subtree-mass polynomials.

\begin{proposition}[sample shape convergence implies sample subtree-mass convergence]
	The sample shape topology is stronger than the sample subtree-mass topology.
\label{p:shapestrongermass}
\end{proposition}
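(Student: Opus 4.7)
The plan is to show that every subtree-mass polynomial lies in the uniform closure of the set $\sPol$ of shape polynomials on $\Tbin$. Since the sample shape topology makes every element of $\sPol$ continuous, this immediately implies continuity of every $\Psi\in\mPol$ in the sample shape topology, and hence the sample shape topology is stronger than the sample subtree-mass topology it induces.

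First, because Lipschitz functions are convergence determining, it suffices to consider $\Psi=\Psi^{m,\psi}$ of the form \eqref{e:mpol} with $\psi\in\Cb(\tensor)$ Lipschitz, say with constant $L$ w.r.t.\ the $\ell_\infty$-norm. The strategy, mimicking the proof of Proposition~\ref{p:shapestrongerGw}, is to replace each subtree-mass $\eta(u_i,u_j,u_k)$ by an empirical average over $N$ additional sample points. The key combinatorial observation is that for $u,v,w,x\in T\setminus\br(T,c)$ pairwise distinct, the event $\{x\in \Sub_{c(u,v,w)}(u)\}$ is equivalent to $c(u,v,w)=c(v,w,x)$, which depends only on the cladogram $\shape(u,v,w,x)$.

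Accordingly, for $N\in\N$ and $\uu=(u_1,\ldots,u_m)$, $\underline v=(v_1,\ldots,v_N)$, set
\begin{equation}
   \eta_N(u_i,u_j,u_k;\underline v):=\tfrac{1}{N}\sum_{l=1}^N\one_{\{c(u_i,u_j,u_k)=c(u_j,u_k,v_l)\}},
\end{equation}
and $\m_N(\uu,\underline v):=(\eta_N(u_i,u_j,u_k;\underline v),\eta_N(u_j,u_i,u_k;\underline v),\eta_N(u_k,u_i,u_j;\underline v))_{1\le i<j<k\le m}$. Because each indicator, and hence the entire integrand $\psi\circ\m_N$, is a function of $\shape[m+N](\uu,\underline v)$ alone, the test function
\begin{equation}
   \Phi_N(\smallx):=\int_{T^{m+N}}\psi\bigl(\m_N(\uu,\underline v)\bigr)\,\mu^{\otimes(m+N)}(\mathrm{d}\uu,\mathrm{d}\underline v)
\end{equation}
belongs to $\sPol$, i.e.\ is a shape polynomial.

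To conclude, I estimate $\|\Phi_N-\Psi\|_\infty$ uniformly over $\Tbin$. Conditioning on $\uu$ and applying the Lipschitz bound on $\psi$ yields
\begin{equation}
   \bigl|\Phi_N(\smallx)-\Psi(\smallx)\bigr|\le 3L\sup_{\smallx\in\Tbin}\int_{T^N}\sup_{S\in\Sset}\bigl|\mu(S)-\tfrac{1}{N}\textstyle\sum_{l=1}^N\one_S(v_l)\bigr|\,\mu^{\otimes N}(\mathrm{d}\underline v),
\end{equation}
where $\Sset:=\{\Sub_x(y):x,y\in T,\,x\ne y\}$ is the class of component subtrees. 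Since any such component is an intersection of at most two intervals (one from each side of $x$), the VC dimension of $\Sset$ is bounded uniformly in $\smallx\in\Tbin$; thus the uniform Glivenko--Cantelli estimate of Lemma~\ref{l:VCestim} (same tool used in the proof of Proposition~\ref{p:shapestrongerGw}) delivers a bound $\le 3L\cdot\varepsilon_N$ with $\varepsilon_N\to 0$ independently of $\smallx$. Hence $\Phi_N\to\Psi$ uniformly on $\Tbin$, proving that $\Psi$ is a uniform limit of shape polynomials.

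The only nontrivial point is the uniform Glivenko--Cantelli estimate for the class $\Sset$ of components, which is essentially identical to the one used for intervals in Proposition~\ref{p:shapestrongerGw} and follows from the finite VC-dimension of $\Sset$; once this is in hand the rest is bookkeeping on the shape-coding of the events $\{v_l\sim_{c(u_i,u_j,u_k)}u_i\}$.
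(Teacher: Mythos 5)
Your overall strategy is exactly the one the paper uses for Proposition~\ref{p:shapestrongermass}: approximate each Lipschitz subtree-mass polynomial uniformly on $\Tbin$ by shape polynomials obtained from an empirical measure of auxiliary sample points, and control the error by the uniform Glivenko--Cantelli estimate of Lemma~\ref{l:VCestim} applied to the class $\Sset$ of components. However, your concrete implementation has a genuine gap. The equivalence $\{x\in\Sub_{c(u,v,w)}(u)\}\Leftrightarrow\{c(u,v,w)=c(v,w,x)\}$, which you establish only for \emph{pairwise distinct} points, fails precisely on the degenerate configurations over which you then integrate. For instance, if $v=w=a$ is an atom (hence a leaf, by the definition of $\Tbin$), then $c(u,v,w)=a=c(v,w,x)$ for \emph{every} $x$ by (BPM\ref{BPM:2}), so your $\eta_N(u,v,w;\underline v)=1$, whereas $\eta(u,a,a)=\mu(\Sub_a(u))=1-\mu\{a\}$; and if $u=v=w=a$, then $\eta_N\equiv 1$ while $\eta=0$. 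Since $\mu$ may be purely atomic on $\Tbin$ (e.g.\ finite trees with uniform leaf measure), these configurations carry probability bounded away from zero, and the resulting contribution to $\|\Phi_N-\Psi\|_\infty$ is of constant order (e.g.\ of order $\sum_a\mu\{a\}^3$ from the triply coincident samples) and does \emph{not} vanish as $N\to\infty$; so $\Psi$ is not exhibited as a uniform limit of shape polynomials. The paper avoids this by defining the empirical subtree mass literally as $\ueta$ with $\mu$ replaced by the empirical measure of the sample, i.e.\ keeping both the factor $\one_{u_i\ne c(u_i,u_j,u_k)}$ and the genuine membership event $u_\ell\in\Sub_{c(u_i,u_j,u_k)}(u_i)$ (which is still determined by the sampled shape); the error is then exactly controlled by $\sup_{S\in\Sset}\bigl|\mu(S)-\mu_{n,\uu}(S)\bigr|$, to which Lemma~\ref{l:VCestim} applies.

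A second, smaller point: your justification that $\dimVC(\Sset)<\infty$ because ``any such component is an intersection of at most two intervals'' is not correct --- a component $\Sub_x(y)$ is an entire subtree hanging off $x$, not an intersection of intervals. The bound $\dimVC(\Sset)\le 3$ is nevertheless true and is exactly what the paper proves in Example~\ref{exp:006} via the four-point condition (BPM\ref{BPM:4}): for any four points one may assume $c(x_1,x_2,x_3)=c(x_1,x_2,x_4)$, and then no single component contains $x_1$ and $x_3$ while excluding both $x_2$ and $x_4$. With these two repairs your argument coincides with the paper's proof.
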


\begin{proof}
	The proof is similar to that of Proposition~\ref{p:shapestrongerGw}.
	We will show that each subtree-mass polynomial in $\Psi \in \mPol$,
\begin{equation}\label{e:142}
   \Psi(T,c,\mu) = \intamuu{T^m}{\psi\(\(\ueta(u_i,u_j,u_k)\)_{1\le i<j<k\le m}\)}{\uu},
\end{equation}
	with $m\in\N$ and $\psi\in\C(\tensor)$ Lipschitz continuous w.r.t.\ the
	$\ell_\infty$-Norm on $\tensor$ is in the uniform closure of $\sPol$.
	Let $L$ be the Lipschitz constant of $\Psi$.
For $n\in\N$ with $n\ge m$, we define
\begin{equation}
\label{e:143}
   \Phi_n(T,c,\mu) := \intamuu[n]{T^n}{\psi\(\(\ueta^{\mu_{n,\uu}}(u_i,u_j,u_k)\)_{1\le i<j<k\le m}\)}{\uu},
\end{equation}
where $\ueta^{\mu_{n,\uu}}$ is defined in the same way as $\ueta$, but with $\mu$ replaced by
the empirical sample distribution
\begin{equation} \label{e:144}
	\mu_{n,\underline{u}}:= \tfrac1n\sum_{\ell =1}^{n}\delta_{u_\ell}.
\end{equation}
Note that $\Phi_n\in\sPol$ because whether or not $u_\ell\in \Sub_{c(u_i,u_j,u_k)}(u_i)$ for some
$\ell\in\{1,...,n\},\; i,j,k\in\{1,...,m\}$ depends only on the shape $\shape(\uu)$.

Finally, applying the uniform Glivenko-Cantelli estimate Lemma~\ref{l:VCestim}, we have
\begin{equation}
\label{e:145}
   \begin{aligned}
		\| \Psi -\Phi_n \|_\infty \le \sup_{(T,c,\mu)\in \Tbin}\,
			\intamuu[n]{T^n}{L\cdot\sup_{S\in\Sset} \bigl|\mu(S)-\mu_{n,\uu}(S)\bigr|\,}{\uu} \le L \eps_n \tno 0,
	\end{aligned}
\end{equation}
where $\Sset:=\bset{\Sub_v(u)}{u,v\in T}$ and $(\eps_n)_{n\in\N}\tno 0$.
We note that $\dimVC(\Sset)\le 3$ (compare Example~\ref{exp:006}).
\end{proof}

\subsection{Equivalence and compactness of topologies}
\label{s.equivalence}
In this section, we show that sample shape convergence (Definition~\ref{d:shapeconv}), sample subtree-mass
convergence (Definition~\ref{d:massconv}) and branch point distribution distance Gromov-weak convergence
(Definition~\ref{d:bpddGw}) on $\Tbin$ are equivalent.
While spaces of metric measure spaces are usually far from being locally compact, $\Tbin$ is in this topology
even a compact metrizable space.

\begin{theorem}[equivalence of topologies and compactness]
	The sample shape topology, the sample subtree-mass topology, and the bpdd-Gromov-weak topology
	coincide on\/ $\Tbin$.
	Furthermore, $\Tbin$ is compact and metrizable in this topology.
\label{t:topeq}
\end{theorem}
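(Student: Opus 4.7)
The plan is to reduce the theorem to a single continuity statement about the coding map $\tree\colon\triang\to\Tbin$, and then invoke a compactness argument. Propositions~\ref{p:shapestrongerGw} and~\ref{p:shapestrongermass} already establish that the sample shape topology is at least as strong as both the bpdd-Gromov-weak and the sample subtree-mass topologies, and Corollaries~\ref{c:bpdd-metrizable}, \ref{c:shapemetrizable} and~\ref{c:massmetrizable} give that all three topologies are metrizable, in particular Hausdorff. Therefore, once I show that $\Tbin$ is compact in the sample shape topology, the identity from $\Tbin$ with its sample shape topology to $\Tbin$ with either of the weaker topologies is a continuous bijection from a compact to a Hausdorff space, hence a homeomorphism. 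This forces all three topologies to coincide, and yields compactness and metrizability as a by-product.

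To establish compactness of $\Tbin$ in the sample shape topology, I use that $\triang$ is compact in the Hausdorff metric topology by Lemma~\ref{l:triangcomp} and that $\tree$ is surjective onto $\Tbin$ by Theorem~\ref{t:tree}(ii). The missing ingredient --- which simultaneously completes the deferred proof of continuity in Theorem~\ref{t:tree}(ii) --- is to show that $\tree$ is continuous when $\Tbin$ carries the sample shape topology; this would be a separate lemma, say Lemma~\ref{l:Fcont}. Granted it, $\Tbin=\tree(\triang)$ is the continuous image of a compact space and hence compact.

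The bulk of the work is thus the sample-shape continuity of $\tree$. For fixed $m\in\N$ and bounded $\varphi\colon\Clad\to\R$, I would rewrite the shape polynomial by changing variables from $\mu_C$ to Lebesgue measure on the circle: for each $C\in\triang$, there is a natural measurable projection $\kappa_C\colon\S\to T_C$ sending $s\in\S$ to the unique leaf of $T_C$ whose associated closed boundary arc contains $s$, and by construction $(\kappa_C)_\ast\lambda_\S=\mu_C$. This gives the representation
\begin{equation}
    \Phi^{m,\varphi}\(\tree(C)\) = \int_{\S^m} \varphi\(\shape(\kappa_C(s_1),\ldots,\kappa_C(s_m))\)\,\lambda_\S^{\otimes m}(\dx s_1\cdots \dx s_m).
\end{equation}
For $\lambda_\S^{\otimes m}$-almost every $(s_1,\ldots,s_m)$, no $s_i$ lies on a line segment of $C$ and the ``separating triangle'' of $C$ for each triple is non-degenerate; under a sufficiently small Hausdorff perturbation $C_n\to C$, each of these finitely many separating triangles persists and no new separation at a sample point is created, so the sampled shape $\shape(\kappa_{C_n}(s_1),\ldots,\kappa_{C_n}(s_m))$ is eventually constant and equal to $\shape(\kappa_C(s_1),\ldots,\kappa_C(s_m))$. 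Bounded convergence then yields continuity of $C\mapsto\Phi^{m,\varphi}(\tree(C))$ on $\triang$.

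The main obstacle is the geometric claim that the separating-triangle combinatorics is Hausdorff-stable on a full-measure set of sample configurations. One has to isolate a generic condition on $(s_1,\ldots,s_m)$ under which the $\binom{m}{3}$ separating triangles are ``open'' features of $C$ --- none of the $s_i$ lies on the boundary of a triangle of $C$, no separating triangle is degenerate, and so on --- and verify stability of each such condition under Hausdorff convergence. This is standard but needs care; everything else in the proof of Theorem~\ref{t:topeq} is then a routine application of the compact-stronger-topology-equals-weaker-Hausdorff-topology principle, with metrizability inherited from Corollary~\ref{c:shapemetrizable}.
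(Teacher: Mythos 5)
Your proposal follows the paper's own proof essentially step for step: compactness of $\triang$ (Lemma~\ref{l:triangcomp}), surjectivity and sample-shape continuity of the coding map $\tree$ (Theorem~\ref{t:tree} and Lemma~\ref{l:Fcont}, where the paper likewise couples $\mu_C$-samples to Lebesgue samples on $\S$ and argues Hausdorff-stability of the sampled shape on a high-probability event), and then the compact-versus-weaker-Hausdorff principle combined with Propositions~\ref{p:shapestrongerGw} and~\ref{p:shapestrongermass} and the metrizability Corollaries~\ref{c:shapemetrizable} and~\ref{c:massmetrizable}. This is correct and is the same approach as the paper.
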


Because compact subsets of a Hausdorff space are closed, a direct corollary is that unlike the situation in the
space of metric measure trees (with Gromov-weak or Gromov-Hausdorff-weak topology), the set of binary trees is
closed w.r.t.\ the bpdd-Gromov-weak topology. In particular, Gromov(-Hausdorff)-weak convergence does not imply
bpdd-Gromov-weak convergence of the induced trees.

\begin{cor}
	The subspace\/ $\Tbin$ of binary algebraic measure trees with atoms restricted to leaves is closed
	in\/ $\T$ (with bpdd-Gromov-weak topology).
\end{cor}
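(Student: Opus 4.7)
The plan is to deduce the corollary directly from Theorem~\ref{t:topeq} together with a standard point-set topology fact, so the proof is essentially a one-liner. First I would recall that by Corollary~\ref{c:bpdd-metrizable}, the ambient space $\T$ equipped with the bpdd-Gromov-weak topology is metrizable, hence in particular Hausdorff. Next I would invoke Theorem~\ref{t:topeq}, which asserts that $\Tbin$ is compact in the bpdd-Gromov-weak topology (this is precisely the content that the three topologies agree and the resulting space is compact).

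Then the conclusion follows from the classical fact that every compact subset of a Hausdorff space is closed: any point $\smallx \in \T \setminus \Tbin$ can be separated from every point $\smally \in \Tbin$ by disjoint open neighborhoods, and by compactness of $\Tbin$ finitely many of the neighborhoods of the $\smally$'s suffice to cover $\Tbin$; the intersection of the corresponding neighborhoods of $\smallx$ is then an open set disjoint from $\Tbin$. Hence $\T \setminus \Tbin$ is open, i.e.\ $\Tbin$ is closed.

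There is essentially no obstacle here beyond citing the two prior results correctly; all the real work has already been done in establishing compactness of $\Tbin$ (via the equivalence of topologies and the surjective continuous coding map from the compact space $\triang$ in Theorem~\ref{t:tree}) and metrizability of $\T$ (via the injectivity of the selection map $\iota$ in Proposition~\ref{p:injective}).
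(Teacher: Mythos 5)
Your proof is correct and follows exactly the paper's own reasoning: the paper states the corollary as an immediate consequence of Theorem~\ref{t:topeq} (compactness of $\Tbin$) together with the fact that compact subsets of a Hausdorff space are closed, with Hausdorffness of $\T$ supplied by its metrizability (Corollary~\ref{c:bpdd-metrizable}). Nothing to add.
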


As a preparation of the proof for the theorem, we show that binary algebraic measure trees depend continuously on
their encoding as sub-triangulations of the circle. Together with Proposition~\ref{p:shapestrongerGw}, this also
finishes the proof of Theorem~\ref{t:tree}. Recall the space $\triang$ of sub-triangulations of the circle
equipped with the Hausdorff metric topology from \eqref{e:tatT}, and the coding map
$\tree\colon \triang \to \Tbin$ from Theorem~\ref{t:tree}.

\begin{lemma}[continuity of the coding map]
	Let\/ $\Tbin$ be equipped with the sample shape topology, and\/ $\triang$ with the
	Hausdorff metric topology. Then the coding map\/ $\tree\colon \triang\to\Tbin$ is continuous.
\label{l:Fcont}
\end{lemma}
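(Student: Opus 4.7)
The plan is to show continuity by representing the sample shape distribution as a push-forward of Lebesgue measure on the circle and then arguing that this push-forward depends continuously on $C$. Concretely, for each $m\in\N$ I will construct a measurable ``shape reading'' map $\Psi_{m,C}\colon \S^m \to \Clad$ and show:
\begin{enumerate}
\item $\shapedist(\tree(C)) = (\Psi_{m,C})_* \lambda_\S^{\otimes m}$, and
\item for $\lambda_\S^{\otimes m}$\nbd almost every $(p_1,\ldots,p_m)\in\S^m$, we have $\Psi_{m,C_n}(p_1,\ldots,p_m) = \Psi_{m,C}(p_1,\ldots,p_m)$ for all sufficiently large $n$ whenever $C_n\to C$ in the Hausdorff metric topology.
\end{enumerate}
These two facts together imply, by dominated convergence, that for any bounded $\varphi\colon \Clad\to\R$,
\begin{equation*}
\int\varphi\,\dx\shapedist(\tree(C_n)) \tno \int\varphi\,\dx\shapedist(\tree(C)),
\end{equation*}
which, since $\Clad$ is countable and discrete, gives continuity of $C\mapsto \shapedist(\tree(C))$ and hence of $\tree$.

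For the construction of $\Psi_{m,C}$, given a sub-triangulation $C$ and a point $p\in\S$, I associate an element $u(p)\in T_C$ as follows: if $p$ lies in the (relatively open) arc of some circular segment $s\in\circseg(C)$, set $u(p):=s$; otherwise $p\in C\cap\S$ and I send $p$ to the corresponding non-atomic leaf of $T_C$ singled out by the construction in Proposition~\ref{p:construction} applied in the proof of Theorem~\ref{t:tree}. For generic $(p_1,\ldots,p_m)$ (namely, those with distinct $u(p_i)$'s lying in $\lf(T_C)\setminus\br(T_C,c_C)$, which holds $\lambda_\S^{\otimes m}$\nbd a.s.), set $\Psi_{m,C}(p_1,\ldots,p_m):=\shape[(T_C,c_C)](u(p_1),\ldots,u(p_m))$, which depends only on the combinatorics of how the line segments of $C$ separate the $p_i$ inside $\disc$. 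Property (CM2) from Theorem~\ref{t:tree} identifies the $\mu_C$\nbd distribution of $u(\cdot)$ with $\lambda_\S$, so tensorizing and using Lemma~\ref{l:001} yields the push-forward identity \emph{(1)}.

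The main work, and main obstacle, is \emph{(2)}. Fix a generic sample $(p_1,\ldots,p_m)$; its shape under $C$ is determined by finitely many objects in $V_C$ (namely the at most $m-2$ branch triangles $c_{p_i p_j p_k}\in\tri(C)$ together with the circular segments or arc-intervals containing the $p_i$), and by the cyclic arrangement of the $p_i$ on $\S$ relative to these objects. Each of these finitely many line segments of $C$ has both endpoints on $\S$ strictly separated from each $p_i$. If $C_n\to C$ in the Hausdorff metric, then, by the argument used in Proposition~\ref{p:fintriapp} to pass between (Tri2) and (Tri2)', each such segment of $C$ is approximated by a segment (or pair of close-by segments) of $C_n$ having the same cyclic separation behaviour with respect to the $p_i$; conversely, no new segment of $C_n$ can separate two of the $p_i$ for large $n$, since any accumulation point of such segments would be a segment of $C$ separating them. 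This gives the stated pointwise stability of $\Psi_{m,C_n}(p_1,\ldots,p_m)$ in $n$.

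The technical care needed to make ``generic'' precise and to avoid issues at segments of $C$ whose endpoints are very close to some $p_i$ is handled by the null-set hypothesis: the set of $(p_1,\ldots,p_m)$ for which some $p_i$ coincides with an endpoint of a segment of $C$, or for which two $u(p_i)$ coincide, has $\lambda_\S^{\otimes m}$\nbd measure zero, so dominated convergence still applies even though $\Psi_{m,C_n}$ may behave erratically on that null set.
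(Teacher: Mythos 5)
Your step (1) and the ``direct'' half of step (2) are fine, and your overall strategy (realising $\shapedist(\tree(C))$ by sampling uniform points on $\S$ and reading the shape off the chords) is the same device the paper uses. The gap is in step (2): the pointwise a.e.\ stability claim is false, because the decomposition of $C$ into non-crossing line segments required by (Tri2) is \emph{not} determined by $C$ as a closed subset of $\disc$, and your map $\Psi_{m,C}$ depends on that decomposition wherever $\tree(C)$ has a linear part of positive mass. Concretely, let $I_1,I_2\subseteq\S$ be two disjoint arcs of length $\tfrac14$ and $C:=\conv(I_1\cup I_2)\in\triang$ (a ``band'': $\tri(C)=\emptyset$, $\circseg(C)$ consists of two segments of mass $\tfrac14$, and $\tree(C)$ is an interval with atoms of mass $\tfrac14$ at its two leaves and non-atomic mass $\tfrac12$ in between). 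This set admits many laminations by non-crossing chords joining $I_1$ to $I_2$, one for each monotone pairing $g\colon I_1\to I_2$, and finite sub-triangulations $C_n\to C$ can be built along any such pairing (or along pairings oscillating in $n$). For a sample with $p_1,p_3\in I_1$ and $p_2,p_4\in I_2$, the quartet split read off from $C_n$ is determined by the $g_n$-merge of the positions, and two genuinely different pairings produce different splits on a set of $(p_1,\dots,p_4)$ of positive $\lambda_\S^{\otimes 4}$-measure. Hence $\Psi_{4,C_n}(p_1,\dots,p_4)$ need not be eventually equal to, or even convergent to, $\Psi_{4,C}(p_1,\dots,p_4)$, no matter which decomposition you fix to define $\Psi_{4,C}$; the same phenomenon occurs for $C=\disc$. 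Your ``converse'' argument does not exclude this: an accumulation point of separating chords of $C_n$ is indeed a chord \emph{contained in} $C$, but a set $C$ with non-empty interior contains crossing chords realising several mutually incompatible separations, so membership of a chord in $C$ does not pin down the shape.

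What saves the lemma is a distributional rather than pointwise fact: although the order in which an approximating $C_n$ arranges the sample points along a linear part of $\tree(C)$ is not stable, the \emph{law} of that order is always the uniform permutation, because the points are i.i.d.\ from a non-atomic measure on a totally ordered set. The push-forward measures therefore converge even though the integrands do not converge a.e., so dominated convergence cannot be the closing step. This is precisely why the paper argues via a coupling of the two samples and a Prokhorov estimate $\dPr\(\shapedist(\tree(C)),\shapedist(\tree(C'))\)\le 2\eps$, isolating first the finitely many elements of $\tri(C)\cup\circseg(C)$ that carry the sample's combinatorics with probability $1-\eps$. To repair your argument you would need to restrict the pointwise comparison to the separation data coming from these finitely many triangles and segments (which \emph{is} Hausdorff-stable, as in your direct step) and handle the residual linear parts by the exchangeability argument above, rather than by a.e.\ convergence of a globally defined shape-reading map.
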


\begin{proof} Fix $C\in\triang$ and $m\in\N$. By definition of the sample shape topology, it
	is enough to show that $\shapedist\circ\tree\colon \triang\to\CM_1(\Clad)$ is continuous at $C$.
	Let $U_1,...,U_m$ be i.i.d.\ points on the circle $\S$ chosen with the Lebesgue measure.

Recall from \eqref{e:nabla} the set $\circseg(C)$ of connected components of $\disc\setminus\conv(C)$,
from \eqref{s:comp} the connected component $\comp{x}{y}$ of $\disc\setminus\partial_\disc x$ which contains $y$,
where $x\in\tri(C)\cup \circseg(C)$, and $y\subseteq\disc$ connected and disjoint from $\partial_\disc x$.
Furthermore, recall the set $\Box(C)$ from \eqref{e:Box}, and the subtree components  $\Sub_x(y)$ from \eqref{e:equiv}.
	
	For $\eps>0$, there exists $N=N_{C,m,\eps}\in \N$ and $v_1,...,v_N\in
	\tri(C) \cup \circseg(C)$ distinct such that with probability at least $1-\eps$ the following holds:
\begin{itemize}
\item if $\{U_1,...,U_m\}\cap v\not=\emptyset$ for $v\in\circseg(C)$, then $v\in\{v_1,...,v_N\}$, and
\item if $\{U_1,...,U_m\}\cap \comp{v}{w}\not=\emptyset$ for some $v\in\tri(C)$ and all $w\in\tri(C)\cup\circseg(C)\cup\Box(C)$ with $w\not=v$, then $v\in\{v_1,...,v_N\}$.
\end{itemize}

\iffalse{
	for all $v\in \circseg(C)\setminus\{v_1,...,v_N\}$, we have $\{U_1,...,U_m\}\cap v = \emptyset$,
	and
	for all $v\in \tri(C) \setminus \{v_1,...,v_N\}$ there is $w\in (\tri(C)\cup \circseg(C)\cup
	\Box(C))\setminus\{v\}$ with $\{U_1,...,U_m\}\cap \comp{v}{w} = \emptyset$.
}\fi
	
Put $\eps':=\eps\cdot (12mN)^{-1}$. Then
\begin{equation}
\label{e:444}
   \mathbb{P}\big(\big\{d(U_i,\partial v_j)\ge\eps',\,\forall\,i=1,...,m;j=1,...,N\big\}\big)\ge 1-\eps.
\end{equation}

There is a $\delta=\delta(\eps)>0$ sufficiently small such that for any $C'\in \triang$ with Hausdorff metric
$\dH(C,C')<\delta$ there are distinct $v_1',...,v_N'\in \tri(C')\cup\circseg(C')$ such that
$\dH(v_i,v_i')\le\eps'$ for $i=1,...,N$.
Let $\smallx=(T,c,\mu):=\tree(C)$, and $V_1,...,V_m$ be i.i.d.\ $\mu$\nbd distributed, coupled to
$U_1,...,U_m$ such that $V_k\in\Sub_v(w)$ if and only if $U_k \in \comp{v}{w}$, which is possible due to the
properties of $\tree$ established in Theorem~\ref{t:tree}.
Define $\smallx'$ and $V_1',...,V_m'$ similarly with $C'$ instead of $C$.
Then
\begin{equation}
\label{e:120}
	\mathbb{P}\big(\big\{\shape(V_1,...,V_m)=\shape[T'](V_1',...,V_m')\big\}\big)\ge 1-2\eps,
\end{equation}
which implies that
$\dPr\(\shapedist(\tree(C)),\, \shapedist(\tree(C'))\) \le 2\eps$ (with $\dPr$ denoting the Prokhorov distance).
This shows that $\shapedist\circ\tree$ is continuous at $C$ and, since $m$ and $C$ are arbitrary, that $\tree$
is continuous.
\end{proof}

Now we are in a position to combine our results to a proof of the main theorem of Section~\ref{S:topo}.

\begin{proof}[Proof of Theorem~\ref{t:topeq}]
The space $\triang$ of sub-triangulations of the circle with Hausdorff metric topology is compact according to
Lemma~\ref{l:triangcomp}.
The coding map $\tree \colon \triang \to \Tbin$ is surjective by Theorem~\ref{t:tree},
and continuous when $\Tbin$ is equipped with the sample shape topology by Lemma~\ref{l:Fcont}.
Therefore, the sample shape topology is a compact topology on $\Tbin$. Moreover, the sample shape topology is Hausdorff by
Corollary~\ref{c:shapemetrizable}. As the sample subtree-mass topology is a weaker
Hausdorff topology by Proposition~\ref{p:shapestrongermass} and Corollary~\ref{c:massmetrizable}, it coincides with the sample shape topology.
The same is true for the bpdd-Gromov-weak topology by Proposition~\ref{p:shapestrongerGw}.
\end{proof}

Recall from Remark~\ref{rem:bpddGw} that the set of distance polynomials is convergence determining for measures on $\Tbin$.
It directly follows from the construction that the same is true for the sets of shape polynomials and subtree-mass polynomials.
This property is very useful for proving
convergence in law of random variables.

\begin{cor}[convergence determining classes of functions] \label{c:convdet}
	The sets\/ $\sPol\subseteq \Cb(\Tbin)$ (defined in \eqref{e:spol}) and\/ $\mPol$ (defined in \eqref{e:mpol}) are
convergence determining for measures on\/ $\Tbin$ with bpdd-Gromov-weak topology.
\end{cor}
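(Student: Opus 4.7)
The plan is to verify, for each of the two families $\sPol$ and $\mPol$, the three classical hypotheses of the Stone--Weierstrass theorem---algebra, constants, point separation---and then to invoke compactness of $\Tbin$ (from Theorem~\ref{t:topeq}) together with a standard consequence of Stone--Weierstrass to conclude that each family is convergence determining.

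First I would record that $\Tbin$ in the bpdd-Gromov-weak topology is compact metrizable by Theorem~\ref{t:topeq}, so $\Cb(\Tbin)=C(\Tbin)$, and that both $\sPol$ and $\mPol$ consist of continuous functions (continuity of each $\shapedist$ and $\massdist$ holds by definition of the sample shape and sample subtree-mass topologies, which coincide with the bpdd-Gromov-weak topology by Theorem~\ref{t:topeq}). Constants are trivially included (take $m=1$ with $\varphi$ or $\psi$ constant, or alternatively allow $m=0$).

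Next I would check the algebra property. For two shape polynomials $\Phi^{m_1,\varphi_1},\Phi^{m_2,\varphi_2}\in\sPol$ with $m=m_1+m_2$, Fubini gives
\begin{equation}
  \Phi^{m_1,\varphi_1}(\smallx)\cdot\Phi^{m_2,\varphi_2}(\smallx)
  =\intamu{T^{m}}{\varphi(\shape(u_1,\ldots,u_m))},
\end{equation}
where $\varphi(C):=\varphi_1(C_1)\,\varphi_2(C_2)$ and $C_i$ denotes the (measurable, in fact locally constant) restriction of the cladogram $C$ to the label set on which $\varphi_i$ acts. Since $\varphi$ is bounded on $\Clad$, the product lies in $\sPol$. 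The identical argument, with $\psi(\m(\uu)):=\psi_1\otimes\psi_2$ composed with the natural coordinate projection on $\tensor$, shows that $\mPol$ is an algebra (note that both $\psi_1\circ\m$ evaluated on the first $m_1$ coordinates and $\psi_2\circ\m$ on the last $m_2$ coordinates depend only on the full $m$-sample, and their product is of the form \eqref{e:mpol} for $m=m_1+m_2$).

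For point separation, $\sPol$ induces the sample shape topology (Definition~\ref{d:shapeconv}), which is Hausdorff by Corollary~\ref{c:shapemetrizable}; equivalently, separation can be read off directly from the fact that $\{\shapedist\}_{m\in\N}$ separates points, which follows once one has Theorem~\ref{t:topeq} (or from Proposition~\ref{p:reconstruction} in the other direction). For $\mPol$, separation is exactly the content of the reconstruction Proposition~\ref{p:reconstruction}.

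Finally I would conclude: by Stone--Weierstrass on the compact metric space $\Tbin$, each of $\sPol$ and $\mPol$ is uniformly dense in $C(\Tbin)$. On a compact metric space a uniformly dense subalgebra of $C(\Tbin)$ is convergence determining for probability measures (if $\int f\,dP_n\to\int f\,dP$ along the dense algebra, uniform approximation extends it to all of $C(\Tbin)$, hence $P_n\Rightarrow P$), which gives the claim. I do not foresee any serious obstacle beyond bookkeeping in the algebra step; the only mildly delicate point is to write the product of two polynomials in the canonical form of a single polynomial of order $m_1+m_2$, for which the $\uu\mapsto\shape(\uu)$ and $\uu\mapsto\m(\uu)$ functions need to be composed with the obvious coordinate projections.
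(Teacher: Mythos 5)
Your proof is correct and follows essentially the same route as the paper: compactness and metrizability of $\Tbin$ from Theorem~\ref{t:topeq}, the fact that each family is a point-separating algebra of continuous functions, and the Stone--Weierstrass theorem. You merely spell out the bookkeeping (products as polynomials of order $m_1+m_2$ via restriction of shapes and coordinate projections on the mass tensors) that the paper's one-line proof leaves implicit.
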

\begin{proof}
	$\Tbin$ is a compact metrizable space, and both $\sPol$ and $\mPol$ induce the bpdd-Gromov-weak topology
	on $\Tbin$ by Theorem~\ref{t:topeq}. Furthermore, each of $\sPol$ and $\mPol$ is closed under multiplication.
	Thus the claim follows by the Stone-Weierstrass theorem.
\end{proof}

\section{Example: sampling consistent families}
\label{s:examples}
Consider a family $(T_n, c_n)_{n\in\N}$ of random, finite binary (algebraic) trees, where $(T_n,c_n)$ has $n$ leaves.
Let $K_n$ be the Markov kernel that takes such a tree and removes a leaf uniformly chosen at random,
together with the branch point it is attached to, thus obtaining a binary tree with $n-1$ leaves.
We say that the family is \define{sampling consistent} if $K_n(T_n,\,\cdot\,)=\law(T_{n-1})$, where $\law$
denotes the law of a random variable.

\begin{figure}[t]
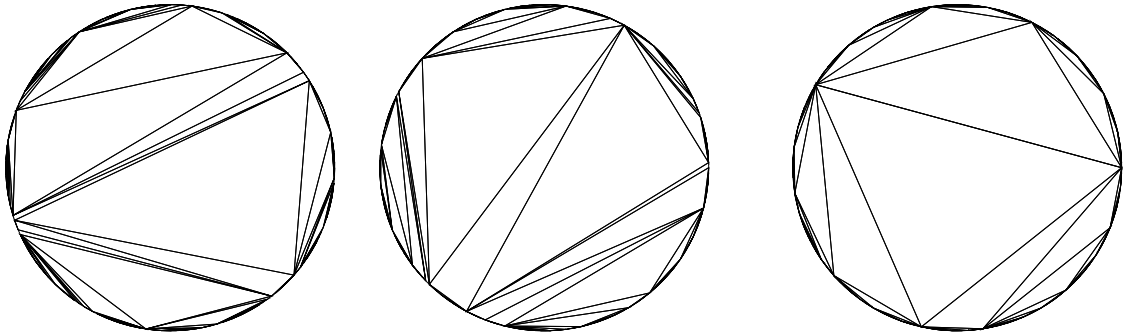

\begin{center}
\ifpdf
	\includegraphics{beta-1-triang}\hfil
	\includegraphics{yule-triang}\hfil
	\includegraphics{beta10-triang}
\else
	\psset{unit=0.139\textwidth}
	\begin{pspicture}(-1.01,-1.01)(1.01,1.01)
\SpecialCoor
\pspolygon(1;1.791045)(1;356.417910)(1;360.000000)
\pspolygon(1;1.791045)(1;10.746269)(1;356.417910)
\pspolygon(1;1.791045)(1;5.373134)(1;10.746269)
\pspolygon(1;1.791045)(1;3.582090)(1;5.373134)
\pspolygon(1;5.373134)(1;8.955224)(1;10.746269)
\pspolygon(1;5.373134)(1;7.164179)(1;8.955224)
\pspolygon(1;10.746269)(1;12.537313)(1;356.417910)
\pspolygon(1;12.537313)(1;318.805970)(1;356.417910)
\pspolygon(1;12.537313)(1;32.238806)(1;318.805970)
\pspolygon(1;12.537313)(1;21.492537)(1;32.238806)
\pspolygon(1;12.537313)(1;17.910448)(1;21.492537)
\pspolygon(1;12.537313)(1;14.328358)(1;17.910448)
\pspolygon(1;14.328358)(1;16.119403)(1;17.910448)
\pspolygon(1;17.910448)(1;19.701493)(1;21.492537)
\pspolygon(1;21.492537)(1;30.447761)(1;32.238806)
\pspolygon(1;21.492537)(1;26.865672)(1;30.447761)
\pspolygon(1;21.492537)(1;23.283582)(1;26.865672)
\pspolygon(1;23.283582)(1;25.074627)(1;26.865672)
\pspolygon(1;26.865672)(1;28.656716)(1;30.447761)
\pspolygon(1;32.238806)(1;198.805970)(1;318.805970)
\pspolygon(1;32.238806)(1;197.014925)(1;198.805970)
\pspolygon(1;32.238806)(1;35.820896)(1;197.014925)
\pspolygon(1;32.238806)(1;34.029851)(1;35.820896)
\pspolygon(1;35.820896)(1;44.776119)(1;197.014925)
\pspolygon(1;35.820896)(1;37.611940)(1;44.776119)
\pspolygon(1;37.611940)(1;39.402985)(1;44.776119)
\pspolygon(1;39.402985)(1;42.985075)(1;44.776119)
\pspolygon(1;39.402985)(1;41.194030)(1;42.985075)
\pspolygon(1;44.776119)(1;159.402985)(1;197.014925)
\pspolygon(1;44.776119)(1;82.388060)(1;159.402985)
\pspolygon(1;44.776119)(1;71.641791)(1;82.388060)
\pspolygon(1;44.776119)(1;69.850746)(1;71.641791)
\pspolygon(1;44.776119)(1;55.522388)(1;69.850746)
\pspolygon(1;44.776119)(1;53.731343)(1;55.522388)
\pspolygon(1;44.776119)(1;48.358209)(1;53.731343)
\pspolygon(1;44.776119)(1;46.567164)(1;48.358209)
\pspolygon(1;48.358209)(1;51.940299)(1;53.731343)
\pspolygon(1;48.358209)(1;50.149254)(1;51.940299)
\pspolygon(1;55.522388)(1;68.059701)(1;69.850746)
\pspolygon(1;55.522388)(1;66.268657)(1;68.059701)
\pspolygon(1;55.522388)(1;62.686567)(1;66.268657)
\pspolygon(1;55.522388)(1;57.313433)(1;62.686567)
\pspolygon(1;57.313433)(1;60.895522)(1;62.686567)
\pspolygon(1;57.313433)(1;59.104478)(1;60.895522)
\pspolygon(1;62.686567)(1;64.477612)(1;66.268657)
\pspolygon(1;71.641791)(1;75.223881)(1;82.388060)
\pspolygon(1;71.641791)(1;73.432836)(1;75.223881)
\pspolygon(1;75.223881)(1;78.805970)(1;82.388060)
\pspolygon(1;75.223881)(1;77.014925)(1;78.805970)
\pspolygon(1;78.805970)(1;80.597015)(1;82.388060)
\pspolygon(1;82.388060)(1;123.582090)(1;159.402985)
\pspolygon(1;82.388060)(1;85.970149)(1;123.582090)
\pspolygon(1;82.388060)(1;84.179104)(1;85.970149)
\pspolygon(1;85.970149)(1;89.552239)(1;123.582090)
\pspolygon(1;85.970149)(1;87.761194)(1;89.552239)
\pspolygon(1;89.552239)(1;103.880597)(1;123.582090)
\pspolygon(1;89.552239)(1;91.343284)(1;103.880597)
\pspolygon(1;91.343284)(1;102.089552)(1;103.880597)
\pspolygon(1;91.343284)(1;93.134328)(1;102.089552)
\pspolygon(1;93.134328)(1;94.925373)(1;102.089552)
\pspolygon(1;94.925373)(1;100.298507)(1;102.089552)
\pspolygon(1;94.925373)(1;96.716418)(1;100.298507)
\pspolygon(1;96.716418)(1;98.507463)(1;100.298507)
\pspolygon(1;103.880597)(1;121.791045)(1;123.582090)
\pspolygon(1;103.880597)(1;114.626866)(1;121.791045)
\pspolygon(1;103.880597)(1;107.462687)(1;114.626866)
\pspolygon(1;103.880597)(1;105.671642)(1;107.462687)
\pspolygon(1;107.462687)(1;112.835821)(1;114.626866)
\pspolygon(1;107.462687)(1;109.253731)(1;112.835821)
\pspolygon(1;109.253731)(1;111.044776)(1;112.835821)
\pspolygon(1;114.626866)(1;120.000000)(1;121.791045)
\pspolygon(1;114.626866)(1;118.208955)(1;120.000000)
\pspolygon(1;114.626866)(1;116.417910)(1;118.208955)
\pspolygon(1;123.582090)(1;155.820896)(1;159.402985)
\pspolygon(1;123.582090)(1;152.238806)(1;155.820896)
\pspolygon(1;123.582090)(1;127.164179)(1;152.238806)
\pspolygon(1;123.582090)(1;125.373134)(1;127.164179)
\pspolygon(1;127.164179)(1;128.955224)(1;152.238806)
\pspolygon(1;128.955224)(1;136.119403)(1;152.238806)
\pspolygon(1;128.955224)(1;134.328358)(1;136.119403)
\pspolygon(1;128.955224)(1;130.746269)(1;134.328358)
\pspolygon(1;130.746269)(1;132.537313)(1;134.328358)
\pspolygon(1;136.119403)(1;137.910448)(1;152.238806)
\pspolygon(1;137.910448)(1;141.492537)(1;152.238806)
\pspolygon(1;137.910448)(1;139.701493)(1;141.492537)
\pspolygon(1;141.492537)(1;146.865672)(1;152.238806)
\pspolygon(1;141.492537)(1;143.283582)(1;146.865672)
\pspolygon(1;143.283582)(1;145.074627)(1;146.865672)
\pspolygon(1;146.865672)(1;148.656716)(1;152.238806)
\pspolygon(1;148.656716)(1;150.447761)(1;152.238806)
\pspolygon(1;152.238806)(1;154.029851)(1;155.820896)
\pspolygon(1;155.820896)(1;157.611940)(1;159.402985)
\pspolygon(1;159.402985)(1;170.149254)(1;197.014925)
\pspolygon(1;159.402985)(1;161.194030)(1;170.149254)
\pspolygon(1;161.194030)(1;162.985075)(1;170.149254)
\pspolygon(1;162.985075)(1;168.358209)(1;170.149254)
\pspolygon(1;162.985075)(1;164.776119)(1;168.358209)
\pspolygon(1;164.776119)(1;166.567164)(1;168.358209)
\pspolygon(1;170.149254)(1;193.432836)(1;197.014925)
\pspolygon(1;170.149254)(1;189.850746)(1;193.432836)
\pspolygon(1;170.149254)(1;171.940299)(1;189.850746)
\pspolygon(1;171.940299)(1;186.268657)(1;189.850746)
\pspolygon(1;171.940299)(1;182.686567)(1;186.268657)
\pspolygon(1;171.940299)(1;179.104478)(1;182.686567)
\pspolygon(1;171.940299)(1;177.313433)(1;179.104478)
\pspolygon(1;171.940299)(1;173.731343)(1;177.313433)
\pspolygon(1;173.731343)(1;175.522388)(1;177.313433)
\pspolygon(1;179.104478)(1;180.895522)(1;182.686567)
\pspolygon(1;182.686567)(1;184.477612)(1;186.268657)
\pspolygon(1;186.268657)(1;188.059701)(1;189.850746)
\pspolygon(1;189.850746)(1;191.641791)(1;193.432836)
\pspolygon(1;193.432836)(1;195.223881)(1;197.014925)
\pspolygon(1;198.805970)(1;308.059701)(1;318.805970)
\pspolygon(1;198.805970)(1;202.388060)(1;308.059701)
\pspolygon(1;198.805970)(1;200.597015)(1;202.388060)
\pspolygon(1;202.388060)(1;204.179104)(1;308.059701)
\pspolygon(1;204.179104)(1;261.492537)(1;308.059701)
\pspolygon(1;204.179104)(1;241.791045)(1;261.492537)
\pspolygon(1;204.179104)(1;238.208955)(1;241.791045)
\pspolygon(1;204.179104)(1;236.417910)(1;238.208955)
\pspolygon(1;204.179104)(1;234.626866)(1;236.417910)
\pspolygon(1;204.179104)(1;225.671642)(1;234.626866)
\pspolygon(1;204.179104)(1;222.089552)(1;225.671642)
\pspolygon(1;204.179104)(1;214.925373)(1;222.089552)
\pspolygon(1;204.179104)(1;209.552239)(1;214.925373)
\pspolygon(1;204.179104)(1;207.761194)(1;209.552239)
\pspolygon(1;204.179104)(1;205.970149)(1;207.761194)
\pspolygon(1;209.552239)(1;213.134328)(1;214.925373)
\pspolygon(1;209.552239)(1;211.343284)(1;213.134328)
\pspolygon(1;214.925373)(1;220.298507)(1;222.089552)
\pspolygon(1;214.925373)(1;218.507463)(1;220.298507)
\pspolygon(1;214.925373)(1;216.716418)(1;218.507463)
\pspolygon(1;222.089552)(1;223.880597)(1;225.671642)
\pspolygon(1;225.671642)(1;229.253731)(1;234.626866)
\pspolygon(1;225.671642)(1;227.462687)(1;229.253731)
\pspolygon(1;229.253731)(1;231.044776)(1;234.626866)
\pspolygon(1;231.044776)(1;232.835821)(1;234.626866)
\pspolygon(1;238.208955)(1;240.000000)(1;241.791045)
\pspolygon(1;241.791045)(1;243.582090)(1;261.492537)
\pspolygon(1;243.582090)(1;259.701493)(1;261.492537)
\pspolygon(1;243.582090)(1;247.164179)(1;259.701493)
\pspolygon(1;243.582090)(1;245.373134)(1;247.164179)
\pspolygon(1;247.164179)(1;256.119403)(1;259.701493)
\pspolygon(1;247.164179)(1;252.537313)(1;256.119403)
\pspolygon(1;247.164179)(1;248.955224)(1;252.537313)
\pspolygon(1;248.955224)(1;250.746269)(1;252.537313)
\pspolygon(1;252.537313)(1;254.328358)(1;256.119403)
\pspolygon(1;256.119403)(1;257.910448)(1;259.701493)
\pspolygon(1;261.492537)(1;306.268657)(1;308.059701)
\pspolygon(1;261.492537)(1;286.567164)(1;306.268657)
\pspolygon(1;261.492537)(1;281.194030)(1;286.567164)
\pspolygon(1;261.492537)(1;266.865672)(1;281.194030)
\pspolygon(1;261.492537)(1;265.074627)(1;266.865672)
\pspolygon(1;261.492537)(1;263.283582)(1;265.074627)
\pspolygon(1;266.865672)(1;279.402985)(1;281.194030)
\pspolygon(1;266.865672)(1;275.820896)(1;279.402985)
\pspolygon(1;266.865672)(1;274.029851)(1;275.820896)
\pspolygon(1;266.865672)(1;272.238806)(1;274.029851)
\pspolygon(1;266.865672)(1;270.447761)(1;272.238806)
\pspolygon(1;266.865672)(1;268.656716)(1;270.447761)
\pspolygon(1;275.820896)(1;277.611940)(1;279.402985)
\pspolygon(1;281.194030)(1;284.776119)(1;286.567164)
\pspolygon(1;281.194030)(1;282.985075)(1;284.776119)
\pspolygon(1;286.567164)(1;304.477612)(1;306.268657)
\pspolygon(1;286.567164)(1;302.686567)(1;304.477612)
\pspolygon(1;286.567164)(1;288.358209)(1;302.686567)
\pspolygon(1;288.358209)(1;291.940299)(1;302.686567)
\pspolygon(1;288.358209)(1;290.149254)(1;291.940299)
\pspolygon(1;291.940299)(1;295.522388)(1;302.686567)
\pspolygon(1;291.940299)(1;293.731343)(1;295.522388)
\pspolygon(1;295.522388)(1;299.104478)(1;302.686567)
\pspolygon(1;295.522388)(1;297.313433)(1;299.104478)
\pspolygon(1;299.104478)(1;300.895522)(1;302.686567)
\pspolygon(1;308.059701)(1;309.850746)(1;318.805970)
\pspolygon(1;309.850746)(1;315.223881)(1;318.805970)
\pspolygon(1;309.850746)(1;313.432836)(1;315.223881)
\pspolygon(1;309.850746)(1;311.641791)(1;313.432836)
\pspolygon(1;315.223881)(1;317.014925)(1;318.805970)
\pspolygon(1;318.805970)(1;322.388060)(1;356.417910)
\pspolygon(1;318.805970)(1;320.597015)(1;322.388060)
\pspolygon(1;322.388060)(1;354.626866)(1;356.417910)
\pspolygon(1;322.388060)(1;338.507463)(1;354.626866)
\pspolygon(1;322.388060)(1;336.716418)(1;338.507463)
\pspolygon(1;322.388060)(1;334.925373)(1;336.716418)
\pspolygon(1;322.388060)(1;333.134328)(1;334.925373)
\pspolygon(1;322.388060)(1;324.179104)(1;333.134328)
\pspolygon(1;324.179104)(1;325.970149)(1;333.134328)
\pspolygon(1;325.970149)(1;329.552239)(1;333.134328)
\pspolygon(1;325.970149)(1;327.761194)(1;329.552239)
\pspolygon(1;329.552239)(1;331.343284)(1;333.134328)
\pspolygon(1;338.507463)(1;340.298507)(1;354.626866)
\pspolygon(1;340.298507)(1;342.089552)(1;354.626866)
\pspolygon(1;342.089552)(1;349.253731)(1;354.626866)
\pspolygon(1;342.089552)(1;345.671642)(1;349.253731)
\pspolygon(1;342.089552)(1;343.880597)(1;345.671642)
\pspolygon(1;345.671642)(1;347.462687)(1;349.253731)
\pspolygon(1;349.253731)(1;352.835821)(1;354.626866)
\pspolygon(1;349.253731)(1;351.044776)(1;352.835821)
\pspolygon(1;356.417910)(1;358.208955)(1;360.000000)
\end{pspicture}
 \hfil
	\begin{pspicture}(-1.01,-1.01)(1.01,1.01)
\SpecialCoor
\pspolygon(1;1.791045)(1;241.791045)(1;360.000000)
\pspolygon(1;1.791045)(1;60.895522)(1;241.791045)
\pspolygon(1;1.791045)(1;17.910448)(1;60.895522)
\pspolygon(1;1.791045)(1;14.328358)(1;17.910448)
\pspolygon(1;1.791045)(1;12.537313)(1;14.328358)
\pspolygon(1;1.791045)(1;5.373134)(1;12.537313)
\pspolygon(1;1.791045)(1;3.582090)(1;5.373134)
\pspolygon(1;5.373134)(1;10.746269)(1;12.537313)
\pspolygon(1;5.373134)(1;8.955224)(1;10.746269)
\pspolygon(1;5.373134)(1;7.164179)(1;8.955224)
\pspolygon(1;14.328358)(1;16.119403)(1;17.910448)
\pspolygon(1;17.910448)(1;19.701493)(1;60.895522)
\pspolygon(1;19.701493)(1;25.074627)(1;60.895522)
\pspolygon(1;19.701493)(1;21.492537)(1;25.074627)
\pspolygon(1;21.492537)(1;23.283582)(1;25.074627)
\pspolygon(1;25.074627)(1;48.358209)(1;60.895522)
\pspolygon(1;25.074627)(1;34.029851)(1;48.358209)
\pspolygon(1;25.074627)(1;28.656716)(1;34.029851)
\pspolygon(1;25.074627)(1;26.865672)(1;28.656716)
\pspolygon(1;28.656716)(1;30.447761)(1;34.029851)
\pspolygon(1;30.447761)(1;32.238806)(1;34.029851)
\pspolygon(1;34.029851)(1;41.194030)(1;48.358209)
\pspolygon(1;34.029851)(1;37.611940)(1;41.194030)
\pspolygon(1;34.029851)(1;35.820896)(1;37.611940)
\pspolygon(1;37.611940)(1;39.402985)(1;41.194030)
\pspolygon(1;41.194030)(1;44.776119)(1;48.358209)
\pspolygon(1;41.194030)(1;42.985075)(1;44.776119)
\pspolygon(1;44.776119)(1;46.567164)(1;48.358209)
\pspolygon(1;48.358209)(1;55.522388)(1;60.895522)
\pspolygon(1;48.358209)(1;50.149254)(1;55.522388)
\pspolygon(1;50.149254)(1;53.731343)(1;55.522388)
\pspolygon(1;50.149254)(1;51.940299)(1;53.731343)
\pspolygon(1;55.522388)(1;59.104478)(1;60.895522)
\pspolygon(1;55.522388)(1;57.313433)(1;59.104478)
\pspolygon(1;60.895522)(1;225.671642)(1;241.791045)
\pspolygon(1;60.895522)(1;137.910448)(1;225.671642)
\pspolygon(1;60.895522)(1;66.268657)(1;137.910448)
\pspolygon(1;60.895522)(1;62.686567)(1;66.268657)
\pspolygon(1;62.686567)(1;64.477612)(1;66.268657)
\pspolygon(1;66.268657)(1;82.388060)(1;137.910448)
\pspolygon(1;66.268657)(1;71.641791)(1;82.388060)
\pspolygon(1;66.268657)(1;69.850746)(1;71.641791)
\pspolygon(1;66.268657)(1;68.059701)(1;69.850746)
\pspolygon(1;71.641791)(1;77.014925)(1;82.388060)
\pspolygon(1;71.641791)(1;75.223881)(1;77.014925)
\pspolygon(1;71.641791)(1;73.432836)(1;75.223881)
\pspolygon(1;77.014925)(1;80.597015)(1;82.388060)
\pspolygon(1;77.014925)(1;78.805970)(1;80.597015)
\pspolygon(1;82.388060)(1;123.582090)(1;137.910448)
\pspolygon(1;82.388060)(1;103.880597)(1;123.582090)
\pspolygon(1;82.388060)(1;102.089552)(1;103.880597)
\pspolygon(1;82.388060)(1;87.761194)(1;102.089552)
\pspolygon(1;82.388060)(1;84.179104)(1;87.761194)
\pspolygon(1;84.179104)(1;85.970149)(1;87.761194)
\pspolygon(1;87.761194)(1;91.343284)(1;102.089552)
\pspolygon(1;87.761194)(1;89.552239)(1;91.343284)
\pspolygon(1;91.343284)(1;98.507463)(1;102.089552)
\pspolygon(1;91.343284)(1;93.134328)(1;98.507463)
\pspolygon(1;93.134328)(1;94.925373)(1;98.507463)
\pspolygon(1;94.925373)(1;96.716418)(1;98.507463)
\pspolygon(1;98.507463)(1;100.298507)(1;102.089552)
\pspolygon(1;103.880597)(1;114.626866)(1;123.582090)
\pspolygon(1;103.880597)(1;105.671642)(1;114.626866)
\pspolygon(1;105.671642)(1;111.044776)(1;114.626866)
\pspolygon(1;105.671642)(1;107.462687)(1;111.044776)
\pspolygon(1;107.462687)(1;109.253731)(1;111.044776)
\pspolygon(1;111.044776)(1;112.835821)(1;114.626866)
\pspolygon(1;114.626866)(1;121.791045)(1;123.582090)
\pspolygon(1;114.626866)(1;120.000000)(1;121.791045)
\pspolygon(1;114.626866)(1;116.417910)(1;120.000000)
\pspolygon(1;116.417910)(1;118.208955)(1;120.000000)
\pspolygon(1;123.582090)(1;130.746269)(1;137.910448)
\pspolygon(1;123.582090)(1;128.955224)(1;130.746269)
\pspolygon(1;123.582090)(1;127.164179)(1;128.955224)
\pspolygon(1;123.582090)(1;125.373134)(1;127.164179)
\pspolygon(1;130.746269)(1;136.119403)(1;137.910448)
\pspolygon(1;130.746269)(1;134.328358)(1;136.119403)
\pspolygon(1;130.746269)(1;132.537313)(1;134.328358)
\pspolygon(1;137.910448)(1;152.238806)(1;225.671642)
\pspolygon(1;137.910448)(1;143.283582)(1;152.238806)
\pspolygon(1;137.910448)(1;141.492537)(1;143.283582)
\pspolygon(1;137.910448)(1;139.701493)(1;141.492537)
\pspolygon(1;143.283582)(1;150.447761)(1;152.238806)
\pspolygon(1;143.283582)(1;148.656716)(1;150.447761)
\pspolygon(1;143.283582)(1;146.865672)(1;148.656716)
\pspolygon(1;143.283582)(1;145.074627)(1;146.865672)
\pspolygon(1;152.238806)(1;223.880597)(1;225.671642)
\pspolygon(1;152.238806)(1;154.029851)(1;223.880597)
\pspolygon(1;154.029851)(1;216.716418)(1;223.880597)
\pspolygon(1;154.029851)(1;171.940299)(1;216.716418)
\pspolygon(1;154.029851)(1;166.567164)(1;171.940299)
\pspolygon(1;154.029851)(1;164.776119)(1;166.567164)
\pspolygon(1;154.029851)(1;161.194030)(1;164.776119)
\pspolygon(1;154.029851)(1;155.820896)(1;161.194030)
\pspolygon(1;155.820896)(1;159.402985)(1;161.194030)
\pspolygon(1;155.820896)(1;157.611940)(1;159.402985)
\pspolygon(1;161.194030)(1;162.985075)(1;164.776119)
\pspolygon(1;166.567164)(1;168.358209)(1;171.940299)
\pspolygon(1;168.358209)(1;170.149254)(1;171.940299)
\pspolygon(1;171.940299)(1;202.388060)(1;216.716418)
\pspolygon(1;171.940299)(1;173.731343)(1;202.388060)
\pspolygon(1;173.731343)(1;186.268657)(1;202.388060)
\pspolygon(1;173.731343)(1;182.686567)(1;186.268657)
\pspolygon(1;173.731343)(1;175.522388)(1;182.686567)
\pspolygon(1;175.522388)(1;177.313433)(1;182.686567)
\pspolygon(1;177.313433)(1;179.104478)(1;182.686567)
\pspolygon(1;179.104478)(1;180.895522)(1;182.686567)
\pspolygon(1;182.686567)(1;184.477612)(1;186.268657)
\pspolygon(1;186.268657)(1;198.805970)(1;202.388060)
\pspolygon(1;186.268657)(1;193.432836)(1;198.805970)
\pspolygon(1;186.268657)(1;191.641791)(1;193.432836)
\pspolygon(1;186.268657)(1;189.850746)(1;191.641791)
\pspolygon(1;186.268657)(1;188.059701)(1;189.850746)
\pspolygon(1;193.432836)(1;197.014925)(1;198.805970)
\pspolygon(1;193.432836)(1;195.223881)(1;197.014925)
\pspolygon(1;198.805970)(1;200.597015)(1;202.388060)
\pspolygon(1;202.388060)(1;204.179104)(1;216.716418)
\pspolygon(1;204.179104)(1;209.552239)(1;216.716418)
\pspolygon(1;204.179104)(1;205.970149)(1;209.552239)
\pspolygon(1;205.970149)(1;207.761194)(1;209.552239)
\pspolygon(1;209.552239)(1;211.343284)(1;216.716418)
\pspolygon(1;211.343284)(1;214.925373)(1;216.716418)
\pspolygon(1;211.343284)(1;213.134328)(1;214.925373)
\pspolygon(1;216.716418)(1;222.089552)(1;223.880597)
\pspolygon(1;216.716418)(1;220.298507)(1;222.089552)
\pspolygon(1;216.716418)(1;218.507463)(1;220.298507)
\pspolygon(1;225.671642)(1;231.044776)(1;241.791045)
\pspolygon(1;225.671642)(1;229.253731)(1;231.044776)
\pspolygon(1;225.671642)(1;227.462687)(1;229.253731)
\pspolygon(1;231.044776)(1;234.626866)(1;241.791045)
\pspolygon(1;231.044776)(1;232.835821)(1;234.626866)
\pspolygon(1;234.626866)(1;236.417910)(1;241.791045)
\pspolygon(1;236.417910)(1;238.208955)(1;241.791045)
\pspolygon(1;238.208955)(1;240.000000)(1;241.791045)
\pspolygon(1;241.791045)(1;345.671642)(1;360.000000)
\pspolygon(1;241.791045)(1;248.955224)(1;345.671642)
\pspolygon(1;241.791045)(1;247.164179)(1;248.955224)
\pspolygon(1;241.791045)(1;243.582090)(1;247.164179)
\pspolygon(1;243.582090)(1;245.373134)(1;247.164179)
\pspolygon(1;248.955224)(1;256.119403)(1;345.671642)
\pspolygon(1;248.955224)(1;254.328358)(1;256.119403)
\pspolygon(1;248.955224)(1;250.746269)(1;254.328358)
\pspolygon(1;250.746269)(1;252.537313)(1;254.328358)
\pspolygon(1;256.119403)(1;309.850746)(1;345.671642)
\pspolygon(1;256.119403)(1;300.895522)(1;309.850746)
\pspolygon(1;256.119403)(1;282.985075)(1;300.895522)
\pspolygon(1;256.119403)(1;277.611940)(1;282.985075)
\pspolygon(1;256.119403)(1;272.238806)(1;277.611940)
\pspolygon(1;256.119403)(1;259.701493)(1;272.238806)
\pspolygon(1;256.119403)(1;257.910448)(1;259.701493)
\pspolygon(1;259.701493)(1;266.865672)(1;272.238806)
\pspolygon(1;259.701493)(1;263.283582)(1;266.865672)
\pspolygon(1;259.701493)(1;261.492537)(1;263.283582)
\pspolygon(1;263.283582)(1;265.074627)(1;266.865672)
\pspolygon(1;266.865672)(1;268.656716)(1;272.238806)
\pspolygon(1;268.656716)(1;270.447761)(1;272.238806)
\pspolygon(1;272.238806)(1;274.029851)(1;277.611940)
\pspolygon(1;274.029851)(1;275.820896)(1;277.611940)
\pspolygon(1;277.611940)(1;279.402985)(1;282.985075)
\pspolygon(1;279.402985)(1;281.194030)(1;282.985075)
\pspolygon(1;282.985075)(1;295.522388)(1;300.895522)
\pspolygon(1;282.985075)(1;288.358209)(1;295.522388)
\pspolygon(1;282.985075)(1;286.567164)(1;288.358209)
\pspolygon(1;282.985075)(1;284.776119)(1;286.567164)
\pspolygon(1;288.358209)(1;291.940299)(1;295.522388)
\pspolygon(1;288.358209)(1;290.149254)(1;291.940299)
\pspolygon(1;291.940299)(1;293.731343)(1;295.522388)
\pspolygon(1;295.522388)(1;299.104478)(1;300.895522)
\pspolygon(1;295.522388)(1;297.313433)(1;299.104478)
\pspolygon(1;300.895522)(1;304.477612)(1;309.850746)
\pspolygon(1;300.895522)(1;302.686567)(1;304.477612)
\pspolygon(1;304.477612)(1;308.059701)(1;309.850746)
\pspolygon(1;304.477612)(1;306.268657)(1;308.059701)
\pspolygon(1;309.850746)(1;325.970149)(1;345.671642)
\pspolygon(1;309.850746)(1;322.388060)(1;325.970149)
\pspolygon(1;309.850746)(1;313.432836)(1;322.388060)
\pspolygon(1;309.850746)(1;311.641791)(1;313.432836)
\pspolygon(1;313.432836)(1;318.805970)(1;322.388060)
\pspolygon(1;313.432836)(1;317.014925)(1;318.805970)
\pspolygon(1;313.432836)(1;315.223881)(1;317.014925)
\pspolygon(1;318.805970)(1;320.597015)(1;322.388060)
\pspolygon(1;322.388060)(1;324.179104)(1;325.970149)
\pspolygon(1;325.970149)(1;334.925373)(1;345.671642)
\pspolygon(1;325.970149)(1;329.552239)(1;334.925373)
\pspolygon(1;325.970149)(1;327.761194)(1;329.552239)
\pspolygon(1;329.552239)(1;331.343284)(1;334.925373)
\pspolygon(1;331.343284)(1;333.134328)(1;334.925373)
\pspolygon(1;334.925373)(1;338.507463)(1;345.671642)
\pspolygon(1;334.925373)(1;336.716418)(1;338.507463)
\pspolygon(1;338.507463)(1;343.880597)(1;345.671642)
\pspolygon(1;338.507463)(1;340.298507)(1;343.880597)
\pspolygon(1;340.298507)(1;342.089552)(1;343.880597)
\pspolygon(1;345.671642)(1;354.626866)(1;360.000000)
\pspolygon(1;345.671642)(1;347.462687)(1;354.626866)
\pspolygon(1;347.462687)(1;351.044776)(1;354.626866)
\pspolygon(1;347.462687)(1;349.253731)(1;351.044776)
\pspolygon(1;351.044776)(1;352.835821)(1;354.626866)
\pspolygon(1;354.626866)(1;358.208955)(1;360.000000)
\pspolygon(1;354.626866)(1;356.417910)(1;358.208955)
\end{pspicture}
 \hfil
	\providecommand{\pstrrootedge}{\psline[linestyle=dashed, linewidth=0.3pt, arrows=*-o]}
\providecommand{\pstrintedge}{\psline[linestyle=dashed, linewidth=0.3pt, arrows=*-]}
\providecommand{\pstrextedge}{\psline[linestyle=dashed, linewidth=0.3pt, arrows=o-]}
\providecommand{\addTriangCommand}{\pscircle[linestyle=dotted, linewidth=0.5pt](0,0){1}}
\begin{pspicture}(-1.01,-1.01)(1.01,1.01)
\addTriangCommand
\SpecialCoor
\pspolygon(1;0.000000)(1;149.400000)(1;360.000000)
\pspolygon(1;0.000000)(1;63.000000)(1;149.400000)
\pspolygon(1;0.000000)(1;27.000000)(1;63.000000)
\pspolygon(1;0.000000)(1;12.600000)(1;27.000000)
\pspolygon(1;0.000000)(1;1.800000)(1;12.600000)
\pspolygon(1;1.800000)(1;7.200000)(1;12.600000)
\pspolygon(1;1.800000)(1;3.600000)(1;7.200000)
\pspolygon(1;3.600000)(1;5.400000)(1;7.200000)
\pspolygon(1;7.200000)(1;10.800000)(1;12.600000)
\pspolygon(1;7.200000)(1;9.000000)(1;10.800000)
\pspolygon(1;12.600000)(1;18.000000)(1;27.000000)
\pspolygon(1;12.600000)(1;16.200000)(1;18.000000)
\pspolygon(1;12.600000)(1;14.400000)(1;16.200000)
\pspolygon(1;18.000000)(1;21.600000)(1;27.000000)
\pspolygon(1;18.000000)(1;19.800000)(1;21.600000)
\pspolygon(1;21.600000)(1;25.200000)(1;27.000000)
\pspolygon(1;21.600000)(1;23.400000)(1;25.200000)
\pspolygon(1;27.000000)(1;43.200000)(1;63.000000)
\pspolygon(1;27.000000)(1;32.400000)(1;43.200000)
\pspolygon(1;27.000000)(1;28.800000)(1;32.400000)
\pspolygon(1;28.800000)(1;30.600000)(1;32.400000)
\pspolygon(1;32.400000)(1;36.000000)(1;43.200000)
\pspolygon(1;32.400000)(1;34.200000)(1;36.000000)
\pspolygon(1;36.000000)(1;39.600000)(1;43.200000)
\pspolygon(1;36.000000)(1;37.800000)(1;39.600000)
\pspolygon(1;39.600000)(1;41.400000)(1;43.200000)
\pspolygon(1;43.200000)(1;54.000000)(1;63.000000)
\pspolygon(1;43.200000)(1;52.200000)(1;54.000000)
\pspolygon(1;43.200000)(1;46.800000)(1;52.200000)
\pspolygon(1;43.200000)(1;45.000000)(1;46.800000)
\pspolygon(1;46.800000)(1;50.400000)(1;52.200000)
\pspolygon(1;46.800000)(1;48.600000)(1;50.400000)
\pspolygon(1;54.000000)(1;59.400000)(1;63.000000)
\pspolygon(1;54.000000)(1;57.600000)(1;59.400000)
\pspolygon(1;54.000000)(1;55.800000)(1;57.600000)
\pspolygon(1;59.400000)(1;61.200000)(1;63.000000)
\pspolygon(1;63.000000)(1;99.000000)(1;149.400000)
\pspolygon(1;63.000000)(1;79.200000)(1;99.000000)
\pspolygon(1;63.000000)(1;72.000000)(1;79.200000)
\pspolygon(1;63.000000)(1;64.800000)(1;72.000000)
\pspolygon(1;64.800000)(1;66.600000)(1;72.000000)
\pspolygon(1;66.600000)(1;70.200000)(1;72.000000)
\pspolygon(1;66.600000)(1;68.400000)(1;70.200000)
\pspolygon(1;72.000000)(1;77.400000)(1;79.200000)
\pspolygon(1;72.000000)(1;75.600000)(1;77.400000)
\pspolygon(1;72.000000)(1;73.800000)(1;75.600000)
\pspolygon(1;79.200000)(1;82.800000)(1;99.000000)
\pspolygon(1;79.200000)(1;81.000000)(1;82.800000)
\pspolygon(1;82.800000)(1;93.600000)(1;99.000000)
\pspolygon(1;82.800000)(1;86.400000)(1;93.600000)
\pspolygon(1;82.800000)(1;84.600000)(1;86.400000)
\pspolygon(1;86.400000)(1;90.000000)(1;93.600000)
\pspolygon(1;86.400000)(1;88.200000)(1;90.000000)
\pspolygon(1;90.000000)(1;91.800000)(1;93.600000)
\pspolygon(1;93.600000)(1;97.200000)(1;99.000000)
\pspolygon(1;93.600000)(1;95.400000)(1;97.200000)
\pspolygon(1;99.000000)(1;131.400000)(1;149.400000)
\pspolygon(1;99.000000)(1;120.600000)(1;131.400000)
\pspolygon(1;99.000000)(1;115.200000)(1;120.600000)
\pspolygon(1;99.000000)(1;108.000000)(1;115.200000)
\pspolygon(1;99.000000)(1;102.600000)(1;108.000000)
\pspolygon(1;99.000000)(1;100.800000)(1;102.600000)
\pspolygon(1;102.600000)(1;104.400000)(1;108.000000)
\pspolygon(1;104.400000)(1;106.200000)(1;108.000000)
\pspolygon(1;108.000000)(1;113.400000)(1;115.200000)
\pspolygon(1;108.000000)(1;111.600000)(1;113.400000)
\pspolygon(1;108.000000)(1;109.800000)(1;111.600000)
\pspolygon(1;115.200000)(1;117.000000)(1;120.600000)
\pspolygon(1;117.000000)(1;118.800000)(1;120.600000)
\pspolygon(1;120.600000)(1;127.800000)(1;131.400000)
\pspolygon(1;120.600000)(1;122.400000)(1;127.800000)
\pspolygon(1;122.400000)(1;126.000000)(1;127.800000)
\pspolygon(1;122.400000)(1;124.200000)(1;126.000000)
\pspolygon(1;127.800000)(1;129.600000)(1;131.400000)
\pspolygon(1;131.400000)(1;144.000000)(1;149.400000)
\pspolygon(1;131.400000)(1;142.200000)(1;144.000000)
\pspolygon(1;131.400000)(1;135.000000)(1;142.200000)
\pspolygon(1;131.400000)(1;133.200000)(1;135.000000)
\pspolygon(1;135.000000)(1;138.600000)(1;142.200000)
\pspolygon(1;135.000000)(1;136.800000)(1;138.600000)
\pspolygon(1;138.600000)(1;140.400000)(1;142.200000)
\pspolygon(1;144.000000)(1;147.600000)(1;149.400000)
\pspolygon(1;144.000000)(1;145.800000)(1;147.600000)
\pspolygon(1;149.400000)(1;257.400000)(1;360.000000)
\pspolygon(1;149.400000)(1;221.400000)(1;257.400000)
\pspolygon(1;149.400000)(1;189.000000)(1;221.400000)
\pspolygon(1;149.400000)(1;172.800000)(1;189.000000)
\pspolygon(1;149.400000)(1;165.600000)(1;172.800000)
\pspolygon(1;149.400000)(1;158.400000)(1;165.600000)
\pspolygon(1;149.400000)(1;156.600000)(1;158.400000)
\pspolygon(1;149.400000)(1;151.200000)(1;156.600000)
\pspolygon(1;151.200000)(1;154.800000)(1;156.600000)
\pspolygon(1;151.200000)(1;153.000000)(1;154.800000)
\pspolygon(1;158.400000)(1;163.800000)(1;165.600000)
\pspolygon(1;158.400000)(1;162.000000)(1;163.800000)
\pspolygon(1;158.400000)(1;160.200000)(1;162.000000)
\pspolygon(1;165.600000)(1;169.200000)(1;172.800000)
\pspolygon(1;165.600000)(1;167.400000)(1;169.200000)
\pspolygon(1;169.200000)(1;171.000000)(1;172.800000)
\pspolygon(1;172.800000)(1;176.400000)(1;189.000000)
\pspolygon(1;172.800000)(1;174.600000)(1;176.400000)
\pspolygon(1;176.400000)(1;185.400000)(1;189.000000)
\pspolygon(1;176.400000)(1;183.600000)(1;185.400000)
\pspolygon(1;176.400000)(1;181.800000)(1;183.600000)
\pspolygon(1;176.400000)(1;178.200000)(1;181.800000)
\pspolygon(1;178.200000)(1;180.000000)(1;181.800000)
\pspolygon(1;185.400000)(1;187.200000)(1;189.000000)
\pspolygon(1;189.000000)(1;214.200000)(1;221.400000)
\pspolygon(1;189.000000)(1;196.200000)(1;214.200000)
\pspolygon(1;189.000000)(1;192.600000)(1;196.200000)
\pspolygon(1;189.000000)(1;190.800000)(1;192.600000)
\pspolygon(1;192.600000)(1;194.400000)(1;196.200000)
\pspolygon(1;196.200000)(1;199.800000)(1;214.200000)
\pspolygon(1;196.200000)(1;198.000000)(1;199.800000)
\pspolygon(1;199.800000)(1;208.800000)(1;214.200000)
\pspolygon(1;199.800000)(1;207.000000)(1;208.800000)
\pspolygon(1;199.800000)(1;205.200000)(1;207.000000)
\pspolygon(1;199.800000)(1;203.400000)(1;205.200000)
\pspolygon(1;199.800000)(1;201.600000)(1;203.400000)
\pspolygon(1;208.800000)(1;210.600000)(1;214.200000)
\pspolygon(1;210.600000)(1;212.400000)(1;214.200000)
\pspolygon(1;214.200000)(1;217.800000)(1;221.400000)
\pspolygon(1;214.200000)(1;216.000000)(1;217.800000)
\pspolygon(1;217.800000)(1;219.600000)(1;221.400000)
\pspolygon(1;221.400000)(1;239.400000)(1;257.400000)
\pspolygon(1;221.400000)(1;234.000000)(1;239.400000)
\pspolygon(1;221.400000)(1;230.400000)(1;234.000000)
\pspolygon(1;221.400000)(1;226.800000)(1;230.400000)
\pspolygon(1;221.400000)(1;225.000000)(1;226.800000)
\pspolygon(1;221.400000)(1;223.200000)(1;225.000000)
\pspolygon(1;226.800000)(1;228.600000)(1;230.400000)
\pspolygon(1;230.400000)(1;232.200000)(1;234.000000)
\pspolygon(1;234.000000)(1;235.800000)(1;239.400000)
\pspolygon(1;235.800000)(1;237.600000)(1;239.400000)
\pspolygon(1;239.400000)(1;250.200000)(1;257.400000)
\pspolygon(1;239.400000)(1;243.000000)(1;250.200000)
\pspolygon(1;239.400000)(1;241.200000)(1;243.000000)
\pspolygon(1;243.000000)(1;246.600000)(1;250.200000)
\pspolygon(1;243.000000)(1;244.800000)(1;246.600000)
\pspolygon(1;246.600000)(1;248.400000)(1;250.200000)
\pspolygon(1;250.200000)(1;255.600000)(1;257.400000)
\pspolygon(1;250.200000)(1;253.800000)(1;255.600000)
\pspolygon(1;250.200000)(1;252.000000)(1;253.800000)
\pspolygon(1;257.400000)(1;279.000000)(1;360.000000)
\pspolygon(1;257.400000)(1;273.600000)(1;279.000000)
\pspolygon(1;257.400000)(1;268.200000)(1;273.600000)
\pspolygon(1;257.400000)(1;262.800000)(1;268.200000)
\pspolygon(1;257.400000)(1;259.200000)(1;262.800000)
\pspolygon(1;259.200000)(1;261.000000)(1;262.800000)
\pspolygon(1;262.800000)(1;266.400000)(1;268.200000)
\pspolygon(1;262.800000)(1;264.600000)(1;266.400000)
\pspolygon(1;268.200000)(1;271.800000)(1;273.600000)
\pspolygon(1;268.200000)(1;270.000000)(1;271.800000)
\pspolygon(1;273.600000)(1;275.400000)(1;279.000000)
\pspolygon(1;275.400000)(1;277.200000)(1;279.000000)
\pspolygon(1;279.000000)(1;338.400000)(1;360.000000)
\pspolygon(1;279.000000)(1;311.400000)(1;338.400000)
\pspolygon(1;279.000000)(1;300.600000)(1;311.400000)
\pspolygon(1;279.000000)(1;293.400000)(1;300.600000)
\pspolygon(1;279.000000)(1;284.400000)(1;293.400000)
\pspolygon(1;279.000000)(1;282.600000)(1;284.400000)
\pspolygon(1;279.000000)(1;280.800000)(1;282.600000)
\pspolygon(1;284.400000)(1;288.000000)(1;293.400000)
\pspolygon(1;284.400000)(1;286.200000)(1;288.000000)
\pspolygon(1;288.000000)(1;291.600000)(1;293.400000)
\pspolygon(1;288.000000)(1;289.800000)(1;291.600000)
\pspolygon(1;293.400000)(1;297.000000)(1;300.600000)
\pspolygon(1;293.400000)(1;295.200000)(1;297.000000)
\pspolygon(1;297.000000)(1;298.800000)(1;300.600000)
\pspolygon(1;300.600000)(1;306.000000)(1;311.400000)
\pspolygon(1;300.600000)(1;302.400000)(1;306.000000)
\pspolygon(1;302.400000)(1;304.200000)(1;306.000000)
\pspolygon(1;306.000000)(1;309.600000)(1;311.400000)
\pspolygon(1;306.000000)(1;307.800000)(1;309.600000)
\pspolygon(1;311.400000)(1;327.600000)(1;338.400000)
\pspolygon(1;311.400000)(1;320.400000)(1;327.600000)
\pspolygon(1;311.400000)(1;315.000000)(1;320.400000)
\pspolygon(1;311.400000)(1;313.200000)(1;315.000000)
\pspolygon(1;315.000000)(1;318.600000)(1;320.400000)
\pspolygon(1;315.000000)(1;316.800000)(1;318.600000)
\pspolygon(1;320.400000)(1;325.800000)(1;327.600000)
\pspolygon(1;320.400000)(1;324.000000)(1;325.800000)
\pspolygon(1;320.400000)(1;322.200000)(1;324.000000)
\pspolygon(1;327.600000)(1;334.800000)(1;338.400000)
\pspolygon(1;327.600000)(1;331.200000)(1;334.800000)
\pspolygon(1;327.600000)(1;329.400000)(1;331.200000)
\pspolygon(1;331.200000)(1;333.000000)(1;334.800000)
\pspolygon(1;334.800000)(1;336.600000)(1;338.400000)
\pspolygon(1;338.400000)(1;352.800000)(1;360.000000)
\pspolygon(1;338.400000)(1;345.600000)(1;352.800000)
\pspolygon(1;338.400000)(1;342.000000)(1;345.600000)
\pspolygon(1;338.400000)(1;340.200000)(1;342.000000)
\pspolygon(1;342.000000)(1;343.800000)(1;345.600000)
\pspolygon(1;345.600000)(1;351.000000)(1;352.800000)
\pspolygon(1;345.600000)(1;349.200000)(1;351.000000)
\pspolygon(1;345.600000)(1;347.400000)(1;349.200000)
\pspolygon(1;352.800000)(1;358.200000)(1;360.000000)
\pspolygon(1;352.800000)(1;356.400000)(1;358.200000)
\pspolygon(1;352.800000)(1;354.600000)(1;356.400000)
\end{pspicture} \fi
\end{center}
\caption{Realisations of $\beta$-splitting trees for (from left to right)
	$\beta=-1$, $\beta=0$ (Yule tree), $\beta=10$.}
\label{f:betasplit}
\end{figure}
\begin{example}[$\beta$-splitting trees]
	For every $\beta\in [-2,\infty]$, let $T^\beta_n$ be the $\beta$\nbd splitting tree on $n$ leaves
	from \cite{Aldous1996} (with forgotten labels). For $-2<\beta<\infty$, the $\beta$\nbd splitting tree
	$T^\beta_n$ can be constructed recursively as follows.
	$T^\beta_2$ consists of two leaves connected by a distinguished root edge. If $n>2$, choose $i\in
	\{1,\ldots,n-1\}$ with probability
	\begin{equation}
		q_n^\beta(i) = \frac1{a_n(\beta)} \binom ni \int_0^1 x^{i+\beta}(1-x)^{n-i+\beta} \,\dx x,
	\end{equation}
	where $a_n(\beta)$ is a normalisation constant. Then construct two independent $\beta$\nbd splitting trees $T^\beta_i$ and
	$T^\beta_{n-i}$, introduce a new branch point in the middle of each of the two root edges, and connect
	these new branch points with the new root edge to obtain $T^\beta_n$.
	
	It is easy to see (and observed in \cite{Aldous1996}) that $(T^\beta_n)_{n\in\N}$ is sampling consistent.
	Note the special cases $\beta=-2$ which is the {\em comb tree}, $\beta=-\frac32$ which is the \emph{uniform cladogram},
	$\beta=0$ which is the \emph{Yule tree} and $\beta=\infty$ which is the {\em symmetric binary tree}.
See Figure~\ref{f:betasplit} for triangulations of a realization of
	$\beta$\nbd splitting trees for different values of $\beta$ and large $n$. The Aldous Brownian CRT, which is the
	limit for $\beta=-\frac32$, is shown in Figure~\ref{f:nontriang}. 
\label{exp:009}
\end{example}

\begin{lemma}[convergence of sampling consistent families]\label{l:sampconsist}
	Let\/ $((T_n,c_n))_{n\in\N}$ be a sampling consistent family of random binary trees, and\/ $\mu_n$ the uniform
	distribution on\/ $\lf(T_n,c_n)$. Then we have the convergence in law
	\begin{equation}
		(T_n,c_n,\mu_n) \toln (T,c,\mu)	\quad\text{on\/ $\Tbin$ with bpdd-Gromov-weak  topology}
	\end{equation}
	for some random algebraic measure tree\/ $(T,c,\mu) \in \Tbin$ with non-atomic measure\/ $\mu$.
\end{lemma}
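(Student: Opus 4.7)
The plan is to exploit the compactness of $\Tbin$ established in Theorem~\ref{t:topeq}: tightness of the sequence of laws of $(T_n,c_n,\mu_n)$ is then automatic, and I only need to identify all subsequential weak limits. Since $\sPol$ is convergence-determining by Corollary~\ref{c:convdet}, the problem reduces to showing that $\E[\Phi^{m,\varphi}(T_n,c_n,\mu_n)]$ converges as $n\to\infty$ for every $m\in\N$ and every bounded $\varphi\colon\Clad\to\R$, and that the resulting collection of limits characterizes a unique probability measure on $\Tbin$.

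For the core computation I would write
\begin{equation*}
	\E\bigl[\Phi^{m,\varphi}(T_n,c_n,\mu_n)\bigr]
	= \E\bigl[\varphi(\shape[(T_n,c_n)](U_1,\ldots,U_m))\bigr],
\end{equation*}
where $U_1,\ldots,U_m$ are i.i.d.\ uniform on $\lf(T_n,c_n)$. A birthday-type bound gives $\mathbb{P}(U_1,\ldots,U_m\text{ pairwise distinct})\to 1$, so up to a vanishing error I may condition on distinctness, which turns $(U_1,\ldots,U_m)$ into a uniformly random ordered $m$-tuple of distinct leaves of $T_n$. Iterating the kernel $K_n$ from the sampling consistency hypothesis identifies the distribution of the shape spanned by such a tuple with the law of $T_m$ equipped with a uniformly random labeling of its leaves; this distribution is independent of $n\ge m$. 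Consequently every shape polynomial expectation converges to a limit characterizing a unique law on $\Tbin$, and Corollary~\ref{c:convdet} yields convergence in law to some $(T,c,\mu)\in\Tbin$.

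For non-atomicity of $\mu$, I would apply this convergence to $\Phi^{2,\varphi}$ with $\varphi$ the indicator of the (unique) degenerate $2$-cladogram whose single leaf carries both labels. One checks that $\Phi^{2,\varphi}(T,c,\mu)=\sum_{v\in T}\mu\{v\}^2$, while $\E[\Phi^{2,\varphi}(T_n,c_n,\mu_n)]=\mathbb{P}(U_1=U_2)=\tfrac1n\tno 0$. Taking the limit gives $\E\bigl[\sum_v\mu\{v\}^2\bigr]=0$, so $\mu$ is almost surely non-atomic.

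The main technical obstacle is making the identification in the second paragraph rigorous: I must verify carefully that the labeled shape spanned by an i.i.d.\ uniform sample of distinct leaves of $T_n$ has the same law as $T_m$ endowed with a uniform labeling of its leaves. This is the combinatorial heart of the argument and the only point where sampling consistency really enters; once it is in place, the compactness and convergence-determining machinery already developed does the rest.
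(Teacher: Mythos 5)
Your proposal is correct and follows essentially the same route as the paper's proof: decompose the annealed $m$-shape distribution into the contribution from distinct samples (which by sampling consistency equals the law of $T_m$ with uniformly random leaf labels, independent of $n$) plus an $O(m^2/n)$ error, then invoke the convergence-determining property of shape polynomials (Corollary~\ref{c:convdet}) together with compactness of $\Tbin$ (Theorem~\ref{t:topeq}) to get a unique limit law. Your non-atomicity argument via $\E\bigl[\sum_v\mu\{v\}^2\bigr]=\lim_n \tfrac1n=0$ is just a more explicit version of the paper's remark that the probability of a single-labelled sampled shape tends to one.
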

\begin{proof}
	Recall the $m$\nbd tree shape distribution $\shapedist$ from Definition~\ref{d:shapeconv}.
	Let $n,m\in\N$ with $m<n$ and define
	\begin{equation}
		\eps_{n,m} := \mu_n^{\otimes m}\bset{x\in T^m}{x_1,\ldots,x_m \text{ not distinct}}
			\le \tfrac{m^2}{n}.
	\end{equation}
	Because $(T_n)$ is sampling consistent, we obtain for the annealed shape distribution
	\begin{equation}\label{e:shapeconv}
		\Exp\(\shapedist(T_n,c_n,\mu_n)\) = (1-\eps_{n,m}) \law(T_m^*) + \eps_{n,m} \mu_{n,m},
	\end{equation}
	where $T_m^*$ is obtained from $T_m$ by randomly labelling the leaves, and $\mu_{n,m} \in \MC_1(\Clad)$
	is some law of \nclad s supported by cladograms where at least one leaf has more than one label.
	This shows that, for every fixed $m$, the expected $m$\nbd tree shape distribution converges as
	$n\to\infty$. Because the $m$\nbd tree shape distribution is convergence determining for the bpdd-Gromov-weak
	topology by Corollary~\ref{c:convdet}, all limit points of $\law(T_n,c_n,\mu_n)$ in $\MC_1(\Tbin)$ coincide.
	According to Theorem~\ref{t:topeq}, $\Tbin$, and hence $\MC_1(\Tbin)$, is compact and thus a unique
	limit exists. That the limiting measure is non-atomic is obvious, because the probability that a sampled
	shape is single-labelled tends to one by \eqref{e:shapeconv}.
\end{proof}

In the parameter range $\beta \in \ropenint{-2}{-1}$, the height (in graph distance) of the $\beta$-splitting
tree with $n$ leaves is asymptotically of power-law order $\Theta(n^{-\beta -1})$.
In this case, after rescaling edge-lengths with the factor $n^{\beta+1}$, Gromov-Hausdorff convergence in law to
a fragmentation tree is shown in \cite[Corollary~16]{HaasMiermontPitmanWinkel08}.
In the case $\beta>-1$, the height of the tree is only of logarithmic order $\Theta(\log(n))$, and it is easy to
see that no non-trivial Gromov-Hausdorff scaling limit (with uniform edge rescalings) exists. Seen as algebraic
measure trees, however, it easily follows from sampling consistency that the bpdd-Gromov-weak limit exist in the
full parameter range $\beta\in [-2,\infty]$.

\begin{example}[$\beta$-splitting trees continued]
	By Lemma~\ref{l:sampconsist}, for every $\beta \in [-2,\infty]$, the sequence $(T_n^\beta, c_n^\beta,
	\mu_n^\beta)_{n\in\N}$ of increasing $\beta$\nbd splitting trees converges in distribution to some
	limiting random algebraic measure tree $(T^\beta, c^\beta, \mu^\beta)$. In the case of the uniform cladogram
	($\beta=-\frac32$), the limit is the Brownian algebraic continuum random tree which can be obtained as
	tree $\tree(\CCRT)$ coded by the Brownian triangulation (see Example~\ref{ex:CRT}), or as the algebraic
	measure tree induced by the metric measure Brownian CRT which is known to have uniform shape
	distribution (\cite{Aldous1993}).
	In the case of the comb tree ($\beta=-2$), the limit is the unit interval with Lebesgue measure (a
	coding triangulation is shown in the very right of Figure~\ref{f:nobranch}).
\label{exp:008}
\end{example}

\appendix

\section{A uniform Glivenko-Cantelli theorem}
In Subsections~\ref{s:convshape} and~\ref{s:convmass} we made use of uniform estimates of the speed of convergence
in the approximation of the branch point distribution and the measure of a algebraic measure tree by empirical
distribution. Such uniform Glivenko-Cantelli estimates under a bound on the Vapnik-Chervonenkis dimension (VC-dimension) of the type presented below should be
well-known. As we did not find it explicitly in sufficient generality in the literature, we will present it here.

We recall the
definition of VC-dimension, going back to the seminal work of Vapnik and Chervonenkis,
\cite{VapnikChervonenkis71}.
Let $E$ be a non-empty set and $\J$ a non-empty collection of subsets of $E$. For $n\in\N$ and $x\in E^n$, put
\begin{equation}
	\J(x) := \bset{(\one_I(x_1),\ldots,\one_I(x_n))}{I\in\J}\subseteq\{0,1\}^n.
\end{equation}
Then obviously, $1\le\#\J(x)\le 2^n$.
\begin{definition}[Vapnik-Chervonenkis dimension]
The \emph{Vapnik-Chervonenkis dimension} of $\J$ is defined as
\begin{equation}
	\dimVC(\J) := \sup \bset{n\in\N}{\max_{x\in E^n} \#\J(x) = 2^n}.
\end{equation}
\label{def:VCdim}
\end{definition}

\begin{example}[collection of intervals of an algebraic tree]
	Let $(T,c)$ be a separable algebraic tree with $\#T>2$, and
\begin{equation}
\label{e:135}
   \J:=\I:=\bset{[u,v]}{u,v\in T}.
\end{equation}
	For $x_1,x_2,u \in T$ distinct, we have $\#\J(x) \ge \#\{[u,u],\, [x_1,x_1],\, [x_2,x_2],\,
	[x_1,x_2]\}=2^2$, hence $\dimVC(\I) \ge 2$. Conversely, for $x\in T^3$, either there is $u,v\in T$ with
	$x_1,x_2,x_3 \in [u,v]$. Then w.l.o.g.\ $x_2\in [x_1,x_3]$ and $(1,0,1)\not\in \I(x)$. Or there is no
	such $u,v \in T$, which means $(1,1,1) \not\in \I(x)$. Therefore,
	\begin{equation}
\label{e:VCpath}
		\dimVC(\I) = 2.
	\end{equation}
\label{exp:004}
\end{example}

Recall the notion $\Sub_x(y)$ of the equivalence class of $T\setminus\{x\}$ containing $y$.
\begin{example}[collection of subtrees branching of a branch point]
	Let $(T,c)$ be a separable algebraic tree, and
\begin{equation}
\label{e:136}
   \J:=\Sset:=\bset{\Sub_v(u)}{u,v\in T}.
\end{equation}
We claim that
\begin{equation}\label{e:VCsubtree}
		\dimVC(\Sset) \le  3.
\end{equation}
For this upper bound, let $x=(x_1,x_2,x_3,x_4)\in T^4$.
By the $4$-point condition of the branch point map, we can assume w.l.o.g.\ that
\begin{equation} \label{e:034}
   c(x_1,x_2,x_3)=c(x_1,x_2,x_4).
\end{equation}
In this case, it is not possible to cover $\{x_1,x_3\}$ but neither $x_2$ nor $x_4$ with a single subtree in
$\Sset$, which proves the claim.
\label{exp:006}
\end{example}

The constant in front in the following Glivenko-Cantelli lemma is clearly not optimal.
For us it is only important that it is universal and not depending on the measure space $(E,\mu)$.

\begin{lemma}[rate of convergence in Glivenko-Cantelli]
	Let\/ $E$ be a Polish space, $\mu$ a probability measure on\/ $E$, $(X_n)_{n\in\N}$ i.i.d.\
	$\mu$\nbd distributed, and\/ $\mu_n=\frac1n \sum_{k=1}^n \delta_{X_k}$ the empirical measure.
	Then, for every\/ $\J \subseteq \B(E)$ with\/ $\dimVC(\J)<\infty$ and\/ $n>1$,
	\begin{equation}
		\Exp\Bigl(\sup_{I\in\J}\bigl| \mu(I) - \mu_n(I)\bigr|\Bigr)
			\le 96 \sqrt{\frac{\dimVC(\J)}{n}}.
	\end{equation}
\label{l:VCestim}
\end{lemma}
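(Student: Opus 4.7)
The plan is to combine symmetrization with an independent ``ghost'' sample, the Sauer--Shelah lemma, and Dudley's entropy integral for empirical processes indexed by a VC class. This is the standard route; the only subtlety is keeping track of constants well enough to hit the value $96$.

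First, I would introduce an independent copy $(X_k')_{k=1}^n$ of the sample with empirical measure $\mu_n'$. Conditioning on $(X_k)_k$ and using Jensen's inequality gives
\begin{equation*}
   \Exp\Bigl[\sup_{I\in\J}\bigl|\mu_n(I)-\mu(I)\bigr|\Bigr]
     \le \Exp\Bigl[\sup_{I\in\J}\bigl|\mu_n(I)-\mu_n'(I)\bigr|\Bigr].
\end{equation*}
Since $(X_k,X_k')$ and $(X_k',X_k)$ have the same joint law for each $k$, we may multiply the $k$-th summand $\one_I(X_k)-\one_I(X_k')$ by an independent Rademacher sign $\varepsilon_k\in\{\pm 1\}$ without changing the distribution. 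A triangle inequality then produces the symmetrized bound
\begin{equation*}
   \Exp\Bigl[\sup_{I\in\J}\bigl|\mu_n(I)-\mu(I)\bigr|\Bigr]
   \le 2\,\Exp\Bigl[\sup_{I\in\J}\Bigl|\tfrac1n\sum_{k=1}^n\varepsilon_k\one_I(X_k)\Bigr|\Bigr].
\end{equation*}

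Next, I would condition on the sample $X=(X_1,\ldots,X_n)$. For each fixed $I$, the normalized Rademacher sum $\tfrac1n\sum_k\varepsilon_k\one_I(X_k)$ is sub-Gaussian with parameter $\mu_n(I)^{1/2}/\sqrt n \le 1/\sqrt n$ by Hoeffding's inequality, and the supremum is really over the finite set $\J(X)\subseteq\{0,1\}^n$. A naive union bound combined with the Sauer--Shelah estimate $\#\J(X)\le(en/d)^d$ for $d:=\dimVC(\J)$ would yield a bound of order $\sqrt{d\log(n/d)/n}$, with a spurious logarithmic factor. To eliminate it, I would apply Dudley's chaining inequality relative to the empirical $L^2$-pseudometric $d_{\mu_n}$ on $\J$, together with the Haussler--Dudley covering-number estimate
\begin{equation*}
   N\bigl(\varepsilon,\,\J,\,L^2(\nu)\bigr)
   \le K\,d\,(4e)^{d}\,\varepsilon^{-2d}, \qquad 0<\varepsilon\le 1,
\end{equation*}
valid for every probability measure $\nu$ and some absolute constant $K$. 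Since $\int_0^1 \sqrt{d\log(1/\varepsilon)}\,\mathrm{d}\varepsilon = O(\sqrt d)$, Dudley's bound gives
\begin{equation*}
   \Exp\Bigl[\sup_{I\in\J}\Bigl|\tfrac1n\sum_k\varepsilon_k\one_I(X_k)\Bigr|\,\Big|\,X\Bigr]
   \le \frac{C}{\sqrt n}\int_0^1\sqrt{\log N(\varepsilon,\J,L^2(\mu_n))}\,\mathrm{d}\varepsilon
   \le \frac{C'\sqrt d}{\sqrt n},
\end{equation*}
where $C,C'$ are universal. Taking expectations over $X$ and combining with the symmetrization step yields the claim.

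The conceptual content is entirely classical (all three ingredients are in standard references, e.g.\ van der Vaart and Wellner, \emph{Weak Convergence and Empirical Processes}); the main (tedious but routine) obstacle is to propagate the explicit constants through symmetrization, the Haussler covering bound, and the chaining integral in order to reach the numerical factor $96$. Note that, as remarked in the statement, the exact value of the constant is immaterial for the applications in Sections~\ref{s:convshape} and \ref{s:convmass}; only its independence of $(E,\mu)$ and of the particular VC class $\J$ matters there.
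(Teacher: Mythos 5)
Your proposal follows essentially the same route as the paper: the cited Theorem~3.2 of Devroye--Lugosi is precisely the symmetrization-plus-chaining entropy bound you reconstruct by hand, and the paper then combines it with Haussler's packing-number estimate and evaluates the entropy integral, exactly as you propose (after first reducing to $E=\R$ via the Kuratowski isomorphism theorem to apply the cited result). The only step left open in your sketch is the explicit constant-tracking down to $96$, which the paper does carry out (splitting the integral at $r^2\le \tfrac1n$ and using $\log(2e(d+1))\le 3d$), but as you correctly observe the precise value is immaterial for the applications.
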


\begin{proof}
	By the Kuratowski isomorphism theorem, all uncountable Polish spaces are Borel-isomorphic. Therefore, we
	may assume w.l.o.g.\ that $E=\R$. Theorem~3.2 in \cite{DevroyeLugosi01} yields
	\begin{equation}\label{e:DevLug}
		\Delta := \Exp\(\sup_{I\in\J}\bigl| \mu(I) - \mu_n(I)\bigr|\) \le
			\frac{24}{\sqrt{n}} \sup_{x\in \R^n} \int_0^1 \sqrt{\log\(2 N(r,\J(x))\)} \,\dx r,
	\end{equation}
	where $N(r,\J(x))$ is the covering number of $\J(x)$ w.r.t.\ the metric $\frac1{\sqrt{n}} \cdot d_{\ell^2}$, where $d_{\ell^2}$ is the
	Euclidean metric on $\{0,1\}^n$.
	This covering number can be upper-bounded in terms of the separation number $M(r, \J)$ w.r.t.\ the
	metric $\frac1n\cdot d_{\ell^1}$ used by Haussler in \cite{Haussler95}, and Theorem~1 there yields
	\begin{equation}\label{e:Haussler}
		N\(r,\J(x)\) \le M\(r^2, \J(x)\) \le e(\dimVC(\J)+1) \Bigl( \frac{2e}{r^2} \Bigr)^{\dimVC(\J)},
	\end{equation}
	provided that $nr^2\in\N$. For $r^2\le\frac1n$, we use the trivial estimate $M(r^2,\J(x)) \le 2^n$.
	For general $r^2\ge \frac1n$, we estimate $M(r^2,\J(x)) \le M(\frac1n \floor{nr^2},\J(x))$, and
	inserting \eqref{e:Haussler} into \eqref{e:DevLug} yields
	\begin{equation}
	\begin{aligned}
	  \Delta &\le \frac{24}{\sqrt{n}} \Bigl( \sqrt{\tfrac{n+1}n}
		+ \int_{\frac1{\sqrt{n}}}^1  \sqrt{\log(2e(\dimVC(\J)+1))
			+ \dimVC(\J) \log\(2e(r^2-\tfrac1n)^{-1}\)} \,\dx r \Bigr)\\
	    & \le \frac{24}{\sqrt{n}}\sqrt{\dimVC(\J)} \Bigl(
		\sqrt{\tfrac{n+1}{n}} +  \int_0^1 \sqrt{3 + \log(2e) -2\log(r)} \,\dx r \Bigr),
	\end{aligned}
	\end{equation}
	where we used that $\log(2e(d+1)) \le 3d$ for $d\ge 1$, and $r^2 - \frac1n \ge (r-\frac1{\sqrt{n}})^2$.
	The last bracket is less than $4$ for $n>1$, and the claim follows.
\end{proof}

\addtocontents{toc}{\protect\setcounter{tocdepth}{-1}}

\bibliographystyle{amsalpha}
\providecommand{\bysame}{\leavevmode\hbox to3em{\hrulefill}\thinspace}
\providecommand{\MR}{\relax\ifhmode\unskip\space\fi MR }
\providecommand{\MRhref}[2]{
  \href{http://www.ams.org/mathscinet-getitem?mr=#1}{#2}
}
\providecommand{\href}[2]{#2}

\end{document}